\documentclass[a4paper,11pt]{article}

\addtolength{\hoffset}{-1cm}
\addtolength{\voffset}{-2cm}
\addtolength{\textwidth}{2cm}
\addtolength{\textheight}{4cm}

\usepackage[T1]{fontenc}
\usepackage{lmodern}

\usepackage{dsfont}

\usepackage[latin1]{inputenc}
\usepackage{amsmath}
\usepackage{amsthm}
\usepackage{amssymb}
\usepackage{mathrsfs}
\usepackage{graphicx}
\usepackage[all]{xy}
\usepackage{hyperref}
\usepackage{xcolor}
\usepackage{tikz}

\usepackage{makeidx}

\usepackage{stmaryrd}
\usepackage{caption}

\usepackage{abstract} 

\newtheorem{thm}{Theorem}[section]
\newtheorem{cor}[thm]{Corollary}
\newtheorem{claim}[thm]{Claim}
\newtheorem{fact}[thm]{Fact}

\newtheorem{lemma}[thm]{Lemma}
\newtheorem{prop}[thm]{Proposition}

\theoremstyle{definition}
\newtheorem{definition}[thm]{Definition}

\newtheorem{remark}[thm]{Remark}

\def\rquotient#1#2{%
	\makeatletter
	\raise.3ex\hbox{$#1$}/\lower.3ex\hbox{$#2$}%
	\makeatother
}	

\makeatletter
\newcommand{\subjclass}[2][2010]{%
	\let\@oldtitle\@title%
	\gdef\@title{\@oldtitle\footnotetext{#1 \emph{Mathematics subject classification.} #2}}%
}
\newcommand{\keywords}[1]{%
	\let\@@oldtitle\@title%
	\gdef\@title{\@@oldtitle\footnotetext{\emph{Key words and phrases.} #1.}}%
}
\makeatother

\newcommand{\Address}{{
		\bigskip
		\small

\noindent\textsc{University of Montpellier\\ 
Institut Math\'ematiques Alexander Grothendieck\\
Place Eug\`ene Bataillon\\
34090 Montpellier (France)\\}\nopagebreak
\textit{E-mail address}: \texttt{anthony.genevois@umontpellier.fr}

\medskip
		
\noindent\textsc{Univ. Lyon \\
ENS de Lyon \\
UMPA UMR 5669 \\
46 all{\'e}e d'Italie \\
F-69364 Lyon cedex 07\\}\nopagebreak \textit{E-mail address}: \texttt{yassine.guerch@ens-lyon.fr}

\medskip

		\noindent\textsc{Institut de Math\'ematiques de Jussieu-Paris Rive Gauche, \\ Place Aur\'elie Nemours, \\ 75013 Paris, France.}\par\nopagebreak
		\noindent\textit{E-mail address}: \texttt{romain.tessera@imj-prg.fr}
\medskip
}}

\makeindex

\title{Folding median graphs}
\date{\today}
\author{Anthony Genevois, Yassine Guerch, and Romain Tessera}

\subjclass{Primary 20F65. Secondary 20F67, 05C25.}
\keywords{Median graphs, CAT(0) cube complexes, Stallings' folds}


\begin{document}

\maketitle

\begin{abstract}
Extending Stallings' foldings of trees, we show in this article that every \emph{parallel-preserving} map between median graphs factors as an isometric embedding through a sequence of elementary transformations which we call \emph{foldings} and \emph{swellings}. This new construction proposes a unified point of view on Beeker and Lazarovich's work on folding pocsets and on Ben-Zvi, Kropholler, and Lyman's work on folding nonpositively curved cube complexes. 
\end{abstract}

\tableofcontents

\section{Introduction}

\medskip \noindent
In his seminal work \cite{stallings1983topology}, Stallings introduced \emph{folds} in the study of free groups. The central idea is that, for every edge-preserving map $\psi : T_1 \to T_2$ between two trees $T_1,T_2$, there exists a(n infinite) sequence of folds $\eta : T_1 \to \cdots \to T_3$ and an isometric embedding $\iota : T_3 \to T_2$ such that $\psi = \iota \circ \eta$. Here, a fold refers to the identification of two intersecting edges. Since then, Stallings' folds have been applied to more general actions on trees~\cite{stallings1991foldings}; and they have been adapted to other groups such as random and small cancellation groups \cite{MR1445193, MR1869228}, some Coxeter groups \cite{MR1956839, MR2018959}, automatic groups~\cite{steinberg2001finite,kharlampovich2017stallings}, Thompson's group $F$ \cite{MR3710646}, right-angled Coxeter groups~\cite{dani2021subgroups}. Stallings' folds have also been applied for proving structural results on \emph{deformation spaces}, which can be thought of as analogues of the Teichm{\"u}ller space in the case of groups acting on trees (see for instance~\cite{skoradeformation,clay2005contractibility, guirardel2007deformation}). 

\medskip \noindent
In this article, we are interested in generalising Stallings' folds to median graphs (also known as one-skeletons of CAT(0) cube complexes\footnote{Median graphs and CAT(0) cube complexes essentially define the same object. Following \cite{MedianVs}, we use the terminology of median graphs in this article.}), where we give to hyperplanes the role played by edges in trees. Such a strategy turns out to be relevant in many situations. Already in \cite{MR1347406}, it is proved that a group $G$ has at least two ends relative to a subgroup $H$ if and only if its acts (essentially) on a median graph with $H$ as a hyperplane-stabiliser, generalising Stallings' theorem which claims that a group is multi-ended if and only if it acts (non-trivially) on a tree with finite edge-stabilisers. From this perspective, edge-preserving maps become \emph{parallel-preserving} maps, i.e.\ maps that sends vertices to vertices, edges to edges, and parallel edges to parallel edges (two edges being parallel whenever they are connected by a sequence of edges such that any two consecutive edges are opposite sides in a $4$-cycle).

\medskip \noindent
We begin by proving that there is a canonical  way to fold pairs of hyperplanes in contact.

\begin{thm}\label{Intro:Foldings}
Let $X$ be a countable median graph and $\mathcal{P}$ a collection of pairs of hyperplanes in contact. There exists a median graph $M$ and a parallel-preserving map $\zeta : X \to M$ such that the following holds. \\
\indent For every median graph $Y$ and every parallel-preserving map $\psi : X \to Y$ satisfying $\psi(A)=\psi(B)$ for all $\{A,B\} \in \mathcal{P}$, there exists a unique parallel-preserving map $\xi : M \to Y$ such that $\psi = \xi \circ \zeta$. 
\end{thm}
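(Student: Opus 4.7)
The plan is to reduce the problem to a single elementary fold and then iterate. First, I would construct, for any median graph $Z$ and any single pair $\{A, B\}$ of hyperplanes in contact in $Z$, a universal parallel-preserving map $\zeta_{A,B} : Z \to Z'$ identifying $A$ with $B$. Since $X$ is countable, the set $\mathcal{P}$ is at most countable; enumerating it as $\{A_n, B_n\}_{n \geq 1}$ and applying elementary folds successively (to the images of $A_n, B_n$ in the previous stage) yields a sequence $X = X_0 \to X_1 \to X_2 \to \cdots$ of median graphs and parallel-preserving maps. The desired median graph $M$ is then defined as the direct limit of this sequence, and $\zeta : X \to M$ as the composition of the structure maps.

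For the elementary fold, I would work at the level of the pocset $(\mathcal{H}(Z), \subseteq)$ of halfspaces, equipped with its complementation $h \mapsto h^{*}$. The hyperplanes $A$ and $B$ correspond to pairs of complementary halfspaces $\{a, a^{*}\}$ and $\{b, b^{*}\}$, and because they are in contact, there is a canonical matching between these pairs: in the crossing case, pair $a$ with the halfspace $b$ such that $a \cap b$ contains a corner of the crossing square; in the osculating case, pair $a$ with the halfspace $b$ on the appropriate side of the osculation. Quotienting $\mathcal{H}(Z)$ by the equivalence relation generated by these two identifications (and closing under the involution) yields a new pocset, and applying Sageev's construction produces the median graph $Z'$. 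The map $\zeta_{A,B}$ is induced on vertices by sending a principal ultrafilter on $\mathcal{H}(Z)$ to its image in the quotient pocset.

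The universal property at the elementary stage is then formal: any parallel-preserving map $\psi : Z \to Y$ identifying $A$ with $B$ induces an order-preserving map on pocsets that respects the equivalence relation, and hence factors uniquely through $Z'$. This property passes to the direct limit $M$ by a standard argument: any parallel-preserving map $\psi : X \to Y$ satisfying the hypotheses must factor through each finite stage $X_n$, and the compatible system of factorizations assembles into a unique map $\xi : M \to Y$.

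The main technical difficulty will be verifying that the pocset quotient $\mathcal{H}(Z)/\!\sim$ remains a discrete pocset on which Sageev's construction yields a genuine nonempty median graph, and that the induced map $\zeta_{A,B}$ is parallel-preserving rather than merely a map of pocsets. Concretely, one must rule out pathological propagations of the equivalence relation — in particular, that no halfspace is ever identified with its complement, and that the induced partial order remains antisymmetric — which requires a careful local analysis separating the crossing and osculating cases. A secondary point is to check that, at each iteration, the images of the hyperplanes $A_n, B_n$ remain in contact in $X_{n-1}$ (or have already been identified), so that the elementary fold can be legitimately applied at each step.
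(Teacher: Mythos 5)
Your top-level architecture (construct a single elementary fold with a universal property, iterate along an enumeration of $\mathcal{P}$, take the direct limit, and pass the universal property to the limit) is exactly the paper's. The gap is in your elementary fold. Defining it as a quotient of the pocset of halfspaces followed by Sageev's construction does not work, and the failure is most visible in the transverse case. Suppose $A\pitchfork B$ with halfspaces $a,a^{*}$ and $b,b^{*}$, so all four sectors $a\cap b$, $a\cap b^{*}$, $a^{*}\cap b$, $a^{*}\cap b^{*}$ are nonempty. Whatever matching you choose, say $a\sim b$, the principal ultrafilter of a vertex in $a^{*}\cap b$ contains $a^{*}$ and $b$, hence does not make a consistent choice on the class $\{a,b\}$; with the matching $a\sim b^{*}$ the same problem occurs for a vertex in $a\cap b$. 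So the map ``send a principal ultrafilter to its image in the quotient pocset'' is simply undefined on some vertices, and no ``careful local analysis'' of the matching fixes this. The correct merged wall is the one whose halfspaces are $(a\cap b)\cup(a^{*}\cap b^{*})$ and $(a\cap b^{*})\cup(a^{*}\cap b)$ --- i.e.\ sides are determined by the \emph{parity} of the number of elements of $\{A,B\}$ crossed --- and this wall is not obtained by identifying elements of the halfspace pocset. (This is essentially why the pocset folds of Beeker--Lazarovich never identify transverse hyperplanes, whereas the foldings of this paper must.) The same defect already appears, more mildly, in the tangent case: for a path $u-v-w$ with $A,B$ the two hyperplanes, the ultrafilters of $u$ and $w$ do not descend to the quotient pocset either, even though these are precisely the two vertices the fold is supposed to identify.

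The paper avoids this by working at the level of the graph rather than the pocset: it first forms the naive vertex identification $\alpha(p)\leftrightarrow\beta(p)$ for $p\in N(A)\cap N(B)$ (the ``first fold''), proves that the resulting graph is \emph{submedian} (embeds in a hypercube and has simply connected square-completion), and then cubulates the wallspace given by its parallelism classes. The other point you underestimate is the assertion that the universal property ``is then formal'': because the first fold is not median, one must show that every parallel-preserving map from it to a median graph extends uniquely over the cubulation. This is Proposition~\ref{prop:CubulatingSub}, proved by a transfinite induction completing corners to $3$-cubes one at a time, and it is the technical heart of the argument rather than a formality. Your concerns about antisymmetry and halfspaces being identified with their complements are not where the difficulty lies; the difficulty is that the fold of a transverse pair is not a pocset quotient at all, and that extending maps over the cubulation of the (merely submedian) intermediate object requires genuine work.
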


\noindent
Theorem~\ref{Intro:Foldings} characterises the pair $(Z,\zeta)$ uniquely, and the latter will be referred to as the \emph{folding of $X$ relative to $\mathcal{P}$}. A direct and explicit construction can be found in Section~\ref{section:FoldExplicit}. 

\medskip \noindent
In order to prove the theorem, we begin by describing how to fold a single pair of hyperplanes in contact. From a median graph $X$ and two hyperplanes in contact $A,B$, a natural way to fold $A$ and $B$ is to identify any two vertices of $X$ that are only separated by $A$ and $B$. See Figure~\ref{Folding}. However, such a graph may not be median. Nevertheless, a wallspace structure is preserved during the quotient, so we can define the folding of $X$ relative to $\{A,B\}$ by cubulating this wallspace. It turns out that such a folding satisfies a universal property as in Theorem~\ref{Intro:Foldings}, which motivates the idea that this is the ``correct'' way to fold a median graph. For an arbitrary collection $\mathcal{P}$ of pairs of hyperplanes in contact, we fix an enumeration of $\mathcal{P}$ and we fold the pairs successively as just described. The universal property satisfied by a single folding implies that the median graph $M$ thus obtained does not depend on our enumeration. This is the graph of Theorem~\ref{Intro:Foldings}. 
\begin{figure}[h!]
\begin{center}
\includegraphics[width=\linewidth]{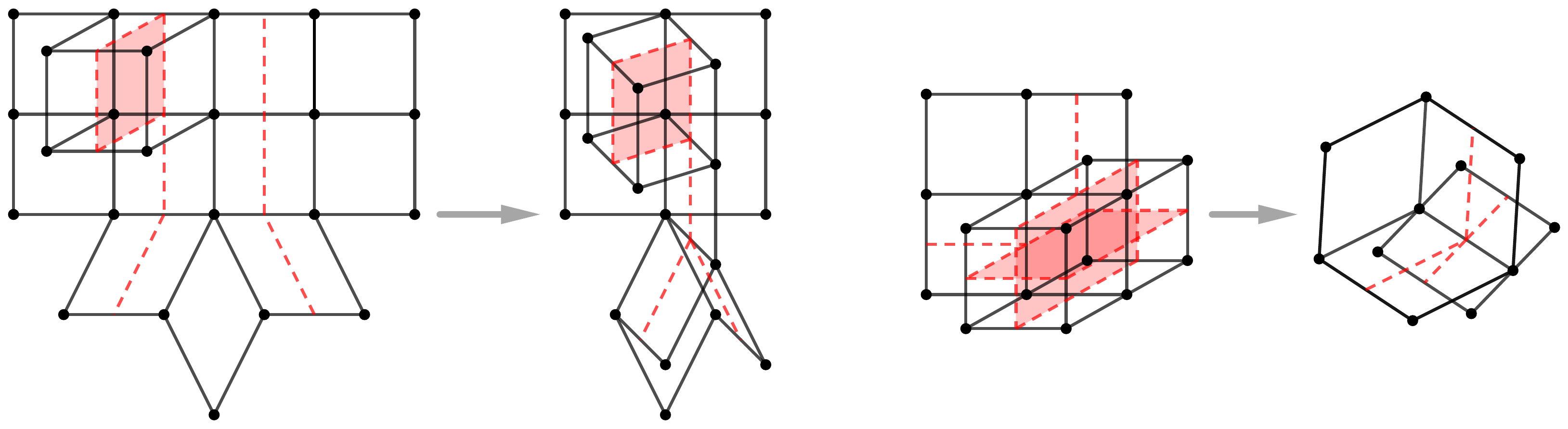}
\caption{Examples of tangent and transverse foldings.}
\label{Folding}
\end{center}
\end{figure}

\medskip \noindent
However, folding hyperplanes in contact is not sufficient in order to transform an arbitrary parallel-preserving map into an isometric embedding. The reason is that, contrary to trees, isometric embeddings between median graphs cannot be recognised locally. Another elementary transformation is required: we need to be able to make two tangent hyperplanes transverse. Again, we show that there is a canonical way to realise this construction.

\begin{thm}\label{Intro:Swelling}
Let $X$ be a countable median graph and $\mathcal{P}$ a collection of pairs of tangent hyperplanes. There exists a median graph $Z$ containing an isometrically embedded copy of $X$ such that the following holds. For every median graph $Y$, every parallel-preserving map $\psi : X \to Y$ satisfying $\psi(A) \pitchfork \psi(B)$ for every $\{A,B\} \in \mathcal{P}$ admits a unique parallel-preserving extension $Z \to Y$.\\
\indent Moreover, $Z$ is, up to isometry, the unique median graph containing an isometric copy of $X$ such that the convex hull of $X$ is $Z$ entirely and such that any two hyperplanes $A,B$ of $Z$ are transverse if and only if they are transverse in $X$ or $\{A,B\} \in \mathcal{P}$. 
\end{thm}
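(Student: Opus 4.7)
\medskip

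\noindent
\textbf{Proof plan.} My strategy is to construct $Z$ by modifying the halfspace pocset of $X$ and then cubulating. Let $\mathcal{H}_X$ denote the pocset of halfspaces of $X$ (with inclusion order and complementation involution). For each $\{A,B\} \in \mathcal{P}$, tangency forces exactly one quadrant $A^\epsilon \cap B^\delta$ to be empty in $X$, which corresponds to the nesting $A^\epsilon \leq B^{-\delta}$ in $\mathcal{H}_X$. I define $\mathcal{H}_Z$ to have the same underlying set of halfspaces and same involution as $\mathcal{H}_X$, but with the nestings $A^\epsilon \leq B^{-\delta}$ and $B^\delta \leq A^{-\epsilon}$ removed for each $\{A,B\} \in \mathcal{P}$. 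The first task is to verify that $\mathcal{H}_Z$ is a valid pocset: the key observation is that since $A$ and $B$ are in contact, no hyperplane of $X$ strictly separates them, so no chain $A^\epsilon < C < B^{-\delta}$ exists in $\mathcal{H}_X$, and the removed direct relations are not reinstated through transitivity.

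\medskip

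\noindent
Next, I define $Z$ as the connected component of $v_{x_0}$ in the cubulation of $\mathcal{H}_Z$, where $v_x$ denotes the principal ultrafilter at $x \in X$ and $x_0$ is any base vertex. Since $\mathcal{H}_Z$ has fewer nestings than $\mathcal{H}_X$, each $v_x$ remains a filter in $\mathcal{H}_Z$, and the assignment $x \mapsto v_x$ yields a parallel-preserving isometric embedding of $X$ into $Z$ whose hyperplanes biject with those of $X$. Transversality in $Z$ is precisely incomparability in $\mathcal{H}_Z$, which by construction matches transversality in $X$ together with $\mathcal{P}$. To confirm that each missing quadrant is actually realized in $Z$, for each $\{A,B\} \in \mathcal{P}$ I choose a corner vertex $x_0 \in A^{-\epsilon} \cap B^{-\delta}$ (existing by tangency) and check that the orientation obtained by flipping $v_{x_0}$ only at $A$ and $B$ is a filter in $\mathcal{H}_Z$: every halfspace in $v_{x_0}$ other than $A^{-\epsilon}, B^{-\delta}$ contains both the across-$A$ and across-$B$ neighbors of $x_0$, hence is not contained in $A^{-\epsilon}$ or $B^{-\delta}$, so no $\mathcal{H}_Z$-nesting is violated. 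Finally, since every proper halfspace of $\mathcal{H}_Z$ is a non-empty proper subset of $V(X)$, no proper halfspace of $Z$ contains all of $X$, and the convex hull of $X$ in $Z$ equals $Z$.

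\medskip

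\noindent
For uniqueness, any median graph $Z'$ with the stated properties has hyperplanes bijecting with those of $X$ (each halfspace of $Z'$ meets $X$ by the convex hull condition) and nestings determined by the prescribed transversality (the missing quadrant fixes the nesting direction), so its pocset agrees with $\mathcal{H}_Z$ and the standard correspondence between pocsets and median graphs yields $Z' \cong Z$. For the extension property, given $\psi : X \to Y$ parallel-preserving with $\psi(A) \pitchfork \psi(B)$ for $\{A,B\} \in \mathcal{P}$, the extension $\hat\psi : Z \to Y$ is forced on each new vertex $v \in Z \setminus X$ by parallel-preservation: the transversality hypothesis ensures that the squares in $Z$ coming from swellings are sent to genuine squares in $Y$, so that the $\hat\psi$-image of each such $v$ is the unique fourth corner of the corresponding square in $Y$. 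More globally, one can describe $\hat\psi$ by sending the filter $v$ in $\mathcal{H}_Z$ to the filter $\psi_*(v)$ in $\mathcal{H}_Y$, where the transversality hypothesis guarantees that $\psi_*(v)$ is a consistent orientation corresponding to a $Y$-vertex. Uniqueness of $\hat\psi$ follows because its action on hyperplanes is prescribed by $\psi$, and a parallel-preserving map is determined by this together with its action on $X$. The main obstacle is verifying that the flipped-orientation construction produces filters in $\mathcal{H}_Z$ when pairs in $\mathcal{P}$ share hyperplanes, and that the inductive extension of $\hat\psi$ to iterated swellings remains consistent.
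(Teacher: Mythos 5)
Your construction is sound and, in substance, coincides with the paper's own explicit description of the swelling: the ultrafilters on your modified pocset $\mathcal{H}_Z$ are exactly the \emph{spots relative to $\mathcal{P}$} of Section~\ref{section:SwellingExplicit} (an orientation can violate consistency only on a pair whose unique empty quadrant it selects, and those are precisely the relations you deleted), so your $Z$ is the graph $M$ of Proposition~\ref{prop:SwellingDirect}. Where you differ is the order of operations: the paper first builds the swelling of a \emph{single} tangent pair concretely by gluing $(N(A)\cap N(B))\times[0,1]^2$, proves the universal property for that single pair (Proposition~\ref{prop:ExtensionTwo}), and obtains the general case by enumerating $\mathcal{P}$ and taking a direct limit, with the spot description derived afterwards. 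Your one-shot pocset construction is cleaner for existence, for the uniqueness of $Z$, and for the transversality characterisation; in particular your observation that no halfspace is strictly nested between $A^\epsilon$ and $B^{-\delta}$ (so that deleting relations preserves transitivity) is correct and is the right key point.

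The genuine gap is the universal extension property, which you largely assert. The pushforward $\psi_*$ on ultrafilters is not well defined as stated: $\psi$ need not be injective on hyperplanes, so a hyperplane of $Y$ may be the image of several hyperplanes of $X$ and could receive conflicting orientations from a non-principal $v$, and hyperplanes of $Y$ outside the image of $\psi$ must also be oriented. The workable route is the local one you mention --- propagate $\hat\psi$ outward from $X$ by completing squares --- but a vertex of $Z\setminus X$ may differ from every principal orientation on arbitrarily many hyperplanes (several pairs of $\mathcal{P}$ flipped simultaneously), so it is not the fourth corner of a single square over $X$. One therefore needs an induction on the number of hyperplanes on which the orientation differs from a principal one, a check that the result is independent of the order in which pairs are flipped, and a verification that the map obtained still sends parallel edges to parallel edges (in particular for the new edges joining two flipped vertices over adjacent corners of $N(A)\cap N(B)$). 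This is precisely the content of Proposition~\ref{prop:ExtensionTwo} and of the iteration in the paper's proof of Theorem~\ref{thm:Swelling}, and it is not optional; you correctly flag it as the main obstacle but do not close it. Uniqueness of the extension, by contrast, is fine as you state it: in a median graph a vertex has a unique neighbour across a given hyperplane, so a parallel-preserving extension is determined by its restriction to $X$ together with its effect on hyperplanes.
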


\noindent
The unique median graph $Z$ given by Theorem~\ref{Intro:Swelling} is referred to as the \emph{swelling of $X$ relative to $\mathcal{P}$}. A direct and explicit construction can be found in Section~\ref{section:SwellingExplicit}. 

\medskip \noindent
In order to prove the theorem, as before, we first explain how to perform our construction on a single pair of hyperplanes, i.e.\ we describe how to make a single pair of tangent hyperplanes transverse. From a median graph $X$ and two tangent hyperplanes $A,B$, a natural construction is to ``add'' the missing $4$-cycles to make $A$ and $B$ transverse. More precisely, we take the intersection $N(A) \cap N(B)$ of the carriers of $A$ and $B$, and we naturally glue a copy of $(N(A) \cap N(B)) \times [0,1]^2$ to $X$. See Figure~\ref{Swelling}. The graph thus obtained is median and satisfies again a universal property as in Theorem~\ref{Intro:Swelling}. For an arbitrary collection $\mathcal{P}$ of pairs of tangent hyperplanes, the same strategy as before works, i.e.\ we fix an enumeration of $\mathcal{P}$ and we swell the pairs successively as just described. The universal property satisfied by a single swelling implies that the median graph $Z$ thus obtained does not depend on our enumeration. This is the graph of Theorem~\ref{Intro:Swelling}. 
\begin{figure}[h!]
\begin{center}
\includegraphics[width=\linewidth]{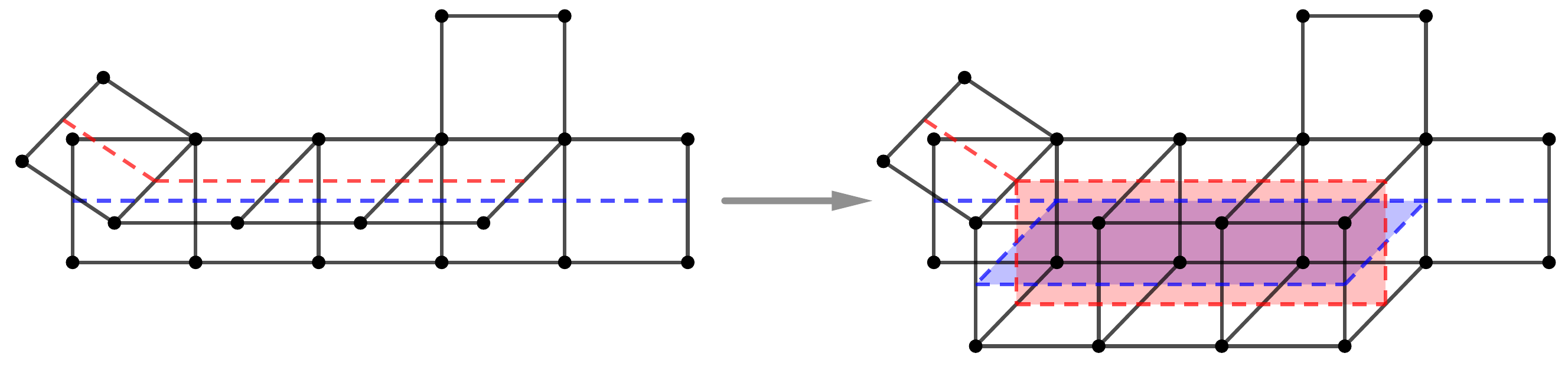}
\caption{Example of a swelling.}
\label{Swelling}
\end{center}
\end{figure}

\medskip \noindent
Finally, we show that foldings and swellings are sufficient to turn any parallel-preserving map between median graphs into an isometric embedding (with convex image).

\begin{thm}\label{Intro:mainthm}
Let $\psi : X \to Y$ be a parallel-preserving map between two median graphs $X,Y$. There exist a (possibly infinite) sequence of foldings and swellings $\eta : X \to \cdots \to Z$ and an isometric embedding (resp.\ with convex image) $\iota : Z \to Y$ such that $\psi = \iota \circ \eta$.  Moreover, $\iota(Z)$ coincides with the median (resp.\ convex) hull of $\psi(X)$ in $Y$. 
\end{thm}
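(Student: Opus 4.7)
The plan is to iteratively resolve the local defects preventing $\psi$ from being an isometric embedding. Call a pair $\{A,B\}$ of distinct hyperplanes of the source a \emph{folding defect} if $A$ and $B$ are in contact and $\psi(A)=\psi(B)$, and a \emph{swelling defect} if they are tangent in the source but $\psi(A)\pitchfork\psi(B)$ in $Y$. The preliminary step I would prove is a local-to-global characterisation: a parallel-preserving map $\phi:Z\to Y$ between median graphs is an isometric embedding with median image if and only if it has no folding and no swelling defect. One direction is immediate; for the converse, these hypotheses make the induced map of cube complexes a local isometry on links, and a Cartan--Hadamard-type argument for nonpositively curved cube complexes then promotes it to a globally isometric embedding whose image is automatically a median subgraph.

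With this characterisation in hand, I would build the factorisation by countable induction. Set $X_0:=X$, $\psi_0:=\psi$, and maintain a queue of folding and swelling defects. At step $n$, pop the next entry and resolve it using Theorem~\ref{Intro:Foldings} or Theorem~\ref{Intro:Swelling}: each gives a parallel-preserving $X_n\to X_{n+1}$, and the corresponding universal property provides a parallel-preserving $\psi_{n+1}:X_{n+1}\to Y$ through which $\psi_n$ factors; any new defect appearing in $\psi_{n+1}$ is enqueued with a first-in-first-out discipline, ensuring every defect is eventually processed. The directed colimit $Z:=\varinjlim X_n$ is again a median graph, equipped with a canonical parallel-preserving map $\iota:Z\to Y$ such that $\psi=\iota\circ\eta$ for $\eta:X\to Z$ the composition of all the foldings and swellings.

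By the bookkeeping, $\iota$ has no remaining folding or swelling defects and, by the preliminary characterisation, is an isometric embedding; its image equals the median hull of $\psi(X)$ because a straightforward induction along the construction shows that every newly introduced vertex is a median in $Y$ of previously present vertices, while foldings stay inside the existing image. For the version with convex image, I would add a third class of defects---pairs of hyperplanes of the current graph not transverse there but whose images are transverse in $Y$---and apply the same iterative scheme; the absence of such pairs characterises convex embedding among isometric embeddings of median graphs, so this yields the convex version.

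The main technical obstacle is the local-to-global characterisation, which must be established in sufficient generality to cover countable and possibly infinite-dimensional median graphs. The rest is a rather formal chase of the universal properties provided by Theorems~\ref{Intro:Foldings} and~\ref{Intro:Swelling}, with the one delicate point being the diagonal bookkeeping that prevents any defect from being postponed indefinitely.
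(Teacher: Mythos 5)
There is a genuine gap in the median-hull version. Your scheme resolves \emph{every} swelling defect, i.e.\ every tangent pair $\{A,B\}$ with $\psi(A)\pitchfork\psi(B)$. Once all folding and swelling defects are gone, your local-to-global lemma (which is essentially Lemma~\ref{lem:NotInjConv}, and whose correct conclusion is an isometric embedding with \emph{convex} image, not merely median image) forces $\iota(Z)$ to be convex, hence equal to the convex hull of $\psi(X)$ --- which in general strictly contains the median hull. Concretely, take $X$ a path of length two mapping onto two adjacent sides of a $4$-cycle $Y$: the median hull of the image is the path itself, but your procedure swells the (tangent, transversely-imaged) pair and outputs the whole $4$-cycle. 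The supporting claim that ``every newly introduced vertex is a median in $Y$ of previously present vertices'' is false for swellings: the fourth corner $\gamma(p)$ of the square over $p,\alpha(p),\beta(p)$ is \emph{not} the median of those three vertices (their median is $p$). It is true for the cubulation step inside a folding (the completing vertex of a corner is the median of the three outer vertices), which is why foldings stay in the median hull, but swellings do not. The paper's proof avoids this by never swelling gratuitously: it fixes the median hull $Y_0$ as the target, enumerates only the pairs $\{A,B\}$ with $\psi(A)=\psi(B)$, and proves (Lemma~\ref{lem:folduniquepair}, by induction on $d(A,B)$) that each such pair can be identified after finitely many \emph{targeted} swellings --- one shows via a geodesic-in-the-carrier argument that the first hyperplane $H_1$ separating $A$ from $B$ satisfies $\psi(H_1)\pitchfork\psi(A)$ inside $Y_0$, swells $\{A,H_1\}$ to decrease $d(A,B)$, and eventually folds. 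Only those swellings are performed, and they land in $Y_0$ because the universal property is invoked with target $Y_0$.

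Two secondary remarks. First, the ``only if'' direction of your characterisation is also false as stated: an isometric embedding with median image can perfectly well have swelling defects (same $4$-cycle example), though you only ever use the ``if'' direction, which is fine and is proved in the paper combinatorially (minimal-distance pairs plus convexity of carriers) rather than via links and Cartan--Hadamard. Second, for the convex version your ``third class of defects'' (non-transverse, non-tangent pairs with transverse images) cannot be resolved directly --- swellings are only defined for tangent pairs, and Remark~\ref{remark:NotCanonicalTangent} shows there is no canonical swelling of separated hyperplanes --- but it is also unnecessary: Lemma~\ref{lem:NotInjConv} shows that the absence of folding defects and of \emph{tangent} swelling defects already forces convexity of the image. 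Your FIFO bookkeeping for the limit is workable (the paper instead enumerates upfront all offending pairs of hyperplanes of $X$ and uses surjectivity of foldings and swellings on hyperplanes to see that no defect survives in the direct limit), but the median-hull clause cannot be repaired without abandoning the ``resolve all swelling defects'' strategy.
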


\noindent
Saying that our parallel-preserving map $\psi : X \to Y$ is not an isometric embedding amounts to saying that there exist two distinct hyperplanes $A,B$ of $X$ satisfying $\psi(A)=\psi(B)$; see Lemma~\ref{lem:WhenIso}. When this happens, we would like to fold $A$ and $B$. However, $A$ and $B$ may not be in contact. Then, the trick is to apply swellings, making $A$ transverse to all the hyperplanes separating $A$ and $B$, in order to make $A$ and $B$ tangent. Thus, it is possible to construct a new median graph $X_1$ from $X$ by swelling and folding hyperplanes in which $A$ and $B$ coincide. By iterating (possibly infinitely many times) this construction, we obtain the median graph from Theorem~\ref{Intro:mainthm}. 

\medskip \noindent
Typically, the image of our isometric embedding $\iota : Z \to Y$ will be median but not convex. However, saying that $\iota(Z)$ is not convex amounts to saying that there exists two tangent hyperplanes $A$ and $B$ in $Z$ such that $\iota(A)$ and $\iota(B)$ are transverse in $Y$; see Lemma~\ref{lem:WhenIso}. Thus, we can make the image of $\iota$ convex by applying further swellings. We emphasize that a map between two median graph is an isometric embedding with convex image if and only if the map induced on the cube-completions is an isometric embedding with respect to the CAT(0) metrics. 

\medskip \noindent
It is worth mentioning that, due to the universal properties satisfied by our foldings and swellings, all our constructions can be made equivariant with respect to group actions. See Sections~\ref{section:FoldingEqui} and~\ref{section:SwellingEqui}, as well as Theorems~\ref{thm:BigFactor} and~\ref{thm:BigFactorTwo}. Under group actions, Theorem~\ref{thm:BF} provides a sufficient condition that guarantees that, in some sense, only finitely many equivariant foldings and swellings are necessary in Theorem~\ref{Intro:mainthm}.

\paragraph{Median-cocompact subgroups.} In this article, we highlight the concept of \emph{(strongly) median-cocompact subgroups}. Namely, given a group $G$ acting on a median graph $X$, a subgroup $H \leq G$ is \emph{median-cocompact} if it acts cocompactly on some connected median-closed subgraph $Y \subset X$. Here, a \emph{median-closed} subgraph $Y \subset X$ refers to a subgraph stable under the median operation, i.e.\ the median point of any three vertices of $Y$ also belongs to $Y$. Notice that, because connected median-closed subgraphs are isometrically embedded, it follows that median-cocompact subgroups in a group acting geometrically on a median graph are undistorted. 

\medskip \noindent
Median-cocompact subgroups encompass \emph{convex-cocompact} subgroups, namely subgroups acting cocompactly on convex subgraphs. They coincide with the convex-cocompact subgroups from the perspective of CAT(0) cube complexes, as subcomplexes are convex with respect to the CAT(0) metric if and only if their one-skeletons are convex with respect to the graph metric. Due to the influence of CAT(0) geometry, the literature focuses on convex-cocompact subgroups, but one can argue that the median-cocompact subgroups provide a more suited family of subgroups, which remains to be studied more deeply. Let us mention that cyclic subgroups are always median-cocompact but may not be convex-cocompact \cite{MR4645691}. Other remarkable examples include some centralisers \cite{MR4218342, MR4574362, EliaOne} and some fixators of group automorphisms \cite{EliaTwo}. Morse subgroups, including quasiconvex subgroups in (relatively) hyperbolic groups \cite{MR2413337, SageevWiseCores}, are examples of convex-cocompact subgroups \cite{MR4057355}, and a fortiori of median-cocompact subgroups. 

\medskip \noindent
As an application of our techniques, we prove the following criterion for median-cocompact subgroups:

\begin{thm}\label{thm:IntroMC}
Let $G$ be a group acting geometrically on a median graph $X$ and let $H \leq G$ be a finitely presented subgroup. Assume that one of the following conditions is satisfied:
\begin{itemize}
	\item $H$ is a locally quasiconvex hyperbolic group;
	\item $X$ has cubical dimension two.
\end{itemize}
Then $H$ is median-cocompact if and only if all the hyperplanes skewered by $H$ have finitely generated $H$-stabilisers.
\end{thm}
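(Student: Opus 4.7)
The forward implication is a standard consequence of cocompact actions on median graphs. Assume $H$ acts cocompactly on a connected median-closed subgraph $Y \subseteq X$; the restricted $H$-action on $Y$ is automatically geometric, since $H$ already acts properly on all of $X$. Because median-closed subgraphs are isometrically embedded, any hyperplane $J$ of $X$ skewered by some $h \in H$ restricts to a hyperplane $J'$ of $Y$: the $h$-orbit of any vertex in $Y$ eventually crosses $J$, so $J$ separates two vertices of $Y$. One then checks that the $H$-stabiliser of $J$ coincides with the $H$-stabiliser of $J'$, which acts cocompactly on the carrier of $J'$ under the geometric action on $Y$; hence it is finitely generated.

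For the converse, the plan is to apply the equivariant factorisation of Theorem~\ref{Intro:mainthm}, refined as in Theorems~\ref{thm:BigFactor}, \ref{thm:BigFactorTwo}, and~\ref{thm:BF}. The first step is to construct, from the finite presentation of $H$, a median graph $Z_0$ equipped with a geometric $H$-action and an $H$-equivariant parallel-preserving map $\psi_0 : Z_0 \to X$; this can be done using a finite presentation $2$-complex for $H$, realised inside $X$ by sending generators to edges (after possibly enlarging the generating set) and filling the relator loops cubically using that $X$ is simply connected and median. Applying the equivariant main theorem then factors $\psi_0 = \iota \circ \eta$, where $\eta : Z_0 \to Z$ is an $H$-equivariant sequence of foldings and swellings and $\iota : Z \to X$ is an isometric embedding whose image is the median hull of $\psi_0(Z_0)$.

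The heart of the argument is to show that, under either hypothesis, $H$ still acts cocompactly on $Z$, so that $\iota(Z) \subseteq X$ is the desired $H$-cocompact median-closed subgraph. The assumption that every hyperplane of $X$ skewered by $H$ has finitely generated $H$-stabiliser is precisely what controls which pairs of hyperplanes get identified or made transverse during the process, and should suffice to verify the hypothesis of Theorem~\ref{thm:BF} guaranteeing that only finitely many equivariant foldings and swellings occur up to $H$-translation. The two auxiliary assumptions enter through separate arguments: when $H$ is locally quasiconvex hyperbolic, every finitely generated subgroup is quasiconvex, hyperbolic, and finitely presented, supporting an induction through which intermediate hyperplane stabilisers remain finitely generated at every stage; when $X$ has cubical dimension two, the combinatorics of squares is restrictive enough that swellings and foldings preserve finite generation of stabilisers by more elementary low-dimensional means.

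The main obstacle will be precisely this verification of cocompactness for $Z$ in each of the two cases. Tracking the hyperplane-stabilisers through a potentially infinite sequence of foldings and swellings requires that finite generation be preserved at every stage, which is not automatic without extra structural assumptions on $H$ or on $X$. Both special hypotheses are tailored to provide this preservation, and carrying out the two inductions independently will constitute the technical core of the proof.
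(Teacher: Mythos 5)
Your overall architecture (build a median graph from a presentation $2$-complex of $H$, map it $H$-equivariantly to $X$, factor through equivariant foldings and swellings, and check that cocompactness survives) matches the paper's, but there is a genuine gap at the very first step, and it is exactly where the two auxiliary hypotheses must actually be spent. The median graph you build from the presentation complex is a \emph{resolution} of $H \curvearrowright X$: tracks in the squared-up presentation complex give a wallspace, which you cubulate. You assert that this $Z_0$ carries a \emph{geometric} $H$-action for free; it does not. Properness is fine, but coboundedness of a resolution fails in general, and this --- not the behaviour of the subsequent foldings and swellings --- is the point at which the paper invokes the two hypotheses. In the locally quasiconvex hyperbolic case, one uses that resolutions always have finitely generated hyperplane-stabilisers, hence quasiconvex ones by hypothesis, and that a cubulation with respect to finitely many quasiconvex subgroups of a hyperbolic group is automatically cocompact; in the dimension-two case one quotes the theorem that every resolution of an action on a two-dimensional median graph is cobounded. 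Your proposal instead spends the hypotheses on keeping ``intermediate hyperplane stabilisers finitely generated at every stage'' of the folding sequence, which is not where the difficulty lies: once the resolution is cobounded, Theorem~\ref{thm:BF} gives a \emph{finite} sequence of $H$-atomic foldings and swellings (its hypotheses being finitely many orbits of hyperplanes in the source and finitely generated hyperplane-stabilisers in the target), and Propositions~\ref{prop:CocompactSwelling} and~\ref{prop:CocompactFolding} show that each such step preserves coboundedness uniformly, with no case distinction between the two hypotheses.

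A second, smaller omission: the hypothesis only concerns hyperplanes \emph{skewered} by $H$, whereas Theorem~\ref{thm:BF} needs all hyperplane-stabilisers of the target to be finitely generated. You must first replace $X$ by an $H$-invariant convex subgraph all of whose hyperplanes are skewered by $H$ (the essential core of the convex hull of an $H$-orbit, as in Fact~\ref{fact:Skewer}) and run the whole argument with that subgraph as the target. Your forward implication is essentially the paper's (which it leaves as ``clear'') and is fine.
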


\noindent
Recall that a hyperbolic group is \emph{locally quasiconvex} if all its finitely generated subgroups are quasiconvex. This includes for instance free groups and hyperbolic surface groups, but also many two-dimensional hyperbolic groups.

\paragraph{Comparison with previous works.} A few works have been already dedicated to folds in median graphs from other perspectives \cite{BrownPhD, beeker2018stallings, dani2021subgroups, ben2022folding}. They will be described in details in Section~\ref{section:comparison}. In a nutshell, our goal in this paper is to decompose every parallel-preserving map between median graphs as an isometric embedding (with convex image) composed with a sequence of intuitive elementary operations that preserve the median geometry, the construction being equivariant with respect to arbitrary group actions. This contrasts with \cite{ben2022folding}, based on nonpositively curved cube complexes, which only deals with free actions and only provides isometric embeddings with convex images. And this contrasts with \cite{beeker2018stallings}, based on pocsets, which only provides isometric embeddings and which is, in our opinion, more technical and consequently less intuitive. We refer to Section~\ref{section:comparison} for a more detailed analysis.

\paragraph{Organisation of the paper.} The paper is organised as follows. In Section~\ref{section:Preliminaries}, we recall some basic properties regarding median graphs, parallel-preserving maps, and isometric embeddings between them. Theorems~\ref{Intro:Foldings} and~\ref{Intro:Swelling} are proved respectively in Sections~\ref{section:foldings} and~\ref{section:Swellings}. Section~\ref{section:isoembedding} is dedicated to Theorem~\ref{Intro:mainthm}, as well as a variation where we control the number of foldings and swellings applied; see Theorem~\ref{thm:BF}. Median-cocompact subgroups are introduced in Section~\ref{section:MedianCocompact}, where Theorem~\ref{thm:IntroMC} is proved. Finally, Section~\ref{section:comparison} is dedicated to a detailed comparison between the constructions introduced in this article and other constructions available in the literature.

\paragraph{Acknowledgements.} We are grateful to Sami Douba and Elia Fioravanti for their comments on a previous version of this article.

\section{Preliminaries}\label{section:Preliminaries}

\noindent
In this section, we record basic definitions and properties regarding median graphs and related concepts.

\paragraph{Parallelism.} First, we introduce the notion of \emph{parallel} edges in arbitrary graphs. This is especially relevant for median graphs, but parallelism will be also used for submedian graphs in Section~\ref{section:SubmedianGraph}. 

\begin{definition}
Let $X$ be a graph. Two edges $a,b \subset X$ are \emph{parallel} if there exists a sequence of edges
$$e_0=a, \ e_1, \ldots, \ e_{n-1}, \ e_n=b$$
such that $e_i,e_{i+1}$ are opposite edges in a $4$-cycle for every $0 \leq i \leq n-1$. A \emph{hyperplane} is a parallelism class of edges. Given a hyperplane $J$, we denote by $X \backslash \backslash J$ the graph obtained from $X$ by removing the (interiors of the) edges in $J$. The connected components of $X \backslash \backslash J$ are the \emph{halfspaces delimited by $J$}. The subgraph $N(J)$ induced by $J$ is the \emph{carrier of $J$}, and the connected components of $N(J) \cap X \backslash \backslash J$ are its \emph{fibers}.
\end{definition}

\noindent The following definition describes the possible intersections between hyperplanes in a graph.

\begin{definition}
Let $X$ be a graph. Two hyperplanes $J,K$ are \emph{transverse}, denoted by $J \pitchfork K$, if there exist two intersecting edges $e \in J$ and $f \in K$ that span a $4$-cycle. They are \emph{tangent} if there exist two intersecting edges $e \in J$ and $f \in K$ that do not span a $4$-cycle. Two hyperplanes that are either transverse or tangent are \emph{in contact}. 
\end{definition}

\noindent 
We now turn to the definition of maps between graphs that are relevant for us in this article, namely the \emph{parallel-preserving maps}. 

\begin{definition}
A map $\psi : X \to Y$ between two graphs $X,Y$ is \emph{parallel-preserving} if it sends vertices to vertices, edges to edges, and parallel edges to parallel edges.
\end{definition}

\noindent
Given a parallel-preserving map $\psi : X \to Y$ and a hyperplane $J$, we will denote by $\psi(J)$ the unique hyperplane of $Y$ containing the images under $\psi$ of the edges of $J$. This is well-defined because $\psi$ is parallel-preserving. However, we emphasize that not all the edges of $\psi(J)$ necessarily belong to the image of $\psi$.

\paragraph{Median geometry.} Recall that a connected graph $X$ is \emph{median} if, for all vertices $x_1,x_2,x_3 \in X$, there exists a unique vertex $m \in X$, referred to as the \emph{median}, satisfying
$$d(x_i,x_j)= d(x_i,m)+d(m,x_j) \text{ for all } i \neq j.$$
Products of trees and hypercubes (see Definition~\ref{def:Hypercube}) are typical examples of median graphs. It has been observed independently by various people that median graphs coincide with one-skeletons of CAT(0) cube complexes \cite{mediangraphs, MR1663779, Roller}. More precisely, the one-skeleton of a CAT(0) cube complex is a median graph, and, conversely, the cube complex obtained from a median graph by filling with cubes all the subgraphs isomorphic to one-skeletons of cubes is CAT(0). 

\medskip \noindent
Median graphs can be characterised by many criteria. The following one is especially useful (and essentially contained in \cite[Theorem~6.1]{mediangraphs}, even though not explicitly stated):

\begin{thm}\label{thm:MedianCriterion}
A connected graph is median if and only if
\begin{itemize}
	\item it does not contain the bipartite complete graph $K_{2,3}$ as a subgraph;
	\item it satisfies the $3$-cube condition (i.e.\ given a vertex $o$ and three neighbours $a,b,c$, if the edges $[o,a],[o,b],[o,c]$ pairwise span a $4$-cycle, then they globally span a $3$-cube);
	\item and its square-completion (i.e.\ the square complex obtained by filling with squares all the $4$-cycles) is simply connected.
\end{itemize}
\end{thm}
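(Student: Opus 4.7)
The plan is to establish the two implications separately, with the converse being the substantive direction. For the forward direction, each condition follows from the uniqueness of medians. A copy of $K_{2,3}$ with parts $\{a,b\}$ and $\{x,y,z\}$ would give two distinct medians $a$ and $b$ of the triple $\{x,y,z\}$, so $K_{2,3}$ cannot embed. For the $3$-cube condition, given $o$ with three neighbours $a, b, c$ whose edges pairwise span $4$-cycles, let $u, v, w$ denote the vertices opposite to $o$ in the three $4$-cycles (so $u$ is adjacent to $a$ and $b$, and similarly for $v, w$), and set $t := m(u, v, w)$; using the metric definition of the median together with the bipartite structure of a median graph, I would check that $d(o, t) = 3$ and that $\{o, a, b, c, u, v, w, t\}$ induces a $3$-cube. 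Simple connectedness of the square-completion is standard, since it coincides with the $2$-skeleton of the associated CAT(0) cube complex, which is contractible.

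For the converse, I would follow the strategy of Chepoi \cite{mediangraphs}. The idea is to construct the cube-completion $\widehat X$ by gluing, for each $k$, a $k$-cube onto every subgraph of $X$ isomorphic to the $1$-skeleton of a $k$-cube; the $3$-cube condition handles $k = 3$ and a straightforward induction extends this to all $k$. The critical ingredient is the \emph{flag property}: whenever $k$ edges at a common vertex pairwise span $4$-cycles, they globally span a $k$-cube. Once this is established, the link of every vertex in $\widehat X$ is a flag simplicial complex, hence $\widehat X$ satisfies Gromov's link condition and is nonpositively curved. Combined with the simple connectedness of the square-completion, this upgrades $\widehat X$ to a CAT(0) cube complex, and it is then classical \cite{mediangraphs, Roller} that its $1$-skeleton $X$ is median.

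The main obstacle is the inductive proof of the flag property beyond dimension three. The base case $k = 3$ is precisely the second hypothesis. For $k \geq 4$, given edges $e_1, \ldots, e_k$ at a common vertex $o$ pairwise spanning $4$-cycles, the inductive hypothesis applied to two subfamilies of size $k-1$ yields two $(k-1)$-cubes sharing a common $(k-2)$-face; the exclusion of $K_{2,3}$ is exactly what prevents the degenerate identifications of vertices at distance $2$ from $o$ that would otherwise obstruct these faces from extending to a single $k$-cube. Completing the induction requires careful bookkeeping of which such vertices coincide, but once the correct setup is in place the combinatorial verification is routine.
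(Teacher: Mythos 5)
The paper does not actually prove this statement---it is attributed to Chepoi's \cite{mediangraphs} with no argument given---so your proposal can only be judged on its own terms. Your overall strategy (forward direction from uniqueness of medians; converse by building the cube-completion, verifying Gromov's link condition, invoking Cartan--Hadamard, and citing the equivalence between median graphs and one-skeletons of CAT(0) cube complexes) is the standard and correct route, and your forward direction is fine: the $K_{2,3}$ and $3$-cube arguments you sketch go through using bipartiteness and uniqueness of medians exactly as you describe.

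There is, however, a genuine gap in the inductive step for the flag property, which is the crux of the converse. You assert that the inductive hypothesis applied to two subfamilies of size $k-1$ yields two $(k-1)$-cubes sharing a $(k-2)$-face, and that excluding $K_{2,3}$ then lets these extend to a $k$-cube. But the union of those two $(k-1)$-cubes does not contain any candidate for the top vertex of the $k$-cube (for $k=4$ it has only $12$ of the $16$ vertices, and in particular no vertex at distance $4$ from $o$), and $K_{2,3}$-freeness can only force existing vertices to coincide---it can never produce a new one. The missing ingredient is a second application of the $3$-cube condition at vertices \emph{other than} $o$: after assembling all $k$ of the $(k-1)$-subcubes (which do glue coherently, since $K_{2,3}$-freeness makes the square on two given edges at a vertex unique, hence makes each subcube unique), one looks at a codimension-$3$ vertex $v_S$ of the partial cube; its three upward edges pairwise span squares lying in the already-built faces, so the $3$-cube condition at $v_S$ produces a candidate apex adjacent to three of the codimension-$1$ vertices. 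Only now does $K_{2,3}$-freeness enter in the way you intend: any two such apexes share two codimension-$1$ neighbours, which already have a common neighbour one level down, so distinct apexes would create a $K_{2,3}$; hence all apexes coincide and give the top vertex. (For $k=4$: the $3$-cube condition at $a_1$ gives a vertex adjacent to $a_{123},a_{124},a_{134}$, the one at $a_2$ gives a vertex adjacent to $a_{123},a_{124},a_{234}$, and these must be equal since otherwise they would form a $K_{2,3}$ together with $a_{12}$ over $\{a_{123},a_{124}\}$.) With this correction the induction closes and the rest of your argument is sound; as written, though, the step that creates the top vertex is absent rather than merely routine.
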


\noindent
Notice that the bipartite complete graph $K_{2,3}$ can be visualise as two $4$-cycles with two consecutive edges identified. In it, the two vertices of degree $3$ are median points for the three vertices of degree $2$, which is why $K_{2,3}$ is not median and more generally cannot be a subgraph of a median graph.

\medskip \noindent
Typically, the geometry of a median graph is encoded in the combinatorics of its hyperplanes. The following statement justifies this claim.

\begin{thm}[\cite{MR1347406}]\label{thm:BigMedian}
Let $X$ be a median graph. The following assertions hold.
\begin{itemize}
	\item Every hyperplane delimits exactly two halfspaces. Moreover, halfspaces, carriers, and fibers are convex.
	\item Given a hyperplane $J$ and one of its fibers $F$, there is a graph isomorphism $F \times [0,1] \to N(J)$ sending $F \times \{0\}$ to $F$ and the edges $\{\mathrm{pt}\} \times [0,1]$ to the edges of $J$. The isometry $N(J) \to N(J)$ induced by the involution of $F \times [0,1]$ defined by $(f,0) \leftrightarrow (f,1)$ is the \emph{canonical involution of $N(J)$}. 
	\item A path in $X$ is geodesic if and only if it crosses each hyperplane at most once.
	\item The distance between two vertices coincides with the number of hyperplanes separating them.
\end{itemize}
\end{thm}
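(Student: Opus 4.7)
The plan is to deduce the four claims from the characterisation of Theorem~\ref{thm:MedianCriterion}, establishing them in an intertwined order: bipartiteness and the two-sidedness of hyperplanes first, then the convexity of halfspaces via the key lemma that geodesics cross each hyperplane at most once, and finally the product structure of carriers using the $K_{2,3}$-exclusion.

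First I would observe that $X$ is bipartite: since the square completion is simply connected with $2$-cells of length $4$, every closed edge path has even length. Fix an edge $e=\{u,v\}$ and set $H_u(e) = \{x : d(x,u) < d(x,v)\}$ and $H_v(e) = \{x : d(x,v) < d(x,u)\}$; bipartiteness forces these sets to partition $V(X)$. A square-by-square induction, using the $3$-cube condition and the absence of $K_{2,3}$, shows that if $\{u,v\}$ and $\{u',v'\}$ are opposite edges of a $4$-cycle (with $u$ adjacent to $u'$), then $H_u(e) = H_{u'}(e')$. Iterating along a sequence of $4$-cycles witnessing parallelism, the halfspace $H_u(e)$ depends only on the oriented hyperplane of $e$, and the hyperplane containing $e$ is exactly the set of edges between $H_u(e)$ and $H_v(e)$. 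This already yields the two-halfspaces part of~(i) and implies that parallel edges cannot share a vertex.

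The central step is to prove that every geodesic crosses each hyperplane at most once, from which items~(iii) and~(iv) (and the convexity part of~(i)) will follow. I would argue by a minimal counterexample: among all geodesics crossing some hyperplane twice, pick one $\gamma = x_0, x_1, \dots, x_n$ of shortest length, necessarily crossing its hyperplane $J$ only at the two extreme edges. Then $x_1$ and $x_{n-1}$ lie on the same side of $J$. Using parallelism of the first and last edges of $\gamma$, I would propagate a $4$-cycle adjacent to $e_1 = \{x_0,x_1\}$ across $\gamma$ by successive square-exchanges, each legal thanks to the $3$-cube condition and $K_{2,3}$-exclusion. This produces either a strictly shorter geodesic still crossing $J$ twice or a violation of the halfspace partition above, both contradicting minimality. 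Once geodesics cross each hyperplane at most once, the number of hyperplanes separating $x$ and $y$ is a lower bound for $d(x,y)$, realised by any geodesic since each of its edges lies in a distinct separating hyperplane; this gives~(iii) and~(iv), and halfspaces are then convex since a path leaving and re-entering a halfspace would cross its boundary hyperplane twice.

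To obtain the product structure $N(J) \simeq F \times [0,1]$ in item~(ii), I would fix a fiber $F$ and define $\phi : F \to X$ by sending $v \in F$ to its neighbour across the unique edge of $J$ incident to $v$. Uniqueness of this edge is immediate from the previous paragraph: two such edges would be parallel and share the vertex $v$, contradicting the two-sidedness already established. Then $\phi$ is injective by bipartiteness and surjective onto the opposite fiber by the definition of parallelism; it preserves adjacency because adjacent vertices of $F$ together with their images form $4$-cycles (these being precisely the squares witnessing parallelism of the two incident edges of $J$). This gives the isomorphism $F \times [0,1] \to N(J)$ and the canonical involution. Convexity of $N(J)$ and of $F$ follows from~(iii): leaving either set along a geodesic would force crossing a boundary hyperplane twice. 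The main obstacle will be the square-exchange argument of the third paragraph --- turning the parallelism of $e_1$ and $e_n$ into a local modification of $\gamma$ reducing its length; this is the point at which simple connectedness of the square completion, together with the $3$-cube and $K_{2,3}$ conditions, must all be simultaneously invoked to guarantee that each local move is well-defined.
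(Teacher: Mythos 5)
First, a point of reference: the paper does not prove this statement at all --- it is imported wholesale from \cite{MR1347406} (see also Chepoi's work), so there is no internal proof to compare yours against. Your outline follows the classical Djokovi\'c/partial-cube route, which is indeed how these facts are established in the literature, and most of your deductions are sound \emph{given} your second paragraph. The problem is that essentially all of the content of the theorem is concentrated in that second paragraph, and there it is asserted rather than proved. Two distinct claims are hiding there: (a) the metric partition $\{H_u(e),H_v(e)\}$ is invariant when one passes to the opposite edge of a $4$-cycle, and (b) \emph{every} edge joining $H_u(e)$ to $H_v(e)$ is parallel to $e$. You justify (a) only by the phrase ``a square-by-square induction, using the $3$-cube condition and the absence of $K_{2,3}$,'' but this is false in general bipartite graphs satisfying those local conditions plus simple connectivity of the square completion is exactly the full strength of Theorem~\ref{thm:MedianCriterion}, so invoking it here is not a routine induction --- it is the theorem. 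The clean proof of (a) uses the median property directly (e.g.\ via convexity of $W_{uv}=\{x: d(x,u)<d(x,v)\}$, proved with median points), which your argument never touches. Claim (b) you state as a by-product of ``iterating along a sequence of $4$-cycles,'' but iteration only gives the inclusion of the parallelism class into the set of edges between the halfspaces; the reverse inclusion is a separate argument, and it is load-bearing: without it you cannot conclude that $X\backslash\backslash J$ has exactly two connected components (the paper's definition of ``delimits exactly two halfspaces''), nor that a geodesic between two vertices of $H_u(e)$ stays in $H_u(e)$.

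A secondary remark: the step you flag as the main obstacle --- the minimal-counterexample and square-exchange argument of your third paragraph --- is actually unnecessary once (a) is in place. If $e_i=[x_{i-1},x_i]$ and $e_j=[x_{j-1},x_j]$ are two edges of a geodesic lying in the same parallelism class with $i<j$, then $d(x_{i-1},x_{j-1})=j-i<j-i+1=d(x_{i-1},x_j)$ and $d(x_i,x_{j-1})=j-i-1<j-i=d(x_i,x_j)$, so $x_{i-1}$ and $x_i$ lie on the same side of the cut determined by $e_j$, while they lie on opposite sides of the cut determined by $e_i$; since the two cuts coincide by (a), this is already a contradiction, with no surgery on the path required. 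So the structure of your write-up misplaces the difficulty: the square-exchange machinery can be deleted, and the entire burden falls on establishing (a) and (b) from the median property, which is the one thing the proposal does not actually do.
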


\noindent 
The following lemma characterises embeddings (with convex images) between median graphs. It is the key observation that motivates our construction of folds.

\begin{lemma}\label{lem:WhenIso}
Let $X,Y$ be two median graphs and $\psi : X \to Y$ a parallel-preserving map. The following assertions hold.
\begin{itemize}
	\item $\psi$ is an isometric embedding if and only if $\psi(A) \neq \psi(B)$ for any two distinct hyperplanes $A,B$.
	\item $\psi$ is an isometric embedding with convex image if and only if, for all distinct hyperplanes $A,B$, $\psi(A) \neq \psi(B)$ and $\psi(A) \pitchfork \psi(B)$ if and only if $A \pitchfork B$.
	\item $\psi$ is an isometry if and only if $\psi$ induces a bijection from the hyperplanes of $X$ to the hyperplanes of $Y$ sending non-transverse hyperplanes to non-transverse hyperplanes.
\end{itemize}
\end{lemma}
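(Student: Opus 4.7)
The plan is to leverage the fundamental identity from Theorem~\ref{thm:BigMedian}, namely that the distance between two vertices of a median graph equals the number of hyperplanes separating them. This reduces every distance computation to counting hyperplanes. The starting observation is that any parallel-preserving map $\psi : X \to Y$ controls these counts: if $A_1, \ldots, A_k$ denote the hyperplanes of $X$ separating $x$ from $y$, then the image of an $X$-geodesic from $x$ to $y$ is a path in $Y$ whose edges all lie on hyperplanes in $\{\psi(A_1), \ldots, \psi(A_k)\}$. Every hyperplane of $Y$ separating $\psi(x)$ from $\psi(y)$ must therefore belong to this set, so
\[ d_Y(\psi(x),\psi(y)) \;\leq\; |\{\psi(A_1),\ldots,\psi(A_k)\}| \;\leq\; d_X(x,y). \]
Both inequalities are sensitive to the hyperplane-level behavior of $\psi$, which is precisely what the lemma characterizes.

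For assertion (1), I first record that any two distinct hyperplanes $A, B$ of a median graph are jointly separated by some pair of vertices. Indeed, the four convex quadrants $A^{\pm} \cap B^{\pm}$ cover $X$, two diagonally opposite ones cannot both be empty (otherwise the halfspaces would coincide, forcing $A = B$), and two adjacent empty ones would empty a halfspace; hence at most one quadrant is empty, and any diagonal pair of non-empty quadrants supplies the required vertices. If $\psi$ is an isometric embedding but $\psi(A) = \psi(B)$ for some $A \neq B$, then choosing such $x,y$ makes the second inequality strict, contradicting the isometry. Conversely, if $\psi$ is injective on hyperplanes, then the image of any $X$-geodesic from $x$ to $y$ crosses each $\psi(A_i)$ exactly once (only edges of $A_i$ can land in $\psi(A_i)$), so each $\psi(A_i)$ separates $\psi(x)$ from $\psi(y)$; this produces $k$ distinct separating hyperplanes, forcing equality throughout.

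For assertion (2), the forward implication $A \pitchfork B \Rightarrow \psi(A) \pitchfork \psi(B)$ follows from the observation that a witnessing $4$-cycle in $X$ maps under the isometric embedding to a non-degenerate $4$-cycle in $Y$ whose edges alternate on $\psi(A)$ and $\psi(B)$. For the converse, I exploit the gate projection onto the convex subgraph $\psi(X)$, which preserves halfspaces with respect to every hyperplane crossing $\psi(X)$; applied to points in the four non-empty quadrants of $\psi(A), \psi(B)$ in $Y$, it retracts each quadrant to a non-empty intersection with $\psi(X)$, yielding four non-empty quadrants in $X$ and hence $A \pitchfork B$. The main obstacle lies in the reverse direction of (2). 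Assuming $\psi$ is an isometric embedding with transversality preserved in both directions but $\psi(X)$ is not convex, I will extract a minimal counterexample, namely vertices $u, v \in \psi(X)$ at distance $2$ admitting a $4$-cycle $u, w, v, w^*$ in $Y$ with $w \in \psi(X)$ and $w^* \notin \psi(X)$. The two hyperplanes $\psi(A), \psi(B)$ hosting this cycle are transverse in $Y$; the hypothesis gives $A \pitchfork B$ in $X$, and the lifted $4$-cycle then forces $w^* \in \psi(X)$, a contradiction.

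For assertion (3), the forward direction is immediate from (2) together with the surjectivity of $\psi$ on edges. For the converse, I first upgrade the hypothesis: a bijection on hyperplanes automatically sends transverse pairs to transverse pairs, since a witnessing $4$-cycle in $X$ cannot collapse under $\psi$ (if two opposite images coincided, the two incident edges would be identified while lying on the distinct hyperplanes $\psi(A) \neq \psi(B)$, absurd). Combined with the hypothesis, this yields the biconditional of (2), so $\psi$ is an isometric embedding with convex image. Surjectivity then follows from the classical fact that any proper convex subgraph of a median graph misses at least one hyperplane (being a proper intersection of halfspaces, at least one corresponding hyperplane fails to cross it); since every hyperplane of $Y$ is already $\psi(A)$ for some $A$ by hypothesis, one must have $\psi(X) = Y$, and $\psi$ is an isometry.
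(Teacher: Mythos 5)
Your proof is correct and follows essentially the same route as the paper's: item (1) via hyperplane counting and the geodesic criterion of Theorem~\ref{thm:BigMedian}, item (2) via the behaviour of $4$-cycles under a parallel-preserving map plus a local-to-global convexity criterion, and item (3) by reduction to (2) together with the fact that a proper convex subgraph of $Y$ misses some hyperplane. Two intermediate steps are handled with different (equally standard) tools: where the paper uses the Helly property of carriers to show that an isometric embedding with convex image preserves non-transversality, you use the gate projection onto $\psi(X)$ together with the characterisation of transversality by the non-emptiness of all four quadrants --- this is valid, but that characterisation is not the paper's definition of transversality and itself rests on the same unstated fact (two intersecting edges on transverse hyperplanes span a square) that both you and the paper invoke elsewhere. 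Finally, your ``minimal counterexample'' reduction of non-convexity to a single $4$-cycle $u,w,v,w^{*}$ with three vertices in $\psi(X)$ is precisely the local convexity criterion that the paper outsources to \cite{Chepoi}; as written you assert rather than prove that a minimal violation of convexity takes this local form, so you should either supply that argument or cite it as the paper does.
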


\begin{proof}
Let $a,b \in X$ be two vertices. Let $\gamma$ be a geodesic between $a$ and $b$. Let $J_1, \ldots, J_n$ denote the hyperplanes successively crossed. Then $\psi(\gamma)$ yields a path between $\psi(a)$ and $\psi(b)$ in $Y$ crossing successively the hyperplanes $\psi(J_1), \ldots, \psi(J_n)$. If $\psi$ sends distinct hyperplanes to distinct hyperplanes, then $\psi(J_1),\ldots, \psi(J_n)$ are pairwise distinct, which implies that $\psi(\gamma)$ is a geodesic according to Theorem~\ref{thm:BigMedian}. Hence $d(\psi(a),\psi(b))=d(a,b)$. We then conclude that $\psi$ is an isometric embedding. Conversely, if $\psi$ is an isometric embedding, then $\psi(\gamma)$ must be a geodesic and the hyperplanes $\psi(J_1),\ldots, \psi(J_n)$ must be pairwise distinct. Because this has to be true for all $a,b \in X$ and that any two hyperplanes always separate two vertices, this implies that $\psi$ sends distinct hyperplanes to distinct hyperplanes. Thus, the first item of our lemma is proved.

\medskip \noindent
Now, assume that $\psi$ is an isometric embedding with convex image. We already know from the previous item that $\psi(A) \neq \psi(B)$ for any two distinct hyperplanes $A,B$. Let $A,B$ be two hyperplanes. If $A$ and $B$ are transverse, then there exists a $4$-cycle whose edges belong to $A$ and $B$. Because a parallel-preserving map has to send a $4$-cycle either to a single edge or to another $4$-cycle, it follows that either $\psi(A)=\psi(B)$ or $\psi(A) \pitchfork \psi(B)$. The former being impossible, we know that $\psi(A)$ and $\psi(B)$ must be transverse. Assuming that $A$ and $B$ are not transverse, we want to prove that $\psi(A)$ and $\psi(B)$ are not transverse either. If $N(\psi(A))$ and $N(\psi(B))$ are disjoint, there is nothing to prove. Otherwise, as a consequence of the Helly property for convex subgraphs, we know that $N(\psi(A))$ and $N(\psi(B))$ intersect in the image of $\psi$. Consequently, $N(A)$ and $N(B)$ intersect in $X$. So we can fix two intersecting edges $a \in A$ and $b \in B$ such that $\psi(a)$ and $\psi(b)$ intersect. If $\psi(a)$ and $\psi(b)$ span a $4$-cycle, then the convexity of the image of $\psi$ imposes that this $4$-cycle lies in the image of $\psi$. But $a$ and $b$ do not span a $4$-cycle, so this is not possible. Consequently, $\psi(a)$ and $\psi(b)$ do not span a $4$-cycle, which implies that $\psi(A)$ and $\psi(B)$ are not transverse.

\medskip \noindent
Conversely, assume that, for all distinct hyperplanes $A,B$, $\psi(A) \neq \psi(B)$ and $\psi(A) \pitchfork \psi(B)$ if and only if $A \pitchfork B$. We already know from the first item that $\psi$ is an isometric embedding. According to \cite{Chepoi}, in order to show that $\psi(X)$ is convex, it suffices to show that, if $a,b \subset \psi(X)$ are two intersecting edges spanning a $4$-cycle in $Y$, then they span a $4$-cycle in $\psi(X)$. Let $x,y \subset X$ be two edges such that $a=\psi(x)$ and $b=\psi(y)$. Because $a,b$ span a $4$-cycle in $Y$, the hyperplanes containing them are transverse. It follows from our assumption that the hyperplanes containing $x,y$ are transverse, which implies that $x,y$ span a $4$-cycle. Taking the image of this $4$-cycle under $\psi$ shows that $a,b$ span a $4$-cycle in $\psi(X)$, as desired. This concludes the proof of the second item of our lemma.

\medskip \noindent
Finally, assume that $\psi$ induces a bijection from the hyperplanes of $X$ to the hyperplanes of $Y$ that preserves non-transversality. Notice that $\psi$ also preserves transversality. Indeed, let $A,B$ be two transverse hyperplanes in $X$. So there exists a $4$-cycle whose edges belong to $A,B$. Because a parallel-preserving map always sends a $4$-cycle either to a single edge or to another $4$-cycle, it follows from $\psi(A) \neq \psi(B)$ that $\psi$ sends our $4$-cycle to a $4$-cycle, proving that $\psi(A)$ and $\psi(B)$ are transverse. It follows from the second item that $\psi$ is an isometric image with convex image. If $\psi$ is not surjective, then $\psi(X)$ is a proper convex subgraph of $Y$, so there must be a hyperplane of $Y$ that does not cross $\psi(X)$. But this is impossible since every hyperplane of $Y$ is the image of a hyperplane of $X$ under $\psi$. Thus, $\psi$ is surjective, proving that it defines an isometry from $X$ to $Y$. Conversely, it is clear that an isometry induces a bijection from the hyperplanes of $X$ to the hyperplanes of $Y$ that preserves non-transversality. Thus, the third item of our lemma is proved. 
\end{proof}

\noindent We now introduce a family of median graphs which will be useful in Section~\ref{section:SubmedianGraph} to define our notion of submedian graphs.

\begin{definition}\label{def:Hypercube}
Let $S$ be an arbitrary set. The \emph{hypercube} $\mathscr{HC}(S)$ is the graph whose vertices are the finite subsets of $S$ and whose edges connect two subsets with an edge whenever their symmetric difference has size one.
\end{definition}

\noindent 
Hypercubes naturally appear in the study of median graphs. In fact, median graphs can be characterised as retracts of hypercubes. We record the following weaker fact for future use:

\begin{lemma}\label{lem:MedianHC}
Median graphs embed isometrically in hypercubes.
\end{lemma}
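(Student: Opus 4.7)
The plan is to use the halfspace structure of $X$ to build an explicit coordinate map into a hypercube, following the standard ``halfspace encoding'' strategy.

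First, I would fix a basepoint $o \in X$ and let $S$ denote the set of hyperplanes of $X$. For each hyperplane $J$, Theorem~\ref{thm:BigMedian} gives exactly two halfspaces, so I can canonically choose $h_J^+$ to be the one not containing $o$. I then define
$$\phi : X \to \mathscr{HC}(S), \quad x \mapsto \{ J \in S : x \in h_J^+ \}.$$
The key observation is that $J \in \phi(x)$ if and only if $J$ separates $x$ from $o$, so $|\phi(x)|$ equals the number of hyperplanes separating $x$ from $o$, which by the last item of Theorem~\ref{thm:BigMedian} equals $d(o,x) < \infty$. Thus $\phi(x)$ is indeed a finite subset of $S$, i.e.\ a vertex of $\mathscr{HC}(S)$.

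Next I would verify that $\phi$ is an isometric embedding. The core computation is that for any $x, y \in X$ and any hyperplane $J$, the halfspace $h_J^+$ contains exactly one of $x, y$ if and only if $J$ separates $x$ from $y$ (since $J$ delimits only two halfspaces and they partition the vertex set of $X$). Therefore
$$\phi(x) \triangle \phi(y) = \{ J \in S : J \text{ separates } x \text{ from } y \},$$
and the size of the right-hand side is exactly $d(x,y)$, again by the last item of Theorem~\ref{thm:BigMedian}. Since the distance in $\mathscr{HC}(S)$ between two finite subsets is the cardinality of their symmetric difference, this gives $d_{\mathscr{HC}(S)}(\phi(x), \phi(y)) = d_X(x,y)$, so $\phi$ is an isometric embedding.

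I do not anticipate any real obstacle: the two facts I rely on (halfspaces are well-defined and exactly two in number, and graph distance equals the number of separating hyperplanes) are both packaged into Theorem~\ref{thm:BigMedian}. The only thing to be careful about is the finiteness of $\phi(x)$, which is where connectedness of $X$ (implicit in the definition of a median graph) enters: it ensures $d(o,x)$ is finite so that $\phi$ actually lands in $\mathscr{HC}(S)$ rather than in the larger graph of arbitrary subsets.
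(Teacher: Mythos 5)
Your proposal is correct and coincides with the paper's own argument: the map $\phi$ you define is exactly the map $x \mapsto \mathcal{H}(o|x)$ used in the paper, and the isometry follows from the same symmetric-difference computation together with the last item of Theorem~\ref{thm:BigMedian}. No gaps.
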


\noindent
Lemma~\ref{lem:MedianHC} is well-known, but we include a proof as a motivation for the proof of Lemma~\ref{lem:FoldSub} later. 

\begin{proof}[Proof of Lemma~\ref{lem:MedianHC}.]
Let $S$ denote the set of the hyperplanes of $X$. Fix a basepoint $o \in X$. Because every vertex of $X$ is separated from $o$ by only finitely many hyperplanes, the map
$$\Theta : \left\{ \begin{array}{ccc} X & \to & \mathscr{HC}(S) \\ x & \mapsto & \mathcal{H}(o|x):= \{\text{hyperplanes separating $o$ and $x$}\} \end{array} \right.$$
is well-defined. Observe that, for all vertices $x,y \in X$, the symmetric difference $\Theta(x) \triangle \Theta(y)$ coincides with $\mathcal{H}(x|y)$, hence
$$d(\Theta(x), \Theta(y)) = |\Theta(x) \triangle \Theta(y)| = |\mathcal{H}(x|y)| = d(x,y)$$
according to Theorem~\ref{thm:BigMedian}. Thus, we have proved that $\Theta$ is an isometric embedding. 
\end{proof}

\paragraph{Cubulation.} Recall that a \emph{wallspace} $(X,\mathcal{W})$ is the data of a set $X$ endowed with a collection $\mathcal{W}$ of pairs of subsets such that:
\begin{itemize}
	\item for every $\{A,B\} \in \mathcal{W}$, $X= A \sqcup B$ with $A,B$ both non-empty;
	\item for all points $x,y \in X$, there exist only finitely many $\{A,B\} \in \mathcal{P}$ for which either $x \in A$ and $y \in B$ or $x \in B$ and $y \in A$.
\end{itemize}
The partitions given by $\mathcal{W}$ are \emph{walls}. The pieces of a wall are \emph{halfspaces}. A wall $\{A,B\} \in \mathcal{P}$ \emph{separates} two points $x,y \in X$ if either $x \in A$ and $y \in B$ or $x \in B$ and $y \in A$. Thus, the second item above says that any two points are separated by only finitely many walls. The \emph{wall (pseudo-)metric} is the map $X \times X \to \mathbb{N}$ that associates to two points of $X$ the number of walls separating them.

\medskip \noindent
It is explained in~\cite{MR2059193} (see also \cite{MR2197811}) how to associate a median graph to any wallspace. The construction of this median graph requires the notion of orientations.

\begin{definition}
Let $(X,\mathcal{W})$ be a wallspace. An \emph{orientation} is a map $\sigma$ that associates halfspaces to walls such that:
\begin{itemize}
	\item for every wall $W \in \mathcal{W}$, $\sigma(W) \in W$;
	\item for all $U,V \in \mathcal{W}$, $\sigma(U) \cap \sigma(V) \neq \emptyset$.
\end{itemize}
The orientation is \emph{principal} is there exists a point $x \in X$ such that, for every wall $W \in \mathcal{W}$, $\sigma(W)$ is the halfspace of $W$ containing $x$. The \emph{cubulation} of $(X,\mathcal{W})$ is the graph whose vertices are the orientations which differ from principal orientations only on finitely many hyperplanes and whose edges connect two orientations whenever they differ on a single hyperplane. 
\end{definition}

\noindent
Notice that there is a canonical map from a wallspace to its cubulation: it is the map that sends every point to the principal orientation it defines. Given an orientation $\sigma$ and a hyperplane $J$, we denote by $[\sigma,J]$ the map obtained from $\sigma$ by modifying its value on $J$. The map $[\sigma,J]$ may not be an orientation, but two orientations $\mu,\nu$ are adjacent in the cubulation if and only if $\nu = [\mu,J]$ for some hyperplane $J$. 

\medskip \noindent We now collect some fundamental properties of the cubulation of a wallspace.

\begin{prop}\label{prop:MainCubulation}
Let $(X,\mathcal{W})$ be a wallspace. Let $M(X)$ denote its cubulation and $\eta : X \to M(X)$ the canonical map. The following assertions hold. 
\begin{itemize}
	\item $M(X)$ is a median graph.
	\item The median hull of $\eta(X)$ in $M(X)$ coincides with $M(X)$. 
	\item Two edges $[a_1,b_1],[a_2,b_2] \subset M(X)$ belong to the same hyperplane if and only if $b_1=[a_1,W]$ and $b_2=[a_2,W]$ for some wall $W \in \mathcal{W}$.
\end{itemize}
\end{prop}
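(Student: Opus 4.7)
For (1), I would define a candidate median on $M(X)$ and verify it is the graph median. Given $\sigma_1,\sigma_2,\sigma_3 \in M(X)$, declare $m$ by the majority rule: for each wall $W$, let $m(W)$ be the halfspace of $W$ agreed upon by at least two of $\sigma_1(W),\sigma_2(W),\sigma_3(W)$. A pigeonhole argument shows $m$ is an orientation, since for any two walls $U,V$ at least one $\sigma_i$ agrees with $m$ on both, forcing $m(U)\cap m(V) \supseteq \sigma_i(U)\cap\sigma_i(V)\neq \emptyset$. Fixing a common principal orientation $\sigma_0$ relative to which each $\sigma_i$ has finite disagreement set $F_i$, the disagreement set of $m$ lies in $F_1\cup F_2\cup F_3$, so $m \in M(X)$. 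To identify $m$ with the graph median, I would first prove the distance identity $d_{M(X)}(\sigma,\tau) = D(\sigma,\tau)$, where $D$ counts walls on which $\sigma,\tau$ disagree. The inequality $d \geq D$ is immediate. For $d \leq D$, I would build a geodesic inductively via the key lemma: among the finitely many walls $V$ on which $\sigma,\tau$ disagree, choose $W$ with $\tau(W)$ maximal for inclusion; then $[\sigma,W]$ is again an orientation, since $\tau(W) \cap \sigma(V) = \tau(W) \cap \tau(V)^c \neq \emptyset$ by maximality when $V$ is a wall of disagreement, and $\tau(W) \cap \sigma(V) = \tau(W) \cap \tau(V) \neq \emptyset$ by the orientation property of $\tau$ otherwise. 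The median identity $D(\sigma_i,\sigma_j) = D(\sigma_i,m) + D(\sigma_j,m)$ then follows by wall-by-wall inspection, since on every wall where $\sigma_i,\sigma_j$ disagree, exactly one of them is the minority.

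For (3), each edge $[\sigma,\tau]$ of $M(X)$ has a canonical associated wall, namely the unique wall on which $\sigma,\tau$ disagree. A direct algebraic check in a $4$-cycle shows that opposite edges share the same associated wall (adjacent edges sharing a wall would collapse the cycle), so parallelism preserves the associated wall by induction along any chain of $4$-cycles. Conversely, once (1) is established and Theorem~\ref{thm:BigMedian} is available, each wall $W$ partitions $M(X)$ into $\{\sigma : \sigma(W) = A\}$ and its complement; precisely the $W$-edges cross this partition, no other edges do, and since hyperplanes in a median graph are uniquely determined by the partitions they induce, all $W$-edges lie in a common hyperplane.

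For (2), I would show every $\sigma \in M(X)$ belongs to the median hull of $\eta(X)$ by induction on $|F|$, where $F := \{W : \sigma(W) \neq \sigma_0(W)\}$, which is finite by the definition of $M(X)$. The base case $|F| = 0$ gives $\sigma = \sigma_0 \in \eta(X)$. For the inductive step, apply the geodesic lemma from (1) to produce $W \in F$ such that $\sigma' := [\sigma,W]$ is an orientation with $|F_{\sigma'}| = |F| - 1$; by induction $\sigma'$ lies in the median hull. Choosing $y \in \sigma(W) \cap X$ so that $\eta(y)(W) = \sigma(W)$, I would try to realize $\sigma$ as the median of $\sigma'$, $\eta(y)$, and $\sigma_0$: the wall $W$ is correctly flipped because $\eta(y)$ sides with $\sigma'$ there, and walls $V \notin F$ are preserved because $\sigma'(V) = \sigma_0(V)$ already gives majority. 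The trickiest point, and the main obstacle of the whole proposition, is handling walls $V \in F \setminus \{W\}$: here the majority depends on $\eta(y)(V)$, and a single $y$ need not lie in $\bigcap_{V \in F} \sigma(V)$. I expect to resolve this either by iterating the construction with a finite family of witnesses $\{y_V : V \in F\}$ and composing ternary medians to successively correct each problematic wall, or more slickly by embedding $M(X)$ into the hypercube $\mathscr{HC}(\mathcal{W})$ via $\sigma \mapsto \{W : \sigma(W) \neq \sigma_0(W)\}$ (mirroring the proof of Lemma~\ref{lem:MedianHC}) and transferring the median-hull question to the ambient hypercube, where majority closure is automatic.
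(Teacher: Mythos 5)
Your treatment of the first and third items takes a genuinely different route from the paper: the paper simply cites \cite{MR2059193} both for medianness of $M(X)$ and for the description of its halfspaces as $\{\sigma \mid \sigma(W)=W_1\}$, whereas you reprove these from scratch via the majority-rule median and the distance formula $d=D$. That self-contained argument is essentially sound (the maximal-halfspace flip is exactly the mechanism the paper itself uses later, in the proof of Proposition~\ref{prop:SwellingTwo}), but two small points need attention. First, medianness requires \emph{uniqueness} of the median vertex, which you do not address; it follows from the same distance formula by observing that any median must agree with the majority on every wall. Second, in item (3) the converse direction is where the content lies: saying that ``hyperplanes are determined by the partitions they induce'' presupposes that $\{\sigma \mid \sigma(W)=W_1\}$ and its complement are the two halfspaces of a single hyperplane. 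To close this you must check that each side of this partition is connected by paths using no $W$-edge -- which your geodesic lemma does give, since the geodesic from $\sigma$ to $\tau$ only flips walls of disagreement -- and then rule out a second parallelism class of $W$-edges. As written, this step is asserted rather than proved.

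The genuine gap is in item (2), and you have correctly located it yourself. Neither of your proposed repairs works as stated. The hypercube route fails because majority closure is \emph{not} automatic in a hypercube: the median hull of a subset of $\mathscr{HC}(\mathcal{W})$ is in general a proper subset (the median hull of two vertices is just those two vertices), so transferring the question to the ambient hypercube does not help. The witness-iteration route is not carried out, and the difficulty you identify is real: a single ternary median $\mathrm{med}(\sigma',\eta(y),\eta(y'))$ must win the majority vote on \emph{every} wall $V\neq W$ simultaneously, which imposes infinitely many constraints on the pair $y,y'$; it is not clear that successively ``correcting'' walls terminates or even makes progress, since each correction can spoil previously correct walls. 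The paper avoids this entirely by a different tool: the characterisation of the median hull of a subgraph as the intersection of all halfspaces, $2/3$-spaces, and $3/4$-spaces containing it (cited from \cite{Book}). Combined with the explicit description of the halfspaces of $M(X)$, one checks that every halfspace and every $1/4$-space contains a principal orientation, so no proper set of the relevant co-convex type can contain $\eta(X)$, and the hull is all of $M(X)$. Some such global characterisation of median hulls (or an equivalent input) seems necessary here; your purely local, wall-by-wall induction does not obviously close.
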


\noindent
Note that Proposition~\ref{prop:MainCubulation} implies the existence of a bijection between the walls in $(X,\mathcal{W})$ and the hyperplanes in $M(X)$.

\begin{proof}[Proof of Proposition~\ref{prop:MainCubulation}.]
The fact that $M(X)$ is median is proved by \cite[Proposition~4.5]{MR2059193}. We also know from \cite[Proposition~4.6]{MR2059193} that the halfspaces of $M(X)$ are exactly the subsets of the form
$$\{ \sigma \text{ orientation} \mid \sigma(W) = W_1 \},  \ W= \{W_1,W_2\} \in \mathcal{W}.$$
The third item of our proposition follows easily from this fact.

\medskip \noindent
In order to prove the second item of our proposition, we use the fact that, in a median graph, the median hull of a subgraph coincides with the intersection of all the halfspaces, \emph{$2/3$-spaces} (i.e.\ the union of two disjoint halfspaces), and \emph{$3/4$-spaces} (i.e.\ the union of two halfspaces delimited by two transverse hyperplanes) containing it. We refer the reader to \cite{Book} for more details. It follows easily from the description of halfspaces given above that every halfspace or \emph{$1/4$-space} (i.e.\ the intersection of two halfspaces delimited by two transverse hyperplanes) contains at least one principal orientation. Thus, the median hull of $\eta(X)$ must be $M(X)$ entirely.
\end{proof}

\section{Foldings}\label{section:foldings}

\noindent
In this section, we show that, given a median graph $X$ and a collection $\mathcal{P}$ of pairs of hyperplanes in contact, there exists a canonical median graph obtained from $X$ by ``merging'' or ``folding'' the pairs of hyperplanes in $\mathcal{P}$. 

\begin{thm}\label{thm:Foldings}
Let $X$ be a countable median graph and $\mathcal{P}$ a collection of pairs of hyperplanes in contact. There exists a median graph $M$ and a parallel-preserving map $\zeta : X \to M$ such that the following holds. For every median graph $Y$ and every parallel-preserving map $\psi : X \to Y$ satisfying $\psi(A)=\psi(B)$ for all $\{A,B\} \in \mathcal{P}$, there exists a unique parallel-preserving map $\xi : M \to Y$ such that $\psi = \xi \circ \zeta$. \\
\indent Moreover, for any two distinct hyperplanes $A,B$ of $X$, $\zeta(A)=\zeta(B)$ if and only if $A$ and $B$ are $\mathcal{P}$-connected. 
\end{thm}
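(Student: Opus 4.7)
The plan is to handle a single pair in contact first, establish its universal property, then iterate over an enumeration of $\mathcal{P}$.

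For a single pair $\{A,B\}$ of hyperplanes in contact, I would define a wallspace on $V(X)$ as follows. Since $A$ and $B$ are in contact, their carriers meet, so fix orientations of the halfspaces so that a common vertex of $N(A) \cap N(B)$ lies in $A^+ \cap B^+$. Keep all walls $\{C^+, C^-\}$ for $C \neq A, B$, and add one merged wall
\[ W_{AB} = \bigl\{ (A^+ \cap B^+) \cup (A^- \cap B^-),\ (A^+ \cap B^-) \cup (A^- \cap B^+) \bigr\}. \]
This unified description covers both flavours of contact: in the tangent case one of the quadrants is empty (by convexity of carriers), so the partition degenerates to $\{A^+ \cap B^+,\, A^- \cup B^-\}$, which folds the tangent corner; in the transverse case the partition realises the diagonal identification inside each $A$-$B$ square. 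Let $M$ be the cubulation of this wallspace and $\zeta : X \to M$ the canonical map. Proposition~\ref{prop:MainCubulation} yields that $M$ is median; using its third item, a direct check shows that $\zeta$ is parallel-preserving and that $\zeta(A) = \zeta(B)$ coincides with the hyperplane of $M$ corresponding to $W_{AB}$.

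Next I would establish the universal property for this single fold. Given a parallel-preserving $\psi : X \to Y$ with $\psi(A) = \psi(B)$, the assumption forces $\psi$ to collapse every $A$-$B$ square diagonally and to fold every tangent corner, so that $\psi$ factors on vertex sets through the quotient defined by $\mathcal{W}_{AB}$. The induced wallspace morphism into the hyperplane-wallspace of $Y$ then produces, by functoriality of cubulation and the universal property of $M$ as a cubulation, a unique parallel-preserving $\xi : M \to Y$ satisfying $\psi = \xi \circ \zeta$. Uniqueness follows from Proposition~\ref{prop:MainCubulation}(2): $\zeta(X)$ has median hull equal to $M$, and parallel-preserving maps between median graphs are determined by their restriction to any median-generating subgraph.

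For the iteration, countability of $X$ implies countability of its hyperplane set and hence of $\mathcal{P}$; enumerate $\mathcal{P} = \{p_1, p_2, \ldots\}$ and set $X_0 = X$, with $X_{n+1}$ the single fold of $X_n$ along the image of $p_{n+1}$. Let $M$ be the direct limit of the $X_n$'s and $\zeta : X \to M$ the induced map. For any compatible $\psi : X \to Y$, the single-fold universal property applied inductively yields a coherent sequence of maps $X_n \to Y$ whose colimit is the desired $\xi$; uniqueness at each stage transfers to uniqueness in the limit, which in particular shows that $(M, \zeta)$ is independent of the enumeration. For the final assertion, each single fold identifies exactly the two hyperplanes of its pair and keeps all others distinct — itself proved by applying the universal property to a target that collapses only the chosen pair — and iterating shows that $\zeta(A) = \zeta(B)$ is precisely the $\mathcal{P}$-connectedness relation. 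The main obstacle I anticipate is the rigorous verification of this factorization: one must check uniformly in the tangent and transverse cases that $W_{AB}$ captures exactly the identifications forced by $\psi(A) = \psi(B)$, and in the iteration one must rule out unintended collapses, which is delicate when infinitely many pairs accumulate.
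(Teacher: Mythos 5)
Your construction of the single fold is the right one --- the merged wall $W_{AB}$ is exactly the ``separated by an odd number of hyperplanes in $\{A,B\}$'' wall that the paper arrives at via the \emph{first fold} (identifying $\alpha(p)$ with $\beta(p)$ for $p\in N(A)\cap N(B)$) and records explicitly in Section~\ref{section:FoldExplicit}. The genuine gap is in the step you dispatch with ``by functoriality of cubulation and the universal property of $M$ as a cubulation'': no such off-the-shelf universal property exists, and this step is the technical heart of the paper's proof. The difficulty is that the cubulation $M$ contains non-principal orientations as new vertices, and there is no canonical way to push such an orientation forward to a vertex of $Y$: hyperplanes of $Y$ not met by $\psi(X)$ have no prescribed side, and a single hyperplane of $Y$ may have several $\psi$-preimages on which an orientation of $M$'s walls takes inconsistent values (remember $\psi$ need not be injective on hyperplanes outside $\mathcal{P}$). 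The paper instead proves Proposition~\ref{prop:CubulatingSub}: the quotient wallspace is a \emph{submedian} graph (Lemma~\ref{lem:FoldSub}, which itself requires checking that the quotient embeds in a hypercube and has simply connected square-completion), and the cubulation is reached from it by a transfinite sequence of corner completions (Lemma~\ref{lem:CubulationInductively}); the map to $Y$ is then extended one corner at a time using the fact that a parallel-preserving map between submedian graphs sends a corner either to a corner or to a single edge (Corollary~\ref{cor:Wheel}, which uses the absence of $K_{2,3}$). Some argument of this kind is unavoidable, and your proposal contains no substitute for it. Relatedly, you would also need to verify that $\zeta$ is parallel-preserving and that it identifies no hyperplanes other than $A$ and $B$ (the paper's Claim~\ref{claim:PiHyperplanes}, a nontrivial case analysis); you flag this, but note that your proposed justification of the ``moreover'' clause --- applying the universal property to ``a target that collapses only the chosen pair'' --- is circular, since the only candidate for such a target is the fold itself. (Your wallspace description does yield this fact directly via Proposition~\ref{prop:MainCubulation}, since distinct hyperplanes $C,D\neq A,B$ give distinct walls, each distinct from $W_{AB}$; that is the non-circular route.)

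Two smaller points. First, your uniqueness argument invokes the principle that parallel-preserving maps between median graphs are determined by their restriction to a median-generating subgraph; this is not obviously true as stated (parallel-preserving maps do not commute with the median operation --- folding a path of length two onto an edge already fails this), and the correct justification is again the corner-completion structure: the added vertex of each corner has a forced image because the corner maps to a corner or collapses to an edge. Second, in the iteration you should check that the direct limit is median (the paper uses that the connecting maps are $1$-Lipschitz so distances stabilise) and that the images of the remaining pairs of $\mathcal{P}$ are still in contact after earlier folds. These are minor compared with the missing extension-over-the-cubulation argument, which is where the proposal would need to be rebuilt.
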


\noindent
Here, we refer to two hyperplanes $A$ and $B$ as \emph{$\mathcal{P}$-connected} whenever there exist hyperplanes $J_0=A, J_1, \ldots, J_n=B$ such that $\{J_i,J_{i+1}\} \in \mathcal{P}$ for every $0 \leq i \leq n-1$. Being $\mathcal{P}$-connected defines an equivalence relation, and we denote by $[J]_\mathcal{P}$ or $[J]$ the equivalence class of a hyperplane $J$. 

\medskip \noindent
The universal property satisfied by the median graph $M$ implies that Theorem~\ref{thm:Foldings} uniquely characterises it, as justified in Section~\ref{section:ThmFolding}. It also implies that the construction is compatible with group actions, see Section~\ref{section:FoldingEqui}. The graph $M$ will be referred to as the \emph{fold of $X$ relative to $\mathcal{P}$}.

\subsection{Preliminaries on submedian graphs}\label{section:SubmedianGraph}

\noindent
We begin by introducing and discussing \emph{submedian graphs}. This (new) family of graphs will be fundamental in our proof of Theorem~\ref{thm:Foldings}. 

\begin{definition}
A connected graph is \emph{submedian} if it can be embedded into a hypercube and if its square-completion is simply connected.
\end{definition}

\noindent
We emphasize that a submedian graph can be embedded in a hypercube with an image that is not an induced subgraph. For instance, a $3$-cube minus (the interior of) an edge defines a submedian graph but cannot be described as an induced subgraph of a hypercube. Median graphs are examples of submedian graphs; see the discussion about median graphs in Section~\ref{section:Preliminaries}.

\medskip \noindent
A key property of submedian graphs is that they are naturally endowed with a wallspace structure, as justified by the following observation:

\begin{lemma}\label{lem:HypSeparates}
Let $X$ be a connected graph. If the square-completion of $X$ is simply connected, then hyperplanes separate.
\end{lemma}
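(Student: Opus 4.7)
The plan is to construct a $\mathbb{Z}/2\mathbb{Z}$-valued labelling of the vertices of $X$ that changes value exactly across the edges of the given hyperplane $J$; then the preimages of $0$ and $1$ form the two pieces of $X \backslash \backslash J$, and since $J$ is non-empty both pieces are non-empty, giving the desired separation.

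Concretely, I would define $\phi$ on edges by $\phi(e) = 1$ if $e \in J$ and $\phi(e) = 0$ otherwise, and extend it to a map on edge-paths by summing the values of the traversed edges modulo $2$. Fixing a basepoint $o \in X$, I would set $f(x) := \phi(\gamma_{ox})$ for any edge-path $\gamma_{ox}$ from $o$ to $x$. Granting that $\phi$ vanishes on every closed loop, $f$ is a well-defined map $X^{(0)} \to \mathbb{Z}/2\mathbb{Z}$; by construction it changes value across an edge $e$ if and only if $e \in J$, so $f^{-1}(0)$ and $f^{-1}(1)$ lie in distinct connected components of $X \backslash \backslash J$.

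The core of the argument is therefore the claim that $\phi$ vanishes on every closed edge-loop. Since $\phi$ is additive and vanishes on backtracking edge-pairs, it suffices to check that $\phi$ vanishes on the boundary of every $4$-cycle of $X$: but in any $4$-cycle, opposite edges are parallel by definition, so each of the two pairs of opposite edges is contained in a common hyperplane. Consequently, the number of edges of a $4$-cycle lying in $J$ is $0$, $2$, or $4$, and $\phi$ sums to $0 \pmod 2$ along its boundary. Now, by simple connectedness of the square-completion of $X$, every closed edge-loop in $X$ is homotopic, within the square-completion, to the trivial loop; combinatorially this means that any closed loop in $X$ can be reduced to a constant loop by a finite sequence of moves, each of which either inserts or removes a backtracking pair or inserts or removes the boundary of a $4$-cycle. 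Since $\phi$ is preserved under all such moves, the claim follows.

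The main obstacle I expect is this last point, namely rendering the homotopical statement purely combinatorial. This is a standard fact about combinatorial $2$-complexes: a null-homotopy of an edge-loop in a $2$-complex $K$ can be realised as a concatenation of elementary moves consisting of backtracking insertions/deletions and insertions/deletions of boundaries of $2$-cells. Applying this to the square-completion, whose $2$-cells are exactly the $4$-cycles of $X$, yields the required decomposition and completes the proof.
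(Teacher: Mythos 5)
Your proof is correct, but it takes a genuinely different route from the paper's. The paper argues locally with disc diagrams: for an edge $[a,b]\in J$ and an arbitrary path $\gamma$ from $a$ to $b$, simple connectedness of the square-completion yields a disc diagram bounded by $\gamma\cup[a,b]$, and the dual curve of $\Delta$ emanating from $[a,b]$ must exit through an edge of $\gamma$, which is then parallel to $[a,b]$ in $X$; hence every such $\gamma$ meets $J$. You instead build a global $\mathbb{Z}/2$-labelling: the cochain $\phi=\mathds{1}_J$ is a cocycle because opposite edges of a $4$-cycle are parallel (so each $4$-cycle carries $0$, $2$, or $4$ edges of $J$), and simple connectedness --- rephrased combinatorially as reducibility of every loop by backtrack and $2$-cell moves --- makes $\phi$ vanish on all loops, so it integrates to a two-colouring $f$ of the vertices that flips exactly across edges of $J$. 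Both arguments ultimately rest on the same two inputs (parity of $J$-edges in squares, and van Kampen-type combinatorial null-homotopy), but your version buys an explicit partition of the components of $X\backslash\backslash J$ into two non-empty classes, one on each side of every edge of $J$, whereas the paper's version is shorter and uses the dual-curve machinery that recurs elsewhere in the theory. The one point you flagged as an obstacle --- that a null-homotopy in a $2$-complex decomposes into elementary moves --- is indeed the standard combinatorial form of the same fact the paper invokes via disc diagrams, so your proof is complete.
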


\begin{proof}
Let $[a,b]$ be an edge in $J$. We claim that $J$ separates $a$ and $b$. Let $\gamma$ be an arbitrary path connecting $a$ and $b$. Because the square-completion $X^\square$ of $X$ is simply connected, there exists a disc diagram $\Delta \to X^\square$ bounded by $\gamma \cup [a,b]$, i.e.\ a contractible planar square complex $\Delta$ endowed with a combinatorial map $\Delta \to X^\square$ whose restriction to $\partial \Delta$ coincides with $\gamma \cup [a,b]$ (see e.g.\ \cite{MR0675732}). In $\Delta$, it is clear that there must be an edge from $\gamma$ parallel to $[a,b]$, hence the desired conclusion. 
\end{proof}

\noindent
Next, let us mention a characterisation of submedian graphs.

\begin{prop}\label{prop:SubCriterion}
A connected graph is submedian if and only if its square-completion is simply connected and every path connecting two distinct vertices crosses at least one parallelism class an odd number of times.
\end{prop}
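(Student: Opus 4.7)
The plan is to construct an explicit candidate hypercube embedding using parities of hyperplane crossings, in the same spirit as the proof of Lemma~\ref{lem:MedianHC} but adapted to the submedian setting.

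For the $(\Rightarrow)$ direction, I would start with an embedding $\phi : X \hookrightarrow \mathscr{HC}(S)$. Because $\phi$ is injective on vertices, every $4$-cycle of $X$ is sent to a genuine $4$-cycle of $\mathscr{HC}(S)$, and opposite edges of a $4$-cycle in a hypercube toggle the same element of $S$. Consequently, parallel edges in $X$ are sent by $\phi$ to edges toggling a common element of $S$, which yields a well-defined map $\sigma$ from the parallelism classes of $X$ to $S$. For any path $\gamma$ between two vertices $x,y \in X$, the endpoint $\phi(y)$ is obtained from $\phi(x)$ by taking the symmetric difference with the image under $\sigma$ of the set of parallelism classes crossed an odd number of times by $\gamma$. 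When $x \neq y$, injectivity of $\phi$ forces this set to be non-empty, giving the required odd-crossing property. The simple-connectedness condition is part of the definition of submedian.

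For the $(\Leftarrow)$ direction, let $S$ denote the set of parallelism classes of $X$, fix a basepoint $o \in X$, and for each vertex $x \in X$ pick an arbitrary path $\gamma_x$ from $o$ to $x$. Define
\[
\Theta : x \mapsto \{ J \in S : \gamma_x \text{ crosses } J \text{ an odd number of times} \} \in \mathscr{HC}(S).
\]
The first and central step is to show that $\Theta(x)$ does not depend on the choice of $\gamma_x$: given two such paths, their concatenation is a loop which, by simple-connectedness of the square-completion, bounds a disc diagram; an induction on the number of $2$-cells then reduces the claim to the observation that the elementary moves (backtracks and boundaries of $4$-cycles) each cross every parallelism class an even number of times, hence preserve the parities defining $\Theta$. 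Finiteness of $\Theta(x)$ is automatic since $\gamma_x$ is finite.

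Once $\Theta$ is well-defined, the remaining verifications are routine. If $[x, y]$ is an edge of $X$ in a parallelism class $J$, then concatenating $\gamma_x$ with $[x,y]$ produces a valid path from $o$ to $y$ whose parities differ from those of $\gamma_x$ only on $J$, so $\Theta(y) = \Theta(x) \triangle \{J\}$, meaning $\Theta(x)$ and $\Theta(y)$ are adjacent in $\mathscr{HC}(S)$, and $\Theta$ is a graph homomorphism. For injectivity, if $\Theta(x) = \Theta(y)$, then $\gamma_x^{-1} \cdot \gamma_y$ is a path from $x$ to $y$ crossing every parallelism class an even number of times, which by the odd-crossing hypothesis forces $x = y$. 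The main conceptual obstacle is precisely this well-definedness step: it is the bridge that converts the topological hypothesis on the square-completion into an actual hypercube embedding, and every other verification reduces to unwinding definitions.
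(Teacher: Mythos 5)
Your proof is correct. The forward direction is essentially the paper's argument: both of you use the hypercube embedding to assign to each parallelism class of $X$ a coordinate of the hypercube (the paper phrases this as ``parallel in $X$ implies parallel in $Q$'') and observe that a path between distinct vertices must toggle some coordinate an odd number of times. One small imprecision there: $\phi(x)\triangle\phi(y)$ need not \emph{equal} the image under $\sigma$ of the set of classes crossed an odd number of times, since two distinct parallelism classes of $X$ can share the same $\sigma$-value and their contributions cancel; but the containment $\phi(x)\triangle\phi(y)\subseteq\sigma\left(\{J : \gamma \text{ crosses } J \text{ an odd number of times}\}\right)$ does hold, and that is all your conclusion needs. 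Where you genuinely diverge is the converse: the paper invokes the characterisation from \cite{MR1402650} of graphs embeddable in hypercubes (existence of an edge-colouring in which every loop uses each colour evenly and every nontrivial path uses some colour oddly) and checks the two conditions for the colouring by parallelism classes, the first via Lemma~\ref{lem:HypSeparates} and the second by hypothesis. You instead build the embedding $\Theta$ explicitly from crossing parities relative to a basepoint, establishing well-definedness by the same disc-diagram/elementary-moves argument that underlies Lemma~\ref{lem:HypSeparates}. This amounts to reproving the relevant direction of the cited criterion in the special case at hand: your route is longer but self-contained and produces the embedding explicitly, whereas the paper's is shorter by outsourcing exactly the parity bookkeeping you carry out.
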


\begin{proof}
Let $X$ be a connected graph. First, assume that $X$ is submedian and embed $X$ into a hypercube $Q$. Given two distinct vertices $a,b \in X$ and a path $\gamma \subset X$ connecting them, if $J$ denotes a hyperplane of $Q$ separating $a$ and $b$ then $\gamma$ has to cross $J$ an odd number of times. However, two edges in $X$ may be parallel in $Q$ but not in $X$. Nevertheless, it is clear that two edges parallel in $X$ are parallel in $Q$, so $J \cap X$ is a union of hyperplanes of $X$. Necessarily, $\gamma$ has to cross at least one of them an odd number of times. 

\medskip \noindent
Conversely, assume that $X$ satisfies the two conditions given by our proposition. It is proved in \cite{MR1402650} that $X$ embeds into a hypercube if and only if there is colouring $\kappa : E(X) \to C$ of its edges satisfying the following conditions:
\begin{itemize}
	\item given a colour $c \in C$, every loop has an even number of edges with colour $c$;
	\item for every path $\gamma$ of positive length, there exists a colour $c \in C$ such that $\gamma$ has an odd number of edges with colour $c$.
\end{itemize}
Set $C := \{ \text{parallelism classes of } X\}$ and let $\kappa : E(X) \to C$ denote the map that sends each edge to the hyperplane containing it. The first item above is satisfied as a consequence of Lemma~\ref{lem:HypSeparates} and the second item is satisfied by assumption. We conclude that $X$ is submedian. 
\end{proof}

\noindent
We record the following elementary observation for future use.

\begin{lemma}\label{lem:Wheel}
Let $X,Y$ be two submedian graphs. A parallel-preserving map $\psi : X \to Y$ sends every $4$-cycle either to a single edge or to another $4$-cycle.
\end{lemma}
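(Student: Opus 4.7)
The plan is to prove an auxiliary claim first: in any submedian graph, two parallel edges cannot share a vertex. Given this, the lemma reduces to a short case analysis on the image of a $4$-cycle under $\psi$.

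For the auxiliary claim, I would exploit the hypercube embedding built into the definition of a submedian graph. Fix any embedding of the ambient submedian graph into a hypercube $\mathscr{HC}(S)$. Since a $4$-cycle maps to a $4$-cycle with opposite edges preserved, parallelism is preserved by this embedding. In $\mathscr{HC}(S)$, two edges are parallel precisely when their endpoint-pairs share the same symmetric difference (i.e.\ the same ``label'' in $S$), so two parallel edges that share a vertex are determined by the same unordered pair of vertices and therefore coincide.

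Now let $abcd$ be a $4$-cycle in $X$, cyclically ordered, with $[a,b]\parallel[c,d]$ and $[b,c]\parallel[a,d]$. Its image under $\psi$ is a closed edge-walk $\psi(a)\psi(b)\psi(c)\psi(d)$ in $Y$, and since $\psi$ sends edges to edges, consecutive vertex images differ. Parallel-preservation yields $\psi([a,b])\parallel\psi([c,d])$ and $\psi([b,c])\parallel\psi([a,d])$ in $Y$. If $\psi(a),\psi(b),\psi(c),\psi(d)$ are pairwise distinct, these four edges form a genuine $4$-cycle in $Y$. Otherwise, the only possible coincidences are $\psi(a)=\psi(c)$ or $\psi(b)=\psi(d)$; by symmetry we may suppose $\psi(a)=\psi(c)$. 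Then $\psi([a,b])$ and $\psi([b,c])$ share both endpoints and coincide, while $\psi([c,d])=\psi([a,d])$ has endpoint $\psi(a)$. The parallel pair $\psi([a,b])\parallel\psi([a,d])$ thus consists of two parallel edges in $Y$ sharing the vertex $\psi(a)$; the auxiliary claim forces them to coincide, whence $\psi(b)=\psi(d)$. In this case the $4$-cycle collapses entirely onto the single edge $\psi([a,b])$.

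The main obstacle is the auxiliary claim, which is where submedianness (rather than a weaker local condition) is genuinely used: passing through a hypercube embedding is the cleanest way to turn ``parallelism of edges'' into the combinatorial statement ``shared symmetric difference'' that forbids a shared endpoint. The remainder of the argument is bookkeeping in a simple graph.
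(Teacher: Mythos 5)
Your proposal is correct and follows essentially the same route as the paper: both reduce the lemma to the observation that two distinct parallel edges in a submedian graph cannot share a vertex (proved via the hypercube embedding, where parallel edges carry the same label), and then run the identical case analysis in which any vertex coincidence must be between opposite vertices of the $4$-cycle and forces a total collapse onto one edge. Your write-up is just a slightly more detailed version of the paper's argument.
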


\begin{proof}
Let $(a,b,c,d)$ be the four vertices of a $4$-cycle $C$. If $\psi$ is injective on $C$, there is nothing to prove. Otherwise, $\psi$ maps two vertices of $C$ to the same vertex of $Y$. Since $\psi$ sends edges to edges, two such vertices cannot be adjacent, so they must be opposite vertices. Up to relabelling the vertices of $C$, say that $\psi(a)= \psi(c)$. Because $\psi$ is parallel-preserving, the edges $\psi([a,b])$, $\psi([b,c])$, and $\psi([c,d])=\psi([a,d])$ must be parallel. But, in a submedian graph, two distinct parallel edges cannot intersect (because this cannot happen in a hypercube), so $\psi$ must collapse $C$ to a single edge. 
\end{proof}

\begin{cor}\label{cor:Wheel}
Let $X,Y$ be two submedian graphs. A parallel-preserving map $\psi : X \to Y$ sends every corner (i.e.\ the data of a vertex $o$ and three neighbours $a,b,c$ such that the edges $[o,a],[o,b],[o,c]$ pairwise span $4$-cycles) either to a single edge or to another corner.
\end{cor}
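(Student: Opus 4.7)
The plan is to apply Lemma~\ref{lem:Wheel} to each of the three 4-cycles that constitute the corner and combine the resulting constraints on the images of the three edges at $o$.

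Denote the corner by $(o; a, b, c)$ and write $C_{ab} = (o,a,d_{ab},b)$, $C_{ac} = (o,a,d_{ac},c)$, $C_{bc} = (o,b,d_{bc},c)$ for its three defining 4-cycles. By Lemma~\ref{lem:Wheel}, each image $\psi(C_{xy})$ is either a single edge of $Y$ or another 4-cycle. Inspecting the proof of Lemma~\ref{lem:Wheel}, one sees that in the ``edge'' case both pairs of opposite vertices of $C_{xy}$ identify, so that $\psi(x)=\psi(y)$ and $\psi(o)=\psi(d_{xy})$ simultaneously; in the ``4-cycle'' case, $\psi(x) \neq \psi(y)$ and the two edges $\psi([o,x]), \psi([o,y])$ at $\psi(o)$ span the resulting 4-cycle.

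It follows that, for each pair $(x,y) \in \{(a,b),(a,c),(b,c)\}$, the incident edges $\psi([o,x])$ and $\psi([o,y])$ at $\psi(o)$ either coincide (collapsed case) or span a 4-cycle (non-collapsed case). If all three pairs collapse, then $\psi(a)=\psi(b)=\psi(c)$ and the image of the corner is the single edge $[\psi(o),\psi(a)]$. If none collapses, then $\psi(a),\psi(b),\psi(c)$ are pairwise distinct neighbours of $\psi(o)$ whose incident edges pairwise span 4-cycles, which is exactly a corner in $Y$.

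The main obstacle is to show that these are the only two possibilities, i.e.\ that one cannot have some but not all of the three squares collapse. The plan for this key step is a contradiction argument: supposing that $\psi(C_{ab})$ collapses (so $\psi(a)=\psi(b)=:\bar a$) while $\psi(C_{ac})$ does not (so $\bar a \neq \psi(c)$, which in turn forces $\psi(C_{bc})$ to be a 4-cycle as well), one applies the submedian property already exploited in Lemma~\ref{lem:Wheel}---namely, that a vertex of a submedian graph is incident to at most one edge in each hyperplane, being embedded in a hypercube. At the vertex $\bar a = \psi(a) = \psi(b)$ the edges $\psi([a,d_{ac}])$ and $\psi([b,d_{bc}])$ are both parallel to $\psi([o,c])$, hence both lie in the hyperplane $\psi(J_c)$, so they must coincide, forcing $\psi(d_{ac}) = \psi(d_{bc})$. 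Combining this with the symmetric rigidity at $\psi(c)$ and the parallelism constraints propagated across the three 4-cycles, the plan is to exhibit an incompatibility---either a forbidden $K_{2,3}$-type configuration, or a failure of the square-completion of $Y$ to remain simply connected (Proposition~\ref{prop:SubCriterion})---which yields the desired contradiction and completes the dichotomy.
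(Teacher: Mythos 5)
Your reduction to three cases (all three squares of the corner collapse, none collapses, exactly one collapses) is correct and matches the paper's strategy, and your treatment of the first two cases is fine. The problem is the third case: you only announce a plan to derive a contradiction there, and that contradiction does not exist. Indeed, your own intermediate deduction already shows this: once you have (correctly) proved that $\psi(d_{ac})=\psi(d_{bc})$, the image of the corner is exactly the single $4$-cycle on $\psi(o)$, $\bar a$, $\psi(d_{ac})=\psi(d_{bc})$, $\psi(c)$ (together with $\psi(d_{ab})=\psi(o)$), which contains no $K_{2,3}$ and no non-contractible loop, so neither of the two incompatibilities you hope to exhibit can occur. This configuration is actually realised: take $X$ to be the $3$-cube on $\{0,1\}^3$, $Y$ the $4$-cycle on $\{0,1\}^2$, and $\psi(x,y,z)=(x+y \bmod 2,\ z)$. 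This map sends vertices to vertices, edges to edges, and parallel edges to parallel edges (the hyperplanes dual to the first two coordinates of the cube are sent to the same hyperplane of the square), yet it sends the corner at the origin onto the full $4$-cycle, which is neither a single edge nor a corner; this $\psi$ is precisely the quotient map of a first fold along a transverse pair of hyperplanes, cf.\ Section~\ref{section:FoldingTwo}.

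For comparison, the paper's own proof disposes of the ``exactly one square collapses'' case by asserting that the image of the corner is then a copy of $K_{2,3}$, forbidden in a submedian graph. Your (correct) observation that $\psi(d_{ac})=\psi(d_{bc})$ shows that this assertion is not justified: the two surviving $4$-cycles have the same image, so no $K_{2,3}$ appears. So the gap is not merely that your plan is unfinished; the dichotomy you are trying to prove fails for arbitrary parallel-preserving maps between submedian graphs, and a third outcome (exactly one square collapses and the image is a single $4$-cycle) would have to be admitted in the statement --- or the hypotheses strengthened, e.g.\ by requiring $\psi$ to send transverse hyperplanes to distinct hyperplanes --- before any proof along these lines can go through.
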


\begin{proof}
If $\psi$ is injective on our corner $W$, then there is nothing to prove. Otherwise, if $\psi$ is not injective, two cases may happen. First, $\psi$ may be injective on each $4$-cycle of our corner. But then $\psi$ sends the corner to a subgraph containing the complete bipartite graph $K_{2,3}$, which is impossible since $Y$ is submedian.  Second, $\psi$ may not be injective on at least one $4$-cycle of our corner. It follows from Lemma~\ref{lem:Wheel} that at least one of the three $4$-cycles is collapsed to a single edge. If there is only one such $4$-cycle, then $\psi$ sends $W$ to a copy of the complete bipartite graph $K_{2,3}$ in $Y$, which is impossible since $Y$ is submedian. If there is at least two such $4$-cycles, then $\psi$ must collapse $W$ to a single edge. 
\end{proof}

\noindent
The rest of the section is dedicated to the proof of the following proposition. Roughly speaking, it states that cubulating the wallspace given by the hyperplanes of a submedian graph yields a canonical procedure that associates a median graph to a submedian graph. 

\begin{prop}\label{prop:CubulatingSub}
Let $X$ be a submedian graph, $Y$ a median graph, and $\psi : X \to Y$ a parallel-preserving map. There exists a unique parallel-preserving map $\xi : M(X) \to Y$ such that $\psi = \xi \circ \eta$, where $\eta : X \to M(X)$ denotes the canonical embedding into the cubulation $M(X)$ of $X$. 
\end{prop}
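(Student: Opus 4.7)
My plan breaks into uniqueness and existence.

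For \textbf{uniqueness}, notice that $\eta$ induces a bijection between hyperplanes of $X$ and of $M(X)$ by Proposition~\ref{prop:MainCubulation}(3), so any parallel-preserving extension $\xi$ is forced to send the hyperplane of $M(X)$ corresponding to $J$ onto $\psi_*(J)$. Two such extensions $\xi_1,\xi_2$ agree on $\eta(X)$, and since the neighbour of a vertex across a given hyperplane in a median graph is unique (by the product structure of carriers in Theorem~\ref{thm:BigMedian}), a straightforward induction on distance in $M(X)$ from $\eta(X)$ propagates the equality to all of $M(X)$.

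For \textbf{existence}, I plan to mimic the embedding strategy of Lemma~\ref{lem:MedianHC}. Fix a basepoint $o \in X$, set $S := \mathrm{Hyp}(X)$ and $T := \mathrm{Hyp}(Y)$, and embed $X \hookrightarrow \mathscr{HC}(S)$ by $x \mapsto \mathcal{H}(o|x)$ and $Y \hookrightarrow \mathscr{HC}(T)$ by $y \mapsto \mathcal{H}(\psi(o)|y)$. Under these embeddings $M(X)$ is naturally a subgraph of $\mathscr{HC}(S)$, whose vertices are exactly the finite subsets of $S$ corresponding to valid DCC orientations of $X$. Letting $\psi_* \colon S \to T$ denote the induced map on hyperplanes, I then introduce the ``parity map''
\[
\bar\psi : \mathscr{HC}(S) \longrightarrow \mathscr{HC}(T), \qquad \bar\psi(A) := \{t \in T : |\psi_*^{-1}(t) \cap A| \text{ is odd}\}.
\]
A direct inspection on edges shows that $\bar\psi$ is a parallel-preserving graph morphism, as the edge flipping $s \in S$ is sent to the edge flipping $\psi_*(s) \in T$. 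Moreover, a parity-of-crossings count along any path from $o$ to $x$ in $X$ establishes $\bar\psi(\mathcal{H}(o|x)) = \mathcal{H}(\psi(o)|\psi(x))$, so that $\bar\psi$ agrees with $\psi$ on the image of $X$ in $\mathscr{HC}(S)$.

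The crux---and where I expect the real work to lie---is then to show $\bar\psi(M(X)) \subset Y$. Given $A \in M(X)$, the subset $\bar\psi(A) \subset T$ defines a DCC orientation $\tau_A$ of $Y$'s wallspace; since $Y$ is median we have $M(Y)=Y$, so every valid DCC orientation of $Y$ corresponds to a genuine vertex of $Y$, and the whole matter reduces to checking \emph{validity} of $\tau_A$: for every pair $K_1, K_2 \in T$, the two parity-prescribed halfspaces of $K_1, K_2$ must intersect in $Y$. To establish this I plan to exploit the validity of $\sigma_A$ in $X$ together with the Helly property for halfspaces in the median graph $M(X)$: applied to the finite family $\psi_*^{-1}(K_1) \cup \psi_*^{-1}(K_2) \subset S$, Helly produces a vertex $\sigma' \in M(X)$ lying in every halfspace $\sigma_A(J)$ for $J$ in this family, and a downward induction on $|A|$ then lets me replace $\sigma'$ by an honest $x \in X$ whose image $\psi(x) \in Y$ realises the required intersection point. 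Setting $\xi := \bar\psi|_{M(X)}$ (viewed as a map into $Y$ via the embedding $Y \hookrightarrow \mathscr{HC}(T)$) yields the desired parallel-preserving extension, completing the proof.
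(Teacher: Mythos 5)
Your uniqueness argument is correct (and more direct than the paper's, which obtains uniqueness as a by-product of a forced step-by-step extension), and your existence set-up is sound: the parity map $\bar\psi$ is a well-defined parallel-preserving map of hypercubes, and the parity-of-crossings computation showing $\bar\psi\circ\eta=\iota_Y\circ\psi$ is valid because hyperplanes separate in both $X$ and $Y$. You have also correctly isolated the crux, namely that $\tau_A:=\bar\psi(A)$ is a consistent orientation of the walls of $Y$ for every vertex $A$ of $M(X)$. But the argument you sketch for that step does not go through. First, $\psi_*^{-1}(K_1)\cup\psi_*^{-1}(K_2)$ need not be finite, so the Helly property for convex subgraphs of $M(X)$ does not apply as stated. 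More seriously, even granting a vertex $\sigma'\in M(X)$ lying in all the halfspaces $\sigma_A(J)$, the replacement of $\sigma'$ by an honest vertex $x\in\eta(X)$ lying in the \emph{same} intersection of halfspaces is precisely the global-to-local transfer at issue, and it is false in general: a finite family of pairwise-intersecting halfspaces of $M(X)$ can have nonempty total intersection disjoint from $\eta(X)$. Concretely, let $X$ be the one-skeleton of a $3$-cube $\{0,1\}^3$ with the vertex $p=(1,1,1)$ deleted; this is submedian, $M(X)$ is the full $3$-cube, and the three halfspaces $\{x_i=1\}$ pairwise meet inside $\eta(X)$ while their total intersection is $\{p\}$. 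Taking for $\psi$ the composition of $X\hookrightarrow M(X)$ with the fold of the $3$-cube identifying $J_1$ and $J_2$ (so $Y$ is a $4$-cycle, $\psi_*^{-1}(K_1)=\{J_1,J_2\}$ and $\psi_*^{-1}(K_2)=\{J_3\}$), the vertex $A$ corresponding to $p$ has $\bar\psi(A)\in Y$ as required, but \emph{no} vertex of $X$ lies in all three halfspaces $\sigma_A(J_i)$; the conclusion holds only because the vertex $(0,0,1)$ satisfies the weaker parity condition $|\psi_*^{-1}(K_i)\cap\mathcal H(o|x)|\equiv|\psi_*^{-1}(K_i)\cap A|\pmod 2$ without lying on the $\sigma_A$-side of $J_1$ or $J_2$. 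Your outline neither exploits this weaker condition nor explains how the ``downward induction on $|A|$'' would produce such an $x$; note also that $\bar\psi$ does not commute with medians (parity does not commute with majority vote), so one cannot instead push median hulls forward through $\bar\psi$.

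For comparison, the paper sidesteps this difficulty entirely by working locally: Lemma~\ref{lem:CubulationInductively} builds $M(X)$ from $\eta(X)$ by a transfinite sequence of elementary steps, each completing a corner to a $3$-cube, and the extension of $\xi$ is defined one corner at a time using Corollary~\ref{cor:Wheel} (a parallel-preserving map sends a corner either to a corner or to a single edge), the value at each new vertex being forced. If you wish to salvage your global approach, the consistency of $\tau_A$ should likewise be established by an induction of this local flavour rather than by a single application of the Helly property.
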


\noindent
We begin by recording some observations about cubulations of submedian graphs. 

\begin{lemma}\label{lem:SubCubulation}
Let $X$ be a submedian graph and let $M(X)$ denote its cubulation. 
\begin{itemize}
	\item The canonical map $\eta : X \to M(X)$ is a graph embedding. In particular, it is parallel-preserving.
	\item Two edges in $\eta(X)$ are parallel in $\eta(X)$ if and only if they are parallel in $M(X)$. 
\end{itemize}
\end{lemma}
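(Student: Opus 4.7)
The plan is to unpack the canonical map $\eta : X \to M(X)$ in terms of the wallspace structure on $X$ whose walls are the parallelism classes of edges (equivalently, the hyperplanes of $X$); by Lemma~\ref{lem:HypSeparates} these genuinely delimit halfspaces, so $\eta(x)$ is the principal orientation selecting, for each hyperplane $J$, the halfspace of $J$ containing $x$. Both items will then follow from a careful analysis of which hyperplanes separate adjacent vertices, combined with Proposition~\ref{prop:MainCubulation}.

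For the first item, I would start by verifying injectivity on vertices: given distinct $x, y \in X$, Proposition~\ref{prop:SubCriterion} supplies a path from $x$ to $y$ crossing some parallelism class an odd number of times, so the corresponding hyperplane $J$ separates $x$ and $y$, forcing $\eta(x)(J) \neq \eta(y)(J)$. For edge-preservation, take an edge $[a,b]$ lying in hyperplane $J$; the length-one path $[a,b]$ crosses only $J$, so any $J' \neq J$ leaves $a$ and $b$ in a common component of $X \backslash\backslash J'$ and therefore does not separate them. Thus $\eta(a)$ and $\eta(b)$ differ on exactly one hyperplane, yielding an edge of $M(X)$. Parallel-preservation is then immediate: parallel edges in $X$ share a common hyperplane $J$, so by the preceding paragraph their images in $M(X)$ both belong to the hyperplane associated with $J$ via the third bullet of Proposition~\ref{prop:MainCubulation}.

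For the second item, the forward direction holds trivially because any $4$-cycle in the subgraph $\eta(X)$ is already a $4$-cycle in $M(X)$. For the converse, suppose two edges of $\eta(X)$ are parallel in $M(X)$; by Proposition~\ref{prop:MainCubulation} there is a single wall $W$ of $X$ on which the endpoints of each edge differ. The analysis of item~1 identifies $W$ as the unique hyperplane of $X$ containing each of the corresponding edges in $X$, so these edges are parallel already in $X$ and, via the graph isomorphism $X \to \eta(X)$ supplied by item~1, are parallel in $\eta(X)$.

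I expect the main obstacle to be purely notational: keeping the three parallel notions of ``hyperplane'' in play straight (parallelism classes in $X$, walls of the wallspace on $X$, and hyperplanes of $M(X)$), while the substantive content --- that the two endpoints of an edge of a submedian graph are separated by exactly one hyperplane --- hinges only on Lemma~\ref{lem:HypSeparates} together with the elementary observation that a path of length one crosses exactly one parallelism class.
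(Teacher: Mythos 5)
Your proposal is correct and follows essentially the same route as the paper's proof: injectivity via Proposition~\ref{prop:SubCriterion}, adjacency via the observation that an edge's hyperplane is the unique one separating its endpoints, and the converse of the second item via the description of the hyperplanes of $M(X)$ in Proposition~\ref{prop:MainCubulation}. The extra detail you supply (that a length-one path meets no other parallelism class, together with Lemma~\ref{lem:HypSeparates}) is exactly the justification the paper leaves implicit.
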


\begin{proof}
Let $x,y \in X$ be two distinct vertices and let $\mu,\nu$ denote the principal orientations they respectively define. As a consequence of Proposition~\ref{prop:SubCriterion}, there exists a hyperplane $J$ crossed an odd number of times by some path connecting $x$ to $y$. Necessarily, $J$ separates $x$ and $y$, hence $\mu(J) \neq \nu(J)$, and a fortiori $\mu \neq \nu$. Hence $\eta(x) \neq \eta(y)$. Moreover, if $x$ and $y$ are adjacent in $X$, then the hyperplane containing the edge $[x,y]$ is the unique hyperplane separating $x$ and $y$. Thus, $\mu$ and $\nu$ differ on a single hyperplane, which implies that they are adjacent in $M(X)$. We conclude that $\eta$ is a graph embedding.

\medskip \noindent
Let $e_1,e_2$ be two edges in $X$. It is clear that, if $\eta(e_1)$ and $\eta(e_2)$ are parallel in $\eta(X)$, then they are parallel in $M(X)$ as well. Conversely, assume that $\eta(e_1)$ and $\eta(e_2)$ are parallel in $M(X)$. Write $e_1:=[a_1,b_1]$ and $e_2:=[a_2,b_2]$. According to Proposition~\ref{prop:MainCubulation}, there exists a hyperplane $J$ in $X$ such that $\eta(b_1)=[\eta(a_1),J]$ (resp. $\eta(b_2)= [\eta(a_2),J]$). In other words, $J$ is the unique hyperplane separating $a_1$ and $b_1$ (resp. $a_2$ and $b_2$), which amounts to saying that $e_1$ and $e_2$ both belong to $J$. Thus, $e_1$ and $e_2$ are parallel in $X$, which amounts to saying that $\eta(e_1)$ and $\eta(e_2)$ are parallel in $\eta(X)$ since $\eta$ is a graph embedding. 
\end{proof}

\noindent
In order to prove Proposition~\ref{prop:CubulatingSub}, we need the following statement, which shows how one can construct the cubulation of a submedian graph step by step.

\begin{lemma}\label{lem:CubulationInductively}
Let $X$ be a submedian graph. Let $M(X)$ denote its cubulation and $\eta : X \to M(X)$ the canonical embedding. There exists an increasing sequence $(M_\alpha)_{\alpha \leq \Omega}$ of subgraphs of $M(X)$, indexed by some ordinal $\Omega$, connecting $M_0:=\eta(X)$ to $M_\Omega:=M(X)$ and satisfying:
\begin{itemize}
	\item for every ordinal $\alpha< \Omega$, $M_{\alpha+1}$ is a submedian graph obtained from $M_\alpha$ by completing a corner as a $3$-cube with an existing or unexisting vertex;
	\item for every limit ordinal $\alpha \leq \Omega$, $M_\alpha$ is a submedian graph obtained as the union of the $M_\beta$ for $\beta<\alpha$;
	\item for every ordinal $\alpha \leq \Omega$, every edge in $M_\alpha$ is parallel to an edge in $M_0$;
	\item for every $\alpha \leq \Omega$, two edges in $M_\alpha$ are parallel in $M_\alpha$ if and only if they are parallel in $M(X)$.
\end{itemize}
\end{lemma}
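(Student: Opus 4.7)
The plan is to construct $(M_\alpha)_{\alpha \leq \Omega}$ by transfinite recursion. Set $M_0 := \eta(X)$; by Lemma~\ref{lem:SubCubulation}, this is a submedian subgraph of $M(X)$, the parallelism property holds, and trivially every edge is parallel to an edge of $M_0$. At a successor step $\alpha+1$: if $M_\alpha = M(X)$, stop with $\Omega = \alpha$. Otherwise, I find a corner in $M_\alpha$ whose completion to a $3$-cube in $M(X)$ is not entirely contained in $M_\alpha$, and let $M_{\alpha+1}$ be $M_\alpha$ together with the missing ``top'' vertex (if any) and missing ``top'' edges of that $3$-cube. At a limit $\alpha$, I set $M_\alpha := \bigcup_{\beta < \alpha} M_\beta$. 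Since $X$, and hence $M(X)$, is countable, at most countably many successor steps are needed, so the recursion terminates at some countable ordinal $\Omega$.

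The central step is to verify that when $M_\alpha \neq M(X)$, a bad corner exists. I argue contrapositively: suppose every corner in $M_\alpha$ already closes to a $3$-cube inside $M_\alpha$. Then $M_\alpha$ is submedian (by inductive hypothesis), $K_{2,3}$-free (as a subgraph of the median graph $M(X)$), and satisfies the $3$-cube condition, so it is median by Theorem~\ref{thm:MedianCriterion}. The parallelism property (property 4) combined with Lemma~\ref{lem:WhenIso} implies that the inclusion $M_\alpha \hookrightarrow M(X)$ is an isometric embedding, and therefore $M_\alpha$ is closed under the median operation of $M(X)$. Since $M_\alpha$ contains $\eta(X)$, Proposition~\ref{prop:MainCubulation} then forces $M_\alpha = M(X)$, contradicting the assumption.

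For the propagation of the four properties at a successor step, note that the set we adjoin is contained in a single $3$-cube of $M(X)$: embeddability into a hypercube is inherited from $M(X)$. Each newly added edge is parallel in $M(X)$ to an opposite edge of the cube that already belongs to $M_\alpha$, so property~3 persists and one direction of property~4 follows by induction; the other direction is immediate because parallelism in $M_{\alpha+1}$ implies parallelism in $M(X)$. Simple connectedness of the square-completion is preserved because the three ``bottom'' squares of the cube (already in $M_\alpha$) form a disc with boundary a $6$-cycle, and we cap it off with the three ``top'' squares meeting at the new vertex, together forming the $2$-sphere boundary of the cube, which introduces no new homotopy classes of loops. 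At limit stages, embeddability and the parallelism identity pass to the union because they are local conditions, while simple connectedness of the square-completion at $M_\alpha$ follows from the fact that any loop is contained in some $M_\beta$ with $\beta < \alpha$ and is null-homotopic there.

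The most delicate point is the interplay between simple connectedness of the square-completion and the stepwise addition of $3$-cubes: a naive addition of stray edges or vertices would generically create non-trivial loops in the square-completion and destroy submedianness. The trick is to add complete missing $3$-cubes (always from $M(X)$, where the cube exists by the median property), so that the topological change at each successor step is a \emph{spherical} capping that kills no loops.
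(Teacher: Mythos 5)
Your proof is correct and follows essentially the same route as the paper's: transfinite induction, with the existence of an incomplete corner at each successor step deduced from Theorem~\ref{thm:MedianCriterion} together with Proposition~\ref{prop:MainCubulation} (you phrase this contrapositively and usefully make explicit, via Lemma~\ref{lem:WhenIso}, why a median $M_\alpha$ containing $\eta(X)$ would have to be all of $M(X)$), and with the parallelism properties propagated through opposite edges of the completed cubes, exactly as in the paper. The only small deviation is your appeal to countability of $X$ to terminate the recursion, which is not among the lemma's hypotheses; the paper instead bounds $\Omega$ by the cardinality of the vertex- and edge-sets of $M(X)$, which suffices since the $M_\alpha$ increase strictly.
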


\begin{proof}
We construct the sequence of subgraphs by transfinite induction. First, we set $M_0:=\eta(X)$. This is a submedian graph as a consequence of Lemma~\ref{lem:SubCubulation}.

\medskip \noindent
Next, assume that $M_\alpha$ is already constructed for some ordinal $\alpha$. If $M_\alpha=M(X)$, we can stop the construction. Otherwise, if $M_\alpha \subsetneq M(X)$, then it follows from Proposition~\ref{prop:MainCubulation} that $M_\alpha$ is not a median graph. Because, as a submedian graph, it does not contain $K_{2,3}$ as a subgraph and its square-completion is simply connected, it follows from Theorem~\ref{thm:MedianCriterion} that $M_\alpha$ contains a corner $W$ that does not span a $3$-cube in $M_\alpha$. But, since $M(X)$ is median, $W$ can be completed in $M(X)$ with a vertex which may or may not belong to $M_\alpha$. Then, we define $M_{\alpha+1}$ as the subgraph obtained from $M_\alpha$ by completing $W$. It is clear that $M_{\alpha+1}$ is a subgraph of a hypercube, since it is a subgraph of the median graph $M(X)$. Moreover, every loop in the square-completion of $M_{\alpha+1}$ can be easily homotoped in the square-completion of $M_\alpha$, so $M_{\alpha+1}$ must have a simply connected square-completion. Thus, $M_{\alpha+1}$ is a submedian graph. It is also clear that every edge of $M_{\alpha+1}$ not in $M_{\alpha}$ is parallel to an edge in $M_{\alpha}$, so a fortiori it has to be parallel to an edge in $M_0$.

\medskip \noindent
Finally, assume that there exists some limit ordinal $\lambda$ such that $M_\alpha$ is already constructed for $\alpha< \lambda$. Then define $M_\lambda$ as the union of the $M_\alpha$ for $\alpha<\lambda$. It is clear that $M_\lambda$ is a submedian subgraph and that every edge in $M_\lambda$ is parallel to some edge in $M_0$, because these properties already hold for all the $M_\alpha$ with $\alpha<\lambda$. 

\medskip \noindent
Of course, the process has to stop at some ordinal $\Omega$ whose cardinality is bounded above by the sum of the cardinalities of the vertex- and edge-sets of $M(X)$. So far, we have constructed our sequence of subgraphs and we have verified that the first three items of our lemma hold. Let us verify the fourth item in order to conclude the proof of our lemma. 

\medskip \noindent
Let $a,b \subset M_\alpha$ be two edges parallel in $M(X)$. It follows from what we already know that there exists an edge $a' \subset M_0$ (resp.\ $b' \subset M_0$) such that $a'$ (resp.\ $b'$) is parallel to $a$ (resp. $b$). According to Lemma~\ref{lem:SubCubulation}, $a'$ and $b'$, which are parallel in $M(X)$, must be parallel in $\eta(X)$ as well, and a fortiori in $M_\alpha$ since $\eta(X) = M_0 \subset M_\alpha$. Thus, two edges in $M_\alpha$ parallel in $M(X)$ are also parallel in $M_\alpha$. The converse is clear. 
\end{proof}

\begin{proof}[Proof of Proposition~\ref{prop:CubulatingSub}.]
Let $(M_\alpha)_{\alpha \leq \Omega}$ be the sequence given by Lemma~\ref{lem:CubulationInductively}. For every $\alpha \leq \Omega$, we claim that there exists a unique parallel-preserving map $\xi_\alpha : M_\alpha \to Y$ satisfying $\psi = \xi_\alpha \circ \eta$. We argue by transfinite induction over $\alpha$. 

\medskip \noindent
For $\alpha=0$, we set $\xi_0:= \psi \circ \eta^{-1}|_{\eta(X)}$ and there is nothing to prove. Next, assume that $\lambda\leq \Omega$ is a limit ordinal such that our claim holds for every $\alpha < \lambda$. Because $\xi_\mu$ and $\xi_\nu$ must agree on $M_\alpha$ for all $\mu,\nu \geq \alpha$, and since $M_\lambda$ is the union of the $M_\alpha$ for $\alpha< \lambda$, we can (and must) define $\xi_\lambda$ as $x \mapsto \xi_\alpha(x)$ if $\alpha$ is such that $x \in M_\alpha$. Because the $\xi_\alpha$, $\alpha<\lambda$, are all parallel-preserving, so is $\xi_\lambda$. 

\medskip \noindent
Now, assume that our claim holds for some $\alpha < \Omega$. We know that $M_{\alpha+1}$ can be obtained from $M_\alpha$ by completing a corner $W \subset M_\alpha$. Let $x \in W$ denote the centre of $W$, $x_1,x_2,x_3 \in W$ its neighbours, and, for all $1 \leq i < j \leq 3$, let $x_{ij} \in W$ denote the fourth vertex of the $4$-cycle spanned by $x,x_i,x_j$. We distinguish two cases, depending on whether $W$ is completed with a vertex $p$ in $M_\alpha$ or not. 

\medskip \noindent
First, assume that $p$ does not belong to $M_\alpha$. According to Corollary~\ref{cor:Wheel}, a parallel-preserving map sends a corner either to a single edge or to another corner. Consequently, since $p$ also defines a corner with $x_1,x_2,x_3,x_{12},x_{13},x_{23}$, either $\xi_\alpha$ sends $W$ to a single edge  and we must set $\xi_{\alpha+1}(p)=\xi_\alpha(x_1)$ (or equivalently, $\xi_\alpha(x_2)$ or $\xi_\alpha(x_3)$); or $\xi_\alpha$ sends $W$ to another corner $W'$ in $Y$ and we must define $\xi_{\alpha+1}(p)$ as the unique vertex of $Y$ completing $W'$ to a $3$-cube. In both cases, it is clear that $\xi_{\alpha+1}$ sends vertices to vertices and edges to edges. It remains to show that $\xi_{\alpha+1}$ is parallel-preserving. Let $e_1,e_2 \subset M_{\alpha+1}$ be two distinct parallel edges. If neither $e_1$ nor $e_2$ contains $p$, then $e_1,e_2 \subset M_\alpha$ and we already know that $\xi_{\alpha+1}(e_1)= \xi_\alpha(e_1)$ and $\xi_{\alpha+1}(e_2)= \xi_\alpha(e_2)$ are parallel. Notice that, because two distinct edges in a submedian graph cannot be parallel if they intersect, $e_1$ and $e_2$ cannot both contain $p$. Therefore, it only remains to consider the case where $p \notin e_1$ and $p \in e_2$ (up to switching $e_1$ and $e_2$). The endpoints of $e_2$ are then $p$ and $x_{ij}$ for some $1 \leq i<j \leq 3$, so $e_2$ is parallel to $e_2':= [x_i,x_{ik}]$ where $k \neq i,j$. On the one hand, $\xi_{\alpha+1}(e_1)=\xi_{\alpha}(e_1)$ and $\xi_{\alpha+1}(e_2')= \xi_\alpha(e_2')$ are parallel because $\xi_\alpha$ is parallel-preserving; and, on the other hand, $\xi_{\alpha+1}(e_2)$ is parallel to $\xi_\alpha(e_2')$ by construction. Consequently, $\xi_{\alpha+1}(e_1)$ and $\xi_{\alpha+1}(e_2)$ are parallel in $Y$, concluding the proof that $\xi_{\alpha+1}$ is parallel-preserving. 

\medskip \noindent
Next, assume that $p$ belongs to $M_\alpha$. In order to define $\xi_{\alpha+1} : M_{\alpha+1} \to Y$, it suffices to verify that, for all $1 \leq i<j \leq 3$, $\xi_\alpha(p)$ and $\xi_{\alpha}(x_{ij})$ are adjacent in $Y$. Because $M_\alpha$ is connected, there exists a path $\gamma \subset M_\alpha$ connecting $p$ to $x_{ij}$. The fact that $p$ and $x_{ij}$ are adjacent in $M(X)$ implies that the partition of the edges of $\gamma$ in parallelism classes in $M(X)$ contains exactly one piece of odd cardinality (given by the parallelism class of $[x_i,x_{ik}]$ where $k \neq i,j$). It follows from Lemma~\ref{lem:CubulationInductively} that the same partition is obtained from parallelism classes in $M_\alpha$. Because $\xi_\alpha$ is parallel-preserving, it sends a parallelism class inside a parallelism class, with the possibility that two distinct parallelism classes are sent inside the same parallelism class in $Y$. Consequently, the partition of $\xi_\alpha(\gamma)$ in parallelism classes in $Y$ contains exactly one piece of odd cardinality (given by the parallelism class of $[\xi_\alpha(x_i),\xi_\alpha(x_{ik})]$), which implies that the endpoints of $\xi_\alpha(\gamma)$, namely $\xi_\alpha(p)$ and $\xi_\alpha(x_{ij})$, are adjacent in $Y$. Let us record our conclusion:

\begin{fact}\label{fact:Parallel2}
The map $\xi_\alpha$ sends the vertices $p$ and $x_{ij}$ to two adjacent vertices of $Y$. Moreover, this edge is parallel to $[\xi_\alpha(x_i),\xi_\alpha(x_{ik})]$. 
\end{fact}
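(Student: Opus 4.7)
The plan is to reduce the adjacency of $\xi_\alpha(p)$ and $\xi_\alpha(x_{ij})$ in the median graph $Y$ to a parity count over the parallelism classes of edges in a single walk, combining Theorem~\ref{thm:BigMedian} with the last item of Lemma~\ref{lem:CubulationInductively}.

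First, I would pick any walk $\gamma$ from $p$ to $x_{ij}$ inside $M_\alpha$, which exists by connectedness. In any median graph, each hyperplane delimits two halfspaces (by Theorem~\ref{thm:BigMedian}), so a walk between two vertices crosses each separating hyperplane an odd number of times and each non-separating hyperplane an even number of times. Since $p$ and $x_{ij}$ are neighbours in $M(X)$, they are separated by a unique hyperplane of $M(X)$, namely the one containing $[p,x_{ij}]$, which is also the one containing the edge $[x_i,x_{ik}]$ (where $k$ is the index distinct from $i$ and $j$). Partitioning the edges of $\gamma$ into $M(X)$-parallelism classes, exactly one class has odd cardinality: the class of $[x_i,x_{ik}]$.

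Second, the fourth item of Lemma~\ref{lem:CubulationInductively} says that parallelism in $M_\alpha$ agrees with parallelism in $M(X)$ on edges of $M_\alpha$, so the same odd/even partition holds intrinsically in $M_\alpha$. Applying $\xi_\alpha$, which is parallel-preserving by the induction hypothesis, each $M_\alpha$-parallelism class of these edges is sent into a single $Y$-parallelism class; distinct $M_\alpha$-classes may merge in $Y$. Merging even classes yields an even union; merging the unique odd class with any number of even classes yields an odd union. Hence the edges of the walk $\xi_\alpha(\gamma)$ in $Y$ decompose into $Y$-parallelism classes with exactly one class of odd cardinality, namely the $Y$-parallelism class of $\xi_\alpha([x_i,x_{ik}])$.

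Third, applying the same parity criterion inside the median graph $Y$, the number of hyperplanes of $Y$ separating $\xi_\alpha(p)$ and $\xi_\alpha(x_{ij})$ equals the number of odd-cardinality parallelism classes of edges of $\xi_\alpha(\gamma)$, namely one. By Theorem~\ref{thm:BigMedian}, these two vertices are then at distance one, hence adjacent, and the edge joining them lies in the unique separating hyperplane, which is the $Y$-parallelism class of $[\xi_\alpha(x_i),\xi_\alpha(x_{ik})]$. This yields both assertions at once. The delicate point I expect is the parity accounting after the possible merging of parallelism classes by $\xi_\alpha$: it works precisely because only one $M_\alpha$-class of edges of $\gamma$ is odd, so no cancellation can occur and a unique odd class survives in $Y$.
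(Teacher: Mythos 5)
Your argument is correct and follows essentially the same route as the paper: choose a path $\gamma \subset M_\alpha$ from $p$ to $x_{ij}$, observe that exactly one $M(X)$-parallelism class of its edges is odd (the class of $[x_i,x_{ik}]$), transfer this to $M_\alpha$-classes via the last item of Lemma~\ref{lem:CubulationInductively}, and push forward by the parallel-preserving map $\xi_\alpha$, noting that merging classes cannot destroy the unique odd class. The only difference is that you make explicit the parity bookkeeping and the final appeal to Theorem~\ref{thm:BigMedian} (distance equals number of separating hyperplanes), which the paper leaves implicit.
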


\noindent
Thus, $\xi_\alpha$ extends uniquely to a map $\xi_{\alpha+1} : M_{\alpha+1} \to Y$ that sends vertices to vertices and edges to edges. It remains to verify that $\xi_{\alpha+1}$ is parallel-preserving. This can done almost word for word as before, but we repeat the argument for clarity. Let $e_1,e_2 \subset M_{\alpha+1}$ be two distinct parallel edges. If neither $e_1$ nor $e_2$ contains $p$, then $e_1,e_2 \subset M_\alpha$ and we already know that $\xi_{\alpha+1}(e_1)= \xi_\alpha(e_1)$ and $\xi_{\alpha+1}(e_2)= \xi_\alpha(e_2)$ are parallel. Notice that, because two distinct edges in a submedian graph cannot be parallel if they intersect, $e_1$ and $e_2$ cannot both contain $p$. Therefore, it only remains to consider the case where $p \notin e_1$ and $p \in e_2$ (up to switching $e_1$ and $e_2$). The endpoints of $e_2$ are then $p$ and $x_{ij}$ for some $1 \leq i<j \leq 3$, so $e_2$ is parallel to $e_2':= [x_i,x_{ik}]$ where $k \neq i,j$. On the one hand, $\xi_{\alpha+1}(e_1)=\xi_{\alpha}(e_1)$ and $\xi_{\alpha+1}(e_2')= \xi_\alpha(e_2')$ are parallel because $\xi_\alpha$ is parallel-preserving; and, on the other hand, $\xi_{\alpha+1}(e_2)$ is parallel to $\xi_\alpha(e_2')$ according to Fact~\ref{fact:Parallel2}. Consequently, $\xi_{\alpha+1}(e_1)$ and $\xi_{\alpha+1}(e_2)$ are parallel in $Y$, concluding the proof that $\xi_{\alpha+1}$ is parallel-preserving. 
\end{proof}

\subsection{Folding two hyperplanes}\label{section:FoldingTwo}

\noindent
Now we are ready to fold hyperplanes in median graphs. There is a naive construction that merges two hyperplanes:

\begin{definition}
Let $X$ be a median graph and $A,B$ two hyperplanes in contact. Let $\alpha$ (resp.\ $\beta$) denote the canonical involution of $N(A)$ (resp.\ $N(B)$). The \emph{first fold of $X$ relative to $\{A,B\}$} is the graph obtained from $X$ by identifying $\alpha(p)$ and $\beta(p)$ for every $p \in N(A) \cap N(B)$. 
\end{definition}

\noindent
We can distinguish two types of folds, depending on whether the two hyperplanes $A,B$ are tangent or transverse. In the latter case, folding amounts to collapsing all the $4$-cycles in $N(A) \cap N(B)$ to edges. See Figure~\ref{Folding} for examples. Unfortunately, the first fold of a median graph may not be median. Nevertheless, it turns out to be submedian. In particular, it keeps a wallspace structure. 

\begin{lemma}\label{lem:FoldSub}
Let $X$ be a median graph and $A,B$ two hyperplanes in contact. The first fold $Z$ of $X$ relative to $\{A,B\}$ is submedian. Moreover, the quotient map $X \twoheadrightarrow Z$ is parallel-preserving.
\end{lemma}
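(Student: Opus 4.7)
The plan is to embed $Z$ explicitly into a hypercube, verify that $Z^\square$ is simply connected, and observe that parallel-preservation of the quotient map $q : X \to Z$ follows from the construction.

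Let $T$ denote the set of hyperplanes of $X$ and set $T' := (T \setminus \{A, B\}) \cup \{[A]\}$, the set obtained by merging $A$ and $B$ into a single element $[A]$. Fixing a basepoint $o \in X$, I would define $\phi : X \to \mathscr{HC}(T')$ by
\[
\phi(x) := \left( \mathcal{H}(o|x) \setminus \{A, B\} \right) \cup \left\{ [A] : |\mathcal{H}(o|x) \cap \{A, B\}| \text{ is odd} \right\}.
\]
Since moving across any edge of $X$ toggles exactly one element of $\phi$, this is a graph morphism. For $p \in N(A) \cap N(B)$, the vertex $\alpha(p)$ differs from $p$ by crossing $A$ only and $\beta(p)$ by crossing $B$ only, each of which toggles $[A]$; hence $\phi(\alpha(p)) = \phi(\beta(p))$, so $\phi$ factors through $q$ to a graph morphism $\tilde\phi : Z \to \mathscr{HC}(T')$. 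Injectivity of $\tilde\phi$ follows by observing that $\phi(x) = \phi(y)$ forces $\mathcal{H}(x|y) \subseteq \{A, B\}$ with even cardinality; either $x = y$, or $d(x, y) = 2$ with a common neighbor $p \in N(A) \cap N(B)$ satisfying $\{x, y\} = \{\alpha(p), \beta(p)\}$, which yields $x \sim y$ in $Z$. This provides an embedding of $Z$ into a hypercube and incidentally shows that the equivalence classes defining $Z$ all have size at most $2$.

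The main obstacle will be to show that $Z^\square$ is simply connected, which I would establish via a local analysis near the fold locus $N(A) \cap N(B)$. The key observation is that the identifications $\alpha(p) \sim \beta(p)$ are local in the sense that $\alpha(p)$ and $\beta(p)$ always lie at distance $2$ in $X$ through $p$. In the transverse case, the $4$-cycle $\alpha(p), p, \beta(p), \alpha\beta(p)$ descends in the simple-graph quotient $Z$ to the path $p - [\alpha(p){=}\beta(p)] - \alpha\beta(p)$ because opposite edges of the $4$-cycle are identified in $Z$, and the corresponding $2$-cell of $X^\square$ gives a null-homotopy of the resulting local loop in $Z^\square$. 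In the tangent case, the two edges $[\alpha(p), p]$ and $[p, \beta(p)]$ already coincide in the simple-graph quotient $Z$, so the image of the length-$2$ path $\alpha(p) - p - \beta(p)$ is a trivial back-and-forth. Given an arbitrary loop $\bar\gamma$ in $Z$, I would lift it to an edge-path $\gamma$ in $X$ whose endpoints are either equal or identified; in the first case, the simple connectivity of $X^\square$ provides a null-homotopy of $\bar\gamma$, and in the second case, one of the local null-homotopies above reduces the problem to a shorter loop. Together with Proposition~\ref{prop:SubCriterion} and the embedding into $\mathscr{HC}(T')$, this shows that $Z$ is submedian.

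Finally, parallel-preservation of $q$ is immediate from the construction: $q$ sends edges to edges, and any two edges parallel in $X$ are connected by a chain of $4$-cycles, each of which either maps to a $4$-cycle in $Z$ or collapses to an edge, so the images remain parallel in $Z$ (with the caveat that the two distinct classes $A$ and $B$ in $X$ merge into a single parallelism class in $Z$, which is consistent with the definition of parallel-preserving).
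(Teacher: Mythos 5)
Your proposal follows essentially the same route as the paper's proof: the map $\phi$ merging $A$ and $B$ into a single hypercube coordinate via the parity of $|\mathcal{H}(o|x)\cap\{A,B\}|$ is exactly the paper's embedding $\Theta$, the simple-connectivity argument via lifting loops to broken paths in $X$ and repairing each break with the length-two detour $\alpha(p)-p-\beta(p)$ (whose image is a backtrack) is the paper's ``broken loop'' argument, and the parallel-preservation step is identical. One small inaccuracy worth noting: in the transverse case the $4$-cycle $p,\alpha(p),\alpha\beta(p),\beta(p)$ does not descend to a path of length two but collapses to a single edge, since $p$ and $\alpha\beta(p)=\beta\alpha(p)$ are also identified (take $q=\alpha(p)\in N(A)\cap N(B)$ in the defining identifications); this does not affect your argument.
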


\noindent
Our proof of the statement is inspired by the proof of Lemma~\ref{lem:MedianHC} given above.

\begin{proof}[Proof of Lemma~\ref{lem:FoldSub}.]
For all vertices $x,y \in X$, define
$$\delta(x,y) := \left\{ \begin{array}{cl} d(x,y)-2 & \text{if both $A,B$ separate $x,y$} \\ d(x,y) & \text{otherwise} \end{array} \right..$$
Clearly, for all distinct $x,y \in X$, $\delta(x,y) \geq 0$ with equality if and only if $\mathcal{H}(x|y)=\{A,B\}$, which amounts to saying that there exists some $p \in N(A) \cap N(B)$ such that $x=\alpha(p)$ and $y=\beta(p)$. Moreover, $\delta(x,y)=1$ if and only if either $x,y$ are adjacent in $X$ or $\mathcal{H}(x|y)=\{A,B,J\}$ for some hyperplane $J \neq A,B$. In the latter case, there exists a vertex $z \in X$ such that, up to switching $x$ and $y$, we have $x$ adjacent to $z$ and $\delta(z,y)=0$. Indeed, fix an arbitrary geodesic $\gamma$ between $x$ and $y$. The hyperplanes crossing $\gamma$ must be exactly $A,B,J$. If $\gamma$ contains two consecutive edges crossed by $A,B$, then our claim follows. Otherwise, $J$ crosses the middle edge of $\gamma$ and $A,B$ cross the first and third edges of $\gamma$. Because $A$ and $B$ are in contact, $J$ cannot separate $A$ and $B$, so it has to be transverse to $A$ or $B$. This implies that the middle edge of $\gamma$ spans a $4$-cycle with the first or third edge. Consequently, we can modify $\gamma$ in order to get two consecutive edges crossed by $A$ and $B$, concluding the proof of our claim. 

\medskip \noindent
From the previous argument, it follows that $\delta$ defines a pseudo-metric on $X$ whose corresponding metric space is isometric to our folding $Z$. Fix a basepoint $o \in X$ and define
$$\Theta : \left\{ \begin{array}{ccc} X & \to & \mathscr{HC} \left( \mathcal{H}(X) \backslash \{A,B\} \sqcup \{Q\} \right) \\ x & \mapsto & \left\{ \begin{array}{cl} \mathcal{H}(o|x) & \text{if neither $A$ nor $B$ separate $o,x$} \\ \mathcal{H}(o|x)\backslash \{A\} \sqcup \{Q\} & \text{if $A$ separates $o,x$ but not $B$} \\ \mathcal{H}(o|x) \backslash \{B\} \sqcup \{Q\} & \text{if $B$ separates $o,x$ but not $A$} \\ \mathcal{H}(o|x) \backslash \{A,B\} & \text{if $A,B$ both separate $o,x$} \end{array} \right. \end{array} \right..$$
In other words, $\Theta$ sends each vertex $x$ to the (finite) collection of the hyperplanes separating $o$ and $x$, with the convention that $A$ and $B$ are considered as a single hyperplane $A \sqcup B$ which we denote by $Q$. By construction, we have the symmetric difference
$$\Theta(x) \Delta \Theta(y) = \left\{ \begin{array}{cl} \mathcal{H}(x|y) & \text{if neither $A$ nor $B$ separate $x,y$} \\ \mathcal{H}(x|y)\backslash \{A\} \sqcup \{Q\} & \text{if $A$ separates $x,y$ but not $B$} \\ \mathcal{H}(x|y) \backslash \{B\} \sqcup \{Q\} & \text{if $B$ separates $x,y$ but not $A$} \\ \mathcal{H}(x|y) \backslash \{A,B\} & \text{if $A,B$ both separate $x,y$} \end{array} \right.$$
for all vertices $x,y \in X$. It follows that $\Theta$ induces an isometric embedding from $(X,\delta)$ into a hypercube, proving that our folding $Z$ is a partial cube.

\medskip \noindent
It is clear that the quotient map $\pi : X \twoheadrightarrow Z$ sends vertices to vertices and edges to edges. Moreover, a $4$-cycle is sent to a single edge if the hyperplanes crossing it are exactly $A$ and $B$; and it is sent to a $4$-cycle otherwise. This implies that $\pi$ is parallel-preserving.

\medskip \noindent
It remains to show that the square completion of $Z$ is simply connected. So let $\gamma$ be a loop in $Z$. A priori, $\gamma$ is not the image of a loop in $X$ under $\pi$. However, it can be described as the image of a \emph{broken loop} $\zeta$ in $X$, i.e.\ a cycle of vertices such that any two consecutive vertices either are adjacent in $X$ or have the same projection to $Z$. If $a,b$ are two consecutive vertices of $\zeta$ having the same projection to $Z$, we can connect them with a path of length two whose projection to $Z$ is a backtrack. Thus, we can construct a (non-broken) loop $\zeta^+$ in $X$ whose projection to $Z$ differ from the projection of $\zeta$ only by addition of backtracks, and so defines a loop in $Z$ homotopically equivalent to $\gamma$. Since $\zeta^+$ becomes homotopically trivial in the square-completion of $X$ and because $\pi$ sends $4$-cycles to edges or $4$-cycles, we conclude that $\gamma$ is homotopically trivial in the square-completion of $Z$. Thus, $Z$ is a submedian graph. 
\end{proof}

\begin{remark}
It is worth mentioning that our proof of Lemma~\ref{lem:FoldSub} shows that the first fold of a median graph isometrically embeds into a hypercube. As a consequence, it can be deduced that the first fold of a median graph turns out to be semi-median (as introduced in \cite{MR1642702}). However, we emphasize that iterating first folds may produce graphs that are not semi-median, nor even submedian. See Figure~\ref{Bad}. 
\end{remark}
\begin{figure}[h!]
\begin{center}
\includegraphics[width=0.8\linewidth]{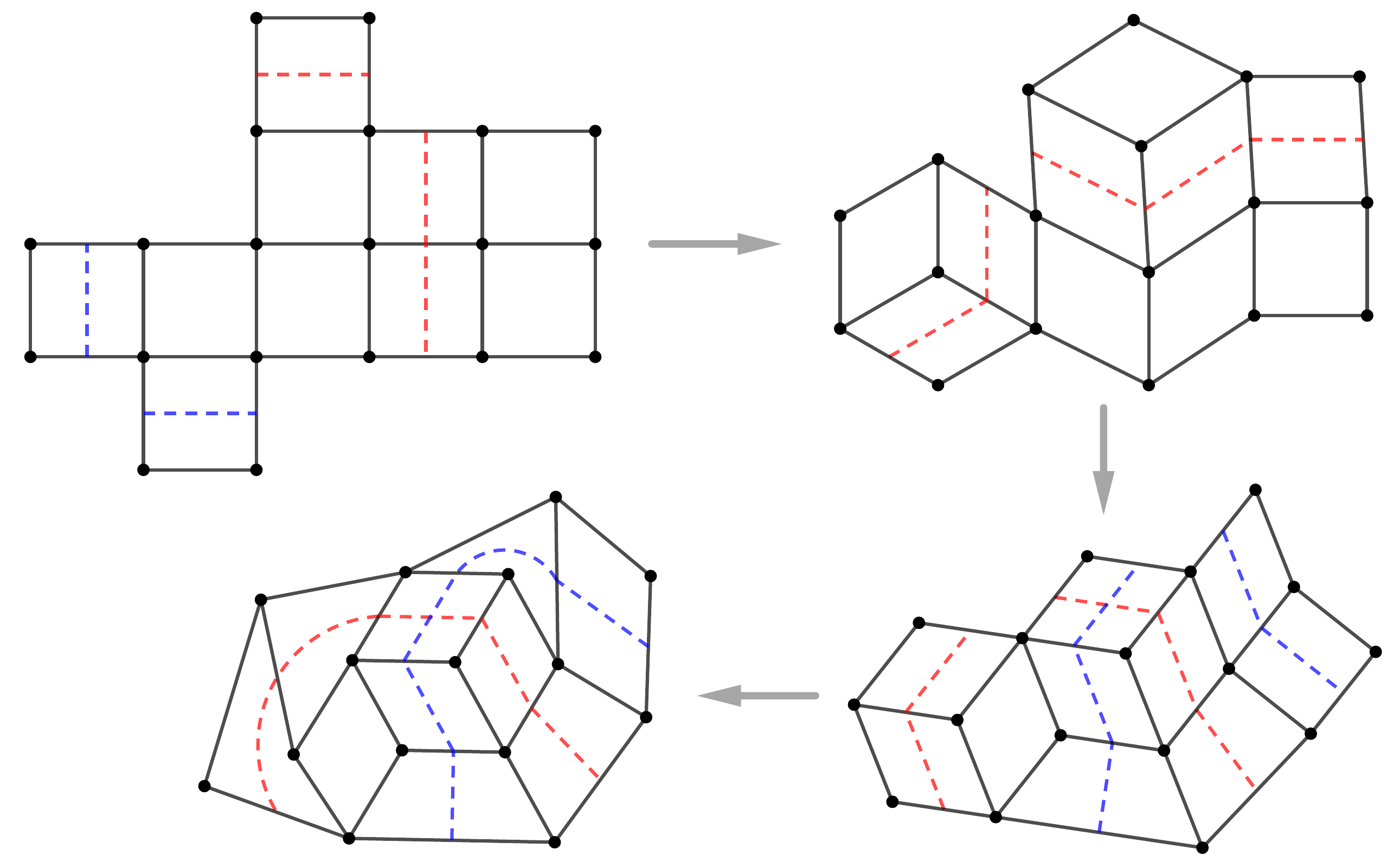}
\caption{A graph that is not submedian but obtained from a median graph by a sequence of foldings.}
\label{Bad}
\end{center}
\end{figure}

\noindent
First folds are the natural operations that merge pairs of hyperplanes. However, as already mentioned, a first fold of a median graph may not be median. However, since the fold turns out to be submedian according to Lemma~\ref{lem:FoldSub}, it admits a natural wallspace structure which we can cubulate in order to get a median graph. 

\begin{definition}
Let $X$ be a median graph and $A,B$ two hyperplanes in contact. The \emph{fold of $X$ relative to $\{A,B\}$} is the cubulation of the first fold of $X$ relative to $\{A,B\}$.
\end{definition}

\noindent
The next proposition justifies that our definition of fold is the most natural one.

\begin{prop}\label{prop:FoldingTwoHyp}
Let $X$ be a median graph and $A,B$ two hyperplanes in contact. Let $Z$ denote the fold of $X$ relative to $\{A,B\}$ and $\eta : X \to Z$ the canonical map. For every median graph $Y$ and every parallel-preserving map $\psi : X \to Y$ satisfying $\psi(A)=\psi(B)$, there exists a unique parallel-preserving map $\xi : Z \to Y$ such that $\psi = \xi \circ \eta$. \\
\indent Moreover, for any two distinct hyperplanes $J,K$ of $X$, $\eta(J)=\eta(K)$ if and only if $\{J,K\} = \{A,B\}$. 
\end{prop}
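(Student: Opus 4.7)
The plan is to factor the canonical map $\eta : X \to Z$ as $\eta = \eta_0 \circ \pi$, where $\pi : X \to Z_0$ is the quotient onto the first fold (which is submedian by Lemma~\ref{lem:FoldSub}) and $\eta_0 : Z_0 \to Z = M(Z_0)$ is the embedding into the cubulation. I would construct $\xi$ in two stages: first, factor $\psi$ through $\pi$ as $\psi = \tilde{\psi} \circ \pi$ for a parallel-preserving map $\tilde{\psi} : Z_0 \to Y$; then extend $\tilde{\psi}$ across the cubulation using Proposition~\ref{prop:CubulatingSub} to obtain $\xi : Z \to Y$.

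The crucial preliminary step, which I would tackle first, is to identify the parallelism classes of $Z_0$, since this simultaneously gives the ``moreover'' clause and feeds into the parallel-preserving property of $\tilde{\psi}$. I would use the isometric embedding of $Z_0$ into the hypercube $\mathscr{HC}(\mathcal{H}(X) \setminus \{A,B\} \sqcup \{Q\})$ built in the proof of Lemma~\ref{lem:FoldSub}, in which edges of $Z_0$ coming from $A$ or $B$ both flip the ``merged'' coordinate $Q$ while edges coming from any other hyperplane $J$ flip coordinate $J$. Since an isometric embedding into a hypercube sends $4$-cycles to $4$-cycles, parallel edges of $Z_0$ must flip the same coordinate, forcing them to lift either to a common hyperplane of $X$ distinct from $A, B$, or both into $A \cup B$. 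Conversely, $\pi(A)$ and $\pi(B)$ do merge into a single parallelism class: in the tangent case, the edges $[p,\alpha(p)]$ and $[p,\beta(p)]$ are collapsed to a single edge at each $p \in N(A) \cap N(B)$; in the transverse case, each $4$-cycle $(p,\alpha(p),m,\beta(p))$ in $N(A) \cap N(B)$ collapses to a single edge because both $\alpha(p) \sim \beta(p)$ and $p \sim m$ are imposed (the latter coming from the identification at $q = \alpha(p)$). Combined with Lemma~\ref{lem:SubCubulation}, which says $\eta_0$ preserves and reflects parallelism, this yields the ``moreover'' clause.

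Next, I would define $\tilde{\psi} : Z_0 \to Y$ by $\tilde{\psi}(\pi(x)) := \psi(x)$. Well-definedness reduces to the case $\{x,y\} = \{\alpha(p),\beta(p)\}$ for some $p \in N(A) \cap N(B)$: the edges $[x,p] \in A$ and $[y,p] \in B$ have $\psi$-images that are parallel in $Y$ (since $\psi(A) = \psi(B)$) and share the vertex $\psi(p)$, hence must coincide by the fact that distinct parallel edges in a median graph cannot share a vertex; this forces $\psi(x) = \psi(y)$. The map $\tilde{\psi}$ sends edges to edges automatically by lifting any edge of $Z_0$ to an edge of $X$. For the parallel-preserving property, given two parallel edges $e_1, e_2$ of $Z_0$ with lifts $\tilde{e}_i \subset X$, the parallelism-class analysis above places the $\tilde{e}_i$ in a common hyperplane of $X$ or both in $A \cup B$; in either case, $\psi(\tilde{e}_1)$ and $\psi(\tilde{e}_2)$ lie in a common hyperplane of $Y$, so $\tilde{\psi}(e_1), \tilde{\psi}(e_2)$ are parallel in $Y$.

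Finally, applying Proposition~\ref{prop:CubulatingSub} to $\tilde{\psi}$ yields a unique parallel-preserving extension $\xi : Z \to Y$ with $\tilde{\psi} = \xi \circ \eta_0$, whence $\xi \circ \eta = \xi \circ \eta_0 \circ \pi = \tilde{\psi} \circ \pi = \psi$. Uniqueness of $\xi$ follows from the uniqueness clause in Proposition~\ref{prop:CubulatingSub} combined with the surjectivity of $\pi$. I expect the second step (the parallelism-class analysis in $Z_0$ via the hypercube embedding) to be the main obstacle; the rest is a matter of threading together the machinery already developed for submedian graphs and cubulations.
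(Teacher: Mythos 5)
Your proposal is correct and follows the same overall architecture as the paper's proof: factor $\psi$ through the first fold, check that the induced map on the first fold is well defined (a length-two path crossing $\psi(A)=\psi(B)$ twice must backtrack) and parallel-preserving, then invoke Proposition~\ref{prop:CubulatingSub} for both existence and uniqueness of $\xi$, and read off the ``moreover'' clause from the correspondence between parallelism classes of the first fold and hyperplanes of its cubulation. The one genuine divergence is how you establish the central claim that $\pi(J)=\pi(K)$ forces $J=K$ or $\{J,K\}=\{A,B\}$ (Claim~\ref{claim:PiHyperplanes} in the paper). The paper argues directly in the first fold: it takes two edges of $X$ whose images are opposite sides of a $4$-cycle and runs a case analysis on whether the relevant distances in $X$ equal $1$ or $3$. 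You instead recycle the isometric embedding $\Theta$ of the first fold into $\mathscr{HC}\left(\mathcal{H}(X)\setminus\{A,B\}\sqcup\{Q\}\right)$ already built in the proof of Lemma~\ref{lem:FoldSub}: since the first fold is bipartite, genuine $4$-cycles are sent to $4$-cycles of the hypercube, opposite edges of which flip the same coordinate, so parallel edges lift into a single hyperplane of $X$ or both into $A\cup B$. This is a clean shortcut that extracts more from the hypercube embedding at the cost of leaning on it (the paper's case analysis is self-contained and does not presuppose that the quotient is a partial cube); both routes still need the easy converse that $\pi(A)$ and $\pi(B)$ actually merge, which you verify correctly in the tangent and transverse cases separately.
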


\begin{proof}
Let $\alpha$ (resp.\ $\beta$) denote the canonical involution of $N(A)$ (resp.\ $N(B)$). Given a vertex $p \in N(A) \cap N(B)$, the path $[\alpha(p),p] \cup [p, \beta(p)]$ is sent by $\psi$ to a path of length two that crosses the hyperplane $\psi(A)=\psi(B)$ twice. This implies that $\psi(\alpha(p))=\psi(\beta(p))$. Consequently, $\psi$ uniquely factors through the first fold $X'$ of $X$ relative to $\{A,B\}$, i.e.\ there exists a map $\zeta : X' \to Y$ such that $\psi = \zeta \circ \pi$ where $\pi : X \to X'$ denotes the canonical projection. More precisely, $\zeta$ is defined by taking, given a vertex $a \in X'$, an arbitrary pre-image $x \in \pi^{-1}(a)$ and by setting $\zeta(z):= \psi(x)$. Our previous observation precisely shows that the vertex of $Y$ thus obtained does not depend on the choice of the pre-image.

\medskip \noindent
Because every edge of $X'$ is the image of an edge of $X$ and that $\psi$ sends an edge to an edge, we know that $\zeta$ sends edges to edges. It remains to show that $\zeta$ is parallel-preserving. 

\begin{claim}\label{claim:PiHyperplanes}
For all hyperplanes $J,K$ of $X$, $\pi(J)=\pi(K)$ if and only if $J=K$ or $\{J,K\} = \{A,B\}$. 
\end{claim}

\noindent
We claim that, if two edges of $X$ are sent by $\pi$ to two opposite edges in some $4$-cycle of $X'$, then our two initial edges necessarily belong to the same hyperplane $X$ or belong to $A$ and $B$. This is sufficient to deduce our statement since, given two hyperplanes $J,K$ of $X$ with the same image in $X'$, we can take a sequence of edges $\pi(e_1), \ldots, \pi(e_n)$ such that $e_1$ belongs to $J$, $e_n$ belongs to $K$, and $\pi(e_i)$ is opposite to $\pi(e_{i+1})$ in some $4$-cycle for every $1 \leq i \leq n-1$. Then our claim implies that, for every $1 \leq i \leq n-1$, $e_i$ and $e_{i+1}$ either belong to the same hyperplane or they belong to $A$ and $B$. This amounts to saying that either $J=K$ or $\{J,K\}= \{A,B\}$.

\medskip \noindent
So let $[a,b]$ and $[x,y]$ be two edges such that $\pi(a)$ is adjacent to $\pi(x)$ and such that $\pi(b)$ is adjacent to $\pi(y)$. Because $d(a,x)$ and $d(b,y)$ can be equal either to $1$ or to $3$, there are different cases to distinguish. If $d(a,x)$ and $d(b,y)$ are both equal to $1$, then $a,b,x,y$ define a $4$-cycle and the edges $[a,b]$ and $[x,y]$ clearly belong to the same hyperplane. If exactly one of $d(a,x)$ and $d(b,y)$ is equal to $1$, say $d(a,x)=1$ and $d(b,y)=3$, then there are hyperplanes $J$ and $K \in \{A,B\}$ such that $\mathcal{H}(a|x)=\{J\}$ and $\mathcal{H}(b|y)=\{A,B,K\}$. If $K$ separates $a$ and $b$, then $\mathcal{H}(a|y)=\{A,B\}$, which implies that $\pi(a)=\pi(y)$, which is impossible. Similarly, $K$ cannot separate $x$ and $y$. Consequently, $K=J$ and our edges $[a,b]$ and $[x,y]$ belong to $A$ and $B$. Finally, if $d(a,x)$ and $(b,y)$ are both equal to $3$, then there exist hyperplanes $J,K$ such that $\mathcal{H}(a|x)=\{A,B,J\}$ and $\mathcal{H}(b|y) = \{A,B,K\}$. If $J$ separates $a$ and $b$, then $\mathcal{H}(b|y)=\{A,B\}$, hence $\pi(b)=\pi(y)$, which is not possible. Similarly, $J$ cannot separate $x$ and $y$, which implies that $J$ must separate $b$ and $y$, i.e.\ $J=K$. It follows that the hyperplane separating $a$ and $b$ coincides with the hyperplane separating $x$ and $y$. This concludes the proof of Claim~\ref{claim:PiHyperplanes}. 

\medskip \noindent
Now, let us prove that $\zeta$ is parallel-preserving. Let $e,f \subset X'$ be two parallel edges. Fix two edges $e',f' \subset X$ respectively sent to $e,f$ by $\pi$. As a consequence of Claim~\ref{claim:PiHyperplanes}, either $e'$ and $f'$ are parallel, which implies that $\xi(e)=\psi(e')$ and $\xi(f')= \psi(f)$ are parallel since $\psi$ is parallel-preserving; or $e'$ and $f'$ belong to $A$ and $B$, which implies that $\xi(e)=\psi(e')$ and $\xi(f')= \psi(f)$ are parallel since $\psi(A)= \psi(B)$.

\medskip \noindent
So far, we have proved that there exists a unique parallel-preserving map $\zeta : X' \to Y$ satisfying $\psi = \zeta \circ \pi$. Next, as a consequence of Proposition~\ref{prop:CubulatingSub} (which applies according to Lemma~\ref{lem:FoldSub}) to the cubulation $Z:=M(X')$, we know that there exists a unique parallel-preserving map $\xi : Z \to Y$ satisfying $\zeta = \xi \circ \eta$, where $\eta : X' \to Z$ is the canonical embedding. This concludes the proof of the first assertion of our proposition. The second assertion follows from Claim~\ref{claim:PiHyperplanes} and Proposition~\ref{prop:MainCubulation}.
\end{proof}

\noindent
It is worth noticing that, as justified by the next example, in general there is no canonical way to fold two hyperplanes that are not in contact.

\begin{remark}\label{remark:NotCanonicalFolding}
Let $X$ be a path of length four, and let $A,B,C,D$ denote its successive hyperplanes. In order to fold $A$ and $D$, we have two natural constructions. See Figure~\ref{NoFold}. First, we can fold $B$ and $C$, in order to make $A$ and $D$ tangent, and then fold $A$ and $D$. The median graph $Y$ thus obtained is a path of length two. Next, we can make $A$ transverse to both $B$ and $C$, in order to make $A$ and $D$ tangent, and then fold $A$ and $D$. The median graph $Z$ thus obtained is two $4$-cycles with a common edge. Unfortunately, there is no median graph $W$ in which $A,D$ coincide so that $X \to Y$ and $X \to Z$ factor through $W$ via parallel-preserving maps. 

\begin{figure}[h!]
\begin{center}
\includegraphics[width=0.6\linewidth]{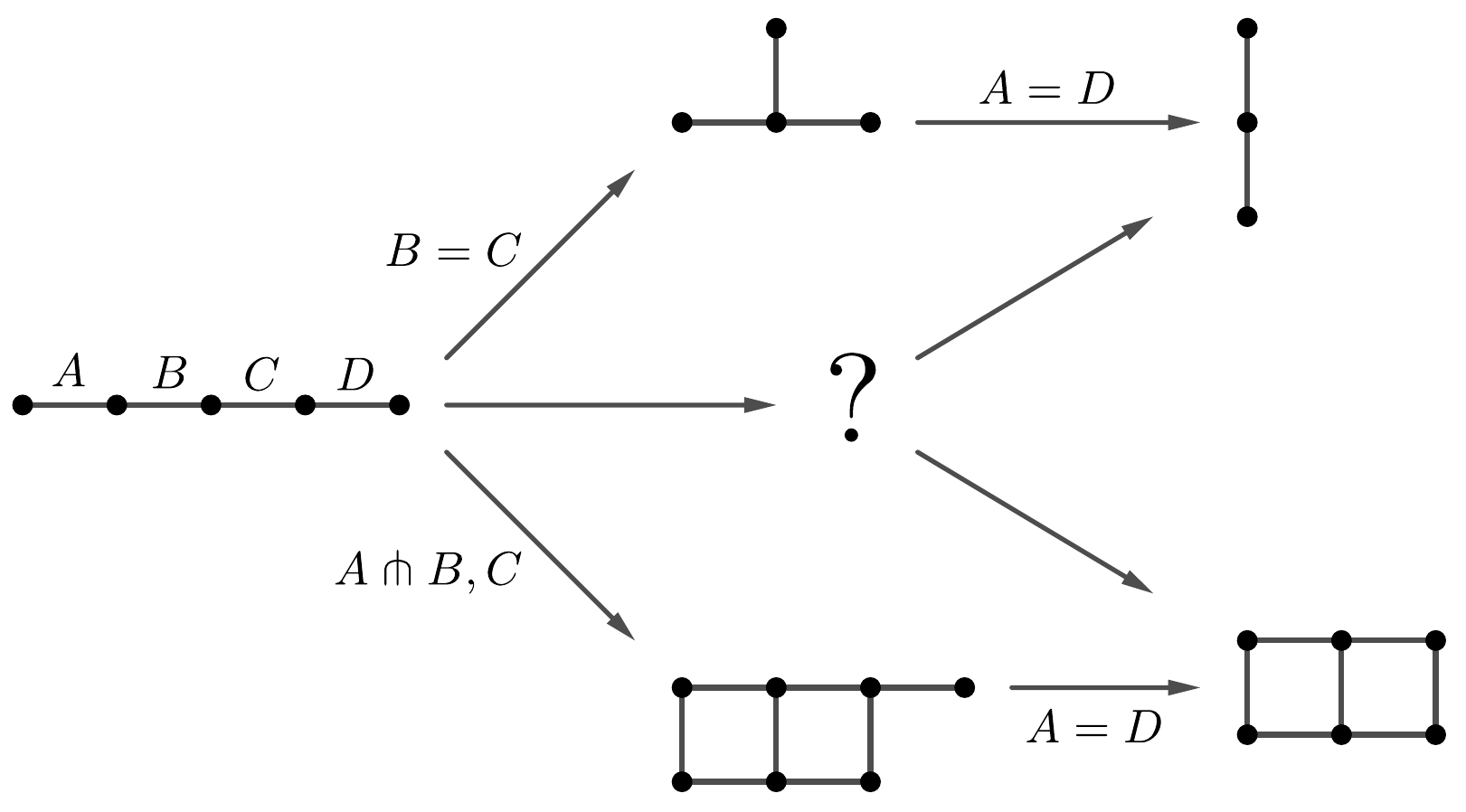}
\caption{A configuration where there is no canonical way to fold two hyperplanes.}
\label{NoFold}
\end{center}
\end{figure}
\end{remark}

\subsection{Folding hyperplanes}\label{section:ThmFolding}

\noindent
So far, we have seen how to fold a pair of hyperplanes in contact. Now, given an arbitrary collection of pairs of hyperplanes in contact, we prove Theorem~\ref{thm:Foldings} by folding the pairs successively. A priori, the median graph thus constructed depends on the enumeration we use to fold the pairs of hyperplanes. However, it will follow from the universal property provided by Proposition~\ref{prop:FoldingTwoHyp} that one obtains the same median graph independently of the enumeration, which is illustrated by the universal property claimed by Theorem~\ref{thm:Foldings}.

\medskip \noindent
In order to define properly the graph obtained by applying infinitely many foldings and swellings, let us recall that:

\begin{definition}\label{def:DirectLimit}
Let $(X_i)_{i \geq 0}$ be a sequence of graphs and $(\sigma_i : X_i \to X_{i+1})_{i \geq 0}$ a sequence of maps (sending vertices to vertices and edges either to edges or to vertices). The \emph{direct limit} $\lim\limits_{\longrightarrow} (X_i,\sigma_i)_{i \geq 0}$ is the graph
\begin{itemize}
	\item whose vertex-set is $\bigsqcup_{i \geq 0} X_i$ quotiented by the equivalence relation that identifies two vertices $a \in X_i$ and $b \in X_j$ whenever they have the same image in some $X_k$, i.e.\ if there exist $r,s \geq 0$ such that $\sigma_{i+r} \circ \cdots \circ \sigma_i (a)= \sigma_{j+s} \circ \cdots \circ \sigma_j(b)$;
	\item and whose edges connect two vertices $a \in X_i$ and $b \in X_j$ (say with $i \leq j$) whenever their images in $X_k$ are eventually adjacent, i.e.\ if $\sigma_{j+s} \circ \cdots \circ \sigma_j \circ \sigma_{j-1} \circ \cdots \circ \sigma_i(a)$ and $\sigma_{j+s} \circ \cdots \circ \sigma_j(b)$ are adjacent in $X_{j+s+1}$ for every $s \geq 1$ large enough.
\end{itemize}
\end{definition}

\noindent
Notice that each $X_i$ maps canonically to the direct limit. 

\begin{proof}[Proof of Theorem~\ref{thm:Foldings}.]
Up to adding redundancy in $\mathcal{P}$ and saying that folding a pair of hyperplanes given by a single hyperplane does not modify the median graph, we can assume that $\mathcal{P}$ is infinite. Fix an enumeration $\mathcal{P}= \{ \{A_i,B_i\}, i \geq 1\}$. We construct median graphs $Z_i$, parallel-preserving maps $\eta_i : X \twoheadrightarrow Z_i$, and parallel-preserving maps $\pi_{i+1} : Z_i \to Z_{i+1}$ for every $i \geq 0$ by the following inductive process:
\begin{itemize}
	\item We set $Z_0:=X$ and $\eta_0= \mathrm{id}_X$. 
	\item At a step $i \geq 1$, we define $Z_{i+1}$ as the folding of $Z_i$ relative to $\{A_{i+1},B_{i+1}\}$, we let $\pi_{i+1} : Z_i \twoheadrightarrow Z_{i+1}$ be the canonical projection, and we set $\eta_{i+1}:= \pi_{i+1} \circ \eta_i$. 
\end{itemize}
Now, we can define $Z$ as the direct limit of the $(Z_i,\pi_{i+1})$ and $\eta : X \to Z$ as the limit of the $\eta_i$.

\medskip \noindent
Observe that, because the $\pi_i$ cannot increase distances (indeed, they send edges to edges), the distance in $Z$ between two vertices in $Z_i$ and $Z_j$ agrees with the distance between their images in $Z_k$ for $k$ large enough. It easily follows that $Z$ must be a median graph. 

\medskip \noindent
Now, we want to show that $Z$ and $\eta$ satisfy the universal property given by the theorem. So let $Y$ be a median graph and $\psi : X \to Y$ a parallel-preserving map satisfying $\psi(A)=\psi(B)$ for every $\{A,B\} \in \mathcal{P}$. According to Proposition~\ref{prop:FoldingTwoHyp}, there exist maps $\xi_i : Z_i \to Y$ such that, for every $i \geq 1$, $\xi_i$ is the unique parallel-preserving map $Z_i \to Y$ satisfying $\xi_{i-1}= \xi_i \circ \pi_i$. Then the limit $\xi$ of the $\xi_i$ yields a commutative diagram

\medskip
\hspace{3cm}
\xymatrix{
X \ar[rrrr]^\psi \ar[drr]^{\eta_0} \ar[ddrr]^{\eta_1} \ar[ddddrr]^\eta && && Y \\ && Z_0 \ar[urr]^{\xi_0} \ar[d]^{\pi_1} && \\ && Z_1 \ar[uurr]^{\xi_1} \ar[d]^{\pi_2} && \\ && \vdots \ar[d] && \\ && Z \ar[uuuurr]^\xi && 
}

\medskip \noindent
It is clear that $\xi$ is parallel-preserving and that $\psi = \xi \circ \eta$. The fact that there exists a unique such map $\xi$ follows from the observation that $\psi = \eta_i \circ (\pi_{i+1} \circ \pi_{i+2} \circ \cdots \circ \xi)$ for every $i \geq 0$ and from the uniqueness provided by Proposition~\ref{prop:FoldingTwoHyp}.

\medskip \noindent
Finally, observe that, as a consequence of Proposition~\ref{prop:FoldingTwoHyp}, we know that, for every $i \geq 0$, two distinct hyperplanes $J,K$ of $X$ have the same image under $\eta_i$ if and and only if their images under $\eta_{i-1}$ coincide or yields the pair $\{A_i,B_i\}$. The second assertion of our theorem follows by induction.
\end{proof}

\noindent
It is worth noticing that, given a countable median graph $X$ and a collection $\mathcal{P}$ of pairs of hyperplanes in contact, if $(M_1,\zeta_1)$ and $(M_2,\zeta_2)$ both satisfy the conclusion of Theorem~\ref{thm:Foldings}, then $M_1$ and $M_2$ are naturally isomorphic. 

\medskip \noindent
Indeed, because $\zeta_1(A)=\zeta_1(B)$ for all hyperplanes $\{A,B\} \in \mathcal{P}$, there must exist a unique parallel-preserving map $\xi_2 : M_2 \to M_1$ such that $\zeta_1 = \xi_2 \circ \zeta_2$. But we know that, for any two distinct hyperplanes $A,B$ of $X$, $\zeta_1(A)=\zeta_1(B)$ if and only if $A$ and $B$ are $\mathcal{P}$-connected, which also amounts to saying that $\zeta_2(A)=\zeta_2(B)$. Consequently, $\xi_2$ has to induce a bijection from the hyperplanes of $M_2$ to the hyperplanes of $M_1$. Similarly, there exists a unique parallel-preserving map $\xi_1 : M_1 \to M_2$ such that $\zeta_2 = \xi_1 \circ \zeta_1$, and $\xi_1$ induces a bijection from the hyperplanes of $M_2$ to the hyperplanes of $M_1$. From
$$\zeta_1 = \xi_2 \circ \zeta_2 = \xi_2 \circ\xi_1 \circ \zeta_1,$$
it follows that $\zeta_2 = \xi_1 \circ \xi_2 \circ \xi_1 \circ \zeta_1$, and the uniqueness of $\xi_1$ implies that $\xi_1 = \xi_1 \circ \xi_2 \circ \xi_1$. As a consequence, the permutation on the hyperplanes of $M_2$ induced by $\xi_1 \circ \xi_2$ must be the identity. Let $A,B$ be two hyperplanes with $\xi_2 (A)$ and $\xi_2(B)$ are transverse. Notice that, because $\xi_1$ sends injectively the hyperplanes of $M_1$ to the hyperplanes of $M_2$, it follows from Lemma~\ref{lem:Wheel} that $\xi_1$ preserves transversality. Consequently, $\xi_1 \circ \xi_2(A) = A$ and $\xi_1 \circ \xi_2 (B)=B$ must be transverse. Thus, we have proved that $\xi_2$ preserves non-transversality. We conclude from Lemma~\ref{lem:WhenIso} that $\xi_2 : M_2 \to M_1$ is an isometry.

\medskip \noindent
Our previous observation justifies the following definition:

\begin{definition}
Let $X$ be a countable median graph and $\mathcal{P}$ a collection of pairs of hyperplanes in contact. The median graph given by Theorem~\ref{thm:Foldings} and its map $\eta : X \to Z$ is the \emph{folding of $X$ relative to $\mathcal{P}$}. 
\end{definition}

\noindent
A direct description will be given in Section~\ref{section:FoldExplicit}.

\subsection{Equivariance}\label{section:FoldingEqui}

\noindent
Typically, there will be groups acting on our median graphs, and we would like our foldings to be equivariant under the actions. Our next proposition shows that the universal property satisfied by foldings automatically implies that our constructions are equivariant under group actions. 

\begin{prop}\label{prop:equivariancefolding}
Let $G$ be a group acting on a median graph $X$ and $\mathcal{P}$ a $G$-invariant collection of pairs of hyperplanes in contact. Let $Z$ denote the folding of $X$ relative to $\mathcal{P}$ and $\eta : X \to Z$ the associated projection. The following assertions hold.
\begin{itemize}
	\item For every isometry $\alpha$ of $X$ preserving $\mathcal{P}$, there exists a unique isometry $\beta$ of $Z$ such that $\beta \circ \eta = \eta \circ \alpha$. Consequently, there is a canonical action of $G$ on $Z$ making $\eta$ $G$-equivariant. 
	\item Let $H$ be a group acting on a median graph $Y$ and $\varphi : G \to H$ a morphism. For every $\varphi$-equivariant parallel-preserving map $\psi : X \to Y$ satisfying $\psi(A)=\psi(B)$ for every $\{A,B\} \in \mathcal{P}$, there exists a unique $\varphi$-equivariant parallel-preserving map $\xi : Z \to Y$ such that $\psi = \xi \circ \eta$. 
\end{itemize}
\end{prop}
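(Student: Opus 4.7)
The whole argument will be a formal consequence of the universal property provided by Theorem~\ref{thm:Foldings}: once one knows that $(Z,\eta)$ is characterised by a universal factorisation property, equivariance essentially comes for free by playing with uniqueness.

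First, I would prove the first item. Let $\alpha$ be an isometry of $X$ preserving $\mathcal{P}$. Consider the parallel-preserving map $\eta \circ \alpha : X \to Z$. For every $\{A,B\} \in \mathcal{P}$, the pair $\{\alpha(A), \alpha(B)\}$ also belongs to $\mathcal{P}$, so the last statement of Theorem~\ref{thm:Foldings} gives $\eta(\alpha(A)) = \eta(\alpha(B))$. Therefore the universal property of $(Z,\eta)$ applied to $\eta \circ \alpha$ yields a unique parallel-preserving map $\beta : Z \to Z$ with $\beta \circ \eta = \eta \circ \alpha$. Running the same argument with $\alpha^{-1}$ produces $\beta'$ with $\beta' \circ \eta = \eta \circ \alpha^{-1}$. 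Then $\beta \circ \beta'$ and $\mathrm{id}_Z$ are both parallel-preserving maps $Z \to Z$ whose composition with $\eta$ equals $\eta$, so the uniqueness in Theorem~\ref{thm:Foldings} (applied with $\psi = \eta$) forces $\beta \circ \beta' = \mathrm{id}_Z$; similarly $\beta' \circ \beta = \mathrm{id}_Z$. Hence $\beta$ is a bijective parallel-preserving map; by Lemma~\ref{lem:WhenIso} (third item), as $\beta$ and $\beta^{-1}$ both induce bijections between hyperplanes preserving non-transversality, $\beta$ is an isometry.

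Next, the assignment $\alpha \mapsto \beta$ is a group homomorphism, again by uniqueness: if $\alpha_1, \alpha_2$ preserve $\mathcal{P}$ and $\beta_1,\beta_2$ are the associated isometries of $Z$, then $\beta_1 \circ \beta_2$ satisfies $(\beta_1 \circ \beta_2) \circ \eta = \eta \circ (\alpha_1 \circ \alpha_2)$, so $\beta_1 \circ \beta_2$ is the isometry associated to $\alpha_1 \circ \alpha_2$. Specialising to the $G$-action on $X$ (which preserves $\mathcal{P}$ by hypothesis) gives a canonical action of $G$ on $Z$ making $\eta$ $G$-equivariant.

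Finally, for the second item, the existence and uniqueness of a parallel-preserving $\xi : Z \to Y$ with $\psi = \xi \circ \eta$ is already provided by Theorem~\ref{thm:Foldings}; the only thing to check is that $\xi$ is $\varphi$-equivariant. Fix $g \in G$ and compare the two parallel-preserving maps $\xi \circ g$ and $\varphi(g) \circ \xi$ from $Z$ to $Y$ (where I write $g$ and $\varphi(g)$ for the isometries of $Z$ and $Y$ they induce). Composing with $\eta$ gives
\[
(\xi \circ g) \circ \eta = \xi \circ (g \circ \eta) = \xi \circ \eta \circ g = \psi \circ g
\]
and, by $\varphi$-equivariance of $\psi$,
\[
(\varphi(g) \circ \xi) \circ \eta = \varphi(g) \circ \psi = \psi \circ g.
\]
Both maps factorise $\psi \circ g$ through $\eta$, so the uniqueness part of Theorem~\ref{thm:Foldings} (applied with $\psi$ replaced by $\psi \circ g$, noticing that it still collapses every pair in $\mathcal{P}$ because $\mathcal{P}$ is $G$-invariant) forces $\xi \circ g = \varphi(g) \circ \xi$, i.e.\ $\xi$ is $\varphi$-equivariant. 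No step is really an obstacle here; the only subtle point is to verify that $\beta$ is actually an \emph{isometry} rather than a mere parallel-preserving self-map of $Z$, which is what Lemma~\ref{lem:WhenIso} is designed for.
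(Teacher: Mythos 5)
Your proof is correct and follows essentially the same route as the paper: both items are deduced purely formally from the universal property of Theorem~\ref{thm:Foldings}, with the map $\beta'$ associated to $\alpha^{-1}$ and the uniqueness clause forcing $\beta\circ\beta'=\beta'\circ\beta=\mathrm{id}_Z$, and with the equivariance of $\xi$ obtained by comparing two factorisations of the same map through $\eta$. The only cosmetic differences are that you invoke Lemma~\ref{lem:WhenIso} where the paper simply observes that a bijective parallel-preserving map with parallel-preserving inverse is a graph isomorphism, and that you apply uniqueness to $\psi\circ g$ whereas the paper applies it to $\psi$ after conjugating $\xi$ by $g$ and $\varphi(g)$.
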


\begin{proof}
By applying the universal property of $(Z,\eta)$ to $\eta \circ \alpha$ (resp. $\eta \circ \alpha^{-1}$), we find a parallel-preserving map $\beta : Z \to Z$ (resp. $\gamma : Z \to Z$) such that $\eta \circ \alpha= \beta \circ \eta$ (resp. $\eta \circ \alpha^{-1}= \gamma \circ \eta$). We have
$$\eta = \eta \circ \alpha \circ \alpha^{-1}= \beta \circ \eta \circ \alpha^{-1}= \beta \circ \gamma \circ \eta$$
and
$$\eta = \eta \circ \alpha^{-1} \circ \alpha = \gamma \circ \eta \circ \alpha = \gamma \circ \beta \circ \eta.$$
The uniqueness in the universal property satisfied by $(Z,\eta)$ imposes that $\beta \circ \gamma= \mathrm{id}_Z$ and $\gamma \circ \beta = \mathrm{id}_Z$. Thus, $\beta$ is the graph isomorphism we are looking for. This proves the first item of our proposition.

\medskip \noindent
In order to prove the second item, we need to verify that the unique parallel-preserving map $\xi : Z \to Y$ given by the universal property of $(Z,\eta)$ is $\varphi$-equivariant. Notice that, for every $g \in G$, the map $\zeta_g : Z \to Y$ defined by $z \mapsto \varphi(g)^{-1} \cdot \xi(g \cdot z)$ satisfies $\psi = \zeta_g \circ \eta$. Indeed, for every $x \in X$, we have
$$\zeta_g (\eta(x)) = \varphi(g)^{-1} \cdot \xi(g \cdot \eta(x)) = \varphi(g)^{-1} \cdot \xi ( \eta(g \cdot x)) = \varphi(g)^{-1} \cdot \psi(g \cdot x) = \psi(x)$$
because $\psi$ is $\varphi$-equivariant. The uniqueness of $\xi$ implies that $\zeta_g=\xi$, which amounts to saying that $\xi(g \cdot z ) = \varphi(g) \cdot \xi(z)$ for every $z \in Z$. Thus, $\xi$ is $\varphi$-equivariant. 
\end{proof}

\subsection{An explicit description}\label{section:FoldExplicit}

\noindent
Given a countable median graph $X$ and a collection $\mathcal{P}$ of pairs of tangent hyperplanes, we saw in the proof of Theorem~\ref{thm:Foldings} that the folding $(M,\zeta)$ of $X$ relative to $\mathcal{P}$ can be obtained by folding the pairs in $\mathcal{P}$ successively, as described in Section~\ref{section:FoldingTwo}. In this last subsection, our goal is to provide a direct description of $M$. This will be useful in Section~\ref{section:Freemediancocompact}. 

\medskip \noindent
For every hyperplane $J$ of $X$, define a wall $\mathcal{W}(J)$ on $X$ as follows: take the union of all the hyperplanes in the class $[J]$ of $J$ (with respect to the relation of being $\mathcal{P}$-connected), and say that two vertices of $X$ belong to the same halfspace delimited by $\mathcal{W}(J)$ if they are separated by an even number of hyperplanes in $[J]$. Set $\mathcal{W}:= \{ \mathcal{W}(J) , \text{ $J$ hyperplane}\}$. 

\begin{prop}
Let $Z$ denote the cubulation of the wallspace $(X,\mathcal{W})$. There exists an isometry $\phi : M \to Z$ such that $\pi = \phi \circ \zeta$, where $\pi : X \to Z$ is the canonical map.
\end{prop}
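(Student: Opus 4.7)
The strategy is to show that the pair $(Z,\pi)$ satisfies the universal property characterising $(M,\zeta)$ in Theorem~\ref{thm:Foldings}, yielding a canonical comparison map $\phi:M\to Z$ which I would then verify to be an isometry by combining Lemma~\ref{lem:WhenIso} with the median-hull statement of Proposition~\ref{prop:MainCubulation}.

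First I would check that $\pi$ is parallel-preserving and satisfies $\pi(A)=\pi(B)$ for every $\{A,B\}\in\mathcal{P}$. If $[x,y]$ is an edge of $X$ lying in the hyperplane $J$, then $J$ is the unique hyperplane separating $x$ from $y$, so $x$ and $y$ differ by exactly one hyperplane in $[J]$ and by none in any other $\mathcal{P}$-class; hence the principal orientations $\pi(x),\pi(y)$ differ precisely on the wall $\mathcal{W}(J)$ and span an edge of $Z$. Two parallel edges of $X$ lie in a common $J$, so their images cross the same wall and are parallel in $Z$. Moreover, if $\{A,B\}\in\mathcal{P}$, then $A$ and $B$ are $\mathcal{P}$-connected, so $\mathcal{W}(A)=\mathcal{W}(B)$ and thus $\pi(A)=\pi(B)$. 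Applying Theorem~\ref{thm:Foldings} to $\pi$ yields a unique parallel-preserving map $\phi:M\to Z$ with $\pi=\phi\circ\zeta$.

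The main step is then to show that $\phi$ is injective on hyperplanes. On the one hand, every hyperplane of $M$ is of the form $\zeta(J)$ for some hyperplane $J$ of $X$: tracing through the iterated-cubulation construction behind Theorem~\ref{thm:Foldings}, an induction using Proposition~\ref{prop:MainCubulation} shows that every edge of $Z_i$ is parallel to an edge in the image of $\eta_i:X\to Z_i$, and passing to the direct limit, every edge of $M$ is parallel to some edge of $\zeta(X)$. On the other hand, the last assertion of Theorem~\ref{thm:Foldings} gives $\zeta(J_1)=\zeta(J_2)$ (for $J_1\neq J_2$) if and only if $J_1$ and $J_2$ are $\mathcal{P}$-connected, which by construction of $\mathcal{W}$ is equivalent to $\mathcal{W}(J_1)=\mathcal{W}(J_2)$, and hence to $\pi(J_1)=\pi(J_2)$. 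Since $\phi(\zeta(J))=\pi(J)$, this equivalence transports along $\phi$ and shows that $\phi$ sends distinct hyperplanes to distinct hyperplanes. The somewhat subtle point is the description of all hyperplanes of the direct-limit graph $M$ as $\zeta(J)$'s, which relies crucially on the explicit iterated-cubulation construction of $M$ rather than on its universal property alone.

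By the first item of Lemma~\ref{lem:WhenIso}, $\phi$ is then an isometric embedding. Its image $\phi(M)$ is median-closed in $Z$, since the median of three points in a median graph is characterised by distance equalities that $\phi$ preserves, and it contains $\pi(X)=\phi(\zeta(X))$, whose median hull in $Z$ is all of $Z$ by Proposition~\ref{prop:MainCubulation}. Hence $\phi(M)=Z$, and $\phi$ is the desired isometry with $\pi=\phi\circ\zeta$.
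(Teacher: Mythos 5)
Your proposal is correct and follows the same skeleton as the paper's proof: verify that $\pi$ is parallel-preserving with $\pi(A)=\pi(B)$ for $\{A,B\}\in\mathcal{P}$, invoke the universal property of Theorem~\ref{thm:Foldings} to produce $\phi$, and then match hyperplanes of $M$ with walls of $\mathcal{W}$ via the second assertion of Theorem~\ref{thm:Foldings}. The only genuine divergence is the final step. The paper concludes in one stroke from the \emph{third} item of Lemma~\ref{lem:WhenIso}, asserting that $\phi$ induces a bijection on hyperplanes (preserving non-transversality) between $M$ and $Z$; you instead use the \emph{first} item to get an isometric embedding from injectivity on hyperplanes, and then obtain surjectivity separately by observing that $\phi(M)$ is median-closed and contains $\pi(X)$, whose median hull is all of $Z$ by Proposition~\ref{prop:MainCubulation}. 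Your route has the small advantage of not needing to check that the hyperplane bijection preserves non-transversality, at the cost of the extra median-hull argument; both rest on the same key input, namely that every hyperplane of $M$ is of the form $\zeta(J)$ (which you rightly flag as requiring the explicit iterated construction, via the third item of Lemma~\ref{lem:CubulationInductively}, rather than the universal property alone) and that $\zeta(J_1)=\zeta(J_2)$ iff $\mathcal{W}(J_1)=\mathcal{W}(J_2)$. No gaps.
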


\begin{proof}
It is clear that the canonical map $\pi : X \to Z$ is parallel-preserving. Because $\pi(A)=\pi(B)$ for all $\{A,B\} \in \mathcal{P}$, Theorem~\ref{thm:Foldings} implies that there exists a parallel-preserving map $\phi : M \to Z$ satisfying $\pi = \phi \circ \zeta$. Because there is a natural bijection preserving transversality and non-transversality between the hyperplanes of $Z$ and the walls of $\mathcal{W}$, it follows from the second part of Theorem~\ref{thm:Foldings} and from the definition of $\mathcal{W}$ that $\phi$ induces a bijection between the sets of hyperplanes of $M$ and $Z$. We conclude from Lemma~\ref{lem:WhenIso} that $\phi$ is an isometry.
\end{proof}

\section{Swellings}\label{section:Swellings}

\noindent
After ``merging'' hyperplanes in median graphs, we would like to be able to make tangent hyperplanes transverse. The following theorem records the main properties satisfied by the construction explained in the next subsections.

\begin{thm}\label{thm:Swelling}
Let $X$ be a countable median graph and $\mathcal{P}$ a collection of pairs of tangent hyperplanes. There exists a median graph $Z$ containing an isometrically embedded copy of $X$ such that the following holds. For every median graph $Y$, every parallel-preserving map $\psi : X \to Y$ satisfying $\psi(A) \pitchfork \psi(B)$ for every $\{A,B\} \in \mathcal{P}$ admits a unique parallel-preserving extension $Z \to Y$.\\
\indent Moreover, $Z$ is, up to isometry, the unique median graph containing an isometric copy of $X$ such that the convex hull of $X$ is $Z$ entirely and such that any two hyperplanes $A,B$ of $Z$ are transverse if and only if they are transverse in $X$ or $\{A,B\} \in \mathcal{P}$. 
\end{thm}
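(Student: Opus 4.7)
The plan is to adapt the strategy used for Theorem \ref{thm:Foldings}: first construct the swelling for a single pair of tangent hyperplanes, establish its universal property, then iterate. Fix a single pair $\{A,B\}$ of tangent hyperplanes in $X$, and set $K := N(A) \cap N(B)$; since carriers are convex (Theorem \ref{thm:BigMedian}), $K$ is itself median. Write $\alpha$ (resp.\ $\beta$) for the canonical involution of $N(A)$ (resp.\ $N(B)$). Define the \emph{first swelling} $X^{+}$ to be the graph obtained from $X$ by adding, for each $p \in K$, a new vertex $p^{\times}$ adjacent to $\alpha(p)$ and $\beta(p)$, together with an edge $[p^{\times}, q^{\times}]$ whenever $p$ and $q$ are adjacent in $K$. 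Each tangency $\{[\alpha(p),p], [p, \beta(p)]\}$ is thereby completed into the $4$-cycle $(p, \alpha(p), p^{\times}, \beta(p))$, so that $A$ and $B$ become transverse in $X^{+}$.

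The main technical task will be to verify that $X^{+}$ is median, that $X \hookrightarrow X^{+}$ is isometric with convex hull equal to $X^{+}$, and that the hyperplanes of $X^{+}$ are canonically in bijection with those of $X$, with exactly one new transversality. I would obtain medianity from Theorem \ref{thm:MedianCriterion}: the absence of $K_{2,3}$ and the $3$-cube condition at new vertices reduce to corresponding statements in $X$, using that the neighbours of $p^{\times}$ are exactly $\{\alpha(p), \beta(p)\} \cup \{q^{\times} : q \sim p \text{ in } K\}$ and that each corner $(\alpha(p), \beta(p), q^{\times})$ at $p^{\times}$ is completed by the $3$-cube with opposite vertex $q$; the added $2$-cells are organised into $3$-cubes indexed by edges of $K$, so the square-completion of $X^{+}$ has the same homotopy type as that of $X$. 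The hyperplane analysis is then straightforward: each new edge is parallel to an old edge (for instance $[p^{\times}, q^{\times}]$ is parallel to $[\alpha(p), \alpha(q)]$ via the new $4$-cycle, and then to $[p,q]$), so no hyperplane is created and none is merged, whence Lemma \ref{lem:WhenIso} delivers the isometric embedding and convexity of the image.

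For the universal property of the single swelling, given parallel-preserving $\psi : X \to Y$ with $\psi(A) \pitchfork \psi(B)$, I would define the extension $\xi$ on each new vertex $p^{\times}$ as the unique fourth vertex of the $4$-cycle in $Y$ spanned by $\psi([\alpha(p), p]) \in \psi(A)$ and $\psi([p, \beta(p)]) \in \psi(B)$; existence is ensured by transversality and uniqueness by $K_{2,3}$-freeness of $Y$. A short check using that $\psi$ is parallel-preserving shows $\xi$ sends edges to edges and parallel edges to parallel edges on the new structure, and uniqueness is automatic since values at new vertices are forced. For an arbitrary collection $\mathcal{P}$, I would fix an enumeration $\mathcal{P} = \{\{A_{i}, B_{i}\} : i \geq 1\}$, define inductively $Z_{0} := X$ and $Z_{i+1}$ as the first swelling of $Z_{i}$ along $\{A_{i+1}, B_{i+1}\}$ (each pair persisting as a pair of hyperplanes of $Z_{i}$ by the hyperplane bijection above), and let $Z$ be the direct limit of the resulting system (Definition \ref{def:DirectLimit}). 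The universal property for $Z$ then follows by composing those of the stages, as in the proof of Theorem \ref{thm:Foldings}, and independence of the enumeration is a formal consequence. For the uniqueness clause, given a candidate $Z'$ satisfying the stated properties, applying the universal property both to $Z$ and to $Z'$ produces mutually inverse parallel-preserving maps extending the identity on $X$; these induce a hyperplane bijection preserving non-transversality (both sides have hyperplanes canonically identified with those of $X$ with exactly $\mathcal{P}$ as extra transversalities), so by Lemma \ref{lem:WhenIso} they are isometries.

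The hard part will be verifying medianity and the isometric embedding of $X$ in $X^{+}$: while the local picture is transparent, one must carefully track that existing hyperplanes of $X$ neither get identified through the new edges nor acquire spurious transversalities beyond $A \pitchfork B$. This relies on understanding that $K$ is a single fiber of both $A$ and $B$ (inside their carriers) and on the fact that $A$ and $B$ were tangent, which is what prevents pre-existing $4$-cycles across $K$ from interfering with the new ones.
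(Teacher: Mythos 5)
Your overall architecture matches the paper's exactly: construct the swelling of a single tangent pair by adjoining, for each $p \in N(A)\cap N(B)$, a fourth vertex completing $(p,\alpha(p),\beta(p))$ to a $4$-cycle (this is precisely the paper's gluing of $(N(A)\cap N(B))\times[0,1]^2$), prove the universal property by forcing the image of the new vertex to be the fourth vertex of the $4$-cycle spanned by $\psi(p),\psi(\alpha(p)),\psi(\beta(p))$, then enumerate $\mathcal{P}$, iterate, pass to the direct limit, and derive uniqueness from the hyperplane bijection via Lemma~\ref{lem:WhenIso}. Where you genuinely diverge is in the key technical step --- proving that the single swelling is median and contains $X$ isometrically. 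The paper does not touch the median criterion of Theorem~\ref{thm:MedianCriterion} at all: it introduces \emph{spots} (orientations of the halfspace pocset that are allowed to violate consistency exactly on $\{A,B\}$), identifies the swelling with the graph of spots, shows the distance between two spots equals the number of hyperplanes on which they differ, and then produces medians explicitly by majority vote. This buys medianity, the isometric embedding of $X$, the convex-hull statement, and the hyperplane/transversality bookkeeping all in one algebraic package, and it is reused verbatim for the explicit description of the general swelling in Section~\ref{section:SwellingExplicit}. Your local route (no $K_{2,3}$, $3$-cube condition, simply connected square-completion) is viable but leaves more to check than you indicate: the $3$-cube condition must be verified not only at the new vertices $p^{\times}$ but also at old vertices where a new edge such as $[\alpha(p),p^{\times}]$ participates in a corner with two old edges; the claim that the square-completion retains its homotopy type needs an actual disc-diagram or deformation argument (compare the paper's treatment of the analogous point in Lemma~\ref{lem:FoldSub}); and the assertion that no old hyperplanes get merged through the new $4$-cycles is exactly the kind of global statement the spot model handles for free but a local argument must track explicitly. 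None of these is a fatal obstruction --- you correctly flag them as the hard part --- but be aware that the paper's orientation-theoretic argument is what makes them disappear.
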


\noindent
The universal property satisfied by the median graph $Z$ implies that Theorem~\ref{thm:Swelling} uniquely characterises it, as justified in Section~\ref{section:Sw}. It also implies that the construction is compatible with group actions, see Section~\ref{section:SwellingEqui}. The graph $Z$ will be referred to as the \emph{swelling of $X$ relative to $\mathcal{P}$}.

\subsection{Swelling two hyperplanes}\label{section:SwellingTwo}

\noindent
We begin by explaining how to make two tangent hyperplanes transverse. 

\begin{definition}
Let $X$ be a median graph and $A,B$ two tangent hyperplanes. Let $\alpha$ (resp. $\beta$) denote the canonical involution of $N(A)$ (resp. $N(B)$). The \emph{swelling of $X$ relative to $\{A,B\}$} is the graph obtained from the union\footnote{Here, $[0,1]$ is thought of as the complete graph $K_2$ on $\{0,1\}$. This implies that $[0,1]^2$ is a $4$-cycle.}
$$X \sqcup (N(A) \cap N(B)) \times [0,1]^2$$
by identifying $N(A) \cap N(B) \times \{(0,0)\}$ with $N(A) \cap N(B)$, $N(A) \cap N(B) \times \{(1,0)\}$ with $\alpha(N(A) \cap N(B))$, and $N(A) \cap N(B) \times \{(0,1)\}$ with $\beta(N(A) \cap N(B))$. 
\end{definition}

\noindent
See Figure~\ref{Swelling} for an example. The next proposition records the key properties satisfied by our construction. 

\begin{prop}\label{prop:SwellingTwo}
Let $X$ be a median graph and $A,B$ two tangent hyperplanes. The swelling $Z$ of $X$ relative to $\{A,B\}$ is a median graph containing $X$ isometrically. \\
\indent Moreover, the convex hull of $X$ in $Z$ is $Z$ entirely; and any two hyperplanes $J,H$ of $Z$ are transverse if and only if they are transverse in $X$ or $\{J,H\}=\{A,B\}$.
\end{prop}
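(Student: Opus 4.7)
\medskip \noindent
The plan is to analyse the hyperplane structure of $Z$, verify the three criteria of Theorem~\ref{thm:MedianCriterion}, and then deduce the remaining assertions.

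\medskip \noindent
First, I would catalogue the parallelism classes of $Z$. The new edges arise from the defining $4$-cycles $\{p,\alpha(p),(p,(1,1)),\beta(p)\}$ and the product structure on $(N(A) \cap N(B)) \times [0,1]^2$: edges $[\alpha(p),(p,(1,1))]$ are parallel to $[p,\beta(p)] \in B$, edges $[\beta(p),(p,(1,1))]$ are parallel to $[p,\alpha(p)] \in A$, and for adjacent $p,q \in N(A) \cap N(B)$ the edge $[(p,(1,1)),(q,(1,1))]$ is parallel to $[p,q]$ via the $4$-cycle $(p,(1,1)),(q,(1,1)),\alpha(q),\alpha(p)$ furnished by the canonical involution $\alpha$ of $N(A)$. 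One uses here that $[p,q]$ cannot lie in $A$ or $B$: otherwise the canonical involutions would propagate the $B$- or $A$-edges at the endpoints to produce a $4$-cycle witnessing $A \pitchfork B$, contradicting tangency. Consequently the hyperplanes of $X$ are in natural bijection with those of $Z$; the new $4$-cycles make $A$ and $B$ transverse in $Z$, and no other tangent pair of $X$ acquires a $4$-cycle.

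\medskip \noindent
Second, I would verify the three conditions of Theorem~\ref{thm:MedianCriterion}. Any $K_{2,3}$ in $Z$ must involve a new vertex $(p,(1,1))$, whose neighbourhood in $Z$ is just $\{\alpha(p),\beta(p)\}$ together with $(q,(1,1))$ for $q \in N(A) \cap N(B)$ adjacent to $p$; a short case-check using the tangency of $A,B$ rules out every such configuration. For the $3$-cube condition, corners wholly within $X$ close in $X$ by its median-ness; a corner based at some $p \in N(A) \cap N(B)$ using the new $4$-cycle through $\alpha(p),\beta(p)$ together with a third neighbour $u_3$ (which is then forced to lie in $N(A) \cap N(B)$) closes to a $3$-cube with body-antipode $(u_3,(1,1))$; and corners based at a new vertex close symmetrically using the involutions. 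Finally, $Z^\square$ is simply connected by a disc-diagram argument: every loop in $Z^\square$ can be homotoped into $X^\square$ by replacing each transit $\alpha(p) \to (p,(1,1)) \to \beta(p)$ by $\alpha(p) \to p \to \beta(p)$ across the new $4$-cycle, after which the simple connectedness of $X^\square$ applies.

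\medskip \noindent
Third, the remaining assertions follow. By Theorem~\ref{thm:BigMedian} and the bijection of hyperplanes, the distance in $Z$ between two $X$-vertices equals the number of separating hyperplanes of $Z$, hence their $X$-distance; thus $X \hookrightarrow Z$ is isometric. Each new vertex $(p,(1,1))$ lies on a length-$2$ geodesic from $\alpha(p)$ to $\beta(p)$ in $Z$ (these $X$-vertices being separated only by $A$ and $B$), so it belongs to the convex hull of $X$, which therefore covers $Z$ entirely. The transversality characterisation is precisely what Step One records. The principal obstacle will be the case analysis in Step Two: one must exploit tangency of $A,B$ in $X$ both to prevent the added cubes from clashing with pre-existing $4$-cycles and to ensure every corner closes without producing a $K_{2,3}$.
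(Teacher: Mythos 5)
Your proposal is correct in outline, but it takes a genuinely different route from the paper. The paper does not touch the glued complex directly at all: it introduces an auxiliary graph $Z'$ of \emph{spots} (maps assigning a halfspace to each hyperplane of $X$, required to be consistent on every pair except possibly $\{A,B\}$, and differing from a principal orientation on finitely many hyperplanes), proves that $Z \cong Z'$ by an explicit isomorphism, establishes that the distance between two spots equals the number of hyperplanes on which they differ, and then gets medianness by exhibiting the majority-vote spot as the median and checking uniqueness. The isometric embedding of $X$ and the convex-hull and transversality claims then fall out of the distance formula. Your approach instead stays with the explicit glued graph, catalogues its new edges, squares and parallelism classes, and verifies the three conditions of Theorem~\ref{thm:MedianCriterion}. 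Both work. What the paper's spot model buys is (i) the distance formula for free, and (ii) reusability: Claims~\ref{claim:DistanceSpot} and~\ref{claim:ZMedian} are repeated verbatim in Section~\ref{section:SwellingExplicit} to describe the swelling relative to an arbitrary collection $\mathcal{P}$, where your square-by-square bookkeeping would not scale as cleanly. What your approach buys is concreteness and a direct picture of where the new $4$-cycles sit. The burden in your version is real, though: you must prove explicitly that \emph{every} $4$-cycle of $Z$ not contained in $X$ contains a new vertex $(p,(1,1))$ and hence appears in your catalogue (this uses that no new edge joins two old vertices, which in turn uses that $[\alpha(p),\alpha(q)]$ is already an edge of $X$ by the carrier product structure), since the completeness of the catalogue underlies the $K_{2,3}$ check, the corner analysis, and the transversality statement. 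You correctly flag the case analyses as the principal obstacle; they do close up, using repeatedly that $N(A)\cap N(B)$ lies on a single side of each of $A$ and $B$ and that $\alpha(q)=\beta(q')$ is impossible by tangency, so I see no gap, only work left to write out.
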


\begin{proof}
Define a \emph{spot} as a map $\sigma$ satisfying the following conditions:
\begin{itemize}
	\item $\sigma$ sends each hyperplane of $X$ to one of the halfspaces it delimits;
	\item $\sigma(J) \cap \sigma(H) \neq \emptyset$ for all hyperplanes $\{J,H\} \neq \{A,B\}$;
	\item $\sigma$ differs only on finitely many hyperplanes with some (or equivalently, any) principal orientation of $X$.
\end{itemize}
Let $Z'$ denote the graph whose vertices are the spots and whose edges connect two spots whenever they differ on a single hyperplane. Observe that $Z'$ contains a natural copy of $X$ given by the subgraph spanned by the principal orientations of $X$. 

\begin{claim}\label{claim:SpotVersion}
The canonical embedding $X \hookrightarrow Z'$ extends to an isomorphism $Z \to Z'$.
\end{claim}

\noindent
For every $p \in N(A) \cap N(B)$, let $\gamma(p)$ denote the vertex $\{p\} \times \{(1,1)\}$ of $Z$. Let $\theta_p$ denote the map obtained from the principal orientation given by $p$ by modifying the values it takes at $A$ and $B$. Because there is no hyperplane separating $p$ from $A$ nor $B$, $\theta_p$ is a spot. We want to prove that the map $\Theta : Z \to Z'$ that sends a vertex of $X$ to its principal orientation and each $\gamma(p)$ to $\theta_p$ induces a graph isomorphism. We begin by proving that $\Theta$ induces a graph embedding. By construction, it is clear that $\Theta$ induces a graph isomorphism $X \to X$. Moreover, two vertices $x \in X$ and $p \in Z\backslash X$ cannot have the same image under $\Theta$ since $\Theta(x)(A) \cap \Theta(x)(B) \neq \emptyset$ whereas $\Theta(p)(A) \cap \Theta(p)(B)= \emptyset$. Therefore, it suffices to show that $\Theta$ is injective on $Z \backslash X$ and that it preserves adjacency and non-adjacency between vertices in $Z \backslash X$ and between vertices of $X$ and $Z \backslash X$.
\begin{itemize}
	\item Let $p,q \in N(A) \cap N(B)$ be two vertices. By construction, the spots $\theta_p$ and $\theta_q$ differ exactly on the hyperplanes separating $p$ and $q$. Therefore, $\Theta(\gamma(p)) = \theta_p \neq \theta_q= \Theta(\gamma(q))$ are distinct whenever $p$ and $q$ are distinct (i.e.\ separated by at least one hyperplane). Also, $\Theta(\gamma(p))=\theta_p$ and $\Theta(\gamma(q))= \theta_q$ are adjacent in $Z'$ if and only if $p$ and $q$ are adjacent in $X$ (i.e.\ separated by exactly one hyperplane), which amounts to saying that $\gamma(p)$ and $\gamma(q)$ are adjacent in $Z$. 
	\item Let $p \in N(A) \cap N(B)$ and $x \in X$ be two vertices. By construction, the set of the hyperplanes on which $\Theta(x)$ and $\Theta(\gamma(p))=\theta_p$ differ is the symmetric difference $\mathcal{H}(x|p) \triangle \{A,B\}$. Notice that $A$ and $B$ cannot both belong to $\mathcal{H}(x|p)$ because they are tangent in $X$. Consequently, $\Theta(x)$ and $\theta_p$ are adjacent in $Z'$ if and only if $x=\alpha(p)$ or $x=\beta(p)$, which amounts to saying that $x$ and $p$ are adjacent in $Z$. 
\end{itemize}
In order to conclude the proof our claim, it remains to verify that $\Theta$ is surjective. So let $\sigma$ be a spot. If $\sigma$ is an orientation of $X$, which amounts to saying that $\sigma(A) \cap \sigma(B) \neq \emptyset$, then $\sigma$ must be a principal orientation and a fortiori it belongs to the image of $\Theta$. Otherwise, if $\sigma(A) \cap \sigma(B)= \emptyset$, let $\varsigma$ denote the map obtained from $\sigma$ by modifying its values on $A$ and $B$. Because $\sigma$ is a spot and because there is no hyperplane separating $A$ and $B$, we know that $\varsigma$ is an orientation. Let $p \in X$ denote the vertex given by the principal orientation $\varsigma$. If $p \notin N(A)$, then there must exist some hyperplane $J$ separating $p$ from $N(A)$. But then either $\sigma(J) \cap \sigma(A) = \varsigma(J) \cap \varsigma(A)^c = \emptyset$ if $J \neq B$, which contradicts the fact that $\sigma$ is a spot; or $\varsigma(B) \subset \varsigma(A)$ if $J=B$, which implies that $\sigma(B) \cap \sigma(A) = \varsigma(B)^c \cap \varsigma(A)^c = \varsigma(A)^c \neq \emptyset$ and contradicts the fact that $\sigma$ is not an orientation. Thus, the vertex $p$ must belong to $N(A)$. Similarly, it has to belong to $N(B)$ as well. We conclude that $\Theta(\gamma(p)) = \sigma$. Claim~\ref{claim:SpotVersion} is thus proved. 

\begin{claim}\label{claim:DistanceSpot}
The distance between two spots in $Z'$ coincides with the number of hyperplanes on which they differ.
\end{claim}

\noindent
Let $n$ be the number of hyperplanes on which two fixed spots $\sigma$ and $\sigma'$ differ. We prove our claim by induction over $n$. The case $n \leq 1$ is immediate, so from now on we assume that $n \geq 2$. Let $\mathcal{J}=\{J_1,\ldots,J_n\}$ be the set of the hyperplanes on which $\sigma$ and $\sigma'$ differ. Up to reordering, we can suppose that $\sigma'(J_1)$ is maximal with respect to the inclusion in the set $\{\sigma'(J_1),\ldots,\sigma'(J_n)\}$. Let $\eta$ denote the map that coincides with $\sigma$ for all the hyperplanes of $X$ but $J_1$ and that sends $J_1$ to $\sigma'(J_1)$. 

\medskip \noindent
We claim that $\eta$ is a spot. Indeed, let $H_1,H_2$ be a pair of hyperplanes of $X$ such that $\{H_1,H_2\} \neq \{A,B\}$. It suffices to prove that $\eta(H_1) \cap \eta(H_2) \neq \emptyset$. 
\begin{itemize}
	\item If $H_1,H_2 \neq J_1$, then $\eta(H_1) \cap \eta(H_2)=\sigma(H_1) \cap \sigma(H_2) \neq \emptyset$ since $\sigma$ is a vertex of $X'$. So we may suppose that $H_1=J_1$. 
	\item If $H_2 \notin \mathcal{J}$, then $\eta(H_1) \cap \eta(H_2)=\sigma'(H_1) \cap \sigma'(H_2) \neq \emptyset$. Thus, we may also suppose that $H_2 \in \mathcal{J}$. 
	\item Assume for contradiction that $\eta(H_1) \cap \eta(H_2)=\emptyset$. This implies that $H_2 \neq J_1$, so that $\eta(H_2)=\sigma(H_2)$. Since $H_2 \in \mathcal{J}$, we see that $\sigma(H_2)$ is the complement of $\sigma'(H_2)$. Thus, $\sigma'(J_1) \subseteq \sigma'(H_2)$, which contradicts the maximality of $\sigma'(J_1)$. 
\end{itemize}
Thus, we have proved that $\eta$ is a spot.

\medskip \noindent
By construction, the vertices $\sigma$ and $\eta$ are adjacent in $Z'$. By induction, we have $d_{Z'}(\eta,\sigma')=n-1$. Thus, we have $d_{Z'}(\sigma,\sigma') \leq n$. By definition of $Z'$, we have $n \leq d_{Z'}(\sigma,\sigma')$. Therefore, we have $d_{Z'}(\sigma,\sigma') = n$, as desired. Claim~\ref{claim:DistanceSpot} is proved.

\medskip \noindent
It follows immediately from Claim~\ref{claim:DistanceSpot} that $X$ is isometrically embedded in $Z'$. In order to conclude the proof of the first assertion in our proposition, we show that:

\begin{claim}\label{claim:ZMedian}
The graph $Z'$ is median.
\end{claim}

\noindent
Let $\sigma_1,\sigma_2,\sigma_3$ be three vertices in $Z'$. Let  $\sigma$ denote the map that associates to a hyperplane $H$ of $X$ the halfspace of $X$ appearing at least twice in $(\sigma_1(H),\sigma_2(H),\sigma_3(H))$. The construction of $\sigma$ implies that it defines a spot. Moreover, it follows from Claim~\ref{claim:DistanceSpot} that, for all pairwise distinct $i,j,k \in \{1,2,3\}$, 
$$
\begin{array}{ccl}
  d_{Z'}(\sigma_i,\sigma_j) &=&  \#\{H \text{ hyperplane} \mid \sigma_i(H) \neq \sigma_j(H)\}\\
     {} &=& \#\{H \text{ hyperplane} \mid \sigma_i(H) \neq \sigma_j(H)=\sigma_k(H)\} \\
     {} & {} & +\#\{H \text{ hyperplane} \mid \sigma_k(H)=\sigma_i(H) \neq \sigma_j(H)\} \\
     {} &=&  d_{Z'}(\sigma_i,\sigma)+d_{Z'}(\sigma,\sigma_j).
\end{array}
$$
Thus, the vertex $\sigma$ is a median point for the triple $\{\sigma_1,\sigma_2,\sigma_3\}$. This proves existence.

\medskip \noindent
We now prove uniqueness of median points. Let $\sigma' \in Z'$ be a median point of the triple $\{\sigma_1,\sigma_2,\sigma_3\}$. We prove that $\sigma'=\sigma$. Let $i,j \in \{1,2,3\}$ be distinct and let $H$ be a hyperplane such that $\sigma_i(H)=\sigma_j(H)$. Notice that Claim~\ref{claim:DistanceSpot} implies that following a geodesic from $\sigma_i$ to $\sigma_j$ amounts to modifying successively the values taken by $\sigma_i$ on the hyperplanes on which $\sigma_i$ and $\sigma_j$ differ, and only on these hyperplanes. Therefore, since $\sigma'$ lies on a geodesic between $\sigma_i$ and $\sigma_j$, it follows that $\sigma'(H)=\sigma_i(H)$. Note that, for every hyperplane $H$, there exist distinct $i,j \in \{1,2,3\}$ such that $\sigma_i(H)=\sigma_j(H)$. Thus, the element $\sigma'$ is entirely determined by the triple $\{\sigma_1,\sigma_2,\sigma_3\}$ and $\sigma'=\sigma$. This proves uniqueness, and finally the fact that $Z'$ is a median graph. Claim~\ref{claim:ZMedian} is thus proved.

\medskip \noindent
It remains to prove the second assertion of our proposition. The fact that the convex hull of $X$ in $Z$ coincides with $Z$ is clear since, for every $p \in N(A) \cap N(B)$, the vertex $\gamma(p)$ belongs to the geodesic $[\alpha(p), \gamma(p)] \cup [\gamma(p),\beta(p)]$ connecting $\alpha(p),\beta(p) \in X$. 

\medskip \noindent
Finally, two hyperplanes $J_1,J_2$ in $X$ are transverse in $Z$ if and only if there exist intersecting edges $e_1 \in J_1$ and $e_2 \in J_2$ that span a $4$-cycle. If such a $4$-cycle lies in $X$, then $J_1,J_2$ are transverse in $X$. Otherwise, the only possibility, by construction of $Z$, is that $e_1,e_2$ belong to $A,B$. Thus, if two hyperplanes $J_1,J_2$ are transverse in $Z$, then either they are transverse in $X$ or $\{J_1,J_2\} = \{A,B\}$. The converse is clear, proving the second assertion of our proposition. 
\end{proof}

\noindent
Finally, we show that our swelling satisfies the universal property given by Theorem~\ref{thm:Swelling}. It justifies that our definition of swelling is the most natural one. 

\begin{prop}\label{prop:ExtensionTwo}
Let $X$ be a median graph and $A,B$ two tangent hyperplanes. Let $Z$ denote the swelling of $X$ relative to $\{A,B\}$. For every median graph $Y$, every parallel-preserving map $\psi : X \to Y$ satisfying $\psi(A) \pitchfork \psi(B)$ admits a unique parallel-preserving extension $Z \to Y$.
\end{prop}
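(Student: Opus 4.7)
The plan is to define the extension $\xi$ vertex-by-vertex on the new vertices $\gamma(p) := (p,(1,1))$ of $Z$ (for $p \in N(A) \cap N(B)$), show this choice is forced by the parallel-preserving constraint, and then verify the parallel-preserving property by a short case analysis.

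For uniqueness, I would observe that in $Z$ the square $\{p\} \times [0,1]^2$ makes the new edge $[\alpha(p), \gamma(p)]$ parallel to the old edge $[p, \beta(p)] \in B$. Hence any parallel-preserving extension $\xi$ must send $[\alpha(p), \gamma(p)]$ to an edge of $\psi(B)$ having $\psi(\alpha(p))$ as an endpoint. By the product description $N(\psi(B)) \cong F \times [0,1]$ from Theorem~\ref{thm:BigMedian}, there is exactly one such edge in $Y$, so $\xi(\gamma(p))$ is completely determined.

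For existence, I use the hypothesis $\psi(A) \pitchfork \psi(B)$: the two intersecting edges $\psi([p, \alpha(p)]) \in \psi(A)$ and $\psi([p, \beta(p)]) \in \psi(B)$ span a unique $4$-cycle in $Y$, whose fourth vertex I define to be $\xi(\gamma(p))$ (and I set $\xi|_X := \psi$). This manifestly sends vertices to vertices and edges to edges, since the two new edges of $Z$ incident to $\gamma(p)$ are mapped onto the remaining two sides of that $4$-cycle in $Y$.

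It then remains to check that $\xi$ is parallel-preserving. By Proposition~\ref{prop:SwellingTwo}, the hyperplanes of $Z$ are in natural bijection with those of $X$: the hyperplane of $Z$ corresponding to $A$ contains, in addition to the old edges of $A$, the new edges $[\beta(p),\gamma(p)]$, and symmetrically for $B$. A three-case analysis on two parallel edges of $Z$ (old-old, old-new, new-new) then suffices: the old-old case follows from parallel-preservation of $\psi$, while in the remaining cases the two edges both lie either in the hyperplane of $Z$ corresponding to $A$ or in the one corresponding to $B$, and by construction their $\xi$-images both lie in $\psi(A)$ or both in $\psi(B)$ respectively, hence are parallel in $Y$. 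The main, minor, obstacle is merely to keep straight which hyperplane of $Z$ each new edge inhabits, but this is dictated unambiguously by the glued squares $\{p\} \times [0,1]^2$.
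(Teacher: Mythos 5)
Your overall strategy is the same as the paper's: force the value $\xi(\gamma(p))$ to be the fourth corner of the square spanned by $\psi([p,\alpha(p)])$ and $\psi([p,\beta(p)])$ (which exists because $\psi(A)\pitchfork\psi(B)$ and is unique because $Y$ contains no $K_{2,3}$), and then check parallel-preservation. Your uniqueness argument, via the product structure of $N(\psi(B))$, is a perfectly good variant of the paper's.

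There is, however, a genuine gap in the existence part: you have forgotten an entire family of new edges of $Z$. The swelling glues a copy of $(N(A)\cap N(B))\times[0,1]^2$, so besides the two edges $[\alpha(p),\gamma(p)]$ and $[\beta(p),\gamma(p)]$ of each new square, $Z$ also contains the ``top face'' edges $[\gamma(p),\gamma(q)]$ for every pair of adjacent vertices $p,q\in N(A)\cap N(B)$ (and $N(A)\cap N(B)$ is in general far from a single vertex). Your proposal never verifies that $\xi$ sends these to edges of $Y$, i.e.\ that $\xi(\gamma(p))$ and $\xi(\gamma(q))$ are adjacent; and your three-case analysis is actually wrong for them: the edge $[\gamma(p),\gamma(q)]$ lies in the hyperplane of $Z$ corresponding to the hyperplane $J$ of $X$ separating $p$ from $q$, which is generically neither $A$ nor $B$, so the claim that in the old--new and new--new cases both edges lie in the hyperplane corresponding to $A$ or to $B$ fails. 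The missing step is handled in the paper as follows: by construction $\xi(\gamma(p))$ is separated from $\psi(p)$ exactly by $\psi(A)$ and $\psi(B)$ (it is the opposite corner of the square), and likewise for $q$; hence the hyperplanes separating $\xi(\gamma(p))$ from $\xi(\gamma(q))$ coincide with those separating $\psi(p)$ from $\psi(q)$, namely the single hyperplane $\psi(J)$. This gives both the adjacency and the fact that $[\xi(\gamma(p)),\xi(\gamma(q))]$ is parallel to $\psi([p,q])$, after which your reduction of parallel-preservation to the old--old case goes through. With this addition your argument matches the paper's proof.
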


\begin{proof}
Let $\alpha$ (resp. $\beta$) denote the canonical involution of $N(A)$ (resp. $N(B)$). For every $p \in N(A) \cap N(B)$, let $\gamma(p)$ denote the fourth vertex of the $4$-cycle spanned by $p,\alpha(p),\beta(p)$. Notice that, given a $p \in N(A) \cap N(B)$, the fact that $\psi(A)$ and $\psi(B)$ are transverse implies that $\psi(p),\psi(\alpha(p)),\psi(\beta(p))$ span a $4$-cycle in $Y$. Assuming that there exists a parallel-preserving map $\xi : Z \to Y$ extending $\psi$, necessarily $\xi$ has to send $\gamma(p)$ to the fourth vertex of this $4$-cycle. Thus, there exists at most one parallel-preserving extension of $\psi$. Defining $\xi : Z \to Y$ like this, we need to verify that $\xi$ is parallel-preserving. Clearly, $\xi$ sends vertices to vertices. Notice that it also sends edges to edges. Indeed:
\begin{itemize}
	\item Because $\psi$ is parallel-preserving, $\xi$ sends an edge of $X$ to an edge of $Y$.
	\item Given a $p \in N(A) \cap N(B)$, consider the edge $e:=[\alpha(p),\gamma(p)]$ or $[\beta(p),\gamma(p)]$. By construction, $\xi$ sends the $4$-cycle $p,\alpha(p),\beta(p),\gamma(p)$ to a $4$-cycle. A fortiori, it sends $e$ to an edge.
	\item Given two adjacent vertices $p,q \in N(A) \cap N(B)$, consider the edge $e:=[\gamma(p),\gamma(q)]$. By construction, $\xi(p)$ (resp.\ $\xi(q)$) is separated from $\xi(\gamma(p))$ (resp.\ $\xi(\gamma(q))$) exactly by $A$ and $B$. Consequently, the hyperplanes separating $\xi(\gamma(p))$ and $\xi(\gamma(q))$ coincide with the hyperplanes separating $\xi(p)=\psi(p)$ and $\xi(q)=\psi(q)$. Since $\psi$ sends $[p,q]$ to an edge, we conclude that $\xi(\gamma(p))$ and $\xi(\gamma(q))$ are adjacent in $Y$.
\end{itemize}
Now, we claim that, for every edge $e:=[a,b] \subset Z$, there exists an edge $e' \subset X$ parallel to $e$ such that $\xi(e)$ and $\xi(e')$ are parallel. 
\begin{itemize}
	\item If $e$ already lies in $X$, it suffices to take $e':=e$.
	\item Assume that $e$ has only one vertex not in $X$, say $a \in X$ but $b \notin X$. In other words, there exists some $p \in N(A) \cap N(B)$ such that $a=\alpha(p)$ or $\beta(p)$ and $b=\gamma(p)$. Up to switching $A$ and $B$, say that $a=\alpha(p)$. By construction, $\xi$ sends the $4$-cycle spanned by $p,\alpha(p),\beta(p),\gamma(p)$ to a $4$-cycle, so the edge $e':=[p,\beta(p)]$, which lies in $X$ and is parallel to $e$, is sent by $\xi$ to an edge parallel to $\xi(e)$.
	\item Assume that $a,b \notin X$. In other words, there exist two adjacent vertices $p,q \in N(A) \cap N(B)$ such that $a=\gamma(p)$ and $b=\gamma(q)$. By construction, $\xi$ sends the $4$-cycle spanned by $p,\alpha(p),\beta(p),\gamma(p)$ (resp. $q, \alpha(q),\beta(q),\gamma(q)$) to a $4$-cycle. Moreover, because $\xi$ sends edges to edges, we know that $\xi(p)$ (resp. $\xi(\alpha(p))$, $\xi(\beta(p))$, $\xi(\gamma(p))$) is adjacent to $\xi(q)$ (resp. $\xi(\alpha(q))$, $\xi(\beta(q))$, $\xi(\gamma(q))$). Consequently, the edges $[\xi(\gamma(p)),\xi(\gamma(q))]$ and $[\xi(p),\xi(q)]$ are adjacent. Thus, it suffices to take $e':=[p,q]$. 
\end{itemize}
Our claim is sufficient to conclude that $\xi$ is parallel-preserving. Indeed, given two parallel edges $e_1,e_2 \subset Z$, our claim yields two edges $e_1',e_2' \subset X$ parallel respectively to $e_1,e_2$ such that $\xi(e_1),\xi(e_2)$ are parallel respectively to $\xi(e_1'),\xi(e_2')$. Since $e_1'$ and $e_2'$ are necessarily parallel, and because $\xi(e_1')= \psi(e_1')$ and $\xi(e_2')=\psi(e_2')$ must be parallel as $\psi$ is parallel-preserving, we conclude that $\xi(e_1)$ and $\xi(e_2)$ are parallel in $Y$, as desired.
\end{proof}

\noindent
It is worth noticing that, as justified by the next example, in general there is no canonical way to swell two hyperplanes that are not tangent.

\begin{remark}\label{remark:NotCanonicalTangent}
Let $X$ be a path of length four, and let $A,B,C,D$ denote its successive hyperplanes. In order to swell $A$ and $D$, we have two natural constructions. See Figure~\ref{NoSwell}. First, we can fold $B$ and $C$, in order to make $A$ and $D$ tangent, and then swell $A$ and $D$. The median graph $Y$ thus obtained is a edge and a $4$-cycle connected by a common vertex. Next, we can make $A$ transverse to both $B$ and $C$, in order to make $A$ and $D$ tangent, and then swell $A$ and $D$. The median graph $Z$ thus obtained is a chain of three $4$-cycles. Unfortunately, there is no median graph $W$ in which $A,D$ coincide so that $X \to Y$ and $X \to Z$ factor through $Z$ via parallel-preserving maps. 

\begin{figure}[h!]
\begin{center}
\includegraphics[width=0.6\linewidth]{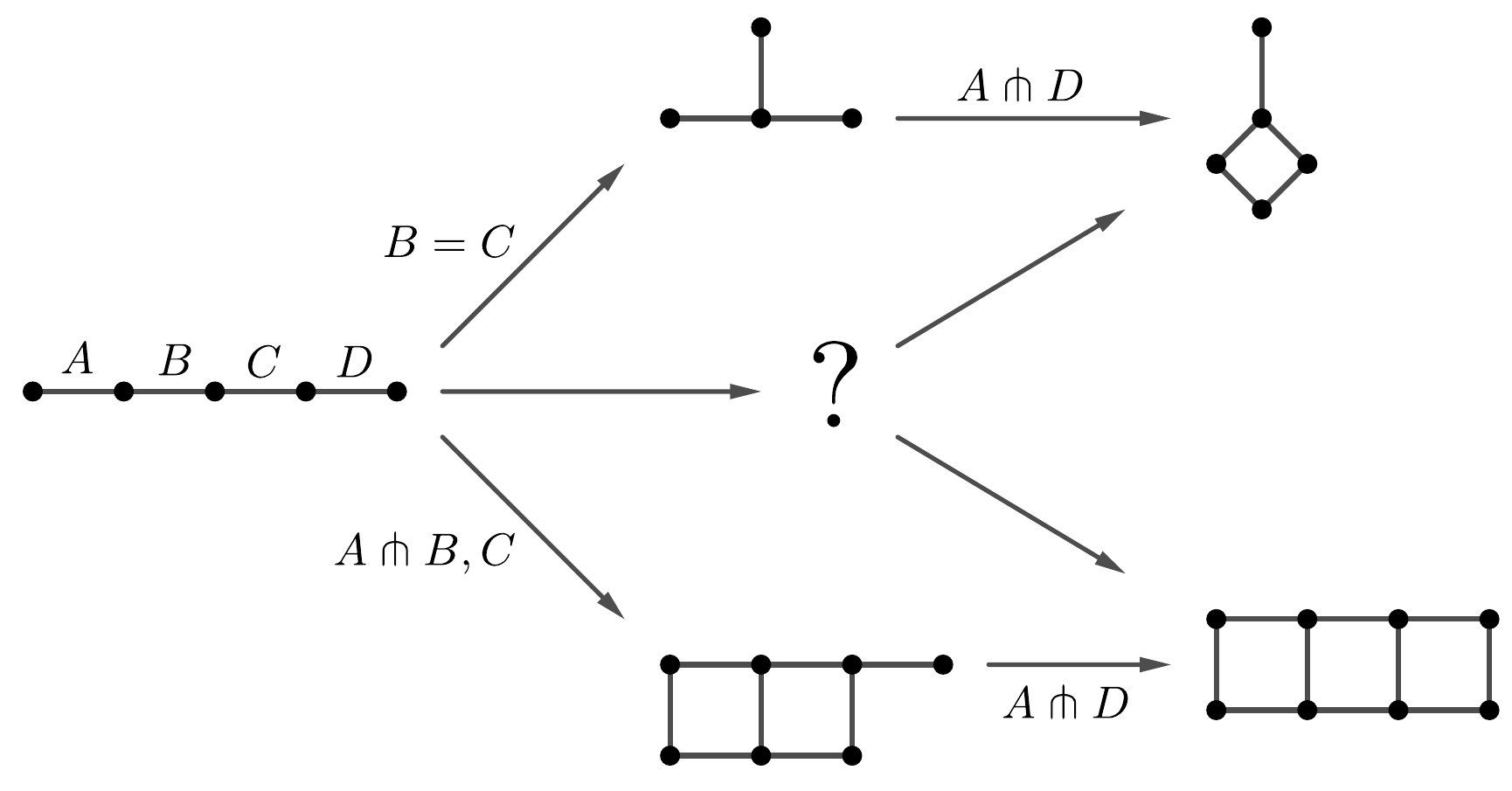}
\caption{A configuration where there is no canonical way to swell two hyperplanes.}
\label{NoSwell}
\end{center}
\end{figure}
\end{remark}

\subsection{Swelling hyperplanes}\label{section:Sw}

\noindent
So far, we have seen how to make two tangent hyperplanes transverse. Now, given an arbitrary collection of pairs of tangent hyperplanes, we prove Theorem~\ref{thm:Swelling} by swelling the pairs successively. A priori, the median graph thus constructed depends on the enumeration we use to swell the pairs of hyperplanes. However, it will follow from the universal property given by Proposition~\ref{prop:ExtensionTwo} that one obtains the same median graph independently of the enumeration. 

\begin{proof}[Proof of Theorem~\ref{thm:Swelling}.]
Up to adding redundancy in $\mathcal{P}$ and saying that swelling a pair of transverse hyperplanes does not modify the median graph, we can assume that $\mathcal{P}$ is infinite. Fix an enumeration $\mathcal{P}= \{ \{A_i, B_i\}, i \geq 1\}$ and define by induction a sequence of graphs $(Z_i)_{i \geq 0}$:
\begin{itemize}
	\item $Z_0$ is just $X$;
	\item if $Z_i$ is already defined, $Z_{i+1}$ is the swelling of $Z_i$ relative to $\{A_{i+1},B_{i+1}\}$.
\end{itemize}
It follows from Proposition~\ref{prop:ExtensionTwo} that, given $i<j$, the isometric embedding $X \hookrightarrow Z_j$ extends to a parallel-preserving map $Z_i \hookrightarrow Z_j$ since $A_i,B_i$ are transverse in $X_j$; moreover, this extension turns out to be an isometric embedding since it sends distinct hyperplanes to distinct hyperplanes. Thus, it makes sense to define $Z$ as the direct limit of the $Z_i$. It is clearly a median graph.

\medskip \noindent
Now, let $Y$ be a median graph and $\psi : X \to Y$ a parallel-preserving map such that $\psi(A_i),\psi(B_i)$ are transverse for every $i \geq 1$. It follows from Proposition~\ref{prop:ExtensionTwo} that, for every $i \geq 1$, $\psi$ admits a unique parallel-preserving extension $\xi_i : Z_i \to Y$. This uniqueness implies that, given $i<j$, the restriction of $\xi_j$ to $Z_i$, thought of as a subgraph of $Z_j$, agrees with $\xi_i$. Thus, the $\xi_i$ define a unique parallel-preseving extension $\xi : Z \to X$ of~$\psi$.

\medskip \noindent
Thus, we have prove the first assertion of our theorem. In order to prove the second assertion, first notice that, by construction of $Z$ and as a consequence of the second assertion of Proposition~\ref{prop:SwellingTwo}, the convex hull of $X$ in $Z$ is $Z$ entirely and two hyperplanes $A,B$ of $Z$ are transverse in $Z$ if and only if they are transverse in $X$ or $\{A,B\} \in \mathcal{P}$. Let $Z'$ denote another median graph containing $X$ isometrically and satisfying these two properties. The inclusion map $X \hookrightarrow Z'$ extends to a parallel-preserving map $\xi : Z \to Z'$. Because $\xi$ sends isometrically the copy of $X$ in $Z$ onto the copy of $X$ in $Z'$, and since the convex hulls of $X$ in $Z$ and $Z'$ are respectively $Z$ and $Z'$, it follows that $\xi$ induces a bijection from the hyperplanes of $Z$ to the hyperplanes of $Z'$. Moreover, as a consequence of the properties satisfied by $Z$ and $Z'$, this bijection preserves transversality and non-transversality. We conclude from Lemma~\ref{lem:WhenIso} that $\xi$ yields an isometry $Z \to Z'$. 
\end{proof}

\noindent
Since Theorem~\ref{thm:Swelling} characterises the median graph $Z$ uniquely up to isometry, the following definition makes sense:

\begin{definition}
Let $X$ be a countable median graph and $\mathcal{P}$ a collection of pairs of tangent hyperplanes in contact. The median graph given by Theorem~\ref{thm:Swelling} is the \emph{swelling of $X$ relative to $\mathcal{P}$}. 
\end{definition}

\noindent
A direct description will be given in Section~\ref{section:SwellingExplicit}.

\subsection{Equivariance}\label{section:SwellingEqui}

\noindent
Typically, there will be groups acting on our median graphs, and we would like our swellings to be equivariant under the actions. Our next proposition shows that the universal property satisfied by swellings automatically implies that our constructions are equivariant under group actions. 

\begin{prop}\label{prop:equivarianceswelling}
Let $G$ be a group acting on a median graph $X$ and $\mathcal{P}$ a collection of pairs of tangent hyperplanes. Let $Z$ denote the swelling of $X$ relative to $\mathcal{P}$. 
\begin{itemize}
	\item Every isometry $X \to X$ preserving $\mathcal{P}$ extends uniquely to an isometry $Z \to Z$. Thus, the action $G \curvearrowright X$ naturally extends to an action $G \curvearrowright Z$.
	\item Let $H$ be a group acting on a median graph $Y$ and $\psi : X \to Y$ a $\varphi$-equivariant parallel-preserving map for some morphism $\varphi : G \to H$. Then there exists a $\varphi$-equivariant parallel-preserving map $\xi : Z \to Y$ such that $\psi = \xi  \circ \iota$. 
\end{itemize}
\end{prop}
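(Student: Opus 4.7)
The plan is to mirror verbatim the argument used for foldings in Proposition~\ref{prop:equivariancefolding}, simply replacing the universal property of Theorem~\ref{thm:Foldings} by the one provided by Theorem~\ref{thm:Swelling}. Throughout, write $\iota : X \hookrightarrow Z$ for the canonical isometric inclusion.

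For the first assertion, let $\alpha$ be an isometry of $X$ preserving $\mathcal{P}$. Then $\iota \circ \alpha : X \to Z$ is a parallel-preserving map, and for every $\{A,B\} \in \mathcal{P}$ the pair $\{\alpha(A),\alpha(B)\}$ also lies in $\mathcal{P}$, so $\iota(\alpha(A))$ and $\iota(\alpha(B))$ are transverse in $Z$ by the explicit description of the hyperplanes of $Z$ in Theorem~\ref{thm:Swelling}. The universal property therefore yields a unique parallel-preserving extension $\beta : Z \to Z$ with $\beta \circ \iota = \iota \circ \alpha$. Applying the same argument to $\alpha^{-1}$ produces a parallel-preserving map $\gamma : Z \to Z$ with $\gamma \circ \iota = \iota \circ \alpha^{-1}$. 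The two compositions $\beta \circ \gamma$ and $\gamma \circ \beta$ are parallel-preserving extensions of $\iota : X \to Z$, as is $\mathrm{id}_Z$, so the uniqueness clause forces $\beta \circ \gamma = \gamma \circ \beta = \mathrm{id}_Z$. In particular $\beta$ is a parallel-preserving bijection with parallel-preserving inverse, hence it induces a bijection between hyperplanes of $Z$ that preserves non-transversality, and Lemma~\ref{lem:WhenIso} then identifies $\beta$ as an isometry. Uniqueness of $\beta$ guarantees that the assignment $\alpha \mapsto \beta$ is a homomorphism, so the action $G \curvearrowright X$ extends canonically to an action $G \curvearrowright Z$ for which $\iota$ is equivariant.

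For the second assertion, assume that $\psi : X \to Y$ additionally satisfies $\psi(A) \pitchfork \psi(B)$ for all $\{A,B\} \in \mathcal{P}$ (which is the hypothesis needed to invoke the universal property). Theorem~\ref{thm:Swelling} provides a unique parallel-preserving extension $\xi : Z \to Y$ with $\xi \circ \iota = \psi$. For each $g \in G$, consider
\[
\zeta_g : Z \longrightarrow Y, \qquad z \longmapsto \varphi(g)^{-1} \cdot \xi(g \cdot z).
\]
Since the $G$-action on $Z$ is by isometries and $\xi$ is parallel-preserving, $\zeta_g$ is parallel-preserving. For every $x \in X$, using $\iota$-equivariance from the first assertion and $\varphi$-equivariance of $\psi$,
\[
\zeta_g(\iota(x)) = \varphi(g)^{-1} \cdot \xi(g \cdot \iota(x)) = \varphi(g)^{-1} \cdot \xi(\iota(g \cdot x)) = \varphi(g)^{-1} \cdot \psi(g \cdot x) = \psi(x).
\]
Thus $\zeta_g$ is also a parallel-preserving extension of $\psi$, and the uniqueness part of Theorem~\ref{thm:Swelling} forces $\zeta_g = \xi$. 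This exactly says that $\xi(g \cdot z) = \varphi(g) \cdot \xi(z)$ for all $g \in G$ and $z \in Z$, i.e. $\xi$ is $\varphi$-equivariant.

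The argument is essentially formal once Theorem~\ref{thm:Swelling} is in hand; the only mildly non-trivial step is promoting the parallel-preserving bijection $\beta$ to an isometry, which is exactly the content of the third item of Lemma~\ref{lem:WhenIso}. No separate combinatorial work on the construction of $Z$ is needed here.
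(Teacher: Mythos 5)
Your proof is correct and follows essentially the same route as the paper: both assertions are deduced purely formally from the universal property of Theorem~\ref{thm:Swelling}, with the second assertion using the same $\zeta_g = \varphi(g)^{-1}\xi(g\cdot)$ trick that the paper also uses for foldings in Proposition~\ref{prop:equivariancefolding}. The only difference is that you spell out the first assertion (construction of $\beta,\gamma$, the identity $\beta\circ\gamma=\gamma\circ\beta=\mathrm{id}_Z$ via uniqueness, and the upgrade to an isometry via Lemma~\ref{lem:WhenIso}) in more detail than the paper, which simply asserts that it "follows from the universal property"; your added details are accurate.
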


\begin{proof}
The first assertion follows from the universal property satisfied by $Z$, as given by Theorem~\ref{thm:Swelling}, since isometric embeddings are parallel-preserving. For the second assertion, we need to verify that the unique extension $\xi$ of $\psi$ is $\varphi$-equivariant. Given a $g \in G$, consider the map $\varphi(g)^{-1} \xi ( g \cdot) : Z \to Y$. This map is parallel-preserving and it coincides with $\psi$ on $X$ since $\psi$ is $\varphi$-equivariant. The uniqueness of $\xi$ implies that $\xi(g \cdot)= \varphi(g) \xi( \cdot)$. 
\end{proof}

\subsection{An explicit description}\label{section:SwellingExplicit}

\noindent
We saw during the proof of Theorem~\ref{thm:Swelling} that the swelling of a median graph $X$ relative to a collection $\mathcal{P}$ of pairs of tangent hyperplanes can be constructed by enumerating $\mathcal{P}$ and swelling the pairs successively as explained in Section~\ref{section:SwellingTwo}. It is also possible to describe the swelling relative to $\mathcal{P}$ directly. This will be useful in Section~\ref{section:Freemediancocompact}.

\medskip \noindent
Following the definition introduced during the proof of Theorem~\ref{thm:Swelling}, fix a countable median graph $X$ and a collection $\mathcal{P}$ of pairs of tangent hyperplanes, and define a \emph{spot} (\emph{relative to $\mathcal{P}$}) as a map $\sigma$ such that:
\begin{itemize}
	\item for every hyperplane $J$ of $X$, $\sigma(J)$ is a halfspace delimited by $J$;
	\item $\sigma(A) \cap \sigma(B) \neq \emptyset$ for all hyperplanes $\{A,B\} \notin \mathcal{P}$;
	\item $\sigma$ differs from some (or equivalently, any) principal orientation on only finitely many hyperplanes.
\end{itemize}
Let $M$ denote the graph whose vertices are the spots and whose edges connect two spots whenever they differ on a single hyperplane. The copy of $X$ in $M$ is given by the principal orientations. 

\begin{prop}\label{prop:SwellingDirect}
Let $Z$ denote the swelling of $X$ relative to $\mathcal{P}$. The embedding $X \hookrightarrow M$ extends to an isometry $Z \to M$. 
\end{prop}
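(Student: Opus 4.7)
The plan is to check that the explicit graph $M$, together with the copy of $X$ given by the principal orientations, satisfies the three conditions characterising the swelling in the second paragraph of Theorem~\ref{thm:Swelling}: $M$ is median, the canonical copy of $X$ embeds isometrically with convex hull equal to $M$, and two hyperplanes of $M$ are transverse if and only if they are either transverse in $X$ or belong to $\mathcal{P}$. The uniqueness clause will then yield an isometry $Z \to M$ extending the canonical embedding. The arguments run parallel to those in the proof of Proposition~\ref{prop:SwellingTwo} for the single-pair case and to Lemma~\ref{lem:MedianHC} for the distance computation, with ``spots relative to $\mathcal{P}$'' replacing ``spots relative to $\{A,B\}$''.

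The core step is to prove that the graph distance in $M$ between two spots $\sigma, \sigma'$ equals the cardinality of $\{J : \sigma(J) \neq \sigma'(J)\}$, which then forces $M$ to be median by the majority rule exactly as in Claim~\ref{claim:ZMedian}. The lower bound is immediate from the definition of edges in $M$. The upper bound is obtained by induction on that cardinality: among the hyperplanes $J$ on which $\sigma$ and $\sigma'$ disagree, pick one for which $\sigma'(J)$ is maximal for inclusion, and switch the value of $\sigma$ there. The verification that the result is still a spot is the maximality argument of Claim~\ref{claim:DistanceSpot}, which only ever uses the intersection requirement for pairs outside $\mathcal{P}$. The majority rule then produces a median: for each $\{A,B\} \notin \mathcal{P}$, pigeonhole gives one of the three input spots agreeing with the majority on both $A$ and $B$, and the spot property of that input provides the required $\sigma(A) \cap \sigma(B) \neq \emptyset$. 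Uniqueness of the median follows from the distance formula in the same way as in Claim~\ref{claim:ZMedian}.

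For the embedding and convex hull, two principal orientations differ on exactly the hyperplanes separating their defining vertices, so the map $X \to M$ is an isometric embedding by the distance formula (this is the Lemma~\ref{lem:MedianHC} argument). For the convex hull: every halfspace of $M$ has the form $\{\sigma : \sigma(J) = H\}$ for a halfspace $H$ of $X$ delimited by some hyperplane $J$, and since both halfspaces of $X$ delimited by any hyperplane contain principal orientations, no halfspace of $M$ is disjoint from $X$, so the convex hull of $X$ in $M$ is $M$ entirely.

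For the transversality criterion: any $4$-cycle of $X$ transports to $M$ via principal orientations, so hyperplanes transverse in $X$ remain so in $M$. For $\{A,B\} \in \mathcal{P}$, any $p \in N(A) \cap N(B)$ produces a $4$-cycle in $M$ formed by the four maps obtained from the principal orientation $\sigma_p$ by toggling the values on subsets of $\{A,B\}$; the verification that each of these four maps is a spot reduces the intersection condition to one already satisfied by $\sigma_p$, $\sigma_{\alpha(p)}$, or $\sigma_{\beta(p)}$ (and the only pair that would cause trouble is $\{A,B\}$ itself, which is exempted since it belongs to $\mathcal{P}$). Conversely, if $A,B$ are neither transverse in $X$ nor related by $\mathcal{P}$, then one of the four halfspace intersections of $A$ and $B$ is empty in $X$, and the defining condition of a spot prohibits any spot from realising that empty combination, so no $4$-cycle of $M$ can witness transversality of $A$ and $B$. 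The main obstacle is the passage from a single pair to an arbitrary collection $\mathcal{P}$; the maximality trick from Claim~\ref{claim:DistanceSpot} is precisely what makes this passage routine.
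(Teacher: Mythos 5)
Your proposal is correct and follows essentially the same route as the paper: the distance formula and medianness via the maximality argument of Claims~\ref{claim:DistanceSpot} and~\ref{claim:ZMedian}, the convex hull via halfspaces containing principal orientations, the transversality characterisation via toggling $4$-cycles and the empty-intersection obstruction, and finally the uniqueness clause of Theorem~\ref{thm:Swelling}. The one step you assert rather than prove --- that the halfspaces of $M$ are exactly the sets $\{\sigma : \sigma(J)=H\}$, equivalently that two edges of $M$ are parallel if and only if they toggle the same hyperplane of $X$ --- is the content of the paper's Claim~\ref{claim:HyperplaneSpot} and does require its own (maximality-style) argument, but it fits squarely into the plan you describe.
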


\noindent
The case $\mathcal{P}=\{A,B\}$ is contained in the proof of Proposition~\ref{prop:SwellingTwo}. The general case is a straightforward adaptation of the argument. In a few words, Claims~\ref{claim:DistanceSpot} and~\ref{claim:ZMedian} can be reproduced word for word and yield:

\begin{claim}
The distance between two spots in $M$ coincides with the number of hyperplanes on which they differ.
\end{claim}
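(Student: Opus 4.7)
The plan is to mimic the argument of Claim~\ref{claim:DistanceSpot} almost word for word, with $\mathcal{P}$ replacing the singleton $\{\{A,B\}\}$. Given two spots $\sigma, \sigma'$ that differ on exactly $n$ hyperplanes $\mathcal{J}=\{J_1,\ldots,J_n\}$, I would proceed by induction on $n$. For $n \leq 1$ the claim follows immediately from the definition of adjacency in $M$. The lower bound $n \leq d_M(\sigma,\sigma')$ is automatic, since each edge of $M$ flips the value of exactly one hyperplane, so any path from $\sigma$ to $\sigma'$ must flip each hyperplane in $\mathcal{J}$ at least once.

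For the upper bound, the goal is to construct a spot $\eta$ adjacent to $\sigma$ that differs from $\sigma'$ on only $n-1$ hyperplanes. After reordering so that $\sigma'(J_1)$ is maximal (with respect to inclusion) among the halfspaces $\sigma'(J_1),\ldots,\sigma'(J_n)$, I would define $\eta$ to agree with $\sigma$ everywhere except on $J_1$, setting $\eta(J_1):=\sigma'(J_1)$. Provided $\eta$ is a spot, it is adjacent to $\sigma$ in $M$ and differs from $\sigma'$ on exactly $\{J_2,\ldots,J_n\}$; the induction hypothesis applied to $(\eta,\sigma')$ then yields $d_M(\eta,\sigma')=n-1$, whence $d_M(\sigma,\sigma') \leq n$.

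The crux is therefore verifying that $\eta$ is a spot, i.e.\ that $\eta(H_1)\cap \eta(H_2)\ne \emptyset$ for every pair $\{H_1,H_2\}\notin \mathcal{P}$. Three cases arise: first, if neither $H_i$ equals $J_1$, then $\eta$ agrees with $\sigma$ on both, and the conclusion follows because $\sigma$ is a spot; second, if, up to swapping, $H_1=J_1$ and $H_2\notin \mathcal{J}$, then $\eta(H_1)\cap\eta(H_2) = \sigma'(H_1)\cap\sigma'(H_2)\ne\emptyset$ because $\sigma'$ is a spot; third, if $H_1=J_1$ and $H_2\in\mathcal{J}\setminus\{J_1\}$, then $\eta(H_2)=\sigma(H_2)$ is the complement of $\sigma'(H_2)$, so assuming $\eta(H_1)\cap\eta(H_2)=\emptyset$ forces $\sigma'(J_1)\subseteq \sigma'(H_2)$, contradicting the maximal choice of $J_1$.

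The main and only delicate step is case (iii), where the maximality of $\sigma'(J_1)$ is essential. Note that the hypothesis $\{H_1,H_2\}\notin\mathcal{P}$ is never invoked to exclude a problematic pair involving $J_1$; the spot condition is applied only to $\sigma$ and $\sigma'$, which are spots by hypothesis. Consequently the generalization from a single pair $\{A,B\}$ to an arbitrary collection $\mathcal{P}$ is essentially cost-free, in line with the author's remark that the earlier proofs can be reproduced \emph{word for word}.
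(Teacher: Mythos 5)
Your proposal is correct and matches the paper's intended proof exactly: the paper states that the general case is obtained by reproducing the proof of Claim~\ref{claim:DistanceSpot} word for word, replacing the single excluded pair $\{A,B\}$ by the collection $\mathcal{P}$, which is precisely what you do (including the key observation that the maximality of $\sigma'(J_1)$ carries the delicate case and that the condition $\{H_1,H_2\}\notin\mathcal{P}$ enters only through $\sigma$ and $\sigma'$ being spots).
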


\begin{claim}
The graph $M$ is median. 
\end{claim}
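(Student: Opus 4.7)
The plan is to adapt the proof of Claim~\ref{claim:ZMedian} from the two-hyperplane case, by the majority-vote construction. Given three spots $\sigma_1,\sigma_2,\sigma_3$ in $M$, I define a candidate median $\sigma$ by sending each hyperplane $H$ of $X$ to the halfspace appearing at least twice among $\sigma_1(H),\sigma_2(H),\sigma_3(H)$ (this is well-defined since $H$ delimits only two halfspaces). The work then splits into two parts: first verify that $\sigma$ is itself a spot, and then use the distance formula from the preceding claim to show that $\sigma$ is the unique median of $\{\sigma_1,\sigma_2,\sigma_3\}$.

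The step I expect to require the most care is checking the intersection condition for $\sigma$, namely $\sigma(A)\cap\sigma(B)\neq\emptyset$ for every pair $\{A,B\}\notin\mathcal{P}$. The key observation is a pigeonhole argument: since $\sigma(A)$ is the majority choice on $A$, at most one index $i\in\{1,2,3\}$ satisfies $\sigma_i(A)\neq\sigma(A)$, and similarly at most one index satisfies $\sigma_i(B)\neq\sigma(B)$. Hence there exists at least one index $i$ with $\sigma_i(A)=\sigma(A)$ and $\sigma_i(B)=\sigma(B)$ simultaneously. Because $\sigma_i$ is a spot and $\{A,B\}\notin\mathcal{P}$, we have $\sigma_i(A)\cap\sigma_i(B)\neq\emptyset$, and therefore $\sigma(A)\cap\sigma(B)\supseteq\sigma_i(A)\cap\sigma_i(B)\neq\emptyset$. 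For the finiteness condition, fix a principal orientation $\pi$ of $X$: the set of hyperplanes on which $\sigma$ disagrees with $\pi$ is contained in the union over $i$ of the sets of hyperplanes on which $\sigma_i$ disagrees with $\pi$, and so is finite. The first condition (sending each hyperplane to one of its halfspaces) is immediate from the construction.

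To establish the median property, I apply the distance formula of the preceding claim. For any distinct $i,j,k\in\{1,2,3\}$, the hyperplanes separating $\sigma_i$ and $\sigma_j$ partition into those on which $\sigma_k$ agrees with $\sigma_j$ and those on which $\sigma_k$ agrees with $\sigma_i$; in both cases, $\sigma$ takes the majority value, which is $\sigma_j$'s in the first case and $\sigma_i$'s in the second. Counting gives
\[
d_M(\sigma_i,\sigma_j) \;=\; \#\{H:\sigma_i(H)\neq\sigma(H)\}+\#\{H:\sigma(H)\neq\sigma_j(H)\} \;=\; d_M(\sigma_i,\sigma)+d_M(\sigma,\sigma_j),
\]
so $\sigma$ is a median. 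For uniqueness, suppose $\sigma'$ is any median of the triple. Following a geodesic between two spots amounts to flipping the values one by one on the hyperplanes where they disagree, and only there; hence if $\sigma_i(H)=\sigma_j(H)$ for some pair $i\neq j$, then $\sigma'$ lying on a geodesic from $\sigma_i$ to $\sigma_j$ forces $\sigma'(H)=\sigma_i(H)$. Since every hyperplane is agreed upon by at least two of the three $\sigma_i$, the value $\sigma'(H)$ is forced to equal $\sigma(H)$ everywhere, giving $\sigma'=\sigma$ and completing the proof.
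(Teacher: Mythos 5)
Your proof is correct and takes essentially the same route as the paper, which establishes this claim by reproducing word for word the majority-vote argument of Claim~\ref{claim:ZMedian} together with the distance formula of the preceding claim, exactly as you do. The only difference is that you spell out the pigeonhole verification that the majority orientation $\sigma$ satisfies $\sigma(A)\cap\sigma(B)\neq\emptyset$ for all $\{A,B\}\notin\mathcal{P}$, a step the paper leaves implicit with ``the construction of $\sigma$ implies that it defines a spot.''
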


\noindent
Notice that, if $\{A,B\} \in \mathcal{P}$, then $A$ and $B$ are transverse in $M$. Indeed, given a vertex $x \in N(A) \cap N(B)$, let $a \in N(A)$ and $b \in N(B)$ denote its neighbours separated from it by $A$ and $B$. Then the map obtained from (the principal orientation given by) $x$ by modifying its values at $A$ and $B$ defines a spot, which induces with (the principal orientations given by) $x$, $a$, and $b$ a $4$-cycle in $M$. Therefore, $A$ and $B$ are transverse in $M$. (Alternatively, one can say that $M$ contains the graph of spots relative to $\{A,B\}$, which coincides with swelling of $X$ relative to $\{A,B\}$ according to the proof of Proposition~\ref{prop:SwellingTwo}.) As a consequence, it follows from Theorem~\ref{thm:Swelling} that the embedding $X \to M$ extends to a parallel-preserving map $Z \to M$. In order to show that this map is an isometry, we need to verify that the convex hull of $X$ in $M$ coincides with $M$ and that two hyperplanes $A,B$ of $X$ are transverse in $M$ if and only if they are transverse in $X$ or $\{A,B\} \in \mathcal{P}$. The first assertion amounts to saying that every hyperplane of $M$ crosses $X$. This follows from Claim~\ref{claim:HyperplaneSpot} below. The second assertion is also an easy consequence of this claim, and is recorded by Claim~\ref{claim:TransverseSpot}. 

\medskip \noindent
Thus, the proofs of the next two claims conclude the proof of Proposition~\ref{prop:SwellingDirect}.

\begin{claim}\label{claim:HyperplaneSpot}
Let $[x,x']$ and $[y,y']$ be two edges from $M$. Let $A$ (resp.\ $B$) denote the hyperplane of $X$ on which $x$ and $x'$ (resp.\ $y$ and $y'$) differ. The edges $[x,x']$ and $[y,y']$ are parallel in $M$ if and only if $A=B$.
\end{claim}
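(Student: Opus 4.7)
The plan is to prove the two implications of the equivalence separately; the forward direction is a direct consequence of the distance formula for spots, while the backward direction requires building an explicit parallelism chain.

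For the forward implication (parallel implies $A=B$), I would argue by induction on the length of a parallelism chain, which reduces the problem to showing that opposite edges in any $4$-cycle of $M$ flip the same hyperplane of $X$. So suppose $[a,a']$ and $[b,b']$ are opposite edges in a $4$-cycle $(a,a',b',b)$, where $a\to a'$ flips a hyperplane $C$ and $b\to b'$ flips $C'$; let $D,D'$ denote the hyperplanes flipped by $[a,b]$ and $[a',b']$. Claim~\ref{claim:DistanceSpot}, applied to the pair $\{a,b'\}$ of opposite corners, provides two descriptions of the set of hyperplanes on which they differ: $\{D,C'\}$ via $b$ and $\{C,D'\}$ via $a'$. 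Thus $\{D,C'\}=\{C,D'\}$, and since $D=C$ would force $b=a'$, a contradiction, we must have $C=C'$.

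For the backward implication ($A=B$ implies parallel), up to swapping $y$ and $y'$ I may assume $x(A)=y(A)$. Fix a geodesic $x=\sigma_0,\sigma_1,\ldots,\sigma_n=y$ in $M$, with $\sigma_i\to\sigma_{i+1}$ flipping a single hyperplane $J_{i+1}$; by Claim~\ref{claim:DistanceSpot}, the $J_i$ are precisely the hyperplanes on which $x$ and $y$ differ, so none of them equals $A$. Define $\tau_i$ by toggling the value of $\sigma_i$ at $A$, so that $\tau_0=x'$ and $\tau_n=y'$. The crucial step is to verify that each $\tau_i$ is itself a spot: the halfspace and finite-difference conditions being immediate, the only content is the intersection condition $\tau_i(A)\cap\tau_i(L)\neq\emptyset$ for $\{A,L\}\notin\mathcal{P}$. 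I would split into two cases depending on whether $L\in\{J_1,\ldots,J_i\}$: if $L$ has not yet been flipped, then $\sigma_i(L)=x(L)$ and the required intersection reduces to $x'(A)\cap x'(L)\neq\emptyset$, which holds as $x'$ is a spot; if $L$ has been flipped, then $\sigma_i(L)=y(L)$ and the required intersection reduces to $y'(A)\cap y'(L)\neq\emptyset$, which holds as $y'$ is a spot.

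Once each $\tau_i$ is known to be a spot, the quadruple $(\sigma_i,\sigma_{i+1},\tau_{i+1},\tau_i)$ is a genuine $4$-cycle in $M$, since each pair of consecutive vertices differs on exactly one hyperplane (either $J_{i+1}$ or $A$). The edges $[\sigma_0,\tau_0]=[x,x']$, $[\sigma_1,\tau_1]$, $\ldots$, $[\sigma_n,\tau_n]=[y,y']$ are then successive pairs of opposite sides, producing the desired parallelism chain. The main obstacle is precisely the spot verification for intermediate $\tau_i$: the condition on pairs $\{A,L\}$ cannot be read off from $\sigma_i$ alone, and the key trick is to exploit the two endpoint spot data $x'$ and $y'$ complementarily via the case split governed by whether $L$ has already been flipped along the geodesic.
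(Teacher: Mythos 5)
Your proof is correct. The forward direction coincides with the paper's argument: reduce to opposite sides of a single $4$-cycle and compare the two descriptions of the set of hyperplanes on which a pair of opposite corners differ, ruling out the degenerate identification because the four vertices of a $4$-cycle are distinct.

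The converse is where you genuinely diverge. The paper builds the parallelism ladder one square at a time by induction: it \emph{chooses} the next hyperplane $B$ to flip so that $x(B)$ is minimal (for inclusion) among the halfspaces $x(J)$ with $J$ separating $x$ and $y$, and this minimality is exactly what certifies that the intermediate vertex $p$ (obtained from $x$ by flipping $B$) is a spot; the second intermediate vertex $q$ is then certified using the endpoint spots. You instead take an arbitrary geodesic $\sigma_0,\dots,\sigma_n$ of $M$ from $x$ to $y$ --- whose intermediate vertices are spots for free --- and only have to certify the toggled vertices $\tau_i$, which you do by the two-sided case split: pairs $\{A,L\}$ with $L$ not yet flipped are handled by the spot $x'$, pairs with $L$ already flipped by the spot $y'$ (using $\tau_i(A)=x'(A)=y'(A)$, which holds since $A\notin\{J_1,\dots,J_n\}$). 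This buys you a cleaner argument with no minimality bookkeeping, at the mild cost of invoking Claim~\ref{claim:DistanceSpot} to know that geodesics of $M$ flip exactly the hyperplanes on which $x$ and $y$ differ, each exactly once. Both routes are sound; yours is arguably the more transparent one, and it incidentally sidesteps a small notational slip in the paper's verification for $q$ (where $y$ should read $y'$).
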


\begin{proof}
First, assume that $[x,x']$ and $[y,y']$ are parallel. It suffices to consider the case where our two edges are opposite sides in a $4$-cycle, the general case following by induction. Up to switching $x$ and $x'$, assume that $x$ is adjacent to $y$. Let $P$ (resp.\ $Q$) denote the hyperplane on which $x$ and $y$ (resp.\ $x'$ and $y'$) differ. Because $y'$ differs from $y$ only on $B$ and that $y$ differs from $x$ only on $P$, it follows that $y'$ differs from $x$ only on $\{B,P\}$. Because $y'$ differs from $x'$ only on $Q$ and that $x'$ differs from $x$ only on $A$, it follows that $y'$ differs from $x$ only on $\{A,Q\}$. Hence $\{A,Q\}=\{B,P\}$. Notice that we cannot have $P=A$, since otherwise we would have $y=x'$. Therefore, $P=Q$ and $A=B$. 

\medskip \noindent
Conversely, assume that $A=B$. Up to switching $x$ and $x'$, we assume that $x(A)=y(A)$. We argue by induction on the number of hyperplanes on which $x$ and $y$ differ. Fix a hyperplane $B$ on which $x$ and $y$ differ such that $x(B)$ is minimal with respect to the inclusion in $\{x(J) \mid J \text{ hyperplane on which differ $x$ and $y$}\}$. Let $p$ denote the map obtained from $x$ by modifying its value at $B$ and let $q$ denote the map obtained from $p$ by modifying its value at $A$. If $p$ and $q$ are spots, then $[p,q]$ yields an edge opposite to $[x,x']$ in a $4$-cycle where $p$ differs from $y$ on less hyperplanes. This will conclude the proof.

\medskip \noindent
Let us verify that $p$ is a spot. Otherwise, there exist two hyperplanes $J$ and $H$ such that $\{J,H\} \notin \mathcal{P}$ and $p(J) \cap p(H)=\emptyset$. Because $p$ differs from the spot $x$ only at $B$, necessarily $J$ or $H$ is $B$, say $H=B$ (and $J \neq B$). Then
$$x(J) \cap x(B)^c = p(J) \cap p(B) = \emptyset, \text{ hence } x(J) \subset x(B),$$
which contradicts our choice of $B$. So $p$ is indeed a spot. Next, if $q$ is not a spot, then there exist two hyperplanes $J$ and $H$ such that $\{J,H\}\notin \mathcal{P}$ and $q(J) \cap q(H)= \emptyset$. Because $q$ differs from the spot $p$ only at $A$, necessarily $J$ or $H$ is $A$, say $H=A$ (and $J \neq A$). Notice that $q(A)=p(A)^c = x(A)^c$ and $q(J)=x(J)$ if $J \neq B$ and $x(B)^c$ otherwise. In the latter case, where $J=B$, we have
$$\emptyset = x(B)^c \cap x(A)^c = y(B) \cap y(A), \text{ hence } \{A,B\} \in \mathcal{P}$$
because $y$ is a spot, but this is impossible since $\{A,B\}= \{J,H\} \notin \mathcal{P}$. In the former case, where $J \neq B$, we have
$$\emptyset = x(J) \cap x(A)^c = x'(J) \cap x'(A), \text{ hence } \{J,A\}\in \mathcal{P}$$
because $x'$ is a spot, which is again a contradiction since $\{J,A\}= \{J,H\} \notin \mathcal{P}$. Thus, $q$ is also a spot. 
\end{proof}

\begin{claim}\label{claim:TransverseSpot}
Let $A$ and $B$ be two hyperplanes of $X$. If $A$ and $B$ are transverse in $M$, then they are transverse in $X$ or $\{A,B\} \in \mathcal{P}$. 
\end{claim}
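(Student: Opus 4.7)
The plan is to use the bijection between hyperplanes of $X$ and hyperplanes of $M$ provided by Claim~\ref{claim:HyperplaneSpot}: every edge lying in the hyperplane of $M$ associated to $A$ (resp.\ $B$) is precisely an edge whose endpoints, as spots, differ on the single hyperplane $A$ (resp.\ $B$) of $X$. Assuming $A$ and $B$ are transverse in $M$, I would extract a $4$-cycle of spots $\sigma_1, \sigma_2, \sigma_3, \sigma_4$ in which consecutive spots differ alternately on $A$ and $B$. Arguing by contradiction, I would suppose that $\{A,B\}\notin\mathcal{P}$ and deduce that $A$ and $B$ are transverse in $X$.

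Write $\sigma:=\sigma_1$, so that $\sigma_2$ differs from $\sigma$ only at $A$, $\sigma_4$ differs from $\sigma$ only at $B$, and $\sigma_3$ differs from $\sigma$ at both $A$ and $B$. Then the four pairs $(\sigma_i(A),\sigma_i(B))$, $i=1,\dots,4$, exhaust the four combinations $(\sigma(A)^{\varepsilon_1},\sigma(B)^{\varepsilon_2})$ with $\varepsilon_1,\varepsilon_2\in\{+,-\}$. Since each $\sigma_i$ is a spot and $\{A,B\}\notin\mathcal{P}$, the defining condition $\sigma_i(A)\cap\sigma_i(B)\neq\emptyset$ holds for every $i$, so all four intersections $\sigma(A)^{\varepsilon_1}\cap\sigma(B)^{\varepsilon_2}$ are non-empty in $X$.

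The last step is the standard observation that, in a median graph, two hyperplanes are transverse if and only if all four intersections of their halfspaces are non-empty: if any one of these intersections were empty, say $\sigma(A)\cap\sigma(B)=\emptyset$, then $\sigma(A)\subseteq\sigma(B)^c$ and the two hyperplanes would be nested rather than crossing, contradicting non-emptiness of the other three corners. Applied here, this shows that $A$ and $B$ are transverse in $X$, which completes the proof. I do not anticipate any serious obstacle: the only mild subtlety is remembering to use all four vertices of the $4$-cycle (not merely two opposite ones) in order to produce the four non-empty corner intersections simultaneously.
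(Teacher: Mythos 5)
Your proposal is correct and follows essentially the same route as the paper: identify the four vertices of the $4$-cycle via Claim~\ref{claim:HyperplaneSpot}, observe that their values at $A$ and $B$ realise all four halfspace combinations, and conclude from the spot condition (when $\{A,B\}\notin\mathcal{P}$) that all four corner intersections are non-empty, hence $A\pitchfork B$ in $X$. The only cosmetic difference is that you argue by contraposition while the paper phrases it as a dichotomy.
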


\begin{proof}
Let $(r,s,t,u)$ be a $4$-cycle in $M$ crossed by $A$ and $B$. To fix the notation, say that $[r,u]$ (and $[s,t]$) is crossed by $A$ and that $[r,s]$ (and $[t,u]$) is crossed by $B$. It follows from Claim~\ref{claim:HyperplaneSpot} that $r$ and $u$, as well as $s$ and $t$, only differ on $A$; and that $r$ and $s$, as well as $t$ and $u$, only differ on $B$. Consider the four intersections
$$\left\{ \begin{array}{l} r(A) \cap r(B) \\ u(A) \cap u(B)=r(A)^c \cap r(B) \\ s(A) \cap s(B) = r(A) \cap r(B)^c \\ t(A) \cap t(B) = r(A)^c \cap r(B)^c \end{array} \right..$$
Either the four intersections are non-empty, and $A,B$ must be transverse in $X$, or at least one intersection is empty, and the definition of spots imposes that $\{A,B\} \in \mathcal{P}$. 
\end{proof}

\section{Factorisations through isometric embeddings}\label{section:isoembedding}

In this section, we prove our main result, namely Theorem~\ref{Intro:mainthm} (see~Theorem~\ref{thm:BigFactor}). Theorem~\ref{Intro:mainthm} constitutes our analogue of Stallings' folds for trees.

\subsection{With infinitely many foldings and swellings}

\noindent
In our next statement, we claim that every parallel-preserving map between two median graphs factors canonically through a sequence of (possibly infinitely many) foldings and swellings as an isometric embedding. 

\begin{thm}\label{thm:BigFactor}
Let $G$ be a group acting on two countable median graphs $X,Y,$ and $\psi : X \to Y$ a $G$-equivariant parallel-preserving map. There exist a (possibly infinite) sequence of $G$-equivariant foldings and swellings $\eta : X \to \cdots \to Z$ and a $G$-equivariant isometric embedding $\iota : Z \to Y$ such that $\psi = \iota \circ \eta$. Moreover, $\iota(Z)$ coincides with the median hull of $\psi(X)$ in $Y$.
\end{thm}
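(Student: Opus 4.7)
The overall strategy is to iteratively resolve every obstruction to $\psi$ being an isometric embedding by a $G$-equivariant sequence of foldings and swellings. First I would replace $Y$ by the median hull of $\psi(X)$, which is $G$-invariant because $\psi$ is $G$-equivariant; the last assertion of the theorem then reduces to showing that $\iota$ surjects onto $Y$. By Lemma~\ref{lem:WhenIso}, an obstruction at stage $i$ is exactly a pair of distinct hyperplanes $\{A,B\}$ of the current median graph $X_i$ with $\psi_i(A) = \psi_i(B)$; we call such pairs \emph{problematic}.

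The key geometric input is the following claim: if $\psi_i(A) = \psi_i(B)$ and $C$ is a hyperplane separating $A$ from $B$ in $X_i$, then either $\psi_i(C) = \psi_i(A)$ or $\psi_i(C) \pitchfork \psi_i(A)$ in $Y$. To see this, consider a geodesic in $X_i$ that successively crosses $A$, then $C$, then $B$; its image in $Y$ is a path crossing the hyperplane $\psi_i(A) = \psi_i(B)$ twice with $\psi_i(C)$ in between. Since a parallel-preserving map cannot collapse edges and $Y$ contains no $K_{2,3}$ subgraph (Theorem~\ref{thm:MedianCriterion}), the image subpath is forced to sit inside a $4$-cycle of $Y$, yielding $\psi_i(A) \pitchfork \psi_i(C)$ unless the three hyperplanes all collapse together under $\psi_i$. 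This is precisely the hypothesis needed to legally swell against $C$ via Proposition~\ref{prop:equivarianceswelling}.

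I construct the sequence inductively as follows. Enumerate the $G$-orbits of problematic pairs appearing at stage $0$; this is a countable set because $X$ is, and since swellings and foldings never identify hyperplanes that previously had distinct $\psi_i$-images, the family of problematic orbits can only shrink. At step $i$, pick a problematic orbit $\mathcal{O}$ whose representative pair $(A,B)$ is a minimal-distance pair in its $\psi_i$-equivalence class; this minimality guarantees that every hyperplane $C$ strictly separating $A$ from $B$ in $X_i$ satisfies $\psi_i(C) \neq \psi_i(A)$, for otherwise $\{A,C\}$ would be a strictly closer problematic pair in the same class. Form the $G$-invariant collection $\mathcal{S}_i$ of all pairs $\{A', C\}$ and $\{B', C\}$, where $(A', B') \in \mathcal{O}$ and $C$ separates $A'$ from $B'$ in $X_i$; by the claim above, each such pair has transverse $\psi_i$-images, so Proposition~\ref{prop:equivarianceswelling} lets me swell $X_i$ relative to $\mathcal{S}_i$ to obtain $X_i'$ with a $G$-equivariant extension $\psi_i' : X_i' \to Y$. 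After these swellings, every pair in $\mathcal{O}$ is tangent, so Proposition~\ref{prop:equivariancefolding} lets me fold $X_i'$ relative to $\mathcal{O}$, yielding $X_{i+1}$ and $\psi_{i+1}$; a diagonal enumeration ensures every problematic orbit is eventually processed.

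Now let $Z$ be the direct limit of the $X_i$ (Definition~\ref{def:DirectLimit}), with induced $G$-equivariant parallel-preserving maps $\eta : X \to Z$ and $\iota : Z \to Y$, so that $\psi = \iota \circ \eta$. The map $\iota$ is an isometric embedding: two distinct hyperplanes of $Z$ with the same $\iota$-image would pull back to a problematic pair in some $X_i$ that the enumeration later processes, folding them together in $Z$ and contradicting their distinctness, so Lemma~\ref{lem:WhenIso} applies. For the median-hull equality, one inclusion is direct: $\iota(Z)$ contains $\psi(X)$ and is median-closed because an isometric embedding between median graphs preserves medians (by uniqueness of median points), hence $\iota(Z)$ contains the median hull of $\psi(X)$. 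For the other inclusion, I would argue by induction that each $\psi_i(X_i)$ stays in the median hull of $\psi(X)$, using for the swelling step that the new image-vertex $w$, the fourth corner of a $4$-cycle over $\psi_i(p), \psi_i(\alpha(p)), \psi_i(c(p))$, can be realised as the median in $Y$ of $\psi_i(\alpha(p)), \psi_i(c(p))$ together with the image of a ``$B$-outer'' vertex of $X_i$, which necessarily lies in the quadrant of $\psi_i(A) \pitchfork \psi_i(C)$ opposite to $\psi_i(p)$. The main obstacle I anticipate is the careful verification of the transversality claim of paragraph~2 and of the median-hull invariant, which both rely on a precise local analysis of geodesic images inside $Y$ beyond the combinatorics visible in $X_i$.
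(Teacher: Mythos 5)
Your overall architecture (reduce to the median hull $Y_0$, enumerate the problematic pairs, resolve each one by swellings followed by a folding, pass to the direct limit, and check injectivity on hyperplanes via Lemma~\ref{lem:WhenIso}) is the same as the paper's, which isolates the per-pair step as Lemma~\ref{lem:folduniquepair}. However, your ``key geometric input'' is false as stated, and this breaks the construction. Take $X$ a path of length $4$ with successive hyperplanes $A,C,D,B$, $Y$ a path of length $2$ with hyperplanes $P,Q$, and $\psi$ the map folding $X$ onto $Y$ so that the successive edges cross $P,Q,Q,P$. Then $\psi(A)=\psi(B)=P$, the hyperplane $C$ separates $A$ from $B$, $\psi(C)=Q\neq\psi(A)$, and yet $P$ and $Q$ are tangent, not transverse, in $Y$. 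Your minimality condition does not exclude this: $\{A,B\}$ is the minimal-distance (indeed the only) problematic pair in the class of $P$, while the obstruction comes from the two \emph{separators} $C,D$ sharing an image in a \emph{different} class. Your sketched justification of the claim (the image subpath ``forced to sit inside a $4$-cycle'' by the absence of $K_{2,3}$) is not a valid argument; the correct statement requires the additional hypothesis that \emph{all} the hyperplanes separating $A$ from $B$, together with $A$ itself, have pairwise distinct images --- only then is the image of a geodesic between $N(A)$ and $N(B)$ a geodesic between two vertices of the convex carrier $N(\psi(A))$, which forces the images of the separators to be transverse to $\psi(A)$. When two separators (or a separator and $A$) share an image, one must first recurse on that strictly closer problematic pair; this is exactly the extra induction branch in the paper's Lemma~\ref{lem:folduniquepair}, and in the example above the correct move is to fold $\{C,D\}$ first, which is forced and cannot be bypassed by any swelling.

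A second, related defect: your collection $\mathcal{S}_i$ consists of all pairs $\{A',C\}$ with $C$ separating $A'$ from $B'$, but the swelling of Theorem~\ref{thm:Swelling} is only defined for collections of \emph{tangent} pairs, and a separator at distance $\geq 1$ from $A'$ is not tangent to it. Remark~\ref{remark:NotCanonicalTangent} shows there is no canonical swelling of a non-tangent pair, so you cannot swell against all separators in one step. The fix is to swell $A$ only against the separator $H_1$ tangent to it, observe that this strictly decreases $d(A,B)$, and induct on that distance --- again as in Lemma~\ref{lem:folduniquepair}. Your treatment of the median-hull equality is also heavier than necessary: rather than tracking the image of each swelled vertex, it suffices to run the whole factorisation with target $Y_0$ (each universal property produces a map into $Y_0$ directly), so that $\iota(Z)$ is a median subgraph of $Y_0$ containing $\psi(X)$ and hence equals $Y_0$.
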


\noindent
In the theorem, an infinite sequence of foldings and swellings $\eta : X \to \cdots \to Z$ refers to the canonical map from $X$ to the direct limit $Z$ associated to a sequence $X \to X_1, X_1 \to X_2, \ldots$ of foldings and swellings, as given by Definition~\ref{def:DirectLimit}.

\medskip \noindent
The idea is that, if our parallel-preserving map $X \to Y$ is not an isometric embedding, then we know from Lemma~\ref{lem:WhenIso} that there are two hyperplanes in $X$ with the same image in $Y$. The goal is to fold two such hyperplanes and to repeat the process until getting an isometric embedding. However, our two hyperplanes may not be in contact. However, one can make them in contact by adding some transversality (in a non-canonical way though, as justified by Remark~\ref{remark:NotCanonicalFolding}). This is what does the next lemma, which requires the following definition.

\begin{definition}
Let $G$ be a group acting on a median graph $X$ and $\mathcal{P}$ a collection of pairs of hyperplanes in contact (resp.\ transverse). The folding (resp.\ swelling) relative to $\mathcal{P}$ is \emph{$G$-atomic} if if $\mathcal{P}$ consists of a single $G$-orbit of pairs of hyperplanes, i.e.\ $\mathcal{P}= \{ g \{A,B\}, g \in G\}$ for some hyperplanes $A,B$ of $X$.
\end{definition}

\noindent
In other words, a $G$-atomic folding (resp.\ swelling) is a ``smallest'' possible folding (resp.\ swelling) that is $G$-equivariant. Notice that, by decomposing collections of pairs of hyperplanes into $G$-orbits, every folding (resp.\ swelling) can be decomposed as a product of (possibly infinitely many) $G$-atomic foldings (resp.\ swellings). Therefore, $G$-atomic foldings and swellings will not play a role in the proof of Theorem~\ref{thm:BigFactor}. However, they will be central in the next section, where we will be concerned in using as few foldings and swellings as possible. 

\begin{lemma}\label{lem:folduniquepair}
Let $G$ be a group acting on two countable median graphs $X,Y$ and $\psi : X \to Y$ a $G$-equivariant parallel-preserving map. Let $A,B$ be two distinct hyperplanes such that $\psi(A)=\psi(B)$. There exists a finite sequence of $G$-atomic foldings and swellings $\eta \colon X \to \cdots \to Z$ such that $\eta(A)=\eta(B)$ and a unique $G$-equivariant parallel-preserving map $\iota : Z \to Y$ such that $\psi = \iota \circ \eta$.
\end{lemma}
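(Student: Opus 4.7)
The plan is to induct on the number $n := |\mathcal{H}(A|B)|$ of hyperplanes of $X$ strictly separating $A$ from $B$. For the base case $n = 0$, the hyperplanes $A$ and $B$ are in contact, so I will take the $G$-atomic folding relative to the orbit $\mathcal{P} := \{g\{A,B\} : g \in G\}$; the hypothesis $\psi(gA) = g \cdot \psi(A) = g \cdot \psi(B) = \psi(gB)$ holds by $G$-equivariance, so the equivariant universal property (Proposition~\ref{prop:equivariancefolding}) provides the desired $\iota$, and $\eta(A) = \eta(B)$ by the final clause of Theorem~\ref{thm:Foldings}.

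For the inductive step $n \geq 1$, write the hyperplanes strictly between $A$ and $B$ as $J_1, \ldots, J_n$ and split into two cases. In the first case, there exist two distinct hyperplanes $C, D$ in $\{A, B, J_1, \ldots, J_n\}$ with $\{C, D\} \neq \{A, B\}$ and $\psi(C) = \psi(D)$; since $C, D$ both lie in the segment between $A$ and $B$ we have $|\mathcal{H}(C|D)| \leq n - 1$, so the inductive hypothesis applied to $(C, D)$ yields $\eta_1 : X \to X_1$ and $\iota_1 : X_1 \to Y$. If $\eta_1(A) = \eta_1(B)$ we are done; otherwise the new pair still satisfies $\iota_1(\eta_1(A)) = \iota_1(\eta_1(B))$, and is separated by strictly fewer hyperplanes (since the classes of $C$ and $D$ have merged, the new separation is bounded by $d(N([C]), N([B])) \leq n - 1$), so a second application of the inductive hypothesis finishes. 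In the second case, $\psi$ is injective on $\{A, B, J_1, \ldots, J_n\}$ apart from the identification $\psi(A) = \psi(B)$; I pick $J := J_1$ tangent to $A$ and separating $A$ from $B$ (take the first hyperplane crossed by a geodesic leaving $N(A)$ toward $N(B)$; it cannot be transverse to $A$ since it separates $A$ from $B$), and aim to invoke the $G$-atomic swelling of $\{A, J\}$. Once justified, in the resulting $X_1$ the pair $A, J$ becomes transverse, so $J$ no longer separates $A$ from $B$, while the other $J_i$'s still do (all new vertices added to $N(A)$ by the swelling have spot coinciding with a base vertex in $N(A)$ on the halfspaces of each $J_i$, $i \geq 2$), yielding a strict decrease of $n$; the inductive hypothesis then concludes.

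The main obstacle is to verify the transversality $\psi(A) \pitchfork \psi(J)$ required for the swelling in the second case. For this, fix $p \in N(A) \cap N(J)$, let $p_A$ be its neighbor across $A$, let $q \in N(B)$ be the gate of $p$ in $N(B)$ (so $d(p, q) = n$), and let $q'$ be the neighbor of $q$ across $B$ on the side opposite to $N(A)$. This yields a geodesic $p_A, p, \ldots, q, q'$ of length $n + 2$ in $X$ whose image is a walk in $Y$ crossing, in order, $\psi(A), \psi(J), \psi(J_2), \ldots, \psi(J_n), \psi(A)$; by the case assumption these $n + 2$ labels are pairwise distinct apart from the two occurrences of $\psi(A)$, so $\psi(p)$ and $\psi(q)$ lie on the same side of $\psi(A)$, hence in a common fiber $F$ of $N(\psi(A))$. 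Since fibers are convex (Theorem~\ref{thm:BigMedian}), any geodesic from $\psi(p)$ to $\psi(q)$ in $Y$ stays inside $F$ and crosses only hyperplanes of $Y$ transverse to $\psi(A)$; but the hyperplanes separating $\psi(p)$ from $\psi(q)$ are exactly $\psi(J_1), \ldots, \psi(J_n)$ (each appearing once in the walk, and all distinct), so each of them -- in particular $\psi(J) = \psi(J_1)$ -- is transverse to $\psi(A)$.

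Equivariance of each intermediate $\iota_i$ and its uniqueness are provided by Propositions~\ref{prop:equivariancefolding} and~\ref{prop:equivarianceswelling}, and compositions of $G$-equivariant parallel-preserving maps remain such. Since each call of the inductive hypothesis strictly decreases $n$, the recursion terminates after finitely many $G$-atomic foldings and swellings, yielding the desired finite sequence $\eta : X \to \cdots \to Z$ together with the unique factoring $\iota : Z \to Y$.
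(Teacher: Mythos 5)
Your proposal is correct and follows essentially the same route as the paper: induction on the number of hyperplanes separating $A$ and $B$, a base-case $G$-atomic folding, a first case handled by two applications of the inductive hypothesis when two of the intermediate hyperplanes already have equal images, and a second case where the tangent separating hyperplane $J_1$ is swelled with $A$ after checking $\psi(A)\pitchfork\psi(J_1)$ via convexity of (a fiber of) the carrier of $\psi(A)$. The only cosmetic caveat is that in the transversality step the point $p$ should be taken as the endpoint of a geodesic realising $d(N(A),N(B))=n$ (not an arbitrary point of $N(A)\cap N(J_1)$) so that $d(p,q)=n$ holds; this is exactly the geodesic you already invoke to define $J_1$, and it is the choice made in the paper.
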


\begin{proof}
We argue by induction over the distance between $A$ and $B$.

\medskip \noindent
First, assume that $d(A,B)=0$. In other words, $A$ and $B$ are in contact. Set $\mathcal{P}=\{ g\{A,B\}, g \in G\}$ and let $Z$ denote the folding of $X$ with respect to $\mathcal{P}$, as given by Theorem~\ref{thm:Foldings}. By the universal property satisfied by $Z$, there exist a $G$-equivariant parallel-preserving map $\eta \colon X \to Z$ satisfying $\eta(A)=\eta(B)$ and a unique $G$-equivariant parallel-preserving map $\iota \colon Z \to Y$ satisfying $\psi = \iota \circ \eta$. This yields the desired conclusion.

\medskip \noindent
From now on, assume that $d(A,B) \geq 1$. Let $H_1,\ldots,H_n$ denote the hyperplanes separating $A$ and $B$ in $X$. For convenience, set $H_0:=A$ and $H_{n+1}:=B$. We distinguish two cases, depending on whether or not two $H_i$ have the same image under $\psi$.

\medskip \noindent
First, assume that there exist distinct $i,j \in \{0,\ldots,n\}$ such that $\psi(H_i)=\psi(H_j)$. Because $d(H_i,H_j) < d(A,B)$, we know by induction that there exists a finite sequence of $G$-atomic foldings and swellings $\eta_0 \colon X \to \ldots \to Z_0$ satisfying $\eta_0(H_i)=\eta_0(H_j)$ and a unique $G$-equivariant parallel-preserving map $\iota_0 : Z_0 \to Y$ satisfying $\psi = \iota_0 \circ \eta_0$. Notice that $d(\eta_0(A),\eta_0(B)) < d(A,B)$. Indeed, consider a geodesic $\gamma$ in $X$ connecting two vertices of $N(A)$ and $N(B)$ at minimal distance. Then the hyperplanes crossing $\gamma$ are exactly the hyperplanes separating $A$ and $B$, namely the $H_i$. But $\eta_0(\gamma)$, which connects $N(\eta_0(A))$ and $N(\eta_0(B))$, is not a geodesic since it crosses one hyperplane twice, namely $\eta_0(H_i)=\eta_0(H_j)$. This implies the desired inequality. Thus, we know by induction that there exist a finite sequence of $G$-atomic foldings and swellings $\eta_1 \colon Z_0 \to Z$ such that $\eta_1(\eta_0(A))=\eta_1(\eta_0(B))$ and a unique $G$-equivariant parallel-preserving map $\iota \colon Z \to Y$ satisfying $\iota_0=\iota \circ \eta_1$.

\hspace{3cm} \xymatrix{
X \ar[ddrr]_\eta \ar[drr]^{\eta_0} \ar[rrrr]^\psi &&  && Y \\
&& Z_0 \ar[urr]^{\iota_0} \ar[d]^{\eta_1} && \\
&& Z \ar[uurr]_\iota && 
}

\noindent
Set $\eta=\eta_1 \circ \eta_0$. Then $\eta$ is a finite sequence of $G$-atomic foldings and swellings satisfying $\eta(A)=\eta(B)$ and $\iota$ is the unique $G$-equivariant parallel-preserving map $Z \to Y$ satisfying $\psi=\iota \circ \eta$. This concludes the proof of our lemma in this case.

\medskip \noindent
Next, assume that $\psi(H_i)\neq \psi(H_j)$ for all distinct $i,j \in \{0,\ldots,n\}$. Consider a geodesic $\gamma$ in $X$ connecting two vertices of $N(A)$ and $N(B)$ at minimal distance. The hyperplanes crossing $\gamma$ are exactly the hyperplanes separating $A$ and $B$, namely the $H_i$. Consequently, the hyperplanes crossed by $\psi(\gamma)$ are the $\psi(H_i)$. Since these hyperplanes are pairwise distinct, it follows that $\psi(\gamma)$ is a geodesic in $Y$ connecting two vertices in the carrier of $\psi(A)=\psi(B)$. Because carriers are convex, we deduce that the $\psi(H_i)$ must be transverse to $\psi(A)=\psi(B)$. Up to reindexing the $H_i$, assume that $H_1$ is the hyperplane containing the edge of $\gamma$ intersecting $N(A)$, which implies that $H_1$ is tangent to $A$. Then we know that $\psi(H_1)$ is transverse to $\psi(A)$ (but $H_1$ is not transverse to $A$ since it separates $A$ and $B$). Therefore, we can set $\mathcal{P}:=\{ g\{A,H_1\} , g \in G \}$ and let $Z_0$ be the swelling of $X$ with respect to $\mathcal{P}$, as given by Theorem~\ref{thm:Swelling}. By the universal property satisfied by $Z_0$, there exist a $G$-equivariant parallel-preserving map $\eta_0 \colon X \to X_0$ satisfying $\eta_0(A)=\eta_0(B)$ and a unique $G$-equivariant parallel-preserving map $\iota_0 \colon X_0 \to Y$ satisfying $\psi = \iota_0 \circ \eta_0$.

\medskip \noindent
Notice that $d(\eta_0(A),\eta_0(B))< d(A,B)$. Indeed, the path $\eta_0(\gamma)$ connects the carriers of $\eta_0(A)$ and $\eta_0(B)$, but has an edge in the carrier of $\eta_0(A)$ (namely, the edge contained in $\eta_0(H_1)$), so it cannot be a shortest path connecting the carriers of $\eta_0(A)$ and $\eta_0(B)$. Therefore, we know by induction that there exist a finite sequence of $G$-atomic foldings and swellings $\eta_1\colon X_0 \to Z$ such that $\eta_1(\eta_0(A))=\eta_1(\eta_0(B))$ and a $G$-equivariant map $\iota_1 \colon Z \to Y$ such that $\iota_0=\iota_1 \circ \eta_1$. 

\hspace{3cm} \xymatrix{
X \ar[ddrr]_\eta \ar[drr]^{\eta_0} \ar[rrrr]^\psi &&  && Y \\
&& Z_0 \ar[urr]^{\iota_0} \ar[d]^{\eta_1} && \\
&& Z \ar[uurr]_\iota && 
}

\noindent
Set $\eta=\eta_1 \circ \eta_0$. Then $\eta$ is a finite sequence of $G$-atomic foldings and swellings satisfying $\eta(A)=\eta(B)$ and $\iota$ is the unique $G$-equivariant parallel-preserving map $Z \to Y$ satisfying $\psi=\iota \circ \eta$. This concludes the proof of our lemma in this case.
\end{proof}

\begin{proof}[Proof of Theorem~\ref{thm:BigFactor}.]
Let $Y_0$ denote the median hull of $\psi(X)$ in $Y$. Our goal is to apply Lemma~\ref{lem:folduniquepair} iteratively in order to fold all the pairs of hyperplanes of $X$ that have the same image in $Y_0$ (or equivalently, in $Y$) under $\psi$. For this purpose, set 
$$\mathcal{Q}:= \{ \{A,B\} \mid \text{ distinct hyperplanes $A,B$ such that } \psi(A)=\psi(B)\}$$
and fix an enumeration $\mathcal{Q}= \{ \{A_i,B_i\}, i \geq 1\}$.

\medskip \noindent
First, we apply Lemma~\ref{lem:folduniquepair} in order to get a median graph $Z_1$, a sequence of $G$-equivariant foldings and swellings $\eta_1 : X \to Z_1$ satisfying $\eta_1(A_1)=\eta_1(B_1)$, and a $G$-equivariant parallel-preserving map $\iota_1 : Z_1 \to Y_0$. Next, let $n_1$ denote the smallest integer satisfying $\eta_1(A_{n_1}) \neq \eta_1(B_{n_1})$. Because we must have $\iota_1(\eta_1(A_{n_1}))=\iota_1(\eta_1(B_{n_1}))$, we can apply Lemma~\ref{lem:folduniquepair} again (to $\eta_1(A_{n_1})$ and $\eta_1(B_{n_1})$) and obtain a median graph $Z_2$, a sequence of $G$-equivariant foldings and swellings $\eta_2 : Z_1 \to Z_2$ satisfying $\eta_2(\eta_1(A_{n_1}))=\eta_2(\eta_1(B_{n_1}))$, and a $G$-equivariant parallel-preserving map $\iota_2 : Z_2 \to Y_0$. Next, we iterate the process.

\hspace{3cm}\xymatrix{
X \ar[rrdddd]_\eta \ar[rrrr]^\psi \ar[drr]^{\eta_1} && && Y_0 \ar[r] & Y \\
&& Z_1 \ar[rru]^{\iota_1} \ar[d]^{\eta_2} && \\
&& Z_2 \ar[rruu]_{\iota_2} \ar[d]^{\eta_3} && \\
&& \vdots \ar[d] && \\
&& Z \ar[rruuuu]_\iota &&
}

\noindent
Let $Z$ denote the direct limit of $(X,\eta_1), (Z_1, \eta_2), \ldots$ as given by Definition~\ref{def:DirectLimit}. The $\eta_i$ extends to a $\eta : X \to Z$, which is by construction a sequence of $G$-equivariant foldings and swellings, and the $\iota_i$ extends to a $G$-equivariant parallel-preserving map $\iota : Z \to Y_0$. 

\medskip \noindent
Let us justify that $\iota$ is an isometric embedding. According to Lemma~\ref{lem:WhenIso}, this amounts to showing that $\iota$ is injective on the set of the hyperplanes of $Z$. If it is not the case, then one can find two distinct hyperplanes $A,B$ of $Z$ such that $\iota(A)=\iota(B)$. Because $\eta$ is a sequence of foldings and swellings, there must exist two hyperplanes $P,Q$ of $X$ such that $\eta(P)=A$ and $\eta(Q)=B$. Necessarily, $P$ and $Q$ are distinct, and moreover 
$$\psi(P)= \iota ( \eta(P)) = \iota(A)= \iota(B)= \iota( \eta(Q)) = \psi(Q),$$
so there must exist some $i \geq 1$ such that $\{P,Q\}= \{A_i,B_i\}$. By construction, there must exist some $j \geq 1$ such that $A_i$ and $B_i$ have the same image in $Z_j$ under $\eta_j \circ \cdots \circ \eta_1$. But this implies that $\eta(P)$ and $\eta(Q)$ coincide, i.e.\ $A=B$. Thus, $\iota$ is indeed an isometric embedding.

\medskip \noindent
Finally, notice that $\iota(Z)$ is a subgraph of $Y_0$ that contains $\psi(X)$ and that is median. By definition of the median hull, the equality $\iota(Z)=Y_0$ must hold.
\end{proof}

\noindent
It is worth noticing that Theorem~\ref{thm:BigFactor} has an analogue for isometric embeddings with convex images. Its proof follows the same lines.

\begin{thm}\label{thm:BigFactorTwo}
Let $G$ be a group acting on two countable median graphs $X,Y$ and $\psi : X \to Y$ a $G$-equivariant parallel-preserving map. There exist a (possibly infinite) sequence of $G$-equivariant foldings and swellings $\eta : X \to \cdots \to Z$ and a $G$-equivariant isometric embedding $\iota : Z \to Y$ with convex image such that $\psi = \iota \circ \eta$. Moreover, $\iota(Z)$ coincides with the convex hull of $\psi(X)$ in $Y$. 
\end{thm}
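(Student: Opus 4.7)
The plan is to reuse Theorem~\ref{thm:BigFactor} to obtain the factorization through the median hull, and then iteratively swell pairs of hyperplanes that are non-transverse in the source but whose images are transverse in $Y$.

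First, I would apply Theorem~\ref{thm:BigFactor} to $\psi$ to obtain a sequence $\eta_0 \colon X \to \cdots \to Z_0$ of $G$-equivariant foldings and swellings together with a $G$-equivariant isometric embedding $\iota_0 \colon Z_0 \to Y$ whose image is the median hull of $\psi(X)$. Then, inductively for $i \geq 0$, I would let $\mathcal{P}_i$ be the $G$-invariant collection of tangent pairs $\{A,B\}$ of hyperplanes in $Z_i$ such that $\iota_i(A) \pitchfork \iota_i(B)$, set $Z_{i+1}$ to be the swelling of $Z_i$ relative to $\mathcal{P}_i$ as given by Theorem~\ref{thm:Swelling} (with $Z_i$ isometrically embedded in $Z_{i+1}$), and let $\iota_{i+1} \colon Z_{i+1} \to Y$ be the unique $G$-equivariant parallel-preserving extension of $\iota_i$ furnished by Theorem~\ref{thm:Swelling} and Proposition~\ref{prop:equivarianceswelling}. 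Finally I would take $Z$ to be the direct limit of the $Z_i$ and $\iota \colon Z \to Y$ to be the limit of the $\iota_i$; precomposing with $\eta_0$ gives a sequence $\eta \colon X \to \cdots \to Z$ of $G$-equivariant foldings and swellings with $\psi = \iota \circ \eta$.

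The first routine check is that every $\iota_i$ remains an isometric embedding. Since the convex hull of $Z_i$ in $Z_{i+1}$ is all of $Z_{i+1}$ (Theorem~\ref{thm:Swelling}), every hyperplane of $Z_{i+1}$ restricts to a hyperplane of $Z_i$, yielding a bijection on hyperplanes under which $\iota_{i+1}$ agrees with $\iota_i$. As $\iota_0$ is injective on hyperplanes by Lemma~\ref{lem:WhenIso}, so is every $\iota_i$, and so is $\iota$ in the limit; Lemma~\ref{lem:WhenIso} then shows that $\iota$ is an isometric embedding. The main obstacle is to show that $\iota(Z)$ is convex in $Y$. By Lemma~\ref{lem:WhenIso} this reduces to verifying that, for distinct hyperplanes $A,B$ of $Z$, transversality of $\iota(A),\iota(B)$ in $Y$ is equivalent to transversality of $A,B$ in $Z$. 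One direction follows from Lemma~\ref{lem:Wheel} and the injectivity of $\iota$ on hyperplanes. For the other, the key observation is that $\iota(A) \pitchfork \iota(B)$ forces $A$ and $B$ to be in contact in $Z$: any hyperplane of $Z$ separating them would map under $\iota$ to a hyperplane separating $\iota(A)$ from $\iota(B)$ in $Y$, contradicting their transversality. Hence $A$ and $B$ are already in contact in some $Z_j$, so tangent or transverse there; in the tangent case $\{A,B\} \in \mathcal{P}_j$, and the pair becomes transverse in $Z_{j+1}$, hence in $Z$.

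To conclude by identifying $\iota(Z)$ with the convex hull $C$ of $\psi(X)$ in $Y$, the inclusion $C \subseteq \iota(Z)$ is immediate from the convexity of $\iota(Z)$. For the reverse inclusion, I would argue by induction that $\iota_i(Z_i) \subseteq C$: the base case is the standard fact that the median hull lies in the convex hull, and the inductive step observes that each new vertex of $Z_{i+1}$ maps to the fourth vertex of a $4$-cycle in $Y$ whose three other vertices already lie in $\iota_i(Z_i) \subseteq C$, hence lies on a length-two geodesic between two points of $C$ and so belongs to $C$ by convexity. Passing to the direct limit gives $\iota(Z) \subseteq C$, completing the proof.
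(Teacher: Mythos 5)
Your overall strategy (first invoke Theorem~\ref{thm:BigFactor} to reach the median hull, then repeatedly swell all tangent pairs whose images are transverse and pass to the direct limit) is viable and in fact somewhat cleaner than the paper's proof, which interleaves foldings and swellings from a single enumerated collection $\mathcal{Q}$ from the start. Your bookkeeping of hyperplanes across swellings, the verification that each $\iota_i$ stays an isometric embedding, and the identification of $\iota(Z)$ with the convex hull are all fine (modulo the minor point that a swelling relative to a whole collection $\mathcal{P}_i$ should be decomposed into single-pair swellings before asserting that each new vertex is the fourth corner of a square over the previous stage).

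However, there is a genuine gap at the key step. You claim that if $\iota(A)\pitchfork\iota(B)$ in $Y$, then $A$ and $B$ must be in contact in $Z$, ``since any hyperplane of $Z$ separating them would map under $\iota$ to a hyperplane separating $\iota(A)$ from $\iota(B)$.'' This is false for a general isometric embedding: take $Z$ a geodesic path of length three with successive hyperplanes $A,J,B$, embedded isometrically as a staircase in a $3$-cube $Y$. Then $J$ separates $A$ and $B$ in $Z$, yet $\iota(A),\iota(J),\iota(B)$ are pairwise transverse in $Y$, so $\iota(J)$ separates nothing. The problem is that $\iota(J)$ separates $\iota(A)$ from $\iota(B)$ only if $\iota(J)$ is not transverse to $\iota(A)$ or $\iota(B)$ --- which is exactly the convexity you are trying to establish, so the argument is circular. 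The statement you need is true for your limit graph $Z$, but it must be proved differently. The clean repair is to argue by contradiction using Lemma~\ref{lem:NotInjConv} (which the paper proves for precisely this purpose): if $\iota$ is an isometric embedding whose image is not convex, then there exists \emph{some} pair of hyperplanes of $Z$ that are tangent and non-transverse but have transverse images; such a pair is already tangent and non-transverse in some finite stage $Z_j$ (the witnessing edges and the absence of a $4$-cycle persist backwards), hence lies in $\mathcal{P}_j$ and becomes transverse in $Z_{j+1}$, a contradiction. Note that Lemma~\ref{lem:NotInjConv} only produces \emph{one} bad pair in contact (via a minimality argument on the distance between the hyperplanes), rather than showing every bad pair is in contact as your formulation requires; restructuring your convexity check as a proof by contradiction resolves this.
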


\noindent
Roughly speaking, one can apply Theorem~\ref{thm:BigFactor} in order to factor our parallel-preserving map through an isometric embedding, and next apply swellings in order to make the image convex (since it is, a priori, only median). As justified by our next observation, contrary to Theorem~\ref{thm:BigFactor}, it will not be necessary to fold or swell pairs of hyperplanes not in contact.

\begin{lemma}\label{lem:NotInjConv}
Let $\psi : X \to Y$ be a parallel-preserving map between two median graphs. If $\psi$ is not an isometric embedding with convex image, then there exist two hyperplanes in contact $A,B$ in $X$ such that either $A,B$ are distinct but $\psi(A)=\psi(B)$ or $A,B$ are not transverse but $\psi(A) \pitchfork \psi(B)$.
\end{lemma}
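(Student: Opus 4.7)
The plan is to split into cases according to which clause of Lemma~\ref{lem:WhenIso} fails, and in each case produce the desired pair either by a minimality argument or by lifting a witness from $Y$.

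\textbf{Case 1: $\psi$ fails to be injective on hyperplanes.} Among all pairs of distinct hyperplanes $A,B$ with $\psi(A)=\psi(B)$, I would pick one minimising $d(A,B)$. If $d(A,B)=0$ the pair is already in contact, giving the first alternative. Otherwise, take a geodesic $\gamma=[u_0,\ldots,u_n]$ of length $n=d(A,B)$ realising the distance between $N(A)$ and $N(B)$, and denote by $H_i$ the hyperplane crossed by $[u_{i-1},u_i]$. The elementary inequalities $d(A,H_i)\leq i-1$ and $d(H_i,H_j)\leq j-1-i$ are both strictly less than $n$, so minimality forces $\psi(A),\psi(H_1),\ldots,\psi(H_n)$ to be pairwise distinct: any coincidence would yield a closer pair of distinct hyperplanes sharing an image. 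Consequently $\psi(\gamma)$ crosses $n$ pairwise distinct hyperplanes and is therefore a geodesic by Theorem~\ref{thm:BigMedian}, whose endpoints lie in the convex carrier $N(\psi(A))=N(\psi(B))$; hence $\psi(\gamma)\subset N(\psi(A))$. Its first edge lies in $N(\psi(A))$ and in $\psi(H_1)\ne\psi(A)$, forcing $\psi(A)\pitchfork\psi(H_1)$. Since $H_1$ strictly separates $A$ from $B$, it cannot be transverse to $A$, and as it shares a vertex with $A$ it must be tangent to $A$. The pair $(A,H_1)$ then witnesses the second alternative.

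\textbf{Case 2: $\psi$ is injective on hyperplanes.} Then $\psi$ is an isometric embedding by Lemma~\ref{lem:WhenIso}, but its image is not convex. Appealing to the characterisation of convex subgraphs in median graphs used in the proof of Lemma~\ref{lem:WhenIso} (following \cite{Chepoi}), non-convexity of $\psi(X)$ produces two intersecting edges $a,b\subset\psi(X)$ spanning a $4$-cycle in $Y$ but not in $\psi(X)$. I would lift these to edges $x,y\subset X$ with $\psi(x)=a$ and $\psi(y)=b$; since $\psi$ is injective on vertices, $x$ and $y$ intersect. Let $A,B$ denote the hyperplanes containing $x$ and $y$: they are distinct (two distinct parallel edges in a median graph cannot share a vertex) and in contact, and the $4$-cycle through $a,b$ gives $\psi(A)\pitchfork\psi(B)$. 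If $A\pitchfork B$ one could use the standard median-graph fact that two intersecting edges in transverse hyperplanes span a $4$-cycle to produce a $4$-cycle in $X$ through $x,y$, whose image is a $4$-cycle in $\psi(X)$ through $a,b$, contradicting the choice of $a,b$. Hence $A\not\pitchfork B$, and $(A,B)$ witnesses the second alternative.

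The main technical obstacle is the minimality argument of Case 1: the distance inequalities must be handled carefully to exclude all coincidences among $\psi(A),\psi(H_1),\ldots,\psi(H_n)$, and the passage from $\psi(\gamma)\subset N(\psi(A))$ to $\psi(A)\pitchfork\psi(H_1)$ relies on the fact that an edge lying in the carrier of a hyperplane but not in the hyperplane itself automatically spans a square with it. Case 2, granted the convexity criterion, reduces to a direct lifting using parallel-preservation and vertex injectivity.
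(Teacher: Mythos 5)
Your proposal is correct and follows essentially the same route as the paper's proof: the same case split via Lemma~\ref{lem:WhenIso}, the same minimal-distance pair $A,B$ with a geodesic between carriers whose image lies in $N(\psi(A))$ by convexity, and the same appeal to Chepoi's local convexity criterion in the second case. The extra details you supply (the distance inequalities justifying pairwise distinctness of the $\psi(H_i)$, and the square spanned by an edge of the carrier with the hyperplane) are exactly the steps the paper leaves implicit.
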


\begin{proof}
First, assume that $\psi$ is not an isometric embedding. According to Lemma~\ref{lem:WhenIso}, there exist two distinct hyperplanes $A$ and $B$ such that $\psi(A)=\psi(B)$. We choose $A$ and $B$ at minimal distance. If $A$ and $B$ are in contact, then we are done. Otherwise, let $H_1, \ldots, H_n$ denote the hyperplanes separating $A$ and $B$. By minimality, $\psi(A),\psi(H_1), \ldots, \psi(H_n)$ are pairwise distinct. Consequently, if we fix a geodesic $\gamma$ between two vertices minimising the distance between $N(A)$ and $N(B)$, because we know that the hyperplanes that $\gamma$ crosses are exactly the $H_i$, then $\psi(\gamma)$ must be a geodesic in $Y$ since the hyperplanes crossed by $\psi(\gamma)$ are exactly the $\psi(H_i)$. Thus, $\psi(\gamma)$ is a geodesic connecting two vertices in the carrier of $\psi(A)$. By convexity of $N(A)$, the $\psi(\gamma)$ must be contained in the carrier of $\psi(A)$, which implies that the $\psi(H_i)$ must all be transverse to $\psi(A)$. If $H_j$ is the hyperplane containing the edge of $\gamma$ with an endpoint in $N(A)$, then we know that $H_j$ and $A$ are tangent but $\psi(H_j)$ and $\psi(A)$ are transverse.

\medskip \noindent
Next, assume that $\psi$ is an isometric embedding but that $\psi(X)$ is not convex. According to \cite{Chepoi}, we know that there exist two intersecting edges $\psi(e_1),\psi(e_2)$ in $\psi(X)$ that span a $4$-cycle in $Y$ but not in $\psi(X)$. Consequently, if $J_1,J_2$ denote the hyperplanes of $X$ containing $e_1,e_2$, then $J_1$ and $J_2$ are tangent but $\psi(J_1)$ and $\psi(J_2)$ are transverse. 
\end{proof}

\begin{proof}[Proof of Theorem~\ref{thm:BigFactorTwo}.]
Let $Y_0$ denote the convex hull of $\psi(X)$ in $Y$. Our goal is to fold and swell hyperplanes iteratively in order to obtain a parallel-preserving map that sends distinct hyperplanes to distinct hyperplanes and (non-)transverse hyperplanes to (non-)transverse hyperplanes. For this purpose, set 
$$\mathcal{Q}:= \left\{ \{A,B\} ~\left| \begin{array}{c} \text{ hyperplanes $A,B$ either in contact such that } \psi(A)=\psi(B) \\ \text{or tangent such that } \psi(A) \pitchfork \psi(B) \end{array} \right. \right\}.$$
and fix an enumeration $\mathcal{Q}= \{ \{A_i,B_i\}, i \geq 1\}$.

\medskip \noindent
First, we fold or swell $X$ relatively to the $G$-orbit of $\{A_1,B_1\}$ in order to get a median graph $Z_1$, a $G$-equivariant folding or swelling $\eta_1 : X \to Z_1$ satisfying $\eta_1(A_1)=\eta_1(B_1)$ or $\eta_1(A_1) \pitchfork \eta_1(B_1)$, and a $G$-equivariant parallel-preserving map $\iota_1 : Z_1 \to Y_0$. Next, let $n_1$ denote the smallest integer such that $\psi(A_{n_1})=\psi(B_{n_1})$ but $\eta_1(A_{n_1}) \neq \eta_1(B_{n_1})$ or $\psi(A_{n_1}) \pitchfork \psi(B_{n_1})$ but $\eta_1(A_{n_1})$ and $\eta_1(B_{n_1})$ not transverse. Notice that $\eta_1(A_{n_1})$ and $\eta_1(B_{n_1})$ are necessarily in contact because $\eta_1$ is $1$-Lipschitz. Then we can fold of swell the $G$-orbit of $\{\eta_1(A_{n_1}), \eta_1(B_{n_1})\}$ and obtain a median graph $Z_2$, a $G$-equivariant folding or swelling $\eta_2 : Z_1 \to Z_2$ satisfying $\eta_2(\eta_1(A_{n_1}))=\eta_2(\eta_1(B_{n_1}))$ or $\eta_2(\eta_1(A_{n_1}))\pitchfork \eta_2(\eta_1(B_{n_1}))$, and a $G$-equivariant parallel-preserving map $\iota_2 : Z_2 \to Y_0$. Next, we iterate the process.

\hspace{3cm}\xymatrix{
X \ar[rrdddd]_\eta \ar[rrrr]^\psi \ar[drr]^{\eta_1} && && Y_0 \ar[r] & Y \\
&& Z_1 \ar[rru]^{\iota_1} \ar[d]^{\eta_2} && \\
&& Z_2 \ar[rruu]_{\iota_2} \ar[d]^{\eta_3} && \\
&& \vdots \ar[d] && \\
&& Z \ar[rruuuu]_\iota &&
}

\noindent
Let $Z$ denote the direct limit of $(X,\eta_1), (Z_1, \eta_2), \ldots$ as given by Definition~\ref{def:DirectLimit}. The $\eta_i$ extends to a $\eta : X \to Z$, which is by construction a sequence of $G$-equivariant foldings and swellings, and the $\iota_i$ extends to a $G$-equivariant parallel-preserving map $\iota : Z \to Y_0$. 

\medskip \noindent
Let us justify that $\iota$ is an isometric embedding with convex image. If it is not the case, it follows from Lemma~\ref{lem:NotInjConv} that one can find two hyperplanes in contact $A,B$ of $Z$ such that either $A,B$ are distinct but $\iota(A)=\iota(B)$ or $A,B$ are not transverse but $\iota(A) \pitchfork \iota(B)$. Because $\eta$ is a sequence of foldings and swellings, there must exist two hyperplanes $P,Q$ of $X$ such that $\eta(P)=A$ and $\eta(Q)=B$. If $A,B$ are distinct but $\iota(A)=\iota(B)$, then $P$ and $Q$ must be distinct distinct and we have 
$$\psi(P)= \iota ( \eta(P)) = \iota(A)= \iota(B)= \iota( \eta(Q)) = \psi(Q).$$
If $A,B$ are not transverse but $\iota(A) \pitchfork \iota(B)$, then $P$ and $Q$ cannot be transverse (as $\eta$ is parallel-preserving) but $\psi(P)$ and $\psi(Q)$ must be transverse (as $\iota$ is parallel-preserving). In other words, there must exist some $i \geq 1$ such that $\{P,Q\}= \{A_i,B_i\}$. By construction, there must exist some $j \geq 1$ such that the images of $A_i$ and $B_i$ in $Z_j$ under $\eta_j \circ \cdots \circ \eta_1$ either coincide or are transverse. But this implies that $\eta(P)$ and $\eta(Q)$ either coincide or are transverse, i.e.\ $A=B$ or $A \pitchfork B$. Thus, $\iota$ is indeed an isometric embedding with convex image.

\medskip \noindent
Finally, notice that $\iota(Z)$ is a convex subgraph of $Y_0$ that contains $\psi(X)$. By definition of the convex hull, the equality $\iota(Z)=Y_0$ must hold.
\end{proof}

\subsection{With only finitely many foldings and swellings}

\noindent
We saw in the previous section that every parallel-preserving map between two median graphs factors as an isometric embedding through a sequence of foldings and swellings. However, such a sequence is usually infinite. In our next statement, we record a particular case of interest where the sequence of foldings and swellings can be chosen finite. Its analogue for trees can be found in \cite{MR1091614}.

\begin{thm}\label{thm:BF}
Let $G$ be a group acting on two countable median graphs $X,Y$ and let $\psi : X \to Y$ be a $G$-equivariant parallel-preserving map. Assume that $X$ has finitely many orbits of hyperplanes and that $Y$ has finitely generated hyperplane-stabilisers. Then there exists a finite sequence of $G$-atomic foldings and swellings $\eta : X \to \cdots \to Z$ such that $\psi = \iota \circ \eta$ for some $G$-equivariant isometric embedding $\iota : Z \hookrightarrow Y$. 
\end{thm}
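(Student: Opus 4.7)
The plan is to apply Theorem~\ref{thm:BigFactor} to the given $G$-equivariant parallel-preserving map $\psi : X \to Y$, obtaining a (possibly infinite) sequence of $G$-atomic foldings and swellings $X = Z_0 \to Z_1 \to Z_2 \to \cdots$ with direct limit $Z$, together with a $G$-equivariant isometric embedding $\iota : Z \hookrightarrow Y$. It will then remain to show that under our hypotheses this sequence is in fact finite. The argument splits naturally into bounding the foldings and then, using that bound, the swellings.

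Two structural facts about $Z$ will drive the argument. First, $Z$ has finitely many $G$-orbits of hyperplanes: swellings preserve the hyperplane set by Proposition~\ref{prop:SwellingTwo}, foldings can only merge orbits by Proposition~\ref{prop:FoldingTwoHyp}, and $X$ has finitely many orbits by hypothesis. Second, since $\iota$ is $G$-equivariant and injective on hyperplanes by Lemma~\ref{lem:WhenIso}, for every hyperplane $J$ of $Z$ we have $\mathrm{Stab}_G^Z(J) = \mathrm{Stab}_G^Y(\iota(J))$, which is finitely generated by hypothesis.

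For the foldings, let $a_i$ denote the number of $G$-orbits of hyperplanes of $Z_i$. The sequence $(a_i)$ is weakly decreasing, hence stabilises at some step $N_1$; beyond $N_1$ every folding identifies two hyperplanes $\{A, gA\}$ within a single $G$-orbit, and this $G$-atomic folding has the effect of adjoining $g \notin \mathrm{Stab}_G^{Z_i}(A)$ to the stabiliser of the merged orbit, yielding $\mathrm{Stab}_G^{Z_{i+1}}([A]) = \langle \mathrm{Stab}_G^{Z_i}(A), g \rangle$, strictly larger than $\mathrm{Stab}_G^{Z_i}(A)$. The main technical step is then the following: for each of the finitely many orbits $[J]$ of $Z$, the resulting strictly ascending chain of subgroups $(\mathrm{Stab}_G^{Z_i}([J]))_{i \geq N_1}$ has union $\mathrm{Stab}_G^Y(\iota(J))$, which is finitely generated, so the chain must stabilise in finitely many steps, since a finite generating set can lie in only finitely many members of the chain. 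Taking a maximum over the finitely many orbits gives a step $N_2$ beyond which no further folding is possible, bounding the total number of foldings.

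For the swellings, the key observation is that in the algorithm of Theorem~\ref{thm:BigFactor}, swellings arise only within applications of Lemma~\ref{lem:folduniquepair}, each of which is itself a finite sequence of $G$-atomic operations. Since swellings preserve the set of hyperplanes, every such application requires at least one folding in order to merge a pair of distinct hyperplanes. Consequently the number of applications of Lemma~\ref{lem:folduniquepair} is bounded by the total number of foldings, which is finite by the previous part, and each contributes only finitely many swellings. The total number of swellings is therefore finite as well, yielding the required finite sequence $\eta : X \to \cdots \to Z$ of $G$-atomic foldings and swellings with $\psi = \iota \circ \eta$.
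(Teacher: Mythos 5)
Your argument is correct in substance, but it takes a genuinely different route from the paper's. The paper proceeds constructively: it first applies Lemma~\ref{lem:folduniquepair} finitely many times to make the induced map on $G$-orbits of hyperplanes injective, then fixes finite generating sets $S_i$ of the stabilisers of the $\psi(H_i)$ in $Y$ and introduces the complexity $c(\psi)=\sum_i |S_i^-|$, where $S_i^-$ consists of those generators not yet stabilising $H_i$; each further application of Lemma~\ref{lem:folduniquepair}, to a pair $\{H_i,sH_i\}$ with $s\in S_i^-$, strictly decreases $c$, and $c=0$ forces injectivity on hyperplanes, hence an isometric embedding by Lemma~\ref{lem:WhenIso}. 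You instead run the full (possibly infinite) algorithm of Theorem~\ref{thm:BigFactor} and prove termination a posteriori: the orbit count stabilises, after which every folding strictly enlarges an orbit-stabiliser, and the resulting ascending chains cannot increase infinitely often because their unions are the finitely generated hyperplane-stabilisers of $Y$; the swellings are then counted via the foldings. The two mechanisms are really two faces of the same fact, but the paper's version is self-contained, avoids direct limits, and gives an explicit bound on the number of operations, whereas yours is a cleaner ``the algorithm terminates'' statement that leans on the internal structure of the proof of Theorem~\ref{thm:BigFactor} (namely, that its sequence is assembled from applications of Lemma~\ref{lem:folduniquepair} and hence consists of $G$-atomic operations) rather than on its statement alone.

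Two points need shoring up. The crux of your termination argument is the identification of $\bigcup_i \mathrm{Stab}_{Z_i}([J])$ with $\mathrm{Stab}_Z(J)=\mathrm{Stab}_Y(\iota(J))$, and this is asserted without proof. It is true, but it requires an argument: if $g$ stabilises the limit hyperplane, then $\eta_i(J)$ and $\eta_i(gJ)$ become parallel only in the direct limit, and one must check that the finite chain of $4$-cycles witnessing this parallelism in $Z$ lifts to some finite stage $Z_k$, so that $g\in\mathrm{Stab}_{Z_k}([J])$. Without this, you only know that every member of the chain is contained in the finitely generated group $\mathrm{Stab}_Y(\psi(J))$, which does not preclude an infinite strictly ascending chain. (A cheaper fix, closer to the paper's argument: each generator $s$ of $\mathrm{Stab}_Y(\psi(J))$ with $sJ\neq J$ yields a pair $\{J,sJ\}\in\mathcal{Q}$ that the algorithm identifies at some finite stage, so $s$ enters the chain at a finite stage by construction.) Separately, the phrase ``a finite generating set can lie in only finitely many members of the chain'' is garbled: the correct statement is that the finitely many generators of the union must all lie in a single member of the ascending chain, which therefore equals the union and admits no further strict increase.
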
 

\begin{proof}
Let $H_1,\ldots,H_n$ be representatives of the $G$-orbits of hyperplanes in $X$. By applying Lemma~\ref{lem:folduniquepair} several times, there exists a median graph $Z$ equipped with an action of $G$ by isometries and a finite sequence of $G$-atomic foldings and swellings $\eta \colon X \to Z$ such that, for all $i,j \in \{1,\ldots,n\}$ satisfying $\psi(H_i)=\psi(H_j)$, we have $\eta(H_i)=\eta(H_j)$. Moreover, there exists an equivariant map $\iota \colon Z \to Y$ such that $\psi=\iota \circ \eta$. Thus, up to replacing $X$ by $Z$, we may suppose that the map induced by $\psi$ between the sets of $G$-orbits of hyperplanes of $X$ and $Y$ is injective. 

\medskip \noindent
For every $i \in \{1,\ldots,n\}$, let $S_i$ be a finite generating set for the stabiliser of $\psi(H_i)$ in $Y$ and let $S_i^-$ be the subset of $S_i$ consisting of all the elements of $S_i$ that do not belong to the stabiliser of $H_i$ in $X$. We introduce the complexity 
\[c(\psi)=\sum_{i=1}^n  |S_i^-|.\]
We claim that, if $c(\psi) >0$, then a finite sequence of $G$-atomic foldings and swellings reduces $c(\psi)$. Indeed, let $s \in S_i^-$. Then $\psi(sH_i)=\psi(H_i)$ but $sH_i \neq H_i$. By Lemma~\ref{lem:folduniquepair}, there exists a finite sequence of $G$-atomic foldings and swellings $\eta_0 \colon X \to X_0$ such that $\eta_0(sH_i)=\eta_0(H_i)$. Let $\iota_0 \colon X_0 \to Y$ be the equivariant map such that $\psi=\iota_0 \circ \eta_0$. Then the complexity $c(\iota_0)$ is smaller than the complexity $c(\psi)$. An inductive argument then shows that there exist a finite sequence of $G$-atomic foldings and swellings $\eta \colon X \to X$ and an equivariant map $\iota \colon X \to Y$ such that $\psi=\iota \circ \eta$ and $c(\iota)=0$. Thus, $\iota$ is injective on the set of the hyperplanes that belong to a same orbit. Recall that, by Lemma~\ref{lem:CubulationInductively} and Proposition~\ref{prop:SwellingTwo}, the map $\eta$ is surjective on the set of hyperplanes. Thus, as $\psi$ is injective on the set of $G$-orbits of hyperplanes and as $\psi=\iota \circ \eta$, this implies that $\iota$ is injective on the set of hyperplanes. By Lemma~\ref{lem:WhenIso}, the map $\iota$ is an isometric embedding. 
\end{proof}

\section{Median-cocompact subgroups}\label{section:MedianCocompact}

\noindent
In this section, we record a few applications of the techniques developed so far to the recognition of a specific class of subgroups:

\begin{definition}
Let $G$ be a group acting on a median graph $X$. A subgroup $H \leq G$ is \emph{(strongly) median-cocompact} if it acts cocompactly on some connected median-closed subgraph $Y \subset X$.
\end{definition}

\noindent
In a median graph $X$, a \emph{median-closed} subgraph $Y \subset X$ refers to a subgraph stable under the median operation, i.e.\ the median point of any three vertices of $Y$ also belongs to $Y$. Because connected median-closed subgraphs are isometrically embedded, it follows that median-cocompact subgroups in a group acting geometrically on a median graph are undistorted. Median-cocompact subgroups encompass \emph{convex-cocompact} subgroups, namely subgroups acting cocompactly on convex subgraphs (which coincide with the convex-cocompact subgroups from a CAT(0) perspective, as subcomplexes are convex with respect to the CAT(0) metric if and only if their one-skeletons are convex with respect to the graph metric). As mentioned in the introduction, examples of median-cocompact subgroups include cyclic subgroups, some centralisers, some fixators of group automorphisms, and Morse subgroups.

\medskip \noindent
In the definition above, we distinguish the strong median-cocompactness from the (weak) median-cocompactness defined in \cite{EliaOne} (and appearing implicitly in other places, e.g.\ \cite{MR4218342}), which does not require the median-closed subgraph to be connected. In full generality, the two notions differ. For instance, let $\mathbb{Z}^{(\infty)}$ be a free abelian group of countable rank. Fixing a free basis $B=\{b_1,b_2, \ldots\}$, the Cayley graph of $\mathbb{Z}^{(\infty)}$ is a median graph (namely a product of countably many bi-infinite lines), and, with respect to the canonical action of $\mathbb{Z}^{(\infty)}$, the subgroup $\langle b_1,2b_2,3b_3, \ldots \rangle$ is weakly median-cocompact but not strongly median-compact. Nevertheless, it is worth noticing that, for groups acting properly and cocompactly on median graphs, weak and strong median-cocompactness turns out to be equivalent (as justified by \cite[Propositions~4.1 and~4.11]{EliaTwo}).

\subsection{Free subgroups}\label{section:Freemediancocompact}

\noindent
Given a $G$ acting on trees $X,Y$, there often exists a $G$-equivariant map $X \to Y$ sending vertices to vertices and edges to edges. More precisely, it suffices that elliptic subgroups for $G \curvearrowright X$ are also elliptic for $G \curvearrowright Y$. This explains why foldings are ubiquitous in the study of groups acting on trees. When $X$ and $Y$ are more general median graphs, asking for a $G$-equivariant parallel-preserving map $X \to Y$ is unfortunately much more restrictive. In this section, we illustrate one specific case where such a construction is possible, namely when $X$ is a tree (but not necessarily $Y$), which allows us to prove the following application to free subgroups:

\begin{thm}\label{thm:FreeSub}
Let $G$ be a group acting properly and cocompactly on a median graph $X$. A finitely generated free subgroup $F\leq G$ is median-cocompact if and only if every hyperplane skewered by $F$ has a finitely generated $F$-stabiliser. 
\end{thm}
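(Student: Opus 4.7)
Suppose $F$ acts cocompactly on a connected median-closed subgraph $Y \subseteq X$. Being median-closed and connected, $Y$ is isometrically embedded, and its hyperplanes correspond bijectively to the $X$-hyperplanes crossing $Y$. Let $H$ be a hyperplane of $X$ skewered by $F$, and pick $f \in F$ with $fH \subsetneq H^+$. Since $F$ acts properly and $f$ has infinite order, $f$ acts freely on $Y$; as a non-elliptic isometry of the median graph $Y$ admitting a cocompact $F$-action, $f$ has a combinatorial axis in $Y$, which is a geodesic of $X$ by the isometric embedding and hence an axis of $f$ in $X$. Because $f$ skewers $H$, this axis crosses $H$, so $H$ determines a hyperplane $H_Y$ of $Y$ with $\mathrm{Stab}_F(H) = \mathrm{Stab}_F(H_Y)$. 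A standard carrier argument (finitely many $F$-orbits of edges in $Y$, plus the identity $F \cdot e \cap H_Y = \mathrm{Stab}_F(H_Y) \cdot e$ for $e \in H_Y$) shows that $\mathrm{Stab}_F(H_Y)$ acts properly and cocompactly on the connected carrier $N_Y(H_Y)$, hence is finitely generated.

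\textbf{Sufficiency.} The plan is to apply Theorem~\ref{thm:BF} with source an $F$-cocompact tree and target $X$. The crucial preliminary step is to upgrade the hypothesis from $F$-skewered hyperplanes to \emph{every} hyperplane of $X$. Indeed, the geometric action of $G$ forces $X$ to be locally finite and of bounded cubical dimension $d < \infty$. If a hyperplane $H$ is not skewered by $F$, then for every $f \in F$ either $fH = H$ or $fH \pitchfork H$; applying this to $f^{-1}f'$ shows that any two distinct hyperplanes in the orbit $F \cdot H$ are transverse. Standard median geometry bounds a pairwise transverse family of hyperplanes by the cubical dimension, so $|F \cdot H| \leq d$, and $\mathrm{Stab}_F(H)$ has finite index in the finitely generated group $F$ and is itself finitely generated. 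Combined with the standing hypothesis, every hyperplane of $X$ has a finitely generated $F$-stabiliser.

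\textbf{Construction and conclusion.} Fix a free basis $\{s_1, \ldots, s_n\}$ of $F$ and a vertex $x_0 \in X$. Let $T$ be the Cayley tree of $F$ with respect to $\{s_1, \ldots, s_n\}$ with each edge labelled $s_j$ subdivided into $\ell_j := d_X(x_0, s_j x_0)$ sub-edges; then $T$ is an $F$-cocompact tree with finitely many $F$-orbits of hyperplanes. Fixing a combinatorial geodesic $\gamma_j$ from $x_0$ to $s_j x_0$ for each $j$, the $F$-equivariant map $\psi : T \to X$ sending the identity to $x_0$ and the subdivided edge from $f$ to $f s_j$ onto $f \gamma_j$ is a combinatorial map, and parallel-preserving since trees have no $4$-cycles. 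Theorem~\ref{thm:BF}, whose hypotheses are now satisfied, yields a finite sequence of $F$-atomic foldings and swellings $\eta : T \to \cdots \to Z$ together with an $F$-equivariant isometric embedding $\iota : Z \hookrightarrow X$ with $\psi = \iota \circ \eta$. Since each atomic operation preserves finiteness of the $F$-orbit count of vertices and hyperplanes, $Z$ is $F$-cocompact, and its isometrically embedded image $\iota(Z) \subset X$ is a connected, median-closed, $F$-cocompact subgraph of $X$, witnessing the median-cocompactness of $F$. The principal technical hurdle is precisely the bootstrap from the hypothesis on skewered hyperplanes to the stronger assumption of Theorem~\ref{thm:BF}, which the finite-dimensionality of $X$ makes essentially automatic.
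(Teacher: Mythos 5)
Your overall skeleton for sufficiency (subdivided Cayley tree $T$ of $F$, an $F$-equivariant parallel-preserving map $T \to X$, then Theorem~\ref{thm:BF}) is the same as the paper's, and your necessity argument is fine (the paper dismisses that direction as clear). But two steps in the sufficiency direction are genuinely broken.

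First, the ``bootstrap'' is false. You claim that if $H$ is not skewered by $F$ then any two distinct hyperplanes in $F\cdot H$ are transverse, hence $|F\cdot H|$ is bounded by the cubical dimension and $\mathrm{Stab}_F(H)$ has finite index in $F$. Take $X$ a tree, $F=\langle f\rangle$ acting by translation along an axis $\gamma$, and $H$ the hyperplane dual to an edge hanging off $\gamma$: then $fH$ and $H$ are neither equal nor transverse (there is no transversality in a tree), yet $f$ does not skewer $H$ (each $f^nH$ sits on a different branch off the axis, and no halfspace of $H$ is mapped properly inside itself). So the orbit is infinite and pairwise non-transverse. The dichotomy ``$fH=H$ or $fH\pitchfork H$'' simply does not follow from non-skewering. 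This matters because the geodesics $\gamma_j$ you use to build $\psi:T\to X$ may cross hyperplanes that are not skewered by $F$, and Theorem~\ref{thm:BF} needs finitely generated stabilisers for the hyperplanes hit by the image. The paper's fix is different and essential: it first passes to the $F$-essential core (Fact~\ref{fact:Skewer}, via \cite{MR2827012}), an $F$-invariant \emph{convex} subgraph $Y\subset X$ all of whose hyperplanes are skewered by $F$, and maps $T$ into $Y$. Without this reduction your application of Theorem~\ref{thm:BF} is not justified.

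Second, the assertion that ``each atomic operation preserves finiteness of the $F$-orbit count of vertices'' is precisely the main technical difficulty of the theorem, not a triviality. An $F$-atomic folding is the cubulation of the first fold, and an $F$-atomic swelling is described by spots: both constructions add new vertices, and a priori nothing prevents them from placing vertices arbitrarily far from the image of the original graph or destroying local finiteness. The paper devotes Propositions~\ref{prop:CocompactSwelling} and~\ref{prop:CocompactFolding} (with Lemmas~\ref{lem:SwellCobounded}, \ref{lem:SwellLocallyFinite}, \ref{lem:FacingCollection}, and~\ref{lem:boundedaction}) to showing that, under bounded degree, finite cubical dimension, chiasmaticity of the maps, and the absence of hyperplane-inversions and direct self-osculations, every vertex of the folded or swelled graph stays within bounded distance of the image of the old one. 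None of this is addressed in your argument, and it is where the real work lies.
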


\noindent
Recall that a hyperplane $J$ is \emph{skewered by $H$} if $h J^+ \subsetneq J^+$ for some halfspace $J^+$ delimited by $J$ and for some $h \in H$. 

\medskip \noindent
Given a free group $F$ acting both on a tree $T$ and on a median graph $X$, it is easy to construct an $F$-equivariant parallel-preserving map $T \to X$. This map turns out to be more than parallel-preserving: it also preserves non-adjacency. This property will be preserved when factorising through foldings and swellings, and will be technically convenient, so we give it a name: 

\begin{definition}
A map $\psi : X \to Y$ between two median graphs $X$ and $Y$ is \emph{chiasmatic} if it is parallel-preserving and sends transverse hyperplanes to transverse hyperplanes.
\end{definition}

\noindent
The difficulty in proving Theorem~\ref{thm:FreeSub} is to verify that coboundedness of the action is preserved by the foldings and swellings involved. This is verified by Propositions~\ref{prop:CocompactSwelling} and~\ref{prop:CocompactFolding} below. 

\begin{prop}\label{prop:CocompactSwelling}
Let $G$ be a group acting on two median graphs $X,Y$ of finite cubical dimension. Let $X \to Y$ a $G$-equivariant chiasmatic map that factors as $X \to Z \to Y$ where $X \to Z$ is a $G$-equivariant swelling. If $X$ has bounded degree and if $G$ acts cocompactly on $X$, then $G$ acts cocompactly on $Z$. 
\end{prop}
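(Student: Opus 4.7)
The plan is to decompose the swelling $X \to Z$ into a finite sequence of $G$-atomic swellings and argue cocompactness at each step via the explicit spot description from Section~\ref{section:SwellingExplicit}.

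First, I would observe that bounded degree of $X$ combined with cocompactness of $G \curvearrowright X$ implies that $X$ admits only finitely many $G$-orbits of pairs of tangent hyperplanes. Indeed, any tangent pair $\{A,B\}$ must share some vertex $p \in N(A) \cap N(B)$, and the number of pairs of hyperplanes through $p$ is at most $\binom{\deg(p)}{2}$; since $G$ has finitely many vertex orbits, the claim follows. Consequently, the $G$-invariant collection $\mathcal{P}$ decomposes into finitely many $G$-orbits $\mathcal{O}_1,\ldots,\mathcal{O}_n$, and $X \to Z$ factors as $X = X_0 \to X_1 \to \cdots \to X_n = Z$, where $X_k \to X_{k+1}$ is the $G$-atomic swelling relative to $\mathcal{O}_{k+1}$.

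The inductive hypothesis is that $X_k$ is a median graph of uniformly bounded degree on which $G$ acts cocompactly. For the inductive step $X_k \to X_{k+1}$, I would use the spots description: vertices of $X_{k+1}$ are spots relative to $\mathcal{O}_{k+1}$, and each spot $\sigma$ determines (non-canonically) a base principal orientation $\sigma_p$ at some vertex $p$ of $X_k$ together with the finite set $S$ of hyperplanes on which $\sigma$ deviates from $\sigma_p$. The constraints defining spots force every $J \in S$ to be a hyperplane through $p$: indeed, if $J$ is not incident to $p$, then flipping $J$ alone (or in combination) must intersect non-flipped neighbours of $p$, forcing $\sigma$ to agree with $\sigma_p$ at $J$ up to changes already accounted for by closer flips. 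It follows that $S$ is a subset of the at most $\deg_{X_k}(p) \leq D$ hyperplanes through $p$, so there are at most $2^D$ candidate subsets $S$ at any fixed $p$. Making a $G$-equivariant choice of base on spot-orbits and combining with cocompactness of $G \curvearrowright X_k$ produces finitely many $G$-orbits of vertices in $X_{k+1}$, establishing cocompactness. The heart of the counting reduces to showing, for a single pair $\{A,B\}$ in $\mathcal{O}_{k+1}$, that $\mathrm{Stab}_G(\{A,B\})$ acts cocompactly on $N(A) \cap N(B)$: given a finite vertex fundamental domain $K$ for $G \curvearrowright X_k$, the set $T := \{(v, \{A',B'\}) : v \in K,\ v \in N(A') \cap N(B'),\ \{A',B'\} \in \mathcal{O}_{k+1}\}$ is finite by bounded degree, and selecting $g_{v,\{A',B'\}} \in G$ mapping $\{A',B'\}$ to $\{A,B\}$ produces the translates $g_{v,\{A',B'\}} \cdot v$ as a finite fundamental domain.

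Preservation of bounded degree through a single atomic swelling is a local check: each old vertex $p$ gains at most one new edge per pair $\{A',B'\} \in \mathcal{O}_{k+1}$ with $p \in N(A') \cap N(B')$, hence at most $\binom{\deg(p)}{2}$ new edges, and each new vertex has degree bounded by the combinatorics of its underlying spot. Iterating through the $n$ atomic steps yields a uniform degree bound on $Z$, closing the induction. The principal obstacle I anticipate is carefully justifying the $2^D$ bound on compound spots at a given base vertex — in particular, checking that the pair constraints rule out flipping any hyperplane not incident to $p$ — but this reduces to the standard fact that in a median graph, flipping a hyperplane not adjacent to the base vertex amounts to shifting the base to a neighbouring principal orientation, leaving the count of genuinely new spots controlled by the local degree.
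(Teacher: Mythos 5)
Your high-level strategy (describe the new vertices as spots, reduce each spot to a principal orientation by flipping back the ``bad'' hyperplanes, and count using the local degree) is in the right spirit, and the reduction to finitely many $G$-atomic swellings is a legitimate alternative framing. But there are two genuine gaps. The first is your central claim that a spot deviates from a suitable base principal orientation only on hyperplanes incident to the base vertex. As stated this is false for an arbitrary base, and even for the correct choice of base the justification you give (``flipping $J$ must intersect non-flipped neighbours of $p$\dots'') is not a proof: a spot may violate the orientation condition on several pairs of $\mathcal{P}$ simultaneously, and one must first show that all the hyperplanes occurring in these violated pairs are pairwise in contact (a genuine case analysis on nesting of halfspaces) and then invoke the Helly property for their carriers to produce a common vertex $v$; only then does one get that there are at most $\deg(v)$ such hyperplanes and that flipping them all back yields a principal orientation. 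This is exactly the content of the paper's Lemma~\ref{lem:SwellCobounded} and its Claim~\ref{claim:PairwiseTangent}, and it cannot be waved away as ``shifting the base to a neighbouring principal orientation.''

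The second gap is the bounded-degree step, and it is symptomatic that your argument never uses the hypothesis that the map factors through $Y$ of finite cubical dimension. Your count of new edges at an old vertex $p$ is wrong: $p$ gains a new neighbour for every hyperplane $J$ such that flipping the principal orientation of $p$ at $J$ yields a spot, and such $J$ need not pass through $p$ (already for a single swelled pair $\{A,B\}$ tangent at $q$, the vertex $\alpha(q)$ gains the neighbour $\gamma(q)$, which corresponds to flipping at $B$, a hyperplane not incident to $\alpha(q)$). Controlling these $J$ amounts to bounding $d(p,N(J))$, and the only way to do this is to observe that the hyperplanes separating $p$ from $N(J)$ must either be among the finitely many already-flipped ones or be $\mathcal{P}$-tangent to $J$, and that the latter are pairwise transverse --- so their number is bounded by the cubical dimension of $Z$, which is finite precisely because $Z\to Y$ is chiasmatic and $Y$ has finite cubical dimension. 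This is the paper's Lemma~\ref{lem:SwellLocallyFinite}. Without this input your induction does not close: local finiteness of $X_{k+1}$ is not a formal consequence of local finiteness of $X_k$. The paper avoids your atomic decomposition altogether and instead proves coboundedness (every vertex of $Z$ is within $N$ of $X$, $N$ the degree bound) and local finiteness of $Z$ directly, from which cocompactness follows.
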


\noindent
For clarity, we split the proof of the proposition into two lemmas.

\begin{lemma}\label{lem:SwellCobounded}
Let $X$ be a median graph of degree $\leq N$ and $Z$ another median graph obtained by swelling $X$. Every vertex of $Z$ lies at distance $\leq N$ from $X$. 
\end{lemma}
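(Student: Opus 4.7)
My approach is to use the explicit description of the swelling from Section~\ref{section:SwellingExplicit}: vertices of $Z$ correspond to \emph{spots}, i.e., maps $\sigma$ assigning to every hyperplane $J$ of $X$ a halfspace $\sigma(J)$, subject to (i) $\sigma(A)\cap\sigma(B)\neq\emptyset$ for every pair $\{A,B\}\notin\mathcal{P}$, and (ii) $\sigma$ differing from some principal orientation on only finitely many hyperplanes. The vertices of $X\subset Z$ are the principal orientations $\mu_x$, and the distance in $Z$ between two spots equals the number of hyperplanes on which they differ. For a given vertex $\sigma\in Z$ and each $x\in X$, setting $\mathcal{H}(x):=\{J:x\notin\sigma(J)\}$, one has $d_Z(\sigma,\mu_x)=|\mathcal{H}(x)|$. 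So the lemma reduces to exhibiting $x^*\in X$ with $|\mathcal{H}(x^*)|\leq N$.

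Since $\mathcal{H}$ takes values in $\mathbb{N}$ (as $\sigma$ differs from some principal on finitely many hyperplanes), it attains a minimum at some $x^*\in X$. By the minimality, every hyperplane $L$ incident to $x^*$ satisfies $L\notin\mathcal{H}(x^*)$: otherwise, crossing $L$ to a neighbor would strictly decrease $|\mathcal{H}|$. In particular, every $K\in\mathcal{H}(x^*)$ satisfies $x^*\notin N(K)$. For each such $K$, consider the gate projection $g_K$ of $x^*$ onto the convex halfspace $\sigma(K)$; the first edge of the geodesic $x^*\to g_K$ crosses some hyperplane $L_K\neq K$ incident to $x^*$ that separates $x^*$ from $g_K$. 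The gate property propagates this separation to all of $\sigma(K)$, so $\sigma(K)\subseteq\sigma(L_K)^c$, hence $\sigma(K)\cap\sigma(L_K)=\emptyset$. The spot consistency then forces $\{K,L_K\}\in\mathcal{P}$, so $K$ is tangent in $X$ to some hyperplane $L_K$ incident to $x^*$.

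The main obstacle is to upgrade this to the bound $|\mathcal{H}(x^*)|\leq N$. A na\"\i ve pigeonhole on $K\mapsto L_K$ would only give $|\mathcal{H}(x^*)|\leq\deg(x^*)$ when this map is injective, but injectivity fails in general (for example at a leaf of a tree-star with all pairs in $\mathcal{P}$, where several $K$'s share the same $L_K$). My plan is instead to apply Helly's property for convex subgraphs of median graphs to the family of carriers $\{N(K):K\in\mathcal{H}(x^*)\}\cup\{N(L_K):K\in\mathcal{H}(x^*)\}$, producing a common vertex $y\in X$ incident to every hyperplane of $\mathcal{H}(x^*)$; this yields $|\mathcal{H}(x^*)|\leq\deg(y)\leq N$ at once. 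The delicate technical step, which I expect to require the most care, is to establish the pairwise non-emptiness of the carrier intersections, and in particular that any two hyperplanes $K_1,K_2\in\mathcal{H}(x^*)$ are in contact in $X$: if some hyperplane $J$ strictly separated $N(K_1)$ from $N(K_2)$, then $J$ would be disjoint from both $K_1$ and $K_2$, and analysing the positions of $\sigma(K_1)$ and $\sigma(K_2)$ relative to $J$, together with $x^*\in\sigma(K_1)^c\cap\sigma(K_2)^c$ and the spot condition $\sigma(K_1)\cap\sigma(K_2)\neq\emptyset$ (valid since $\{K_1,K_2\}\notin\mathcal{P}$, as non-tangent pairs cannot belong to $\mathcal{P}$), should force either an inconsistency in $\sigma$ or a contradiction with the local minimality of $x^*$ via $L_{K_1}$ or $L_{K_2}$.
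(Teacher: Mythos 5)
Your overall strategy is viable and genuinely different from the paper's. The paper starts from the vertex $z\in Z$ itself: it defines $\mathcal{Z}$ as the set of pairs $\{A,B\}$ with $z(A)\cap z(B)=\emptyset$, proves that the hyperplanes appearing in $\mathcal{Z}$ are pairwise in contact (Claim~\ref{claim:PairwiseTangent}), applies the Helly property to their carriers to see that there are at most $N$ of them, and then checks that flipping $z$ on exactly these hyperplanes yields a principal orientation, i.e.\ a vertex of $X$ at distance $\leq N$. You instead take a nearest vertex $x^*$ of $X$ as given and bound $|\mathcal{H}(x^*)|$; this spares you the verification that the flipped map is an orientation, at the cost of having to exploit the minimality of $x^*$. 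Both proofs share the same skeleton (pairwise contact of a finite family of hyperplanes, then Helly, then the degree bound), and your reduction to $|\mathcal{H}(x^*)|$ via the distance formula, the local-minimality observation, and the construction of the tangent hyperplanes $L_K$ via gates are all correct.

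The proof as written, however, stops exactly at its crux: the pairwise-contact claim for $\mathcal{H}(x^*)$ is only announced, and the sketch you give (introducing a hyperplane $J$ separating $N(K_1)$ from $N(K_2)$) is not the most direct route and does not visibly close. The claim is true, and it can be proved along the lines you indicate. Suppose $K_1,K_2\in\mathcal{H}(x^*)$ are not in contact. Since non-tangent pairs cannot lie in $\mathcal{P}$, the spot condition gives $\sigma(K_1)\cap\sigma(K_2)\neq\emptyset$; together with $x^*\in\sigma(K_1)^c\cap\sigma(K_2)^c$ and non-transversality, one of the two remaining quarter-spaces is empty, say $\sigma(K_1)\subseteq\sigma(K_2)$ up to swapping. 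As $K_1,K_2$ are distinct and non-transverse, $N(K_1)$ contains no edge of $K_2$; being connected and meeting $\sigma(K_1)$, it satisfies $N(K_1)\subseteq\sigma(K_2)$. Now take your $L_{K_2}$: it is incident to $x^*$, hence $x^*\in\sigma(L_{K_2})$ by local minimality, and it separates $x^*$ from $\sigma(K_2)$, hence also from $\sigma(K_1)\subseteq\sigma(K_2)$. Therefore $\sigma(L_{K_2})$ is disjoint from both $\sigma(K_2)$ and $\sigma(K_1)$, so the spot condition forces both $\{L_{K_2},K_2\}\in\mathcal{P}$ and $\{L_{K_2},K_1\}\in\mathcal{P}$, i.e.\ $L_{K_2}$ is tangent to both. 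Tangency to $K_2$ (non-transversality plus $L_{K_2}\neq K_2$) means $N(L_{K_2})$ contains no edge of $K_2$, so $N(L_{K_2})$, which contains $x^*$, lies entirely in $\sigma(K_2)^c$; tangency to $K_1$ means $N(L_{K_2})$ meets $N(K_1)\subseteq\sigma(K_2)$. This is the desired contradiction. Two further small points: the Helly family should be just $\{N(K):K\in\mathcal{H}(x^*)\}$ --- the common point $y$ is then incident to every $K\in\mathcal{H}(x^*)$, which is all you use --- since adding the carriers $N(L_K)$ obliges you to check extra pairwise intersections that are neither needed nor obviously nonempty; and note that only the local minimality of $x^*$ is ever used, which your global minimizer of course satisfies.
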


\begin{proof}
We think of the vertices of $Z$ as orientations as described in Section~\ref{section:SwellingExplicit}. Let $z \in Z$ be an arbitrary vertex. Let $\mathcal{Z}$ denote the set of pairs $\{A,B\}$ of hyperplanes of $X$ such that $z(A) \cap z(B)=\emptyset$. Notice that, because $z$ differs from a principal orientations only on finitely many hyperplanes, $\mathcal{Z}$ is necessarily finite.

\begin{claim}\label{claim:PairwiseTangent}
The hyperplanes appearing in $\mathcal{Z}$ are pairwise in contact.
\end{claim}

\noindent
Otherwise, there exist $\{A,A'\}, \{B,B'\} \in \mathcal{Z}$ such that $A$ and $B$ are not in contact. Notice that $z(A) \cap z(B)$ cannot be empty, since otherwise $\{A,B\}$ would belong to $\mathcal{Z}$ and $A,B$ would have to be tangent. If $z(A)$ and $z(B)$ are nested, say $z(B) \subset z(A)$, then $z(A')$, which is disjoint from $z(A)$, must be disjoint from $z(B)$. Hence $\{A',B\} \in \mathcal{Z}$. But this is impossible since it would imply that $A',B$ are tangent whereas they are separated by $A$. So now, assume that $z(A)$ and $z(B)$ are not nested. If $A$ and $B$ are transverse, there is nothing to prove; so assume that $A$ and $B$ are not transverse. Because $z(A')$ must be disjoint from $z(A)$, it is necessarily contained in $z(B)$. Then the same argument with $B,A'$ in place of $A,B$ leads to a contradiction. This concludes the proof of Claim~\ref{claim:PairwiseTangent}.

\medskip \noindent
It follows from the Helly property for convex subgraphs that the carriers of the hyperplanes appearing in $\mathcal{Z}$ globally intersect. Therefore, if $z'$ denote the map obtained from $z$ by replacing $z(J)$ with its complement for every hyperplane appearing in $\mathcal{Z}$, then $z'(A) \cap z'(B) \neq \emptyset$ for all the hyperplanes $A,B$ appearing in $\mathcal{Z}$. Since the hyperplanes appearing in $\mathcal{Z}$ all cross some ball of radius $1$, there are at most $N$ such hyperplanes. Thus, it suffices to show that $z'$ is an orientation in order to conclude the proof of our lemma.

\medskip \noindent
Let $A,B$ be two hyperplanes of $X$. If $A,B$ both appear in $\mathcal{Z}$, then we already know that $z'(A) \cap z'(B) \neq \emptyset$. If neither $A$ nor $B$ appear in $\mathcal{Z}$, then $z'(A) \cap z'(B) = z(A) \cap z(B) \neq \emptyset$. Otherwise, say that $A$ does not appear in $\mathcal{Z}$ and $\{B,B'\}\in \mathcal{Z}$ for some hyperplane $B'$. Because $z'(B)$ is the hyperplane containing $B'$, if $z'(A)=z(A)$ is disjoint from $z'(B)$, then $z(A)$ and $z(B')$ must be disjoint, which is impossible. Thus, $z'(A)$ and $z'(B)$ must intersect, proving that $z'$ is an orientation of $X$. 
\end{proof}

\begin{lemma}\label{lem:SwellLocallyFinite}
Let $X$ be a median graph and $Z$ another median graph obtained by swelling $X$. If $X$ is locally finite and if $Z$ has finite cubical dimension, then $Z$ is locally finite. 
\end{lemma}

\begin{proof}
We think of the vertices of $Z$ as orientations as described in Section~\ref{section:SwellingExplicit}. Fix a vertex $x \in X$ and a constant $R \geq 0$. We want to prove that there are only finitely many $z \in Z$ at distance $\leq R$ from $x$, or equivalently, which differ with $x$ only on $\leq R$ hyperplanes of $X$. Because $X$ is locally finite, it suffices to show that there exists a constant $K \geq 0$ depending only on $R$ and the cubical dimension of $Z$ such that all the hyperplanes on which $x$ and $z$ differ cross the ball $B(x,K)$ in $X$.

\medskip \noindent
So let $J$ be a hyperplane on which $x$ and $z$ differ. If $J$ does not cross a large ball centred at $x$, then there exist many hyperplanes separating $x$ from $J$. Moreover, we can exact from this collection a large subcollection of pairwise non-transverse hyperplanes (whose size only depends on the cubical dimension of $X$). However, there cannot be more than $R$ pairwise non-transverse hyperplanes $J_1, \ldots, J_n$ separating $x$ from $J$. Indeed, we know that $z(J)$ does not contain $x$; and, because $z$ and $x$ differ only on $R+1$ hyperplanes, there must exist some $1 \leq i \leq n-1$ such that $z(J_i)$ contains $x$. But then $z(J_i) \cap z(J)= \emptyset$, which implies that $\{J,J_i\}$ is a pair of swelled hyperplanes. In particular, they have to be tangent. But $J_n$ separates $J$ and $J_i$, hence a contradiction. 
\end{proof}

\begin{proof}[Proof of Proposition~\ref{prop:CocompactSwelling}.]
Because $X \to Y$ and $X \to Z$ are both chiasmatic, necessarily $Z \to Y$ is chiasmatic. This implies that the cubical dimension of $Z$ is bounded above by the cubical dimension of $Y$, hence finite. It follows from Lemma~\ref{lem:SwellCobounded} that $G$ acts coboundedly on $Z$ and from Lemma~\ref{lem:SwellLocallyFinite} that $Z$ is locally finite. Therefore, $G$ acts cocompactly on $Z$. 
\end{proof}

\noindent
We now turn our attention to foldings. Because we are only interested in chiasmatic maps here, it suffices to deal with foldings of tangent pairs of hyperplanes.

\begin{prop}\label{prop:CocompactFolding}
Let $G$ be a group acting on a median graph $X$. Assume that there are no hyperplane-inversions nor direct self-osculations. Fix two tangent hyperplanes $A,B$ and let $\eta : X \to Z$ be the folding of $X$ relative to $\mathcal{P}:= \{ g\{A,B\}, g\in G\}$. If $Z$ has finite cubical dimension and $G$ acts coboundedly on $X$, then $G$ acts coboundedly on $Z$. 
\end{prop}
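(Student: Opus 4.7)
The plan is to show that $\eta(X)$ is a cobounded subset of $Z$. Since $\eta : X \to Z$ is parallel-preserving (hence a graph homomorphism, hence $1$-Lipschitz) and $G$-equivariant, the image $\eta(X)$ is automatically $G$-cobounded in $Z$ as a set: a bounded fundamental domain of $X$ maps to a bounded set whose $G$-translates cover $\eta(X)$. The problem thus reduces to uniformly bounding $d_Z(z, \eta(X))$ for $z \in Z$ by a constant depending only on $d := \dim Z$. I would use the explicit description of $Z$ from Section~\ref{section:FoldExplicit}: $Z$ is the cubulation of the wallspace $(X, \mathcal{W})$ whose walls $\mathcal{W}(J)$ come from the equivalence classes of hyperplanes under $\mathcal{P}$-connectedness, and $\eta(X)$ coincides with the set of principal orientations. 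The no-hyperplane-inversion and no-direct-self-osculation hypotheses intervene here to guarantee the compatibility between the wallspace structure on $X$ and the $G$-action, preventing an orbit $g\{A,B\}$ from degenerating (for instance by having some $g \in G$ invert the carrier shared by $A$ and $B$, or collapse two edges of a single orbit-hyperplane).

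The key claim is that every vertex $z \in Z$ lies at distance at most $d$ from $\eta(X)$. I would fix $x \in X$ minimising $n := d_Z(\eta(x), z)$, and let $W_1, \ldots, W_n$ denote the hyperplanes of $Z$ separating $\eta(x)$ from $z$; equivalently, the walls of $(X, \mathcal{W})$ on which the principal orientation $\sigma_x$ and $z$ differ. The heart of the plan is to establish that $W_1, \ldots, W_n$ are pairwise transverse in $Z$, whence $n \leq d$ by the cubical dimension assumption. I would argue by contradiction: assuming $W_i, W_j$ are not transverse in $Z$, one of the four halfspace intersections in $Z$ (say, the one on the $z$-side of $W_i$ and the $x$-side of $W_j$) is empty. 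Using the fact that every hyperplane of $Z$ is crossed by an edge of $\eta(X)$ (since $\eta$ is surjective on parallelism classes by construction of the cubulation), I would select an edge of $\eta(X)$ lying in $W_i$ and show that its endpoint on the $z$-side of $W_i$ is forced to lie on the $z$-side of $W_j$ as well. Such an endpoint yields a vertex $x' \in X$ with $\eta(x')$ differing from $\sigma_x$ only on a proper nonempty subset of $\{W_1, \ldots, W_n\}$, so $d_Z(\eta(x'), z) < n$, contradicting the minimality of $x$.

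The hard part will be controlling this transversality step: one must guarantee that the candidate vertex $x' \in X$ does not overshoot by differing from $x$ on walls outside $\{W_1, \ldots, W_n\}$. I expect this to be handled by iterating the construction along a geodesic of $\eta(X)$ realising the relevant crossings, invoking the minimality of $x$ at each stage to force every intermediate vertex of $\eta(X)$ to stay within the prescribed family of walls. The non-inversion and non-self-osculation hypotheses will be used once more at this stage to rule out the exceptional situations, arising within the orbits $g\{A, B\}$, where a single step of $\eta(X)$ could jump across more than one folded wall at once and thereby invalidate this local control.
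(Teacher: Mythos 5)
Your reduction is the right one (bound $d_Z(z,\eta(X))$ by a constant depending only on $\dim Z$, then use coboundedness of $G$ on $\eta(X)$), and the target bound $\dim Z$ matches the paper's. But the heart of your argument --- that if $x\in X$ minimises $d_Z(\eta(x),z)$ then the walls separating $\sigma_x$ from $z$ are pairwise transverse --- is not established, and the mechanism you propose for it does not work. Picking an arbitrary edge of $\eta(X)$ dual to $W_i$ gives an endpoint $\eta(x')$ that may differ from $\sigma_x$ on arbitrarily many walls \emph{outside} $\{W_1,\ldots,W_n\}$, so no contradiction with minimality follows; you acknowledge this, but ``iterating along a geodesic of $\eta(X)$'' is not yet an argument, and the only properties of $\eta(X)$ you actually invoke (connected, crosses every wall of $Z$) are insufficient. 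Concretely: in an $n\times n$ grid, the L-shaped path from $(0,0)$ through $(n,0)$ to $(n,n)$ is connected and crosses every wall, yet the vertex $(0,n)$ is at distance $n$ from it and the walls separating it from a closest point are the $n$ pairwise non-transverse horizontal walls. So both your intermediate claim and the final bound fail for a general such subgraph; any correct proof must exploit the specific structure of $\eta(X)$ as the image of a folding of \emph{tangent} pairs, which your sketch never does.

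The missing structural input is exactly where the no-hyperplane-inversion and no-direct-self-osculation hypotheses enter, and you have misidentified their role: they are not about ``compatibility of the wallspace with the $G$-action'' (the wallspace is $G$-invariant by construction), but are used in the paper (Lemma~\ref{lem:FacingCollection}) to show that for every hyperplane $J$ of $Z$ the fibre $\eta^{-1}(J)$ is a \emph{facing} collection of hyperplanes of $X$, i.e.\ no member separates two others. With that in hand the paper argues quite differently from you: given a maximal cube $Q$ containing $z$, it builds an explicit principal orientation $q$ of $X$ with $\eta(q)\in Q$, by orienting each hyperplane of $X$ whose class meets $Q$ towards the rest of its class (this is where facing is indispensable) and each remaining hyperplane towards a class it is nowhere transverse to (this is where maximality of $Q$ is used). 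That construction, rather than a closest-point/transversality argument, is what yields $d_Z(z,\eta(X))\leq\dim Z$. If you want to salvage your route, you would at minimum need to prove the facing property yourself and then show how it forces the local control you postulate; as written, the proposal has a genuine gap at its central step.
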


\noindent
Given a median graph $X$, a \emph{hyperplane-inversion} is an isometry that stabilises a hyperplane but swaps the two halfspaces it delimits. A \emph{direct self-osculation} refers to an isometry $g \in \mathrm{Isom}(X)$ and two tangent hyperplanes $A,B$ such that $g$ sends $A$ to $B$ and $B$ in the halfspace delimited by $B$ opposite to $A$. For instance, the usual action of $\mathbb{Z}$ on the bi-infinite line admits direct self-osculation. It is worth noticing that hyperplane-inversions (resp. direct self-osculation) can always be avoided by replacing the median graph under consideration with its cubical subdivision. 

\medskip \noindent
Our proof of Proposition~\ref{prop:CocompactFolding} is inspired by the proof of \cite[Lemma~6.4]{beeker2018stallings}, and is based on our next two lemmas. 

\begin{lemma}\label{lem:FacingCollection}
Let $G$ be a group acting on a median graph $X$. Assume that there are no hyperplane-inversions nor direct self-osculations. Fix two tangent hyperplanes $A,B$ and let $\eta : X \to Z$ be the folding of $X$ relative to $\mathcal{P}:= \{ g\{A,B\}, g\in G\}$. For every hyperplane $J$ of $Z$, the collection $\eta^{-1}(J)$ is facing.
\end{lemma}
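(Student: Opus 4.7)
The plan is to identify $\eta^{-1}(J)$ with the $\mathcal{P}$-equivalence class $[J]$ via Theorem~\ref{thm:Foldings}, and then to exhibit a canonical orientation $K \mapsto K^+$ on $[J]$ whose positive halfspaces are pairwise disjoint. The natural candidate is the ``outgoing'' orientation: for any $K \in [J]$ and any neighbour $K'$ of $K$ in the $\mathcal{P}$-graph (i.e.\ with $\{K,K'\} \in \mathcal{P}$), set $K^+$ to be the halfspace of $K$ not containing $N(K) \cap N(K')$. Since $\{K,K'\}$ is a $G$-translate of $\{A,B\}$, it is a tangent pair, so $K^+$ and $K'^+$ are the disjoint ``far'' halfspaces of a tangent pair (cf.\ the analysis at the beginning of Section~\ref{section:SwellingTwo}).

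The first task is to verify that $K^+$ is independent of the choice of neighbour $K'$. Writing $\{K,K'\} = g_1\{A,B\}$ and $\{K,K''\} = g_2\{A,B\}$, I would split into the four cases determined by whether $K$ equals $g_iA$ or $g_iB$ for $i=1,2$. In each case the isometry $g_1^{-1}g_2$ either fixes one of $A$ or $B$, in which case the absence of hyperplane-inversions forces it to preserve each halfspace of that hyperplane; or maps $A$ to $B$, in which case the absence of direct self-osculations forces $g_1^{-1}g_2(B)$ into the halfspace of $B$ containing $A$, which pins down how halfspaces of $A$ are mapped to halfspaces of $B$. Either constraint is exactly what is needed to match the two candidate orientations of $K$; propagating along $\mathcal{P}$-chains then yields a globally well-defined orientation on all of $[J]$.

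The main obstacle is the second task: showing that the family $\{K^+ : K \in [J]\}$ is \emph{pairwise} disjoint, rather than merely consistent on adjacent pairs in the $\mathcal{P}$-graph. I would proceed by induction on the $\mathcal{P}$-distance $n$ between two distinct hyperplanes $K_0, K_n$ of $[J]$, the base case $n=1$ being the tangent-pair observation above. For the inductive step, assume that some vertex $x$ lies in $K_0^+ \cap K_n^+$ along a $\mathcal{P}$-chain $K_0, K_1, \ldots, K_n$. Starting from a vertex $p \in N(K_{n-1}) \cap N(K_n)$ and analysing a geodesic from $p$ to $x$, keeping track of the hyperplanes of $[J]$ crossed together with their orientations fixed by the first step, I expect to produce a group element realising either a direct self-osculation (arising from two hyperplanes of $[J]$ that would appear in ``nested'' rather than ``facing'' position along the geodesic) or a hyperplane-inversion. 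Formalising this geodesic analysis is the technical crux: it is where the hypotheses of the lemma must be used in full, and where some care is required because intermediate hyperplanes crossed by the geodesic need not themselves lie in $[J]$.
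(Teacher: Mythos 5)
Your identification of $\eta^{-1}(J)$ with the class $[J]_{\mathcal{P}}$ (via Theorem~\ref{thm:Foldings}) and your first task are sound; in fact your well-definedness argument is, in contrapositive form, exactly the group-theoretic core of the paper's proof: the two cases ($g_1^{-1}g_2$ stabilising one of $A,B$, or sending one to the other) are precisely where the paper invokes the absence of hyperplane-inversions and of direct self-osculations. One detail you should make explicit there: to apply the no-self-osculation hypothesis you need $g_1^{-1}g_2(B)$ to be distinct from, and non-transverse to, $B$, so that it lies entirely in one halfspace of $B$. This follows from the observation (made at the outset of the paper's proof) that folding tangent pairs yields a chiasmatic map, so all hyperplanes of $\eta^{-1}(J)$ are pairwise non-transverse; distinctness holds because $g(A)=B$ and $g(B)=B$ would force the halfspace $g(B^-)$ of $B$ to contain $g(A)=B$, which is absurd. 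The net effect of your first task is: no hyperplane of $[J]$ separates two of its $\mathcal{P}$-neighbours.

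The genuine gap is your second task, which you yourself flag as unformalised. The missing step is purely combinatorial and needs neither a geodesic analysis nor an induction on $\mathcal{P}$-distance: one must show that if $[J]$ is \emph{not} facing, then some hyperplane of $[J]$ separates two of its own $\mathcal{P}$-neighbours, at which point your task~1 already yields the contradiction. The paper does this directly: if $R\in\eta^{-1}(J)$ separates $P,Q\in\eta^{-1}(J)$, take a $\mathcal{P}$-chain $R_0=P,\ldots,R_n=Q$; since consecutive $R_i$ are in contact, $R$ cannot separate $R_i$ from $R_{i+1}$, so $R$ must occur as some $R_i$ with $0<i<n$; taking $j<i$ maximal with $R_j$ on the $P$-side of $R_i$ forces $R_{j+1}=R_i$ (otherwise $R_i$ would separate the tangent pair $R_j,R_{j+1}$), and symmetrically on the $Q$-side, so $R_i$ separates its chain-neighbours $R_{i-1}$ and $R_{i+1}$. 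Your proposed induction, by contrast, leaves you knowing only that $K_0^+$ and $K_n^+$ both lie in $\bigcap_{0<j<n}K_j^-$, which is not yet a contradiction, and the sketch of extracting an inversion or self-osculation from a geodesic to a point of $K_0^+\cap K_n^+$ does not explain how that group element arises; I do not see how to close it without effectively rediscovering the chain argument above.
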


\noindent
In a median graph, a collection of hyperplanes $\mathcal{J}$ is \emph{facing} if no hyperplane of $\mathcal{J}$ separates two other hyperplanes from $\mathcal{J}$. 

\begin{proof}[Proof of Lemma~\ref{lem:FacingCollection}.]
First of all, notice that folding pairs of tangent hyperplanes yields a chiasmatic map. Therefore, given a hyperplane $J$ of $Z$, the hyperplanes in $\eta^{-1}(J)$ are pairwise non-transverse. This observation will be used several times in our argument.

\medskip \noindent
Assume that $\eta^{-1}(J)$ is not facing, i.e.\ there exist $P,Q,R \in \eta^{-1}(J)$ such that $R$ separates $P$ and $Q$. According to Theorem~\ref{thm:Foldings}, there exist
$$R_0:=P, \ R_1, \ldots, \ R_{n-1}, \ R_n:=Q$$
such that, for every $0 \leq i \leq n-1$, $\{R_i,R_{i+1}\}$ belongs to $\mathcal{P}$, say $\{R_i,R_{i+1}\}= g_i \{A,B\}$ for some $g_i \in G$. Because the $R_i$ are successively tangent, and because there is no transversality in $\eta^{-1}(J)$, there must exist some $1 \leq i \leq n-1$ such that $R_i$ separates $P$ and $Q$. 

\medskip \noindent
Let $j$ denote the largest index such that $j < i$ and such that $R_{j}$ lies in the same halfspace of $R_i$ as $P$. Necessarily, $R_{j+1}=R_i$, since otherwise $R_i$ would separate the tangent hyperplanes $R_{j}$ and $R_{j+1}$. Hence $j=i-1$. Similarly, we show that $i+1$ is the smallest index $k$ such that $i<k$ and such that $R_{k}$ lies in the same halfspace of $R_i$ as $Q$. Thus, $R_i$ separates $R_{i-1}$ and $R_{i+1}$. 

\medskip \noindent
Up to switching $A$ and $B$, two cases may happen:
\begin{itemize}
	\item $R_{i-1}=g_{i-1}A$, $R_i=g_{i-1}B= g_iB$, and $R_{i+1}=g_iA$. In this case, we find an element $g:=g_{i-1}^{-1}g_i$ stabilising $B$ such that $B$ separates $A$ and $gA$. In other words, $g$ inverts $B$.
	\item $R_{i-1}=g_{i-1}A$, $R_i=g_{i-1}B = g_iA$, and $R_{i+1}=g_iB$. In this case, we find element $g:=g_{i-1}^{-1}g_i$ that sends $A$ to $B$ such that $B$ separates $A$ and $gB$. Since $A$ and $B$ are tangent, we get a direct self-osculation.
\end{itemize}
Thus, we have proved that, if $\eta^{-1}(J)$ is not facing, then the action of $G$ on $X$ must admit a hyperplane-inversion or a direct self-osculation. 
\end{proof}

\begin{lemma}\label{lem:boundedaction}
Let $G$ be a group acting on a median graph $X$. Assume that there are no hyperplane-inversions nor direct self-osculations. Fix two tangent hyperplanes $A,B$ and let $\eta : X \to Z$ be the folding of $X$ relative to $\mathcal{P}:= \{ g\{A,B\}, g\in G\}$. If $Z$ has finite cubical dimension, then every vertex of $Z$ belongs to a cube intersecting $\eta(X)$.
\end{lemma}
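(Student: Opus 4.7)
\emph{Plan.} Let $v \in Z$. If $v \in \eta(X)$, the trivial $0$-cube $\{v\}$ works, so assume $v \notin \eta(X)$. I would pick $y \in \eta(X)$ minimising $d := d_Z(v,y) \geq 1$, and let $\mathcal{J} = \{J_1, \ldots, J_d\}$ denote the hyperplanes of $Z$ separating $v$ from $y$. The plan is to prove that $\mathcal{J}$ is pairwise transverse in $Z$; since $Z$ has finite cubical dimension, this immediately yields a $d$-cube of $Z$ having $v$ and $y$ as opposite vertices, so that this cube contains $v$ and meets $\eta(X)$ at $y$.

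Using the explicit description from Section~\ref{section:FoldExplicit}, I would identify $v$ with an orientation $\sigma$ of the wallspace $(X,\mathcal{W})$, and $y$ with the principal orientation $\sigma_x$ of some $x \in X$. The walls $W_1, \ldots, W_d$ on which $\sigma$ and $\sigma_x$ differ correspond bijectively to the separating hyperplanes $J_1, \ldots, J_d$. Assume for contradiction that $J_a, J_b \in \mathcal{J}$ are not transverse in $Z$. Translating non-transversality in $Z$ back through the correspondence between halfspaces of $Z$ and wall halfspaces of $X$, one obtains, after possibly swapping $a$ and $b$, the strict nesting $\sigma_x(W_b) \subsetneq \sigma_x(W_a)$ of wall halfspaces in $X$.

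The minimality of $d$ then forces $x$ to have no edge in either $\mathcal{P}$-class $[J_a]$ or $[J_b]$: indeed, if $x$ were incident to an edge in some hyperplane $H \in [J_a]$ with other endpoint $y'$, then $\sigma_{y'}$ would differ from $\sigma_x$ only on $W_a$, hence from $\sigma$ on only $d-1$ walls, contradicting minimality; the same argument eliminates edges in $[J_b]$. Picking next a hyperplane $H \in [J_a]$ closest to $x$, the gate $u$ of $x$ in $N(H)$ has $[J_a]$-parity $0$ from $x$, while its neighbour $u'$ across $H$ has $[J_a]$-parity $1$. The contrapositive of $\sigma_x(W_b) \subsetneq \sigma_x(W_a)$ reads ``odd $[J_a]$-parity from $x$ implies odd $[J_b]$-parity from $x$'', so $u'$---and hence $u$, since the edge $[u,u']$ does not cross any hyperplane in $[J_b]$---must lie at odd $[J_b]$-parity from $x$. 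Thus some hyperplane $K \in [J_b]$ separates $x$ from $H$.

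The heart of the argument, following the strategy of \cite[Lemma~6.4]{beeker2018stallings}, would then consist in exploiting this forced configuration together with the facing property of $\mathcal{P}$-classes given by Lemma~\ref{lem:FacingCollection} and the group-theoretic $\mathcal{P}$-chains relating representatives within each class. Iteratively ``pulling'' hyperplanes of $[J_a]$ towards $x$ across the $[J_b]$-hyperplanes separating them, by successively applying elements of $G$ that realise the $\mathcal{P}$-pair structure, one would extract a group element $g \in G$ realising either a hyperplane-inversion or a direct self-osculation of the action $G \curvearrowright X$, contradicting the standing hypothesis. This final combinatorial extraction is where I expect the main technical difficulty to lie, as one must track the $\mathcal{P}$-chain carefully---using facing to avoid ``doubling back'' in the same class---in order to isolate the offending group element.
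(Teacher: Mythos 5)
Your reductions are sound as far as they go: minimality of $d$ does rule out edges of $[J_a]\cup[J_b]$ at $x$, the gate computation for the closest $H\in[J_a]$ is correct, and the nesting $\sigma_x(W_b)\subsetneq\sigma_x(W_a)$ does force some $K\in[J_b]$ to separate $x$ from $H$. But the argument stops exactly where its content should begin. Everything hinges on the claim that this configuration is contradictory, and you only announce an ``iterative pulling'' procedure: you do not say which elements of $G$ are applied, why the iteration terminates, or how the offending inversion or self-osculation is isolated --- you explicitly flag this as the step you have not done. Note also that in the paper the no-inversion/no-self-osculation hypotheses are consumed entirely by Lemma~\ref{lem:FacingCollection} (each fibre $\eta^{-1}(J)$ is a facing collection), after which the group plays no further role; so if you intend to argue from the facing property alone, the missing step is to show that facingness of $[J_a]$ and $[J_b]$ excludes the nesting --- for instance by producing an edge of some $K'\in[J_b]$ both of whose endpoints have odd $[J_a]$-parity from $x$, since one of those endpoints would then lie in the quadrant $\sigma_x(W_a)^c\cap\sigma_x(W_b)$ that nesting declares empty. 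Nothing of the sort is in your write-up. Two smaller issues: the $d$-cube spanned by pairwise transverse hyperplanes crossing $I(v,y)$ comes from the structure of intervals in median graphs, not from finite cubical dimension; and your plan never actually invokes the finite-dimension hypothesis, which should worry you, since the paper uses it in an essential way.

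For comparison, the paper's proof takes a different route that bypasses the closest-point analysis entirely. Given $z\in Z$, it fixes a \emph{maximal} cube $Q$ containing $z$ and directly constructs a principal orientation $q$ of $X$ with $\eta(q)\in Q$: for a hyperplane $J$ of $X$ whose class $[J]$ meets the hyperplanes of $Q$, one orients $J$ towards the rest of the facing collection $[J]\setminus\{J\}$; for $J$ outside these classes, maximality of $Q$ provides a class $[H]$ crossing $Q$ to which $J$ is nowhere transverse, and one orients $J$ towards $[H]$. Consistency is checked by hand, and the finite cubical dimension is used precisely to verify the descending chain condition guaranteeing that $q$ is principal, i.e.\ an actual vertex of $X$. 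If you want to salvage your approach, you would essentially be reproving \cite[Lemma~6.4]{beeker2018stallings}; otherwise I would recommend adopting the maximal-cube construction.
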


\begin{proof}
In this argument, we think of $Z$ as the cubulation of the wallspace structure of $X$ as described in Section~\ref{section:FoldExplicit}.

\medskip \noindent
Fix a vertex $z \in Z$ and a maximal cube $Q$ containing it. For convenience, let $\mathcal{H}$ denote the set of pre-images under $\eta$ of the hyperplanes crossing $Q$. A good candidate for a principal orientation of $X$ sent in $Q$ by $\eta$ is the map $q$ defined as follows:
\begin{itemize}
	\item If $J \in \mathcal{H}$ and if $[J]$ has size one, then $q(J)$ is an arbitrary halfspace delimited by $J$ (e.g.\ the halfspace containing the fixed basepoint of $X$).
	\item If $J \in \mathcal{H}$ and if $[J]$ has size at least two, then we know from Lemma~\ref{lem:FacingCollection} that $[J]$ is a facing collection. So we can define $q(J)$ as the halfspace delimited by $J$ that contains the hyperplanes in $[J] \backslash \{J\}$.
	\item If $J \notin \mathcal{H}$, then it follows from the maximality of $Q$ that $\eta(J)$ is not transverse to at least one hyperplane of $Q$. Therefore, there exists $H \in \mathcal{H}$ such that $J$ is not transverse to any hyperplane in $[H]$. We define for $q(J)$ the halfspace delimited by $J$ containing $[H]$.
\end{itemize}
Our goal now is to verify that $q$ defines a principal orientation of $X$, and that, when $q$ is thought of as a vertex of $X$, $\eta(q) \in Q$. In the sequel, the following assertion, satisfied by $q$ by construction, will be often used, so we highlight it for the reader's convenience.

\begin{fact}\label{fact:ContainClass}
For every hyperplane $J \notin \mathcal{H}$, there exists some $H \in \mathcal{H}$ such that $[H] \subset q(J)$. 
\end{fact}

\noindent
First, let us verify that $q$ defines an orientation of $X$. If $q$ is not an orientation, then there exist two hyperplanes $A$ and $B$ of $X$ such that $q(A) \cap q(B)=\emptyset$. 

\medskip \noindent
If $A \notin \mathcal{H}$, then we know from Fact~\ref{fact:ContainClass} that $[H] \subset q(A)$ for some $H \in \mathcal{H}$. Because $A$ separates $B$ and $[H]$, $B$ cannot be transverse to any hyperplane in $[H]$, which implies that $\eta(B)$ and $\eta(H)$ are not transverse. So $B \notin \mathcal{H}$, and it follows from Fact~\ref{fact:ContainClass} that $[J] \subset q(B)$ for some $J \in \mathcal{H}$. But then $A$ and $B$ separate $[J]$ and $[H]$, contradicting the transversality of $\eta(J)$ and $\eta(H)$. We obtain the same contradiction if $B \notin \mathcal{H}$.

\medskip \noindent
Thus, we can assume that $A,B \in \mathcal{H}$. If $[A]=[B]$, it is clear by construction of $q$ that $q(A) \cap q(B)\neq \emptyset$, so $\eta(A)$ and $\eta(B)$ must be two distinct hyperplanes of $Q$. In particular, they are transverse, which means that there exist two transverse hyperplanes $A' \in [A]$ and $B' \in [B]$. Observe that, by construction of $q$, either $A'=A$ or $A' \subset q(A)$. Similarly, either $B'=B$ or $B' \subset q(B)$. We distinguish two cases. First, assume that $A'=A$. Because $A$ and $B$ are not transverse, necessarily $B' \neq B$, hence $B' \subset q(B)$. But then $B'$ cannot be transverse to $A$ as $A$ is disjoint from $q(B)$. Second, assume that $A' \neq A$, which amounts to $A' \subset q(A)$. Then $B'$ cannot be contained in $q(B)$, since otherwise it could not be transverse to $A'$. Therefore, $B'=B$, which is also impossible since $B$ is disjoint from $q(A)$. 

\medskip \noindent
This concludes the proof of the fact that $q$ defines an orientation of $X$. Now, we claim that $q$ is principal. Because $X$ has finite cubical dimension, this amounts to verifying that $\{ q(J), J \text{ hyperplane}\}$ satisfies the descending chain condition. So consider a sequence $q(J_1) \supsetneq q(J_2) \supsetneq \cdots$ where $J_1,J_2, \ldots$ are hyperplanes of $X$. Assume first that there are infinitely many $J_i$ in $\mathcal{H}$. Then there exists some $H \in \mathcal{H}$ such that $[J_i]=[H]$ for infinitely many $i$. But, by construction of $q$ and Lemma~\ref{lem:FacingCollection}, two hyperplanes in $[H]$ are not sent to properly nested halfspaces by $q$. So our sequence of halfspaces must be eventually constant. Next, assume that all (but finitely many of) the $J_i$ do not belong to $\mathcal{H}$. Then, as a consequence of Fact~\ref{fact:ContainClass}, there exists some $H \in \mathcal{H}$ for which $q(J_i) \supset [H]$ for every $i$. But, in a median graph, there cannot be an infinite decreasing sequence of halfspaces containing a given non-empty subset, so our sequence of halfspaces must be eventually constant. 

\medskip \noindent
Thus, we have proved that $q$ defines a principal orientation of $X$, or equivalently a vertex of $X$. It remains to verify that $\eta(q)$ belongs to $Q$. Let $J$ be a hyperplane of $Z$ that does not cross $Q$. Fix a pre-image $K \in \eta^{-1}(J)$. Because $K \notin \mathcal{H}$, it follows from Fact~\ref{fact:ContainClass} that $q(K) \supset [H]$ for some $H \in \mathcal{H}$. Therefore, $\eta(q)$ lies in the same halfspace delimited by $J$ as $H$, or equivalently as $Q$. Thus, $\eta(q)$ belongs to the intersection of all the halfspaces containing $Q$, which amounts to saying that $\eta(q)$ belongs to $Q$, as desired. 
\end{proof}

\begin{proof}[Proof of Proposition~\ref{prop:CocompactFolding}.]
We know by assumption that $Z$ has finite cubical dimension $D$, so Lemma~\ref{lem:boundedaction} applies and shows that every vertex of $Z$ lies at distance $\leq D$ from $\eta(X)$. But $G$ acts coboundedly on $\eta(X)$, since it acts coboundedly on $X$. Therefore, $G$ must act coboundedly on $Z$ as well. 
\end{proof}

\begin{proof}[Proof of Theorem~\ref{thm:FreeSub}.]
Let $F \leq G$ be a free subgroup of finite rank. Assume that every hyperplane skewered by $F$ has a finitely generated $F$-stabiliser. We first construct an $F$-invariant convex subgraph $Y \subset X$ all of whose hyperplanes are skewered by $F$. This can be done as follows. 

\begin{fact}\label{fact:Skewer}
Let $F$ be a finitely generated group acting on a median graph $X$. There exists an $F$-invariant convex subgraph $Y \subset X$ all of whose hyperplanes are skewered by~$F$. 
\end{fact}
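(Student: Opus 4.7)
\medskip \noindent
My plan is to realise $Y$ as an essential core of the $F$-action on $X$, in the sense of Caprace--Sageev. First, I pick a basepoint $x_0 \in X$ and set $Y_0 := \mathrm{Hull}(F \cdot x_0)$, the convex hull of the $F$-orbit. This is an $F$-invariant convex subgraph of $X$, and every hyperplane of $Y_0$ separates two vertices of $F \cdot x_0$. I may therefore replace $X$ by $Y_0$. If $F$ happens to have a global fixed vertex, then taking $Y$ to be that vertex trivially satisfies the conclusion (there are no hyperplanes to skewer), so from now on I assume that $F$ has no global fixed point.

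\medskip \noindent
Next, I invoke Caprace--Sageev's essential core construction to produce a non-empty $F$-invariant convex subgraph $Y \subseteq Y_0$ on which $F$ acts \emph{essentially}, in the sense that every halfspace $J^+$ of $Y$ is $F$-essential: there exists $f \in F$ with $fJ^+ \subsetneq J^+$. The construction proceeds by iteratively restricting to intersections with non-essential halfspaces in an $F$-equivariant way. The finite cubical dimension of $X$ -- which holds in the intended applications, notably in Theorem~\ref{thm:FreeSub} where $G$ acts properly and cocompactly on $X$ -- guarantees that this iterative process terminates (or stabilises) with a non-trivial $Y$, and also rules out pathological configurations where arbitrarily many hyperplanes translate of $J$ could remain transverse to $J$.

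\medskip \noindent
Finally, the essentiality of the $F$-action on $Y$ is \emph{precisely} the skewering condition for every hyperplane: given any hyperplane $J$ of $Y$, essentiality applied to the halfspace $J^+$ yields some $f \in F$ with $fJ^+ \subsetneq J^+$, which is exactly the statement that $J$ is skewered by $f$. Hence every hyperplane of $Y$ is skewered by $F$, as required. The principal obstacle is the essential core construction itself: one must verify that the peeling of non-essential halfspaces can be carried out $F$-equivariantly and leaves behind a non-empty convex subgraph. This is where finite cubical dimension enters crucially, and it is available in the setting of Theorem~\ref{thm:FreeSub} by cocompactness.
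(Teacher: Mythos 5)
Your overall route --- convex hull of an orbit followed by the Caprace--Sageev essential core --- is exactly the one the paper takes, but there is a genuine gap in how you justify applying that machinery. The statement assumes only that $F$ is finitely generated and that $X$ is median: no dimension bound and no properness or cocompactness. So importing finite cubical dimension ``from the intended applications'' does not prove the Fact as stated; and even where finite dimension is available, it is not the hypothesis on which the cited results run. The essential core of \cite[Proposition~3.5]{MR2827012}, and the equivalence between a halfspace being $F$-essential and being skewered by a single element (\cite[Proposition~3.2]{MR2827012}), are established under the assumption that the action has \emph{finitely many orbits of hyperplanes}. The missing observation --- which is the one substantive point of the paper's proof --- is that finite generation of $F$ supplies exactly this: every hyperplane of $Y_0=\mathrm{Hull}(F\cdot x_0)$ separates two orbit points, hence crosses some translate $f[x_0,sx_0]$ with $s$ in a fixed finite generating set, and each such geodesic meets only finitely many hyperplanes, so $Y_0$ has finitely many $F$-orbits of hyperplanes. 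With that in hand the two propositions apply verbatim and no dimension hypothesis is needed.

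A second, smaller point: you assert that essentiality of every halfspace is ``precisely'' the skewering condition. In \cite{MR2827012} a halfspace is $F$-essential when orbit points penetrate arbitrarily deep into it; that this forces a single $f\in F$ with $fJ^+\subsetneq J^+$ is not a definition but the content of Proposition~3.2, and it again uses the finitely-many-orbits hypothesis (the alternative route via finite dimension would additionally require ruling out fixed points at infinity, which you do not address --- you only dispose of a global fixed vertex). So the final step of your argument also silently relies on the observation you omitted.
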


\noindent
We begin by considering the convex hull of an orbit of $F$. It is standard that, because $F$ is finitely generated, there are only finitely many $F$-orbits of hyperplanes in this hull. Then \cite[Proposition~3.5]{MR2827012} applies and provides an \emph{essential core}, which satisfies the desired property according to \cite[Proposition~3.2]{MR2827012}. This proves Fact~\ref{fact:Skewer}.

\medskip \noindent
Next, we fix a basepoint $o \in Y$ and a free basis $S \subset F$. Let $T$ denote the Cayley tree $\mathrm{Cayl}(F,S)$. We construct an $F$-equivariant chiasmatic map $T' \to Y$ from some subdivision of $T'$ of $T$ as follows. For every element $g \in F$ and generator $s \in S$, we subdivide the edge $[g,gs]$ of $T$ in order to get a path of length $d(o,so)$. Let $T'$ denote the tree thus obtained. Then the map $T' \to Y$ is defined by sending every vertex $g$ of $T \subset T'$ to $go \in Y$ and by sending each path $[g,gs]$, where $g \in F$ and $s \in S$, to $g [o,so]$ where $[o,so]$ is a geodesic of $Y$ we fixed once for all. 

\medskip \noindent
Apply Theorem~\ref{thm:BF} to find finitely many $F$-equivariant foldings and swellings $T \to \cdots \to Z$ in order to factor $T \to Y$ as an $F$-equivariant isometric embedding $Z \hookrightarrow Y$. Because $T \to Y$ is chiasmatic, our foldings, swellings, and isometric embedding are all chiasmatic as well. In particular, this implies that the cubical dimension of $Z$ is bounded above by the cubical dimension of $X$, and therefore must be finite. Propositions~\ref{prop:CocompactSwelling} and~\ref{prop:CocompactFolding} then apply and show that the image of $Z$ in $X$ yields a median subgraph on which $F$ acts cocompactly. Thus, $F$ is median-cocompact. The converse of the theorem is clear. 
\end{proof}

\subsection{Resolutions}

\noindent
Another way to construct chiasmatic maps comes from \emph{resolutions}, a tool initially used for actions on trees and generalised to median graphs in \cite{MR3546458}. The construction goes as follows.

\medskip \noindent
Let $G$ be a finitely presented group acting on a median graph $X$. 
\begin{itemize}
	\item Fix a basepoint $o \in X$. Also, fix a finite presentation $\langle S \mid R \rangle$ for $G$ and let $K$ denote the corresponding $2$-complex. The universal cover $\tilde{K}$ of $K$ is a simply connected $2$-complex on which $G$ acts geometrically, and its one-skeleton coincides with the Cayley graph $\mathrm{Cayl}(G,S)$. 
	\item For all $g \in G$ and $s \in S$, we subdivide the edge $[g,gs]$ of $\tilde{K}$ into a segment of length $d(o,so)$, and we send it to $g[o,so]$ for some fixed geodesic $[o,so]$ in $X$. For every relation $r =s_1 \cdots s_n$ in $R$ and for every $g \in G$, we tile the $2$-cell of $\tilde{K}$ bounded by $g,gs_1, \ldots, gs_1 \cdots s_n = g$ appropriately and send it to a minimal disc diagram bounded by the loop $go, gs_1o, \ldots, gs_1 \cdots s_no$ in the square-completion $X_\square$ of $X$ (i.e.\ the square complex obtained from $X$ by filling with squares all the $4$-cycles). Thus, one obtains a square complex $\tilde{K}'$ of $\tilde{K}$ and a $G$-equivariant map $\tilde{K}' \to X_\square$ that sends cells to cells (possibly of smaller dimensions). 
	\item To every hyperplane $J$ of $X$ corresponds a \emph{dual curve} in $X_\square$ that connects the midpoints of the edges in $J$ along straight segments in squares. Connected components of pre-images of dual curves under $\tilde{K}' \to X_\square$ are \emph{tracks}. They are embedded graphs defining a wallspace structure on $\tilde{K}'$. Let $C$ denote its cubulation. According to \cite[Proposition~3.2]{MR3546458}, the map $\tilde{K}' \to X_\square$ induces a $G$-equivariant chiasmatic map $C \to X$, called a \emph{resolution} of $G \curvearrowright X$. 
\end{itemize}
Let us record the following observation, which is essentially contained in the proof of \cite[Theorem~7.2]{beeker2018stallings}. 

\begin{thm}\label{thm:Resolution}
Let $G$ be a group acting geometrically on a median graph $X$. Given a finitely presented subgroup $H \leq G$, if hyperplane-stabilisers of $H$ are finitely generated and if the action $H \curvearrowright X$ admits a cobounded resolution, then $H$ is median-cocompact.
\end{thm}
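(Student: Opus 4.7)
The plan is to apply Theorem~\ref{thm:BF} to a cobounded resolution $\rho : C \to X$ of the action $H \curvearrowright X$, which by construction is an $H$-equivariant chiasmatic map. Observe first that any chiasmatic map with locally finite target is automatically locally finite on the source: distinct edges at a vertex lie in pairwise transverse, and in particular pairwise distinct, hyperplanes whose images remain pairwise transverse, bounding the local degree of the source by that of the target. Thus $C$ is locally finite, and combined with the coboundedness assumption, $H$ acts cocompactly on $C$; in particular $C$ has finitely many $H$-orbits of hyperplanes. Since the hyperplane-stabilisers of $H$ in $X$ are finitely generated by hypothesis, Theorem~\ref{thm:BF} applies and produces a finite sequence of $H$-atomic foldings and swellings $\eta : C = C_0 \to C_1 \to \cdots \to C_n = Z$ together with an $H$-equivariant isometric embedding $\iota : Z \hookrightarrow X$ satisfying $\rho = \iota \circ \eta$.

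The next step is an induction on $i$ establishing that $H \curvearrowright C_i$ is cocompact. Up to replacing $X$ by its cubical subdivision (which alters neither the hypothesis nor the conclusion), we may assume that $H \curvearrowright X$ admits no hyperplane-inversions nor direct self-osculations. Each $C_i$ admits a chiasmatic map to $X$, namely the composition of the remaining foldings and swellings with $\iota$; from this chiasmatic map, $C_i$ inherits finite cubical dimension, local finiteness, and the absence of inversions and direct self-osculations for the induced action of $H$. The base case $i=0$ was handled above. For the inductive step, if $C_{i+1}$ is obtained from $C_i$ by an $H$-atomic folding of tangent hyperplanes, Proposition~\ref{prop:CocompactFolding} yields a cobounded $H$-action on $C_{i+1}$, which combined with local finiteness gives cocompactness; and if $C_{i+1}$ is an $H$-atomic swelling of $C_i$, then Proposition~\ref{prop:CocompactSwelling} applies directly to the chiasmatic map $C_i \to X$ factored through $C_{i+1}$, yielding cocompactness of $H \curvearrowright C_{i+1}$.

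Hence $H$ acts cocompactly on $Z$, and therefore on $\iota(Z) \subset X$. Since $\iota$ is an isometric embedding and $Z$ is median, the median of any three vertices of $\iota(Z)$ computed in $Z$ satisfies the defining identities of medians in $X$, and so coincides with the unique median in $X$; consequently $\iota(Z)$ is a connected median-closed subgraph of $X$. This shows that $H$ is median-cocompact. The main technical obstacle in this argument is the inductive preservation of cocompactness across foldings, which is precisely what the preliminary subdivision of $X$ and Proposition~\ref{prop:CocompactFolding} are designed to handle; all other steps amount to packaged applications of Theorem~\ref{thm:BF} and of the universal properties developed earlier.
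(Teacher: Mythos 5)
Your overall strategy is exactly the paper's: factor the cobounded resolution through finitely many $H$-atomic foldings and swellings via Theorem~\ref{thm:BF}, propagate cocompactness along the sequence using Propositions~\ref{prop:CocompactSwelling} and~\ref{prop:CocompactFolding}, and observe that the isometrically embedded median image is a connected median-closed subgraph. The paper's own proof is precisely this, stated much more tersely.

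However, one step you rely on twice is false: two distinct edges sharing a vertex of a median graph do lie in distinct hyperplanes, but \emph{not} in pairwise transverse ones --- two consecutive edges of a geodesic lie in tangent hyperplanes. Consequently a chiasmatic map does not bound the degree of the source by that of the target: the map collapsing a star $K_{1,n}$ onto a single edge is chiasmatic (it is parallel-preserving, and there is no transversality to preserve), yet the source has degree $n$ and the target degree $1$. So your derivation of local finiteness of $C$ (and of the intermediate $C_i$) from the chiasmatic maps to $X$ does not work. This matters in two places: (i) you use it to get cocompactness of $H \curvearrowright C$ and hence finitely many $H$-orbits of hyperplanes, which is a hypothesis of Theorem~\ref{thm:BF}; and (ii) you use it to supply the bounded-degree hypothesis of Proposition~\ref{prop:CocompactSwelling} and to upgrade the coboundedness furnished by Proposition~\ref{prop:CocompactFolding} to cocompactness. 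For (i), the correct source of finiteness is the construction of the resolution itself: since $H$ is finitely presented, $\tilde{K}'$ has finitely many $H$-orbits of cells and hence finitely many $H$-orbits of tracks, so $C$ has finitely many $H$-orbits of hyperplanes. For (ii), you need a separate argument that local finiteness survives the foldings (for swellings, Lemma~\ref{lem:SwellLocallyFinite} does the job); alternatively, one can carry only coboundedness through the induction and convert to cocompactness at the very end, where $\iota(Z)$ sits inside the uniformly locally finite graph $X$. The rest of your argument --- the passage to the cubical subdivision to rule out hyperplane-inversions and direct self-osculations before invoking Proposition~\ref{prop:CocompactFolding}, and the verification that $\iota(Z)$ is median-closed --- is correct.
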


\begin{proof}
Let $C \to X$ be a cobounded resolution. As a consequence of Theorem~\ref{thm:BF}, $C \to X$ factors as $C \to Z \to X$ where $C \to Z$ is a composition of finitely many $G$-atomic foldings and swellings. Because $C \to X$ is chiasmatic, so are $C \to Z$ and $Z \to X$. In particular, this implies that the cubical dimension of $Z$ is bounded above by the cubical dimension of $X$, and therefore must be finite. Propositions~\ref{prop:CocompactSwelling} and~\ref{prop:CocompactFolding} then applies and shows that the image of $Z$ in $X$ yields a median subgraph on which $H$ acts cocompactly. Thus, $H$ is median-cocompact, as desired. 
\end{proof}

\noindent
It is worth noticing that Theorem~\ref{thm:FreeSub} can also be obtained from Theorem~\ref{thm:Resolution}. Indeed, the resolutions obtained for free groups from presentations with no relations provide actions on subdivided Cayley trees, and such actions are clearly cobounded. More generally, as an intersection consequence of Theorem~\ref{thm:Resolution}, let us mention the following generalisation of Theorem~\ref{thm:FreeSub} (which is essentially contained in the proof of \cite[Theorem~1.2]{beeker2018stallings}): 

\begin{cor}\label{cor:LocallyQC}
Let $G$ be a group acting properly and cocompactly on a median graph $X$. A locally quasiconvex hyperbolic subgroup $H\leq G$ is median-cocompact if and only if every hyperplane skewered by $H$ has a finitely generated $H$-stabiliser. 
\end{cor}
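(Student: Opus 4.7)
}

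The forward direction should be routine. If $H$ acts cocompactly on a connected median-closed subgraph $Y \subset X$, then $Y$ is isometrically embedded, so the restricted action $H \curvearrowright Y$ is proper and cocompact on a median graph. Given a hyperplane $J$ of $X$ skewered by some $h \in H$, pick any $y \in Y$; the $h$-orbit $\{h^n y\}$ lies in $Y$ and its median hull is contained in $Y$ (by median-closedness), so an $h$-axis lies in $Y$ and crosses $J$. Therefore $J$ crosses $Y$, the trace $J \cap Y$ is a hyperplane of $Y$, and $\mathrm{Stab}_H(J) = \mathrm{Stab}_H(J \cap Y)$. As the stabiliser of a hyperplane under a proper cocompact action on a median graph acts cocompactly on the carrier, we obtain that $\mathrm{Stab}_H(J)$ is finitely generated.

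For the converse, suppose $H$ is locally quasiconvex hyperbolic with finitely generated stabilisers of all skewered hyperplanes. Since $H$ is finitely generated and locally quasiconvex hyperbolic, every finitely generated subgroup of $H$ is quasiconvex, hence finitely presented; in particular $H$ itself is finitely presented. By Theorem~\ref{thm:Resolution}, it suffices to construct a cobounded resolution $C \to X$ of $H \curvearrowright X$. We build $C$ exactly as in Section~\ref{section:Resolutions}, starting from any finite presentation of $H$ and a finite set of minimal disc diagrams in $X_\square$ representing the relators.

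The main technical step is coboundedness of $H \curvearrowright C$. Each track $T$ in $\tilde K'$ maps into a single dual curve of $X$, so its $H$-stabiliser injects into the $H$-stabiliser of some hyperplane $J$ of $X$. We separate two cases: either $T$ has bounded image in $X$, in which case $\mathrm{Stab}_H(T)$ is finite (the action on $X$ is proper); or $T$ has unbounded image, which forces some element of $\mathrm{Stab}_H(T)$ to skewer $J$, so $\mathrm{Stab}_H(J)$ (and a fortiori $\mathrm{Stab}_H(T)$) is finitely generated by hypothesis, hence quasiconvex in $H$ by local quasiconvexity. With $H$ acting cocompactly on the CW-wallspace $\tilde K'$ and all wall-stabilisers quasiconvex, the standard Sageev--Hsu--Wise cocompactness criterion for cubulations of hyperbolic groups ensures that $H \curvearrowright C$ is cocompact, and in particular cobounded. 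Theorem~\ref{thm:Resolution} then gives that $H$ is median-cocompact.

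The hardest part is the coboundedness assertion: identifying track stabilisers with subgroups of skewered-hyperplane stabilisers, and then invoking the cubulation cocompactness criterion for hyperbolic groups with quasiconvex wall stabilisers. Once this is in place, everything else is a direct appeal to Theorem~\ref{thm:Resolution} and Propositions~\ref{prop:CocompactSwelling}--\ref{prop:CocompactFolding} (already used in its proof), so the remainder of the argument is essentially bookkeeping.
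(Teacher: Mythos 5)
Your forward direction is fine and matches what the paper leaves as ``clear''. The converse, however, has a genuine gap at its central step. You claim that if a track $T$ has unbounded image in a dual curve of a hyperplane $J$, then ``some element of $\mathrm{Stab}_H(T)$ skewers $J$'', so that the hypothesis applies to $J$. This cannot work: $\mathrm{Stab}_H(T)$ is contained in $\mathrm{Stab}_H(J)$, and an element stabilising $J$ sends each halfspace of $J$ to a halfspace of $J$, so it can never satisfy $hJ^+ \subsetneq J^+$; no element of $\mathrm{Stab}_H(J)$ ever skewers $J$. Nor does unboundedness of $T$ force $J$ to be skewered by some other element of $H$ (think of a hyperplane whose carrier is preserved by $H$ and on which $\mathrm{Stab}_H(J)$ acts cocompactly). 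So for such $J$ the hypothesis gives you nothing. The subsequent ``a fortiori $\mathrm{Stab}_H(T)$ is finitely generated'' is a second error: a subgroup of a finitely generated group need not be finitely generated. Finally, even granting a cobounded resolution, Theorem~\ref{thm:Resolution} (via Theorem~\ref{thm:BF}) needs finitely generated $H$-stabilisers for the hyperplanes of the \emph{target}, and your hypothesis only covers the skewered ones; applied directly to $H \curvearrowright X$, this hypothesis is not verified.

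The paper repairs all three points with two moves you omit. First, it invokes Fact~\ref{fact:Skewer} to replace $X$ by an $H$-invariant convex subgraph $Y$ (the essential core) \emph{all} of whose hyperplanes are skewered by $H$; after this reduction the hypothesis yields finite generation of every hyperplane-stabiliser of the target, so Theorem~\ref{thm:Resolution} applies. Second, finite generation of track-stabilisers is not extracted from the hypothesis at all: it is a built-in property of resolutions (Dunwoody's finiteness for tracks in cocompact $2$-complexes, cited from the resolution machinery). Local quasiconvexity then upgrades these finitely generated wall-stabilisers to quasiconvex ones, and Sageev's cocompactness criterion gives coboundedness of the resolution, exactly as in your final step. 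So your overall architecture (build a resolution, show it is cobounded via quasiconvex wall-stabilisers, conclude by Theorem~\ref{thm:Resolution}) is the right one, but you must route the finite generation of track-stabilisers through the resolution itself rather than through the skewering hypothesis, and you must pass to the essential core before building the resolution.
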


\noindent
Recall that a hyperbolic is \emph{locally quasiconvex} if all its finitely generated subgroups are quasiconvex. This includes for instance free groups and hyperbolic surface groups, but also many two-dimensional hyperbolic groups. 

\begin{proof}[Sketch of proof of Corollary~\ref{cor:LocallyQC}.]
According to Fact~\ref{fact:Skewer}, there exists an $H$-invariant convex subgraph $Y \subset X$ all of whose hyperplanes are skewered by $H$. We claim that every resolution $Z \to Y$ of $H \curvearrowright Y$ is cobounded. Indeed, we know that hyperplane-stabilisers in $Z$ are finitely generated (this is a fundamental property satisfied by resolutions, see \cite{MR3546458}). Thus, they must be quasiconvex by assumption. But, as shown in \cite{MR1438181}, a cubulation with respect to finitely many quasiconvex subgroups is automatically cocompact. The converse of the corollary is clear.
\end{proof}

\noindent
As another consequence of Theorem~\ref{thm:Resolution}, let us mention the following statement (which is essentially contained in the proof of \cite[Theorem~1.2]{beeker2018stallings}):

\begin{cor}
Let $G$ be a group acting geometrically on a median graph $X$ of cubical dimension two. A finitely presented subgroup $H \leq G$ is median-cocompact  if and only if every hyperplane skewered by $H$ has a finitely generated $H$-stabiliser. 
\end{cor}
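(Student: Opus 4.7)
The approach I would take mirrors that of Corollary~\ref{cor:LocallyQC} essentially word-for-word, the only substantial difference being the reason why the resolution turns out to be cobounded in cubical dimension two. The backbone is again Theorem~\ref{thm:Resolution}: produce an $H$-invariant convex subgraph of $X$ on which every hyperplane is skewered by $H$, construct a resolution of $H$ acting on it, and show this resolution is cobounded.

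The ``only if'' direction is clear and is handled exactly as in the previous corollaries: if $H$ acts cocompactly on a connected median-closed subgraph $Y \subset X$, then $Y$ is isometrically embedded, every hyperplane of $X$ skewered by $H$ must cross $Y$, and the $H$-stabiliser of such a hyperplane acts cocompactly on its carrier inside $Y$, forcing the stabiliser to be finitely generated (the action of $G$ on $X$ being proper and cocompact).

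For the converse, I would first invoke Fact~\ref{fact:Skewer} to produce an $H$-invariant convex subgraph $Y \subset X$ all of whose hyperplanes are skewered by $H$. Since $H$ is finitely presented, we can build a resolution $C \to Y$ as recalled just before Theorem~\ref{thm:Resolution}. Because the resolution map is chiasmatic and $Y$ inherits from $X$ the bound on cubical dimension, $C$ also has cubical dimension at most two. Moreover, by the standard properties of resolutions (see \cite{MR3546458}), the $H$-stabilisers of hyperplanes of $C$ inject into the $H$-stabilisers of hyperplanes of $Y$, and hence are finitely generated by hypothesis.

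The main obstacle, and the one step where dimension two is actually used, is to promote these facts into cocompactness of $H \curvearrowright C$. Since hyperplanes of $C$ are trees (being convex subgraphs of the two-dimensional median graph $C$), and the walls used to build $C$ are tracks on a two-dimensional complex, one can adapt the Sageev--Wise cocompactness criterion from~\cite{MR1438181}, as carried out in the proof of~\cite[Theorem~1.2]{beeker2018stallings}, to conclude that finite generation of hyperplane-stabilisers suffices to guarantee that the action $H \curvearrowright C$ is cocompact. Once this cocompactness is secured, Theorem~\ref{thm:Resolution} applies directly and yields that $H$ is median-cocompact, finishing the proof.
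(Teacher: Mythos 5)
Your overall architecture (Fact~\ref{fact:Skewer}, then a resolution of $H$ acting on the essential core, then Theorem~\ref{thm:Resolution}) is exactly the paper's, and you correctly isolate the one point at issue: why the resolution is cobounded. But your justification of that point does not work. You propose to ``adapt the Sageev--Wise cocompactness criterion from \cite{MR1438181}, as carried out in the proof of \cite[Theorem~1.2]{beeker2018stallings}'', feeding it the finite generation of the hyperplane-stabilisers of the resolution. That criterion concerns cubulating a \emph{hyperbolic} group with respect to finitely many \emph{quasiconvex} subgroups; it is precisely what is available in Corollary~\ref{cor:LocallyQC}, where $H$ is locally quasiconvex hyperbolic, so that finitely generated stabilisers are automatically quasiconvex. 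Here $H$ is merely finitely presented: finite generation of the track-stabilisers gives no quasiconvexity (indeed quasiconvexity is not even meaningful without hyperbolicity), and there is no adaptation of \cite{MR1438181} that runs on finite generation alone. Your remarks about hyperplanes of $C$ being trees do not bridge this. (A secondary slip: you infer that the stabilisers of hyperplanes of $C$ are finitely generated because they ``inject into'' finitely generated stabilisers in $Y$; subgroups of finitely generated groups need not be finitely generated. The correct reason is that finite generation of track-stabilisers is an intrinsic property of resolutions, as recalled in the proof of Corollary~\ref{cor:LocallyQC}.)

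The paper closes this step by a different and unconditional mechanism: by \cite[Theorem~1.1]{MR3546458}, \emph{every} resolution of an action on a median graph of cubical dimension two is cobounded, with no hypothesis on stabilisers at all. The assumption that skewered hyperplanes have finitely generated $H$-stabilisers is then consumed only inside Theorem~\ref{thm:Resolution} (where it feeds Theorem~\ref{thm:BF}), not in establishing coboundedness. So the key step requires importing this external dimension-two theorem rather than adapting the quasiconvex cubulation criterion; as written, your argument for coboundedness has a genuine gap.
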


\begin{proof}
The corollary is an immediate consequence of Fact~\ref{fact:Skewer} and of Theorem~\ref{thm:Resolution} combined with \cite[Theorem~1.1]{MR3546458}, which shows that, because $X$ has cubical dimension two, every resolution is cobounded. 
\end{proof}

\section{Comparison with other fold-like operations}\label{section:comparison}

\noindent
A few folding techniques for median graphs are already available in the literature, see \cite{BrownPhD, beeker2018stallings, dani2021subgroups, ben2022folding}. In this final section, we compare our framework with the formalisms of \cite{beeker2018stallings} and \cite{ben2022folding} (leaving aside \cite{BrownPhD} and \cite{dani2021subgroups} as they deal with specific families of median graphs, namely graphs of graphs and Cayley graphs of right-angled Coxeter groups).

\paragraph{Comparison with \cite{beeker2018stallings}.} Recall that a \emph{pocset} $(\mathcal{P},\leq, ^\ast)$ is the data of a partially ordered set $(\mathcal{P},\leq)$ equipped with an involution $^\ast : \mathcal{P} \to \mathcal{P}$ such that, for every $\mathfrak{h} \in \mathcal{P}$, $\mathfrak{h}$ and $\mathfrak{h}^\ast$ are distinct and not $\leq$-comparable. The typical example to keep in mind is the set of halfspaces of a median graph, ordered by inclusion, equiped with complementation. Conversely, pocsets naturally define (disjoint unions of) median graphs \cite{MR1347406, Roller}. Following this equivalence, most of the terminology of median graphs naturally extends to pocsets. For instance, a \emph{hyperplane} in our pocset refers to a pair $\hat{\mathfrak{h}}:= \{ \mathfrak{h}, \mathfrak{h}^\ast\}$, where $\mathfrak{h} \in \mathcal{H}$. 

\medskip \noindent
In \cite{beeker2018stallings}, the authors define various maps between pocsets and a folding operation, and they prove a factorisation theorem. Namely:

\begin{thm}[\cite{beeker2018stallings}]
Let $G$ be a group acting on two pocsets $\mathcal{H},\mathcal{H}'$ and $f : \mathcal{H} \to \mathcal{H}'$ a $G$-equivariant resolution. If $G$ has only finitely many orbits in $\mathcal{H}$ and if hyperplane-stabilisers in $\mathcal{H}'$ are finitely generated, then $\mathcal{H} \to \mathcal{H}'$ factors as $\mathcal{H} \to \mathcal{K} \to \mathcal{H}'$ where $\mathcal{H} \to \mathcal{K}$ is a finite $G$-equivariant folding sequence and where $\mathcal{K} \to \mathcal{H}'$ is a $G$-equivariant embedding.
\end{thm}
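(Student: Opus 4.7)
The plan is to transfer the statement to the median graph framework developed in this paper and then invoke Theorem~\ref{thm:BF}. Every pocset $\mathcal{H}$ gives rise to a (possibly disconnected) median graph $X(\mathcal{H})$ whose vertices are the DCC ultrafilters on $\mathcal{H}$, following \cite{Roller,MR1347406}; hyperplanes, $G$-orbits of hyperplanes, and their stabilisers are all preserved under this correspondence. A $G$-equivariant resolution $f : \mathcal{H} \to \mathcal{H}'$ pulls back halfspaces compatibly with inclusion, and hence induces a $G$-equivariant parallel-preserving map $\psi : X(\mathcal{H}) \to X(\mathcal{H}')$ between the associated median graphs.

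Under this dictionary, the two hypotheses (finitely many $G$-orbits of hyperplanes in $\mathcal{H}$, and finitely generated hyperplane-stabilisers in $\mathcal{H}'$) become exactly the hypotheses of Theorem~\ref{thm:BF}. Applying that theorem, I would obtain a factorisation $\psi = \iota \circ \eta$ with $\eta : X(\mathcal{H}) \to \cdots \to Z$ a finite composition of $G$-atomic foldings and swellings and $\iota : Z \hookrightarrow X(\mathcal{H}')$ a $G$-equivariant isometric embedding. Pulling this back to pocsets yields a sequence $\mathcal{H} \to \mathcal{K} \to \mathcal{H}'$ with $\mathcal{K}$ the pocset of $Z$, and where the second arrow is a $G$-equivariant embedding since $\iota$ is an isometric embedding.

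The main obstacle is reconciling our two elementary operations with the single ``folding'' used in \cite{beeker2018stallings}. The key point is that a pocset only records the partial order of halfspaces, without any rigid notion distinguishing transversality from tangency: a swelling of a pair of tangent hyperplanes $A,B$ does not identify any two halfspaces, it merely relaxes the nesting relation $A^+ \leq (B^+)^*$ (or a symmetric one) forced by tangency into incomparability, which at the pocset level is precisely what a pocset embedding provides. Hence any maximal run of swellings in our decomposition contributes no fold on the pocset side and can be absorbed into the final embedding $\mathcal{K} \to \mathcal{H}'$, while what remains of $\eta$ is a sequence of $G$-atomic foldings at the pocset level, i.e.\ a finite $G$-equivariant folding sequence in the sense of \cite{beeker2018stallings}. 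The delicate point is to verify that our foldings, which are defined via cubulation of wallspaces, correspond on the pocset side to the elementary halfspace identifications used in \cite{beeker2018stallings}, and that the interleaving of foldings and swellings in our factorisation can be commuted so as to push all swellings to the end before absorbing them into the embedding; this is where the main technical work will be.
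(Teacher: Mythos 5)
First, a contextual point: the paper does not prove this statement at all. It is quoted verbatim from \cite{beeker2018stallings} in Section~\ref{section:comparison} purely for comparison, so there is no in-paper proof to measure your attempt against; you are in effect proposing a new derivation of Beeker--Lazarovich's theorem from Theorem~\ref{thm:BF}. The skeleton of your dictionary is consistent with what the paper itself says there (resolutions of pocsets correspond to chiasmatic maps between the associated median graphs, pocset embeddings to isometric embeddings), and the hypotheses of Theorem~\ref{thm:BF} do line up with those of the statement.

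The genuine gap is exactly the step you defer to ``the main technical work'', and your proposed strategy for it cannot succeed as described. You plan to commute all swellings to the end of the sequence and absorb them into the final embedding. But in the proof of Lemma~\ref{lem:folduniquepair} the swellings are performed precisely in order to bring two hyperplanes $A,B$ with $\psi(A)=\psi(B)$ \emph{into contact} so that the subsequent folding is defined: a folding of a pair not in contact does not exist (Remark~\ref{remark:NotCanonicalFolding}), so postponing a swelling destroys the very fold it enables, and the interleaving is essential rather than an artefact. What a correct argument would need is the opposite organisation: show that each block ``finitely many swellings followed by one folding'' that identifies a single $G$-orbit of pairs coincides, at the pocset level, with one elementary fold of \cite{beeker2018stallings} --- i.e.\ with the quotient by the minimal $G$- and $\ast$-invariant equivalence relation generated by identifying an \emph{elementary foldable} (in particular minimal) facing pair, with the correct induced partial order on the quotient. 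None of this is verified, and it is not automatic: the paper only establishes the converse direction (``an elementary fold of \cite{beeker2018stallings} factors through finitely many of our foldings and swellings'') and explicitly warns that neither notion of fold subsumes the other. There are also smaller unaddressed points (the median graph realising a pocset may be disconnected, so one must choose a component compatibly with the $G$-action before Theorem~\ref{thm:BF}, which is stated for connected countable median graphs, can be applied). As it stands the proposal is a plausible programme, not a proof.
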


\noindent
The statement requires some definitions. A map $f : \mathcal{H} \to \mathcal{H}'$ between two pocsets is \emph{admissible} if it satisfies the following conditions:
\begin{itemize}
	\item $f(\mathfrak{h}^\ast)= f(\mathfrak{h})^\ast$ for every $\mathfrak{h} \in \mathcal{H}$;
	\item if $\mathfrak{h},\mathfrak{k} \in \mathcal{H}$ are transverse, then so are $f(\mathfrak{h})$ and $f(\mathfrak{k})$;
	\item for all facing halfspaces $\mathfrak{h},\mathfrak{k} \in \mathcal{H}$, if $f(\hat{\mathfrak{h}})= f(\hat{\mathfrak{k}})$ and if $\hat{\mathfrak{h}},\hat{\mathfrak{k}}$ are not separated by a hyperplane in $f^{-1}(f(\hat{\mathfrak{h}}))$, then $f(\mathfrak{h})= f(\mathfrak{k})$;
	\item every hyperplane $\hat{\mathfrak{h}}$ in $\mathcal{H}'$ that is not the image of hyperplane of $\mathcal{H}$ under $f$ admits an orientation $\mathfrak{h}$ compatible with all the halfspace in $f(\mathcal{H})$.
\end{itemize}
An admissible map $f : \mathcal{H} \to \mathcal{H}'$ is an \emph{embedding of pocsets} if $f$ is injective and if, for all $\mathfrak{h},\mathfrak{k} \in \mathcal{H}$, $f(\mathfrak{h}) \leq f(\mathfrak{k})$ implies $\mathfrak{h} \leq \mathfrak{k}$. An admissible map $f : \mathcal{H} \to \mathcal{H}'$ is a \emph{resolution of pocsets} if $(\mathcal{H} / \sim_f) \to \mathcal{H}'$ is an embedding of pocsets, where $\sim_f$ is the \emph{admissible equivalence relation} (see \cite[Definition~3.2]{beeker2018stallings}) defined by: $\mathfrak{h} \sim_f \mathfrak{k}$ whenever $f(\mathfrak{h})= f(\mathfrak{k})$. 

\medskip \noindent
Given a group $G$ acting on two pocsets $\mathcal{H},\mathcal{H}'$ and given a $G$-equivariant resolution of pocsets $f : \mathcal{H} \to \mathcal{H}'$, two facing halfspaces $\mathfrak{h}_1$ and $\mathfrak{h}_2$ of $\mathcal{H}$ are \emph{elementary foldable} if $\mathfrak{h}_1 \sim_f \mathfrak{h}_2$ and if there are no facing pairs $\mathfrak{k}_1 \sim_f \mathfrak{k}_2$ satisfying $\mathfrak{k}_1 \leq \mathfrak{h}_1$ and $\mathfrak{k}_2 \leq \mathfrak{h}_2$. An \emph{elementary fold} is a quotient of the form $\mathcal{H}/ \sim$ where $\sim$ is the minimal $G$-invariant and $^\ast$-invariant equivalence relation generated by identifying an elementary foldable pair. 

\medskip \noindent
In order to compare this formalism with our approach, let us describe the definitions above in terms of median graphs. 

\medskip \noindent
First, it follows from \cite[Lemma~4.4]{beeker2018stallings} that a resolution between two pocsets can be realised as a map between the corresponding median graphs. Such a map is clearly chiasmatic. Conversely, it can be verified that a chiasmatic map between two median graphs induces a resolution between the corresponding pocsets. Embeddings of pocsets then correspond to isometric embeddings. Consequently, one can say that \cite{beeker2018stallings} deals with chiasmatic maps between median graphs. We meet a first difference with our framework: since our results apply to parallel-preserving maps, our scope is (slightly) more general.

\medskip \noindent
Using the point of view of median graphs, the elementary folds described above amount to the following. Given a group $G$ acting on two median graphs $X,Y$ and given a $G$-equivariant chiasmatic map $\psi : X \to Y$, if $\psi$ is not an isometric embedding, then there exist two hyperplanes $J_1,J_2$ having the same image under $\psi$. We can choose $J_1$ and $J_2$ at minimal distance. Fix orientations $J_1^+,J_2^+$ of $J_1,J_2$ such that the halfspaces $J_1^+,J_2^+$ contain respectively $J_2,J_1$. Then $X \to Y$ factors as $X \to Z \to Y$ where $Z$ is the cubulation of the pocset obtained as the quotient of the pocset of $X$ by the smallest reasonable $G$-invariant equivalence relation identifying $J_1^+$ and $J_2^+$. The map $X \to Z$ is an elementary fold, and, according to \cite{beeker2018stallings}, every chiasmatic map between median graphs factors as a sequence of elementary folds followed by an isometric embedding. Compare with Theorem~\ref{thm:BigFactor}. 

\medskip \noindent
We emphasize that, in such an elementary fold, the pair $\{J_1,J_2\}$ may not be tangent and cannot be transverse. Thus, even though our two notions of fold agree for tangent pairs, neither one subsumes the other: our foldings may identify two transverse hyperplanes, the foldings from \cite{beeker2018stallings} may identify two hyperplanes that are not in contact. Nevertheless, due to the universal properties satisfied by our foldings and swellings, an elementary fold of \cite{beeker2018stallings} factors through finitely many of our foldings and swellings. Roughly speaking, in order to identify two hyperplanes $J_1$ and $J_2$ far apart, we can make $J_1$ transverse to all the hyperplanes separating $J_1$ and $J_2$ thanks to finitely many swellings; this process makes $J_1$ and $J_2$ tangent, so a folding can identify them. Consequently, the factorisation provided by Theorem~\ref{thm:BigFactor} is, in some sense, sharper.

\medskip \noindent
Of course, another difference is that we introduced two operations: foldings and swellings. Adding swellings allows us to factor parallel-preserving maps through isometric embeddings with convex images, something which is not reachable from the techniques of \cite{beeker2018stallings}. Isometric embeddings with convex images between median graphs correspond to isometric embeddings between CAT(0) cube complexes, a point of view taken in \cite{ben2022folding} and which we discuss now.

\paragraph{Comparison with \cite{ben2022folding}.} A cube complex is \emph{nonpositively curved} whenever its vertices have simplicial flag links. When endowed with their $\ell^2$-metrics, nonpositively curved cube complexes are locally CAT(0). Following \cite{dani2021subgroups}, \cite{ben2022folding} defines a few elementary operations on cube complexes and shows that cubical maps between nonpositively curved cube complexes factor as locally isometric embeddings through a (possibly infinite) sequence of these elementary operations. More precisely, consider the following transformations.
\begin{description}
	\item[(Folding)] Identify two edges that share an endpoint.
	\item[(Cube identification)] Identify two cubes with the same one-skeleton.
	\item[(Cube attachment)] If $e_1,\ldots, e_n$ are $n$ edges with a common origin $o$ that pairwise span squares, glue an $n$-cube along $o$ and $e_1, \ldots, e_n$. 
\end{description}
The main result of \cite{ben2022folding} shows that these operations are sufficient to turn cubical maps into locally isometric embeddings.

\begin{thm}[\cite{ben2022folding}]
Let $X$ be a connected cube complex and $Y$ a compact nonpositively curved cube complex. Every cubical map $X \to Y$ factors as $X \to Z \to Y$ where $Z \to Y$ is a locally isometric embedding and where $X \to Z$ is a (possibly infinite) sequence of foldings, cube identifications, and cube attachments. 
\end{thm}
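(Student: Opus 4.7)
The plan is to deduce the theorem from Theorem~\ref{thm:BigFactorTwo} by passing to universal covers. Let $\varphi : \pi_1(X) \to \pi_1(Y)$ denote the morphism induced by $\psi$ and let $\tilde{\psi} : \tilde{X} \to \tilde{Y}$ be the $\varphi$-equivariant lift to universal covers. Because $Y$ is nonpositively curved, $\tilde{Y}$ is a CAT(0) cube complex whose one-skeleton is a median graph; by contrast, $\tilde{X}$ need not be nonpositively curved. The strategy is to first transform $\tilde{X}$ equivariantly into a median graph using cube identifications and cube attachments, then apply Theorem~\ref{thm:BigFactorTwo} to the resulting parallel-preserving map, and finally descend to the quotient by $\pi_1(X)$.

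For the first step, perform on $X$ (and hence equivariantly on $\tilde{X}$) all possible cube identifications (merging pairs of cubes sharing a common one-skeleton) and cube attachments (filling any configuration of $n$ edges at a common vertex pairwise spanning squares). Because $\tilde{Y}$ is median, two cubes of $\tilde{X}$ with the same $1$-skeleton are necessarily sent to the same cube of $\tilde{Y}$, and any $n$ edges of $\tilde{X}$ pairwise spanning squares must be completed to an $n$-cube in $\tilde{Y}$ by the cube condition; hence every such elementary move lifts canonically to a new cubical map into $\tilde{Y}$. Iterating this process transfinitely in the spirit of Lemma~\ref{lem:CubulationInductively}, we obtain an equivariant factorisation $\tilde{X} \to \tilde{X}' \to \tilde{Y}$ where $\tilde{X}'$ is a simply connected cube complex containing no $K_{2,3}$ and satisfying the $3$-cube condition, hence whose one-skeleton is a median graph by Theorem~\ref{thm:MedianCriterion}.

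Now apply Theorem~\ref{thm:BigFactorTwo} to factor the equivariant parallel-preserving map $\tilde{X}' \to \tilde{Y}$ as a $\pi_1(X)$-equivariant sequence of foldings and swellings $\tilde{X}' \to \cdots \to \tilde{Z}$ followed by a $\pi_1(X)$-equivariant isometric embedding with convex image $\tilde{Z} \hookrightarrow \tilde{Y}$. After quotienting by $\pi_1(X)$, the latter map yields a locally isometric embedding into $Y$, since isometric embeddings with convex images between CAT(0) cube complexes descend to locally isometric embeddings between nonpositively curved cube complexes. It remains to interpret each folding and each swelling of $\tilde{X}'$ as a finite sequence of edge foldings, cube identifications, and cube attachments in the quotient. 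Using the explicit descriptions from Sections~\ref{section:FoldExplicit} and~\ref{section:SwellingExplicit}, a folding of a tangent pair of hyperplanes becomes an edge folding downstairs, together with the cube identifications and attachments needed to propagate the identification to adjacent cubes; a folding of a transverse pair collapses a column of squares to a single edge and is therefore a sequence of cube identifications; and a swelling relative to a pair $\{A,B\}$ glues on $(N(A) \cap N(B)) \times [0,1]^2$ cell by cell, which is exactly a sequence of cube attachments along the carriers.

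The main obstacle will be the combinatorial bookkeeping in the last step: each atomic move in one framework typically unfolds into several moves in the other, and one has to verify that this decomposition stays compatible with the $\pi_1(X)$-action so as to yield well-defined elementary moves on the quotient $X$. In particular, for foldings of hyperplanes that are not tangent, Theorem~\ref{thm:BigFactorTwo} produces an interleaved sequence of swellings and foldings, whose downstairs interpretation requires tracking carefully which cubes of the quotient already exist, which must be created by cube attachment, and which must be identified. A further delicate point is the countability assumption in Theorem~\ref{thm:BigFactorTwo}, which forces some care when $\pi_1(X)$ is uncountable, typically by exhausting $\tilde{X}'$ by countable $\pi_1(X)$-invariant median-closed subgraphs.
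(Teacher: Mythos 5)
First, a point of orientation: the paper does not prove this statement. It is quoted from \cite{ben2022folding} in Section~\ref{section:comparison} purely for comparison, and the surrounding discussion explicitly argues that neither result subsumes the other. So what can be assessed is whether your proposed deduction from Theorem~\ref{thm:BigFactorTwo} works, and it has a genuine gap --- precisely the one the paper itself flags: Theorem~\ref{thm:BigFactorTwo} applies only to \emph{parallel-preserving} maps, which is ``(much) more restrictive'' than the cubical maps of \cite{ben2022folding}. Parallelism in the sense of this paper is generated by \emph{all} $4$-cycles of the one-skeleton, whereas a cubical map only respects the squares actually present in the complex. Take $X$ to be two squares glued along two consecutive edges: it is simply connected, so $\tilde X = X$, and its one-skeleton is a copy of $K_{2,3}$ containing a $4$-cycle that bounds no square. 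The two opposite edges of that $4$-cycle are parallel in the graph, but their images under a cubical map need not be parallel in $Y$; moreover no cube identification or cube attachment can destroy this $K_{2,3}$ --- that requires identifying two edges at a vertex, i.e.\ a fold, and \emph{which} edges may legitimately be folded depends on the map to $Y$ (here the median property of $\tilde Y$ forces two of the three common neighbours to have the same image, but which two is determined by $\psi$). So your first step --- producing a median $\tilde X'$ and a parallel-preserving map $\tilde X' \to \tilde Y$ using only cube identifications and cube attachments, independently of $\psi$ --- fails, and with it the clean reduction to Theorem~\ref{thm:BigFactorTwo}. The medianization cannot be decoupled from the folding process.

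Two smaller issues in the final translation step. A swelling attaches the square on $p, \alpha(p), \beta(p), \gamma(p)$ precisely when the edges $[p,\alpha(p)]$ and $[p,\beta(p)]$ do \emph{not} yet span a square, which does not match the hypothesis of the cube-attachment move as stated (edges that ``pairwise span squares''); so the claim that a swelling ``is exactly a sequence of cube attachments'' depends on the precise form of the moves in \cite{ben2022folding}, not on the paraphrase recorded here. And the countability hypothesis of Theorem~\ref{thm:BigFactorTwo} is a real restriction for an arbitrary connected cube complex $X$, which you flag but do not resolve. The paper's remark that ``our foldings and swellings can be decomposed further through the elementary operations of \cite{ben2022folding}'' supports the spirit of your last step, but the theorem as stated should be treated as an external input proved in \cite{ben2022folding} by a direct argument on nonpositively curved cube complexes, not as a corollary of the results of this paper.
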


\noindent
Thus, \cite{ben2022folding} works in quotients and use CAT(0) metrics. Taking universal covers and the median point of view, the result can be reformulated as follows. Let $G$ be a torsion-free group acting properly on two median graphs $X,Y$ and let $X \to Y$ be a $G$-equivariant combinatorial map. If $G$ acts cocompactly on $Y$, then $X \to Y$ factors as $X \to Z \to Y$ where $Z \to Y$ is an isometric embedding with convex image and where $X \to Z$ is a (possibly infinite) sequence of $G$-invariant elementary transformations. 

\medskip \noindent
We obtained a similar result in our framework, see Theorem~\ref{thm:BigFactorTwo}. But there are major differences. On the one hand, Theorem~\ref{thm:BigFactorTwo} requires the maps between median graphs to be parallel-preserving, which is (much) more restrictive, but this what allows us to keep some control on the median geometry, which is not the case in \cite{ben2022folding}, as mentioned below. On the other hand, due to the fact that the authors work with fundamental groups of compact nonpositively curved cube complexes, \cite{ben2022folding} can only deal with free actions, and consequently with torsion-free groups. For instance, the results of \cite{dani2021subgroups}, proved for right-angled Coxeter groups, do not follow from \cite{ben2022folding}. Theorem~\ref{thm:BigFactorTwo} has no restriction on the actions under consideration. 

\medskip\noindent
Also, the elementary transformations described above do not preserve in general the geometric structure of cube complexes: typically, applying a folding or a cube attachment to a nonpositively curved cube complex produces a cube complex that is not nonpositively curved.This has to be compared with our Theorem~\ref{thm:BigFactorTwo}, which factors parallel-preserving maps between median graphs through maps between median graphs. In other words, our foldings and swellings can be decomposed further through the elementary operations of \cite{ben2022folding}, but the price to pay is that the median geometry is lost along the way. 

\medskip \noindent
Focusing on CAT(0) geometry, \cite{ben2022folding} cannot reach some results only provable using median geometry. For instance, \cite{ben2022folding} deals with convex-cocompact subgroups (i.e.\ subgroups acting cocompactly on convex subgraphs) but does not cover median-cocompact subgroups, a family of subgroups that is more general and more suited (for instance, cyclic subgroups may not be convex-cocompact whereas virtually abelian subgroups are always median-cocompact). Thanks to our foldings and swellings, both convex- and median-cocompact subgroups belong to the scope of our techniques.

\addcontentsline{toc}{section}{References}

\bibliographystyle{alpha}
{\footnotesize\bibliography{MedianFoldings}

\Address

\end{document}